\numberwithin{equation}{section}
\numberwithin{equation}{section}
\newtheorem{thrm}{Theorem}[section]
\newtheorem{lemma}[thrm]{Lemma}
\newtheorem{prop}[thrm]{Proposition}
\newtheorem{dfn}[thrm]{Definition}
\newtheorem{rmrk}[thrm]{Remark}
\newtheorem{conv}[thrm]{Convention}
\newcommand{\R}{\mathbb{R}}
\newcommand{\Rn}{\mathbb{R}^n}
\newcommand{\Cn}{\mathbb{C}^n}
\newcommand{\Om}{\Omega}
\newcommand{\Hn}{\mathbb{H}^n}
\newcommand{\Kn}{\mathbb{K}^n}
\newcommand{\Hnn}{\mathbb{H}^{n+1}}
\newcommand{\QH}{\boldsymbol {G\,(\mathbb{H})}}
\newcommand{\QC}{\boldsymbol {G\,(\mathbb{C})}}
\newcommand{\QK}{\boldsymbol {G\,(\mathbb{K})}}
\newcommand{\algg}{\mathfrak g}
\newcommand{\algn}{\mathfrak n}
\newcommand{\algs}{\mathfrak s}
\newcommand{\alga}{\mathfrak a}
\newcommand{\lap}{\triangle}
\newcommand{\della}{\delta_{a}}
\newcommand{\norm}[1]{\lVert#1\rVert}
\newcommand{\abs}[1]{\lvert #1 \rvert}
\newcommand{\e}{\textbf {e}}
\newcommand{\domoOm}{{\overset{o}{\mathcal{D}}}\,^{1,p}(\Om)}
\newcommand{\DoOm}{{\overset{o}{\mathcal{D}}}\,^{1,2}(\Om)}
\newcommand{\DoG}{{\overset{o}{\mathcal{D}}}\,^{1,2}(\bG)}
\newcommand{\dxa}[1]{\frac {\partial {#1}} {\partial x_{\alpha}} }
\newcommand{\dta}[1]{\frac{\partial {#1}}{\partial t_{\alpha}}}
\newcommand{\dya}[1]{\frac{\partial {#1}}{\partial y_{\alpha}}}
\newcommand{\dza}[1]{\frac{\partial {#1}}{\partial z_{\alpha}}}
\newcommand{\dx}[1]{\frac{\partial {#1}}{\partial x}}
\newcommand{\dy}[1]{\frac{\partial {#1}}{\partial y}}
\newcommand{\dz}[1]{\frac{\partial {#1}}{\partial z}}
\newcommand{\Cr}{\nabla}
\newcommand{\bG}{{\boldsymbol{G}}}
\newcommand{\essinf}[1]{\underset{#1}{\text{essinf}}}
\newcommand{\esssup}[1]{\underset{#1}{\text{esssup}}}
\def\bi{\nabla}
\newcommand{\vol}{\, Vol_{\eta}}
\newcommand{\rvolg}{dvol_g}
\def\gr{\nabla f}
\newcommand{\tileta}{{\tilde\eta}}
\newcommand{\tvol}{\, {Vol}_{\tilde\eta}}
\newcommand{\tlap}{\tilde\triangle}
\newcommand{\Lap}{\mathcal{L}}
\begin{document}

\begin{abstract}
We report on some aspects and recent progress in certain problems in the sub-Riemannian CR and quaternionic contact (QC) geometries. The focus are the corresponding Yamabe problems on the round spheres, the Lichnerowicz-Obata first eigenvalue estimates, and the relation between these two problems. A motivation from the Riemannian case highlights new and old ideas which are then developed in the settings of Iwasawa sub-Riemannian geometries.
\end{abstract}

\keywords{sub-Riemannian geometry, CR and quaternionic contact structures, Sobolev inequality, Yamabe equation, Lichnerowicz eigenvalue estimate, Obata theorem}
\subjclass{58G30, 53C17}
\title[Two problems on CR and quaternionic contact manifolds]{The Lichnerowicz and Obata  first eigenvalue  theorems and the Obata  uniqueness result in the Yamabe problem on CR and quaternionic contact manifolds}
\date{\today }

\author{Stefan Ivanov}
\address[Stefan Ivanov]{University of Sofia, Faculty of Mathematics and Informatics,
blvd. James Bourchier 5, 1164, Sofia, Bulgaria \& Institute of
Mathematics and Informatics, Bulgarian Academy of Sciences}
\address{and Department of Mathematics,
University of Pennsylvania, DRL 209 South 33rd Street
Philadelphia, PA 19104-6395} \email{ivanovsp@fmi.uni-sofia.bg}

\author{Dimiter Vassilev}
\address[Dimiter Vassilev]{ Department of Mathematics and Statistics\\
University of New Mexico\\
Albuquerque, New Mexico, 87131-0001\\
}
\email{vassilev@unm.edu}
\maketitle
\tableofcontents


\setcounter{tocdepth}{2}
\section{Introduction}
As the title suggests, the goal of this paper is to report on some aspects of  certain problems in the sub-Riemannian CR and quaternionic contact (QC) geometries.  It seems appropriate in lieu of an extensive Introduction to begin with a section about the corresponding problems in the Riemannian case.
 Besides an introduction to the discussed problems we give key steps of the  proofs of some  well known results highlighting ideas which can be used, although with a considerable amount of extra analysis in the sub-Riemannian setting.  In the later sections we show the difficulties and current state of the art in the corresponding results on CR and quaternionic contact manifolds.  However, this article is not designed to be a complete survey of the subjects, especially in the case of the Yamabe problem,  but rather a collection  of particular results with which we have been involved directly while giving references to important works in the area, some of which are covered in this volume.

\begin{conv}\label{convention}
A convention due to traditions: when considering eigenvalue problems, it is more convenient to use the non-negative   (sub-)Laplacian. Correspondingly,  $\lap u=-tr^g(\nabla^2 u)$ for a function $u$ and metric $g$.  On the other hand the (sub-)Laplacian appearing in the Yamabe problem is the "usual" negative (sub-)Laplacian $\lap u=tr^g(\nabla^2 u)$.
\end{conv}
 \section{Background - The Riemannian problems}
The only new result here is Proposition  \ref{p:Obata Einstein}. This fact is exploited later in a new simplified proof of the Obata type  theorem in the qc-setting. Interestingly, the CR case presents another type of behavior.

 \subsection{The Lichnerowicz and Obata first eigenvalue theorems}\label{ss:Riem Lich&Obata}
The relation between the spectrum of the Laplacian and
geometric quantities has been a topic of continued interest.
One such relation was given by  Lichnerowcz \cite{Li} who showed
that on a compact Riemannian manifold $(M,g)$ of dimension $n$ for which the
Ricci curvature  satisfies $
Ric(X,X)\geq (n-1)g(X,X) $  the first positive eigenvalue $\lambda_1$ of the (positive) Laplace operator $\lap f=-tr^g\nabla^2f$
satisfies the inequality $%
\lambda_1\geq n.$  Here $\nabla$ is the Levi-Civita connection of $g$.
In particular, $n$ is the smallest eigenvalue of
the Laplacian on compact Einstein spaces of scalar curvature equal to $n(n-1)$-the scalar curvature of the round unit sphere. Subsequently, Obata \cite{O3} proved that equality is
achieved iff the Riemannian manifold is isometric to the round unit sphere. It should be noted that the smallest possible value  $n$ is achieved on the round unit sphere by the restrictions of the linear functions to the unit sphere (spherical harmonics of degree one), which give the associated eigenspace.
Later, Gallot \cite{Gallot79} generalized these results to statements
involving the higher eigenvalues and corresponding eigenfunctions of the
Laplace operator.

The above described results of Lichnerowicz and Obata we want to discuss in detail are summarized in the next theorem.
\begin{thrm}\label{t:Riem LichObata}
Suppose $(M,g)$ is a compact Riemannian manifold of dimension $n$ which satisfies a positive lower Ricci bound
\begin{equation}\label{e:Ricci lower bound}
Ric(X,X)\geq (n-1)g(X,X).
\end{equation}
\begin{enumerate}[a)]
\item  If $\lambda$ is a non-zero eigenvalue of the (positive) Laplacian, $\triangle f=-\lambda  f$, then $\lambda \geq n$, see \cite{Li}.
\item if there is $\lambda =n$, then $(M,g)$ is isometric with the round sphere $S^n(1)$, see \cite{Ob}
\end{enumerate}
\end{thrm}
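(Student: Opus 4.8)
The plan for part (a) is the Bochner--Weitzenb\"ock technique. Write $\triangle_0=tr^g\nabla^2$ for the (negative) Laplace operator, so that the positive Laplacian of Convention \ref{convention} is $\triangle=-\triangle_0$, and let $f$ be a non-constant eigenfunction, $\triangle_0 f=-\lambda f$ with $\lambda>0$. Starting from the Bochner identity
\begin{equation*}
\tfrac12\,\triangle_0|\nabla f|^2=|\nabla^2 f|^2+\langle\nabla f,\nabla\triangle_0 f\rangle+Ric(\nabla f,\nabla f),
\end{equation*}
I would integrate over the compact $M$, killing the left-hand side, and then use three ingredients: $\nabla\triangle_0 f=-\lambda\nabla f$; the pointwise Cauchy--Schwarz estimate $|\nabla^2 f|^2\ge\frac1n(tr^g\nabla^2 f)^2=\frac{\lambda^2}{n}f^2$; and $\int_M f^2=\frac1\lambda\int_M|\nabla f|^2$, obtained by multiplying the eigenvalue equation by $f$ and integrating by parts. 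Combined with the Ricci hypothesis \eqref{e:Ricci lower bound} this gives
\begin{equation*}
0\ \ge\ \Big(\tfrac\lambda n-\lambda+(n-1)\Big)\int_M|\nabla f|^2 ,
\end{equation*}
and since $f$ is non-constant the integral is positive, forcing $\lambda\ge n$.

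For part (b), the first step is to observe that the equality $\lambda=n$ makes every inequality in part (a) an equality; the equality case of the Cauchy--Schwarz step forces the Hessian to be pure trace,
\begin{equation*}
\nabla^2 f=\tfrac{\triangle_0 f}{n}\,g=-f\,g .
\end{equation*}
So $f$ is a non-constant solution of the Obata equation $\nabla^2 f+fg=0$, and what remains is Obata's rigidity statement: a complete Riemannian $n$-manifold carrying such an $f$ is isometric to the unit round sphere $S^n(1)$.

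For that I would run the classical warped-product argument. Since $\nabla\big(|\nabla f|^2+f^2\big)=2\nabla^2 f(\nabla f,\cdot)+2f\,df=-2f\,df+2f\,df=0$, the quantity $|\nabla f|^2+f^2$ is a positive constant; after rescaling $f$ assume it equals $1$, so $f$ takes values in $[-1,1]$ and $\nabla f$ vanishes precisely on $f^{-1}(\{\pm1\})$. Restricting $f$ to a unit-speed geodesic $\gamma$ gives $(f\circ\gamma)''=-(f\circ\gamma)$, so $f$ oscillates like a cosine along geodesics; since $\nabla^2 f=-fg$ equals $\mp g$ at a critical point, the critical points are non-degenerate, and one concludes that $f$ has a single maximum point $p$ (with $f(p)=1$) and a single minimum point $q$ (with $f(q)=-1$), with $f=\cos r$ where $r=d(\cdot,p)$ and $d(p,q)=\pi$. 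Writing $g=dr^2+h_r$ in geodesic polar coordinates about $p$ on $M\setminus\{p,q\}$ and using that $\nabla^2 r$ restricted to the geodesic spheres equals $\tfrac12\partial_r h_r$, substitution of $f=\cos r$ into $\nabla^2 f=-fg$ yields $\tfrac12\partial_r h_r=\cot r\, h_r$, whence $h_r=\sin^2 r\,\sigma$ for an $r$-independent metric $\sigma$ on $S^{n-1}$; matching the asymptotics $h_r\sim r^2 g_{S^{n-1}}$ as $r\to0$ forces $\sigma=g_{S^{n-1}}$, so $g=dr^2+\sin^2 r\, g_{S^{n-1}}$ is the round metric.

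The routine parts are (a) and the extraction of the Obata equation; the main obstacle is the last step, turning the pointwise identity $\nabla^2 f+fg=0$ into a global isometry. The delicate points are establishing that $f$ has exactly one maximum and one minimum (equivalently, that $f=\cos d(\cdot,p)$ persists up to the cut locus and that the cut locus of $p$ is the single point $q$) and handling the smoothness at the poles $p,q$ when integrating $\tfrac12\partial_r h_r=\cot r\,h_r$ and matching the $r\to0$ behavior. As a check, on $S^n(1)$ the value $\lambda=n$ is realized by the restrictions of the linear coordinate functions of $\mathbb{R}^{n+1}$, which both shows the sharpness of (a) and that the equality case in (b) actually occurs.
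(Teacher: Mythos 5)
Your proposal is correct. For part a) your argument is essentially the paper's: both integrate the Bochner identity \eqref{e:RBochner}, use the Ricci bound \eqref{e:Ricci lower bound} and the eigenvalue equation, and the only cosmetic difference is that you invoke the Cauchy--Schwarz bound $|\nabla^2f|^2\geq\frac1n(\triangle f)^2$ where the paper writes the same thing via the trace-free part $(\nabla df)_0$; both yield $\lambda\geq n$ and, in the equality case, the Obata equation \eqref{e:Riem Hess eqn}. For part b) you take a genuinely different route to the rigidity. The paper only uses the ODE $(f\circ\gamma)''+f\circ\gamma=0$ along geodesics from a maximum point to force $d(M)\geq\pi$, combines this with Bonnet--Myers ($d(M)\leq\pi$), and then concludes by Cheng's maximal diameter theorem \cite{Cheng75}; the whole rigidity is thus outsourced to a comparison theorem, which keeps the argument to a few lines but, as the paper remarks, is precisely why this route does not carry over to the sub-Riemannian setting. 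You instead run the classical self-contained warped-product integration: $|\nabla f|^2+f^2=\mathrm{const}$, $f=\cos r$ with $r=d(\cdot,p)$, and the first-order equation $\tfrac12\partial_r h_r=\cot r\,h_r$ in polar coordinates giving $g=dr^2+\sin^2 r\,g_{S^{n-1}}$. This buys independence from Cheng's theorem at the cost of the cut-locus and pole analysis (uniqueness of the extrema, validity of $f=\cos d(\cdot,p)$ globally, smooth matching of $h_r\sim r^2 g_{S^{n-1}}$ at $r=0$), which you correctly identify as the delicate steps; these are standard to complete, so the sketch is sound and at a comparable level of detail to the paper's own sketch. You might also note that the paper's Proposition \ref{p:Obata Einstein} gives a separate integral argument showing the equality case forces $g$ to be Einstein, a fact your warped-product computation recovers implicitly but which the paper isolates because it is the step that generalizes to the qc setting.
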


Let us briefly sketch the proof of Theorem \ref{t:Riem LichObata} including a new observation, Proposition \ref{p:Obata Einstein}, which will be exploited in the sub-Riemannian setting.
The key to Lichnerowicz' inequality is
Bochner's identity ($\triangle\geq 0)$,
\begin{equation}\label{e:RBochner}
-\frac12\triangle |\nabla f|^2=|\nabla df|^2-g(\nabla(\triangle f),\nabla f)+Ric(\nabla
f,\nabla f).
\end{equation}
After an  integration over the compact manifold we find
\[
0=\int_M |(\nabla df)_0|^2+\frac {1}{n}(\triangle f)^2-g(\nabla(\triangle f),\nabla f)+Ric(\nabla
f,\nabla f)\, {\rvolg}.
\]
Let us assume at this point the inequality $Ric(\nabla
f,\nabla f)\geq (n-1)|\gr|^2$ for any eigenfunction $f$, $\triangle f=\lambda f$. We obtain then the inequality
\begin{multline*}{0=\int_M |(\nabla df)_0|^2+\frac {1}{n}\lambda|\gr|^2-\lambda|\gr|^2+Ric(\nabla
f,\nabla f)\, {\rvolg}}\\
{
= \int_M |(\nabla df)_0|^2\, {\rvolg} +\int_M Ric(\nabla
f,\nabla f) -\frac {n-1}{n}\lambda|\gr|^2\, {\rvolg}}\\
{
\geq \int_M |(\nabla df)_0|^2 \, {\rvolg} +\frac {n-1}{n}\int_M (n-\lambda)|\gr|^2\, {\rvolg}.}
\end{multline*}
{
 Hence $(\nabla df)_0=0$ and $0\geq n-\lambda$}, which proves Lichnerowicz' estimate. Furthermore, if the lowest possible eigenvalue is achieved then the trace-free part of the Riemannian Hessian of an
eigenfunction $f$ with eigenvalue $\lambda=n$ vanishes, i.e., it satisfies
the system
\begin{equation}  \label{e:Riem Hess eqn}
\nabla^2 f = -fg.
\end{equation}

 Obata's result which describes the case of equality was preceded by several results where the case of equality was characterized under the additional assumption that $g$ is Einstein \cite{YaNa59} or has constant scalar curvature \cite{IshTa59}. It turns out that besides Obata's proof these assumptions can also be removed as we found in Proposition \ref{p:Obata Einstein}. Nevertheless, even under the assumption that $g$ is Einstein the proof that $(M,g)=S^n$ requires further delicate analysis involving geodesics and the distance function from a point. Furthermore, Obata showed in fact a more general result, namely,  on a \emph{complete} Riemannian manifold $(M,g)$ equation \eqref{e:Riem Hess eqn} above allows a
non-constant solution iff the manifold is isometric to the round unit
sphere $S^n$.
\begin{rmrk}\label{r:sphere charcat}
A good reference for Hessian equations characterizing the spaces of constant curvature  is \cite{Kuh88}. For example, if $(M,g)$ is compact Riemannian manifold admitting a non-constant solution to $\nabla^2 f = \frac {\lap f}{n} g$ then $(M,g)$  is conformally diffeomorphic to the unit round sphere. Furthermore, if the scalar curvature of $(M,g)$ is constant then $(M,g)$  is isometric to a Euclidean sphere of certain radius.
\end{rmrk}
Thanks to the Bonnet-Myers and S.-Y. Cheng's improved Toponogov  theorems we can sketch the proof of this fact as described in details in \cite[Chapter III.4]{Chav84}. First, we note that assuming $(M,g)$ is complete and satisfies \eqref{e:Ricci lower bound} we have
\begin{enumerate}[(i)]
  \item (Bonnet-Myers) $M$ is compact, the diameter $d(M)\leq \pi$ and $\pi_1(M)$ is finite;
 \item (improved Toponogov theorem)  $d(M)= \pi$ iff $M$ is isometric to $S^n(1)$, \cite{Cheng75}.
 \end{enumerate}
The Hessian equation \eqref{e:Riem Hess eqn} implies that if $\gamma(t)$ is a unit speed geodesics we have $(f\circ\gamma)''+f\circ\gamma=0$, hence $f(\gamma(t))=A\cos t +B\sin t$
for some constants $A$ and $B$. Let $p\in M$ be such that $f(p)=\max_M f$ which exists since $M$ is compact. For any unit tangent vector $\xi\in T_p(M)$ the unit speed geodesic $\gamma_\xi(t)$ from $p$ in the direction of $\xi$ satisfies $f(\gamma_\xi(t))=f(p)\cos t$ sinc the derivative at $t=0$ is zero. Therefore, $f(\gamma_\xi(t))$ is injective for $0\leq t\leq \pi$ which implies $d(M)\geq \pi$. This shows that $d(M)=\pi$ and by Cheng's theorem we conclude $M=S^n$.

\begin{rmrk}
We remark explicitly that the above approach to Obata's theorem  cannot be used in  sub-Riemannian setting {s where}
both (i) and (ii) are very challenging open problems  with the exception of  some results generalizing (i) in some special cases, see Section \ref{ss:sub-Riemannian compariosn note}.
\end{rmrk}


We turn to our result mentioned in context above.
\begin{prop}\label{p:Obata Einstein}
 Suppose $(M,g)$ is a compact Riemannian manifold of dimension $n$ which satisfies \eqref{e:Ricci lower bound}. If the lowest possible eigenvalue is achieved,  $\triangle f =nf$ for some function $f$, then $(M,g)$ is an Einstein space.
\end{prop}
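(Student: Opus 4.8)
The plan is to use that the Obata Hessian equation \eqref{e:Riem Hess eqn} already produces a conformal vector field on $M$, and then to show that the obstruction tensor $Ric-(n-1)g$ is carried into itself by the corresponding flow with a definite sign, so that compactness forces it to vanish. This avoids geodesics and the distance function entirely.

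First I would collect what is available. From the sketch of the equality case of Theorem \ref{t:Riem LichObata}, the eigenfunction $f$ (nonzero, hence nonconstant, since $\triangle f=nf$) satisfies $\nabla^2 f=-fg$, i.e. $\nabla_X\gr=-fX$. Two immediate consequences: $\nabla\bigl(|\gr|^2+f^2\bigr)=0$, so $|\gr|^2+f^2\equiv c$ for a constant $c>0$ (positive because $f$ is nonconstant); and $V:=\gr$ is a closed conformal field, $\mathcal{L}_{V}g=2\nabla^2 f=-2fg$.

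The heart of the matter is the behavior of $Ric$ along $V$. I would substitute $\nabla^2 f=-fg$ into the linearized conformal transformation law of the Ricci tensor, $\mathcal{L}_{V}Ric=-(n-2)\,\nabla^2\sigma-\bigl(tr^g\nabla^2\sigma\bigr)g$ where $\mathcal{L}_{V}g=2\sigma g$, i.e. $\sigma=-f$; since then $\nabla^2\sigma=fg$ and $tr^g\nabla^2\sigma=nf$, this gives $\mathcal{L}_{V}Ric=-(n-2)fg-nfg=-2(n-1)fg=(n-1)\,\mathcal{L}_{V}g$ (alternatively one contracts the Ricci identity for the third covariant derivative of $f$). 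Hence the symmetric tensor $E:=Ric-(n-1)g$ satisfies $\mathcal{L}_{V}E=0$, and rewriting this covariantly via $\nabla_X V=-fX$ yields $\nabla_{\gr}E=2fE$, whence
\[
\nabla_{\gr}|E|^2=2\langle\nabla_{\gr}E,E\rangle=4f\,|E|^2 .
\]
The conclusion is then a one-step maximum principle. Let $m=\max_M|E|^2$, attained at some $x_0$. If $m>0$, then $\nabla|E|^2(x_0)=0$, so $0=\langle\nabla|E|^2,\gr\rangle(x_0)=\nabla_{\gr}|E|^2(x_0)=4f(x_0)m$, forcing $f(x_0)=0$ and hence $|\gr(x_0)|^2=c>0$; thus $x_0$ is a regular point of $f$, so $f$ strictly increases along the flow line of $V$ through $x_0$ for small $t>0$, and by the displayed identity $|E|^2$ strictly increases there as well — contradicting maximality. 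Therefore $E\equiv 0$, i.e. $Ric=(n-1)g$ and $(M,g)$ is Einstein.

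The only step requiring care is the sign bookkeeping in $\mathcal{L}_{V}Ric$ (the curvature sign convention and the sign of $\triangle$ from Convention \ref{convention} must be kept straight); everything else is soft. It is worth emphasizing that this argument produces $Ric=(n-1)g$ without first having to show that the scalar curvature is constant — precisely the features that make it the model for the qc version discussed later. Equivalently, one may note that $|E|^2|\gr|^4$ is constant along the integral curves of $\gr$ and tends to $0$ at the (isolated, nondegenerate) critical points of $f$, which yields the same conclusion.
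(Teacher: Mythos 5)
Your argument is correct, and while it rests on the same key differential identity as the paper's proof, it reaches it and exploits it differently. The paper obtains $\nabla_{\gr}Ric_0=2fRic_0$ (mod pure-trace terms) by differentiating the curvature identity $R(X,Y,Z,\gr)=df(X)g(Y,Z)-df(Y)g(X,Z)$ twice and taking traces; you get the equivalent statement $\mathcal{L}_{\gr}\bigl(Ric-(n-1)g\bigr)=0$, i.e.\ $\nabla_{\gr}E=2fE$, in one stroke from the naturality of the Ricci tensor and the linearized conformal transformation law -- a cleaner derivation, and your check of the sign bookkeeping ($\sigma=-f$, $\nabla^2\sigma=fg$, $tr^g\nabla^2\sigma=nf$) is right. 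The genuine divergence is in the endgame: the paper integrates $|Ric_0|^{2k}f^2$ against the eigenvalue equation, integrates by parts, and chooses $k>n/4$ to force $Ric_0=0$; you instead use the first integral $|\gr|^2+f^2\equiv c>0$ to run a pointwise ODE/maximum-principle argument along the gradient flow ($y'=4(f\circ\gamma)y$ forces $|E|^2$ to exceed its maximum just past a zero of $f$). Your route buys two things: it avoids the slightly delicate choice of the exponent $2k$ (and the regularity issue of differentiating $|Ric_0|^{2k}$ where $Ric_0$ vanishes, which the paper glosses over), and it delivers the sharper conclusion $Ric=(n-1)g$ directly, so the scalar curvature is pinned to $n(n-1)$ without a separate argument. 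The paper's integral method, on the other hand, is the one that transfers to the qc setting (cf.\ Remark \ref{r:qc obata cpct using riem}), where a pointwise first integral of the gradient flow is not as readily available. Your closing alternative via the conserved quantity $|E|^2|\gr|^4$ would need a word on why each integral curve actually limits to a critical point of $f$, but the maximum-principle version you give in full does not rely on that.
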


\begin{proof}
The proof follows from several  calculations and a use of the divergence formula. By the proof of Lichnerowicz' estimate the eigenfunction $f$ satisfies \eqref{e:Riem Hess eqn}. Differentiating \eqref{e:Riem Hess eqn} and using Ricci's identity $\nabla^3f(X,Y,Z)-\nabla^3f(Y,X,Z)=-R(X,Y,Z,\gr)$ we find the next formula for the curvature tensor
\begin{equation}\label{e:Obata Einstein 1}
R(X,Y,Z,\gr)=df(X)g(Y,Z)-df(Y)g(X,Z).
\end{equation}
Taking a trace in the above formula we see
\begin{equation}\label{e:Obata Einstein 2}
Ric(X,\gr)=(n-1)df(X).
\end{equation}
A differentiation of \eqref{e:Obata Einstein 2} and another use of \eqref{e:Riem Hess eqn} gives
\begin{equation}\label{e:Obata Einstein 4}
(\nabla_Z Ric)(Y,\gr)=fRic(X,Y)-(n-10fg(X,Y).
\end{equation}
On the other hand, taking the covariant derivative of \eqref{e:Obata Einstein 1} and then using \eqref{e:Riem Hess eqn} for $\nabla_V(\gr)$, we obtain
\[
(\nabla_V R)(Z,X,Y,\gr)=fR(Z,X,Y,V)-fg(V,Z)g(X,Y)+fg(V,X)g(Z,Y).
\]
Therefore, taking a trace, it follows.
\begin{equation}\label{e:Obata Einstein 5}
(\nabla^*R)=Y,X,\gr)=fRic(X,Y)-(n-1)fg(X,Y).
\end{equation}
A substitution of \eqref{e:Obata Einstein 5} in the formula $(\nabla_Z Ric_0)=(\nabla_Z Ric)(X,Y)-\frac {1}{n}dS(Z)g(X,Y)$ with $Z=\gr$ gives
the key identity
\begin{multline}\label{e:Obata Einstein 6}
(\nabla_{\gr} Ric_0) (X,Y)=2f Ric_0(X,Y)  -\frac {2S}{n}g(X,Y)-2(n-1)fg(X,Y)-\frac {1}{n}dS(\gr)g(X,Y).
\end{multline}
Hence, $L_{\gr}|Ric_0|^{2k}=4kf |Ric_0|^{2k}$.
Integrating over compact manifold $M$ with respect to the Riemannian volume we obtain
\begin{multline*}
\int_M |Ric_0|^{2k} f^2 \,{\rvolg}=\frac {1}{n}\int_M g(\nabla |Ric_0|^{2k} f, \nabla f) \,{\rvolg}\\
=\frac {1}{n} \int_M |Ric_0|^{2k} |\gr|^2\,dvol + \frac {4k}{n}\int_M|Ric_0|^{2k} f^2 \,{\rvolg}.
\end{multline*}
Therefore,
$$(n-4k)\int_M|Ric_0|^{2k} f^2 \,{\rvolg}=\int_M |Ric_0|^{2k} |\gr|^2\,{\rvolg},$$ hence choosing $k>n/4$ it follows $Ric_0=0$, i.e., $g$ is an Einstein metric.
\end{proof}

It should be noted that
\eqref{e:Riem Hess eqn} is obtained in
connection with infinitesimal conformal transformations on Einstein
spaces, see \ref{e:inf conf v.f.}. Thus  the unit sphere  is characterized as the only complete Einstein Riemannian manifold of scalar curvature $n(n-1)$
admitting a non-homothetic conformal transformation, \cite{IshTa59}, \cite{O3}, \cite{Ta65}, \cite{YaNa59}, see also later in this Section for relations with the Yamabe problem on the Euclidean sphere. In addition, this result can be considered as a characterization of the unit sphere as a compact Einstein
space which admits an eigenfunction belonging to the
smallest {possible} eigenvalue of the Laplacian {on a} compact Einstein {space}.

We remark that it is  natural to consider characterizations  of the unit  sphere by its  second
eigenvalue $2(n + l)k$ of the Laplacian. In this case,  the gradients of the corresponding eigenfunctions are infinitesimal projective transformations, which also gives a
system of differential equations of order three satisfied by the divergence
of an infinitesimal projective transformation on an Einstein space.
Furthermore, it is shown that the complete Riemannian manifold
admitting a non-trivial solution for the system is isometric
to the unit sphere provided that the manifold is simply connected{,} \cite{EGKU03}, \cite{GKDU}, \cite{Bl75}, \cite{O65}, \cite{O3}.
There are results in the  K\"ahler case where an infinitesimal holomorphically
projective transformation plays a role similar to that of projective
one on Riemannian manifolds, \cite{O65}. We shall seek a characterization of the model CR and qc  unit spheres through the first eigenfunctions of the respective sub-Laplacians

 \subsection{Conformal transformations}
 Let $(M,g)$ and $( M', g')$ be two Riemannian manifold of dimension $n$. A smooth map $F:M\rightarrow M'$ is called a conformal map if $F^* g'=\phi^{-2}\, g$ for some smooth positive function $\phi$. For our goals we shall consider $(M,g)=(M',g')$, $F$ a diffeomorphism, and let $\bar g=F^* g'$. In this case, we say that $F$ is a conformal difeomorphism while the metrics $g$ and $\bar g$ are called (point-wise) conformal to each other. For $n\geq 3$, we shall need the following well known formulas relating the  traceless Ricci and scalar curvatures of the metrics $g$ and $\bar g$,
 \begin{align}\label{e:Ric_o conf change}
 \overline {Ric}_0=Ric_0+(n-2)\phi^{-1}(\nabla^2\phi)_0\\ \label{e:s conf change}
 \bar S=\phi^2S+2(n-1)\phi\lap \phi -n(n-1)|\nabla \phi|^2,
 \end{align}
 where $\nabla$ is the Levi-Civita connection of $g$, $\lap \phi=tr^g\nabla^2\phi$, and $(\nabla^2\phi)_0$ is the traceless part of the Hessian of $\phi$.

 A conformal vector field on a compact Riemannian manifold
$(M, g)$ is a vector field $X$ whose flow consists of conformal transformations (diffeomorphisms). In the case the flow is a one-parameter group of  isometries the vector field
$X$ is called a Killing field. For $M$ compact, the algebra $\mathfrak{c}(M, g)$ of conformal vector fields is exactly the group of conformal diffeomorphisms $C(M, g)$ of $(M, g)$. It is worth recalling \cite{Ya57} and \cite{Li}  that $X$ is a conformal vector field  iff
\begin{equation}\label{e:conf field}
\mathcal{L}_X g = \frac {2}{n}(div_g X)g,
 \end{equation}
 where $\mathcal{L}_X$ is the Lie derivative operator and $div_g X$ is  the divergence operator $(div_g X)\, vol_g = \mathcal{L}_X vol_g$ defined with the help of Riemannian volume element $vol_g$ associated to $g$. In particular, a  gradient vector field $X=\nabla \phi $ is infinitesimal conformal vector iff
 \begin{equation}\label{e:inf conf v.f.}
 (\nabla^2\phi)_0=0.
 \end{equation}
 A short calculation, see \cite{Ya57}, \cite{Li} or \cite[(1.11)]{YaOb70}, shows that if $X$ is a conformal vector field then
 \begin{equation}\label{e:lap of div conf v.f.}
 \lap (div\, X)=-\frac {1}{n-1}(div\, X)S-\frac {n}{2(n-1)}X(S).
 \end{equation}

 \subsection{The Yamabe problem - Obata's uniqueness theorem}
The Yamabe type equation has its origin in both geometry and analysis. Yamabe \cite {Y} considered  the question of finding a conformal transformations of a given Riemannian metric on a compact manifold to one with a constant scalar curvature, see also
\cite{Tru} and \cite{A1,A2,A3,Au98}. When the ambient space is the Euclidean space $\Rn$,
G. Talenti \cite{Ta} and T. Aubin \cite{A1, A2,A3} described all positive solutions of a more general equation, that is the Euler -
Lagrange equation associated with the best constant, i.e., the norm, in the $L^p$ Sobolev embedding theorem.  With the help of the
stereographic projection, which is a conformal transformation, Yamabe's question for the standard round sphere turns into the $L^2$ case of Talenti's question. The solution of these special cases is an important step in solving the general Yamabe problem, the solution of which in
the case of a compact Riemannian manifold was completed in the 80's after the work of T. Aubin and R. Schoen \cite{A1,A2,A3,Au98,Sch,Sch2,Sch89}, see also \cite{LP}. It should be noted that the  "solution" used the positive mass theorem of R. Schoen and S.-T. Yau \cite{SchYa79}.  An alternative approach was developed by A. Bahri \cite{Bahri} where solutions of the Yamabe equation were obtained through "higher" energies of the Yamabe functional. As well known, in general, there is no uniqueness of the metric of constant scalar curvature within a fixed conformal class. However, with the exception of the round sphere, according to Obata's  theorem uniqueness (up to isometry) holds in a conformal class containing an Einstein metric.

\subsubsection{The Yamabe problem and functional}\label{ss:R Yamabe functional}
Let $(M^n,\, g)$ be a compact Riemannian manifold of dimension $n$.  The Yamabe problem is to find a metric $\bar g$ point-wise conformal to the Riemannnian metric $g$ of constant scalar curvature $\bar S$. Clearly, this is a type of uniformization problem which is one generalization of the classical surface case.

\subsubsection{Riemann surfaces - the 2-D case}
In the 2-D case, where we are dealing  with the uniformization of a closed orientable surface, if we set $\bar g = e^\phi g$, then the equation which needs to be solved is $$\triangle \phi -K=-\bar K e^\phi,$$
for some constant $\bar S$, where $S$ is the Gaussian curvature of $g$. By the {Gauss-Bonnet} formula  $$2\pi\chi(M)=\int_M K \, dv_g,$$ which determines the sign of $\bar K$. By the \emph{uniformization theorem} of the universal cover $\hat M$ of $M$, $M$ is biholomorphic to $\hat M/G$ for some $G$-properly discontinuous subgroup of $Aut(\hat M)$. Thus, depending on the sign, $(M,g)$ is hyperbolic, parabolic, or elliptic Riemann surface, i.e.,  it is conformal to one of constant Gauss curvature $-1,\ 0,\ 1$. Explicitly, depending on its genus,  $M$ is  conformal (in fact biholomorphic) to a surface in one of next three cases:
\begin{enumerate}
\item $\mathbb{H}/\Gamma$, for a properly discontinuous $\Gamma$ subgroup of $PSL(2,R)$-the automorphism group of the unit disc $\mathbb{D}$, when the genus of M at least two;
\item $\mathbb{C}/\Lambda$-elliptic curve, corresponding to a lattice $\Lambda=\{n_1\omega_1+n_2\omega_2\mid n_1, n_2\in Z\},  \omega/\omega_2\notin \mathbb{R}$ when the genus of $M$ is one;
\item $S^2$ of genus $0$.
\end{enumerate}

\subsubsection{The higher dimensional cases}
For $n\geq 3$ such a complete picture is not possible. It is customary to take the conformal factor in a way which is best suited for the problem, accordingly we begin with the form exhibiting the relation the critical Sobolev exponent. As well known, if we write the conformal factor in the form $\bar g=u^{4/(n-2)}\,g$ then the Yamabe problem becomes an existence problem for a positive solution to the Yamabe equation, see \eqref{e:s conf  change},
\begin{equation}\label{e:Riem Yamabe}
  4\frac {n-1}{n-2} \triangle u -  {S}\cdot\,u \ =\ -
\overline{{S}}\cdot\, u^{2^*-1}.
\end{equation}
where $\triangle u\ =\ \text{tr}^g (\nabla du)$,
$\text{S}$ and $\overline{\text{S}}$ are the scalar curvatures
of $g$ and $\bar g$, and $2^* = \frac {2n}{n-2}$ is the Sobolev conjugate exponent.

The Yamabe problem \ref{ss:R Yamabe functional} is of variational natural as we remind next. The critical points of the {Einstein-Hilbert (total scalar curvature) functional} $$\Upsilon(\bar g)=\left ( \int_M \bar S\, dv_{\bar g}\right )/ \left (\int_M \, dv_{\bar g} \right )^{2/{2^*}}$$ are {Einstein metrics}. The \emph{Yamabe functional} is obtained by restricting  $\Upsilon(\bar g)$ to the conformal class  $[g]=\{\bar g=u^{4/(n-2)}g\mid\, 0<u\in \mathcal{C}^\infty(M)\}$ and defining (a conformally invariant functional)
\begin{equation}\label{e:Yamabe Riem}
\Upsilon_g (u) =\left ( \int_M 4\frac {n-1}{n-2}\ \abs{\nabla u}^2\ +\
{S}\, u^2\, dv_g\right )/ \left ( \int_M u^{2^*}\, dv_g\right)^{2/{2^*}}.
\end{equation}
The critical points, i.e., the solutions of  $\frac {d}{dt} \Upsilon (u+t\phi)_{\vert_{t=0}}\ = \ 0, \quad \phi\in \mathcal{C}^\infty(M)$, are  metrics of {constant scalar curvature} (Yamabe metrics) since they are given by the solutions of \eqref{e:Riem Yamabe} with $\bar S$ the corresponding "critical" energy level. The Yamabe constant of $(M,g)$ is
\begin{equation*}
\Upsilon (M,[g])\equiv\Upsilon ([g]) =\ \inf \{ \Upsilon_g (u):\ u>0 \}.
\end{equation*}
The Yamabe invariant is the supremum $ \lambda(M)=\sup_{[g]} \Upsilon
([g])$.

According to the result of Aubin and Talenti, for the round unit sphere $\Upsilon (S^n,[g_{st}])=n(n-1)\omega_n^{2/n}$. The existence of a Yamabe metric is the content of the next theorem, which collects a number of remarkable results, see for example \cite{LP} for a full account,

 \begin{thrm}[N. Trudinger, Th. Aubin, R. Schoen; A. Bahri]
Let $(M^n, g)$, $n\geq 3$, be a compact Riemannian manifold. There is
a $\bar g\in [g]$,  s.t., $\bar S=const$.
\end{thrm}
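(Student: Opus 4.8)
The plan is to prove the existence of a Yamabe metric --- i.e., a minimizer of the Yamabe functional $\Upsilon_g$ in a fixed conformal class --- by the classical subcritical approximation argument, handling the critical Sobolev exponent via a comparison with the sphere. This is the Trudinger--Aubin--Schoen strategy, and I would organize it as follows.

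\textbf{Step 1: The subcritical problems.} For $2 \le q < 2^* = \frac{2n}{n-2}$ consider the functionals
\[
\Upsilon_g^q(u) = \left(\int_M 4\tfrac{n-1}{n-2}|\nabla u|^2 + S\, u^2\, dv_g\right)\Big/\left(\int_M |u|^q\, dv_g\right)^{2/q},
\]
and set $\Upsilon^q = \inf\{\Upsilon_g^q(u): u \in \mathcal{C}^\infty(M),\ u \not\equiv 0\}$. Since the embedding $W^{1,2}(M)\hookrightarrow L^q(M)$ is \emph{compact} for $q < 2^*$ (Rellich--Kondrachov), a standard direct-method argument --- take a minimizing sequence normalized in $L^q$, extract a weakly $W^{1,2}$-convergent, strongly $L^q$-convergent subsequence, use weak lower semicontinuity of the $W^{1,2}$-norm --- produces a nonnegative minimizer $u_q$. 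Elliptic regularity (the Euler--Lagrange equation is $4\tfrac{n-1}{n-2}\triangle u_q - S u_q = -\Upsilon^q u_q^{q-1}$, subcritical) and the strong maximum principle give $u_q \in \mathcal{C}^\infty(M)$ with $u_q > 0$; normalize so that $\|u_q\|_{L^q(M)} = 1$.

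\textbf{Step 2: The energy threshold.} One shows $\limsup_{q\uparrow 2^*} \Upsilon^q \le \Upsilon(M,[g]) = \Upsilon_g$, and, crucially, the \emph{strict} inequality
\[
\Upsilon(M,[g]) < \Upsilon(S^n,[g_{st}]) = n(n-1)\omega_n^{2/n}
\]
whenever $(M,g)$ is not conformally equivalent to the round sphere. This is the deep analytic input: Aubin established it for $n \ge 6$ and non-locally-conformally-flat $M$ by a local test-function computation exploiting the Weyl tensor, while Schoen settled the remaining cases ($n = 3,4,5$ and locally conformally flat) by constructing a test function out of the Green's function of the conformal Laplacian, the positivity of whose mass term is exactly where the positive mass theorem of Schoen--Yau enters. (If $(M,g)$ \emph{is} conformally the round sphere, the Yamabe metric exists trivially.)

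\textbf{Step 3: Passing to the limit and excluding concentration.} Since $\Upsilon^q$ stays bounded and $\|u_q\|_{L^q} = 1$, the $u_q$ are bounded in $W^{1,2}(M)$; extract $u_q \rightharpoonup u$ weakly in $W^{1,2}$, strongly in $L^r$ for every $r < 2^*$, and a.e. The only obstruction to $u$ being a genuine minimizer is loss of $L^{2^*}$-mass in the limit --- concentration at points, governed by the concentration-compactness dichotomy. The strict inequality from Step 2 forbids this: if mass concentrated at a point, a blow-up analysis would force $\Upsilon(M,[g]) \ge \Upsilon(S^n,[g_{st}])$, contradicting Step 2. Hence $\|u\|_{L^{2^*}} = 1$, $u$ realizes the infimum $\Upsilon_g$, it is nonnegative and not identically zero, and one more round of elliptic regularity plus the maximum principle upgrades it to a smooth positive solution of the Yamabe equation \eqref{e:Riem Yamabe}. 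The metric $\bar g = u^{4/(n-2)} g$ then has constant scalar curvature $\bar S = \Upsilon(M,[g])$.

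\textbf{The main obstacle} is unquestionably Step 2 --- establishing the strict energy inequality below the sphere's Yamabe constant, which is the crux of the whole problem and is precisely where the positive mass theorem is invoked in the low-dimensional and conformally flat cases; everything else (the subcritical existence, the regularity, the concentration-compactness bookkeeping) is by now routine variational machinery. I would present Step 2 by citing \cite{A1,A2,A3,Au98,Sch,Sch2,Sch89,SchYa79,LP} rather than reproving it, since a self-contained treatment is a textbook's worth of material; alternatively one may cite Bahri's approach \cite{Bahri} via higher energy levels of the Yamabe functional.
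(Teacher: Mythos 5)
Your proposal follows the same Trudinger--Aubin--Schoen scheme the paper itself sketches: reduce everything to the strict inequality $\Upsilon(M,[g])<\Upsilon(S^n,[g_{st}])$, cite Aubin's test-function computation and Schoen's positive-mass argument for that threshold, and obtain the minimizer by subcritical approximation plus exclusion of concentration. This matches the paper's outline (which likewise defers the hard Step 2 to \cite{A2}, \cite{Sch}, \cite{Bahri}), so the proposal is correct and essentially the same approach, just with the routine variational bookkeeping spelled out in more detail.
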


The main steps in proof of the above theorem are as follows, see \cite{LP,Au98} for a full account.
\begin{itemize}
\item  We have $\Upsilon ([g]) \leq \Upsilon (S^n,
st)$. The Yamabe problem can be solved on any compact manifold M with
$\Upsilon ([g]) < \Upsilon (S^n, [g_{st}])$, see \cite{Y},  \cite{Tru}, and \cite{A2}.
\item  If $n \geq 6$ then $\Upsilon (S^n, [g_{st}])-\Upsilon ([g])  \geq c \norm {W^g}^2$, hence the Yamabe problem can be solved if $n\geq 6$ and $M$ is not locally conformally flat, see \cite{A2}.
\item If $3\leq n \leq 5$, or if $M$ is locally
conformally flat, then  the Yamabe problem has a solution since $\Upsilon (S^n, [g_{st}])- \Upsilon ([g]) \geq c
m_o$, where $m_o$ is the mass of a one point blow-up (stereographic
projection) of $M$, see \cite{Sch}.
\item  If $M$ is locally
conformally flat  a critical point of
the Yamabe functional exists (which may be of higher than $\Upsilon (M,[g])$ energy), see \cite{Bahri}.
\end{itemize}
Given the above existence of a Yamabe metric on every compact Riemannian manifold it is natural to study the question of uniqueness. When $\Upsilon (M,[g])\leq 0$ the Yamabe metric is unique in its conformal class as implied by the maximum principle. However, for $\Upsilon (M,[g])>0$ this is no longer true. An example of non-uniqueness  is provided by the round unit sphere as described next in \ref{ss:Obata uniqueness}.  Another example was given by  \cite{Sch89} in $S^1(R)\times S^n$. Remarkably, in the case of the sphere the set of solutions is non-compact (in the $\mathcal{C}^2$ topology), see further below, and it was conjectured in \cite{Sch89} that this is the only case, which became known as the compactness conjecture, see the review \cite{BrMa11} for further details and references. In short, the conjecture is true in the following cases:
\begin{itemize}
\item for a locally conformally flat manifold (different from the round sphere) \cite{Sch91a} and \cite{Sch91b};
\item for $n\leq 7$, see \cite{LiZh05} and \cite{Ma05};
\item for $8\leq n\leq 24$ provided that the positive mass theorem holds (this covers all cases of a spin manifold), see \cite{LiZh05}, \cite{LiZh07} for $8\leq n\leq 11$ and \cite{KhMaSch09} for $12\leq n\leq 24$.
\end{itemize}
Furthermore, the conjecture is not true for $n\geq 25$, see \cite{Br08} and \cite{BrMa09}. Putting the compactness conjecture aside we turn to the uniqueness result of Obata.

\subsubsection{Obata's uniqueness theorem for the Yamabe problem}\label{ss:Obata uniqueness}
The main result here is that  the conformal class of an Einstein metric on a connected compact Riemannian manifold $(M,\bar g)$ contains a unique Yamabe metric unless $M$ is the round unit sphere $S^n$  in $\mathbb{R}^{n+1}$. It should be noted that if $M$ is not conformal to the round sphere the Yamabe metrics are nondegenerate
global minima of the Einstein-Hilbert functional. The structure of the set of Yamabe metrics in conformal classes near a nondegenerate constant
scalar curvature metric was considered in the  smooth case in \cite{Koiso79}. An extension of Obata's result to a local
uniqueness result for the Yamabe problem in conformal classes near to that of a nondegenerate solution was established in \cite{dlPZ12}. After this short background we turn to Obata's theorem.

\begin{thrm}[\cite{Ob} and \cite{Ob72}]\label{t:Obata Yamabe}

\begin{enumerate}[a)]
 \item Let $(M,\bar g)$ be a connected compact Riemannian manifold which is Einstein and $\bar g=\phi^{-2}g$. If $\bar S=S=n(n-1)$, then $\phi=1$ unless  $(M,\bar g)=(S^n, g_{st})$.
 \item If $g$ is a Riemannian metric conformal to $g_{st}$,  $g_{st}=\phi^{-2} g$, with scalar curvature $S=n(n-1)$, then  $g$ is obtained from $g_{st}$ by a conformal diffeomorphism of the
sphere, i.e., there is $\Phi\in \text{Diff}\,(S^n)$ such that $g=\Phi^* g_{st}$ and up to an additive constant $\phi$ is an eigenfunction for the first eigenvalue of the Laplacian on the round sphere. In particular, $\nabla\phi$ is a gradient conformal field and for some $t$ we have $\Phi=exp(t\nabla \phi)$-the one parameter group of diffeomorphisms generated by  $\nabla\phi$.
\end{enumerate}
\end{thrm}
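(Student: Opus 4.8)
The plan is to reduce both statements to a single lemma, which plays here the role Bochner's formula played in Lichnerowicz's estimate: \emph{if $h$ is Einstein and $g=\phi^{2}h$ has constant scalar curvature $S$, then $g$ is Einstein and $\phi$ solves the Obata--Hessian equation $\nabla^{2}\phi=\tfrac1n(\triangle\phi)\,g$ on $(M,g)$} (here $\phi>0$, being a conformal factor). To prove this I would feed the Einstein condition $\overline{Ric}_0=0$ into \eqref{e:Ric_o conf change} (with $h$ in the role of $\bar g$, and $\nabla,Ric_0$ those of $g$), obtaining the pointwise identity $\phi\,Ric_0=-(n-2)(\nabla^{2}\phi)_0$ on $(M,g)$. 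Pairing it with $Ric_0$ in the $g$-metric, using that $Ric_0$ is trace-free, integrating over the closed $M$ and integrating by parts turns the right side into $(n-2)\int_M\langle\operatorname{div}_g Ric_0,\nabla\phi\rangle\,\rvolg$, which vanishes by the contracted second Bianchi identity $\operatorname{div}_g Ric_0=\tfrac{n-2}{2n}\,dS=0$ (as $S$ is constant). Hence $\int_M\phi\,|Ric_0|^{2}\,\rvolg=0$, so $Ric_0\equiv0$ since $\phi>0$, and then the identity forces $(\nabla^{2}\phi)_0=0$.

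Next I would apply the lemma with $h=\bar g$ in part (a) and with $h=g_{st}$ in part (b): in each case $g=\phi^{2}h$, and $g$ has constant scalar curvature $n(n-1)$, so $g$ is Einstein and $\phi$ solves the Obata--Hessian equation. If $\phi$ is constant, then $h=\phi^{-2}g$ is a constant rescaling of $g$, both are Einstein of scalar curvature $n(n-1)$, and since the Ricci tensor is invariant under constant rescalings, $(n-1)g=Ric(g)=Ric(h)=(n-1)h=(n-1)\phi^{-2}g$; hence $\phi\equiv1$ (so $g=g_{st}$ and $\Phi=\mathrm{id}$ in (b)).

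If $\phi$ is non-constant, then $(M,g)$ is compact, has constant scalar curvature $n(n-1)$, and carries a non-constant solution of $\nabla^{2}\phi=\tfrac1n(\triangle\phi)g$, so Remark \ref{r:sphere charcat} identifies $(M,g)$ with a Euclidean sphere, which the normalisation $S=n(n-1)$ pins down as the unit round sphere $(S^{n},g_{st})$. Moreover, differentiating the Hessian equation, commuting covariant derivatives by the Ricci identity and using $Ric=(n-1)g$ shows $\triangle\phi=-n(\phi-c)$ for a constant $c$; thus $\nabla^{2}(\phi-c)=-(\phi-c)g$, i.e. $\phi-c$ satisfies \eqref{e:Riem Hess eqn} and is a first eigenfunction, and by \eqref{e:inf conf v.f.} the gradient field $\nabla\phi$ is a gradient conformal vector field. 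For (a) this already gives $(M,g)=(S^{n},g_{st})$, and then part (b), which applies to $\bar g$ (conformal to $g_{st}$ with $\bar S=n(n-1)$), gives $(M,\bar g)=(S^{n},g_{st})$. For (b): an isometry $\Phi\colon(S^{n},g)\to(S^{n},g_{st})$ satisfies $\Phi^{*}g_{st}=g=\phi^{2}g_{st}$, hence is a conformal diffeomorphism of the round sphere, and it can be taken inside the one-parameter group generated by $\nabla\phi$, so $\Phi=\exp(t\nabla\phi)$ for a suitable $t$.

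Step 1 is the conceptual heart and is short. The point I expect to be delicate is the last claim of the non-constant case: matching the explicit conformal factor $\phi$ with that of a Möbius transformation of $S^{n}$ and verifying that the flow of the gradient conformal field $\nabla\phi$ is precisely the associated one-parameter conformal group. This is an explicit computation — cleanest via stereographic projection, where these flows become Euclidean dilations (or translations) — and is the only place where the global structure of $\mathrm{Conf}(S^{n})$ is used, as opposed to the purely local PDE input of Remark \ref{r:sphere charcat}. A secondary nuisance is bookkeeping the conformal-change formulas (which metric's Hessian, the $\phi$ versus $\phi^{-1}$ normalisation), but this is routine.
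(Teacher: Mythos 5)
Your proposal is correct and follows essentially the same route as the paper's proof: the conformal-change formula for the traceless Ricci, an integration by parts against $Ric_0$ using the contracted Bianchi identity (for constant $S$) to conclude that $g$ is also Einstein and hence $(\nabla^2\phi)_0=0$, and then reduction to the Obata/K\"uhnel Hessian rigidity to identify the sphere. The only cosmetic differences are that you derive $\triangle\phi=-n(\phi-c)$ by commuting covariant derivatives in the Hessian equation where the paper invokes \eqref{e:lap of div conf v.f.} together with the maximum principle, and that, like the paper, you leave the final identification $\Phi=\exp(t\nabla\phi)$ and the explicit form of $\phi$ to a stereographic-projection computation.
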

\begin{proof} In the proof of part a) we use the argument of \cite{BoEz87} and \cite{LP} which is very close to Obata's argument but uses the "new" metric as a background metric rather than the given Einstein metric. Suppose $\bar g$ is
Einstein, hence by \eqref{e:Ric_o conf change} we have $$0=\overline {Ric}_o= {Ric_o} + \frac
{n-2}{\phi}(\nabla^2\phi)_0.$$ Therefore, $(\nabla^2\phi)_0\  = \ -\frac
{\phi}{n-2}{Ric_0}.$ From the  contracted Bianchi identity  and
$S$=const we have $\nabla^*Ric=\frac {1}{2}\nabla S =0,$ hence $\nabla^* \left ( Ric(\nabla \phi, . )\right) =(\nabla^* Ric) (\nabla \phi)+g(Ric,\nabla^2\phi)=\frac 12g(\nabla S, \nabla \phi) -\frac
{\phi}{n-2}|Ric_o|^2.
$
Integration over $M$ and an application of the divergence theorem  shows that $g$ is \emph{also an Einstein metric}, $Ric_o =0$. This implies $(\nabla^2\phi)_0=0$, hence $\nabla \phi$ is a gradient conformal vector field, see \eqref{e:inf conf v.f.}. Now, from \eqref{e:lap of div conf v.f.} taking into account $S=n(n-1)$ it follows  $\lap (\lap\phi+n\phi)=0$, hence by the maximum principle we have $\lap u=-nu$, where $u=\phi+a$ for some constant $a$. Notice that we also have $(\nabla^2u)_0=0$.   Hence by Obata's result in the eigenvalue Theorem \ref{t:Riem LichObata} either $u=$const or  $g$ is isometric to $g_{st}$ and $u$ is a restriction of a linear function to $S^n$, $u=(a_ox_o+\dots+a_nx_n)\vert_{S^n}$, which implies the claimed form of $\phi$.
\end{proof}
We note that in the case of the round sphere, once we proved that $g$ is also Einstein, we can conclude that $g$ is isometric to $g_{st}$ since it is Einstein and  conformally flat, $W=0$, see \cite{Kuh88}, \cite{KuRa09}, \cite{MRTZ09}, \cite{KiMa09} for further details and references on conformal transformations between Einstein spaces in a variety of spaces. Thus, there is an isometry $\Phi:(S^n,g)\rightarrow (S^n,g_{st})$, $\Phi^*g_{st}=\phi^{-2}g_{st}$ hence $F\in C(S^n,g_{st})$. Part b) of the above theorem shows that $\Phi$ belongs to the largest connected subgroup of $C(S^n,g_{st})$ and determines the exact form of $\phi$.
The same conclusion can be reached with the help of the stereographic projection and relates the analysis to the Liuoville's theorem and the best constant in the $L^2$ Sobolev embedding theorem in Euclidean space. In fact, using the stereographic projection we can reduce to a conformal map of the Euclidean space, which sends the Euclidean metric to a conformal to it  Einstein metric. By a purely local argument, see \cite{Br25}, the resulting system can be integrated, in effect proving also Liuoville's theorem, which gives the form of $\phi$ after transferring the equations back to the unit sphere. Such argument was used in the quaternionic contact setting \cite{IMV} to classify all qc-Einstein structures on the unit $4n+3$ dimensional sphere (quaternionic Heisenberg group) conformal to the standard qc-structure on the unit sphere. We will come back to the qc-Liouville theorem later in the paper, see Section \ref{t:qcLiouville}.

In any case,  the key point here which will be used in the sub-Riemannian CR or QC setting is that Obata's argument shows the validity  of a system of partial differential equations, namely, $(\nabla^2\phi)_0=0$ assuming \eqref{e:s conf  change} holds with $\bar g$ being Einstein and $g$ of constant scalar curvature. On the other hand, using the stereographic projection, Yamabe's equation on the round sphere turns into
\eqref{e:Riem Yamabe} for the Euclidean Laplacian with $S=0$ and $\bar S$=const after interchanging the roles of $g$ and $\bar g$, i.e., assuming that $g$ is the "background" standard constant curvature metric and $\bar g$ is the "new" conformal to $g$ metric of constant scalar curvature. This is nothing but the equation characterizing the extremals of the variational problem associated to the $L^2$ Sobolev embedding theorem. An alternative to Obata's argument is then the symmetrization argument (described briefly in Section \ref{s:FS inequality}).

\subsection{Sub-Riemannian comparison results and Yamabe type problems - a summary}\label{ss:sub-Riemannian compariosn note}
The interest in relations between the spectrum of the Laplacian and
geometric quantities justified the interest in Lichnerowicz-Obata type
theorems in other geometric settings such as Riemannian foliations (and the
eigenvalues of the basic Laplacian) \cite{LR98,LR02}, \cite{JKR11} and \cite%
{PP11}, to CR geometry (and the eigenvalues of the sub-Laplacian) \cite{Gr},
\cite{Bar}, \cite{CC07,CC09a,CC09b}, \cite{ChW}, \cite{Chi06}, \cite{LL}, and to general
sub-Riemannian geometries, see \cite{Bau2} and \cite{Hla}. Complete results have been achieved in the settings of (strictly pseudoconvex) CR, \cite{Gr},\cite{CC09a,CC09b}, \cite{CC07}, \cite{Chi06},\cite{LW,LW1},\cite{IVO,IV3}, and QC, \cite{IPV1,IPV2,IPV3}, geometries which shall be covered in Sections \ref{s:CR Lichnerowicz-Obata} and \ref{s:QC Lichnerowicz-Obata}.

As far as other comparison results are concerned we mention

(i) \cite{Ru94} for a  Bonnet-Myers type theorem on general 3-D
CR manifolds;

(ii) \cite{Hughen}, where a Bonnet-Myers type theorem on a three dimensional Sasakian was proved.

Both of the above papers use analysis of the second-variation formula for sub-Riemannian geodesics.

(iii) \cite{ChY} for an isoperimetric inequalities and volume comparison theorems on CR manifolds.

 (iv) \cite{Bau2}, \cite{BauBonGarMun14}, \cite{BauKim14}, \cite{BauKimWa14}, \cite{BauWa14}, \cite{GrTh14a,GrTh14b} where an extension  to the sub-Riemannian setting of the Bakry-Emery technique on curvature-dimension inequalities are used to obtain Myers-type theorems, volume doubling,  Li-Yau, Sobolev and Harnack inequalities, Liouville theorem. Such inequalities are obtained usually under a transverse symmetry assumption. The latter means that we are actually dealing with a Riemannian manifold with bundle like metrics which are foliated by totally geodesic leaves. Thi scondition equivalent to vanishing torsion in the QC setting (qc-Einstein) and is not very far from  the Sasakian case (vanishing torsion) in the CR case.

 (v) \cite{AgLee14}, \cite{AgBaRi14}, \cite{AgL11}, where sub-Riemannian geodesics and measure-contraction properties are used to establish  for Sasakian manifolds results such as a Bishop comparison theorem, Laplacian and Hessian comparison, volume doubling, Poncar\'e and Harnack inequalities, and Liuoville theorem.
comparison results in the Sasakian case.

(vi) \cite{Hla} for Lichnerowicz type estimates and a Bonnet-Myers theorems in some special sub-Riemannian geometries.

A variant of the Yamabe problem in the setting of a compact strictly pseudoconvex pseudohermitian  manifold (called here simply CR manifold) is  the CR Yamabe problem where one seeks in a fixed pseudoconformal class  of pseudo-Hermitian structures on a compact CR manifold one with  constant scalar curvature (of the canonical Tanaka-Webster connection). After the works of D. Jerison \& J. Lee \cite{JL1}  - \cite{JL4} and N.Gamara \& R. Yacoub \cite{Ga}, \cite{GaY} the CR Yamabe problem on a compact manifold is complete. The case of the standard CR structure on the unit sphere in $\mathbb{C}^n$ is equivalent to the problem of determining the best constant in the $L^2$ Folland \& Stein \cite{FS} Sobolev type embedding inequality on the Heisenberg group. The best constant in the $L^2$ Folland \& Stein inequality together with the minimizers were determined recently using a different from \cite{JL3} method by Frank \& Lieb \cite{FrLi}, see also \cite{BFM}. Nevertheless this simpler approach \emph{does not} yield the uniqueness result of D. Jerison \& J. Lee. A positive mass theorem in the three dimensional case was proven recently in \cite{ChMaYa13}.

In the other case of interest, the qc-Yamabe problem was studied in \cite{IMV, IMV1, IMV2} and \cite{Wei}. According to \cite{Wei} the Yamabe constant of a compact qc manifold is less than  or equal to that of the standard qc sphere. Furthermore,
if the constant is strictly less than the corresponding constant of the sphere, the qc-Yamabe problem has a solution, i.e., there is a conformal
3-contact form for which the qc-scalar curvature is constant. The Yamabe constant of the standard qc structure on the unit $(4n+3)$-dimensional
sphere was determined in \cite{IMV2} with the help of a clever center of mass argument following in the footsteps of the CR case
\cite{FrLi} and \cite{BFM}. However, due to the limitations of the method \cite{IMV2} does not exclude the possibility that in the qc-conformal class of the standard
qc structure there are qc Yamabe metrics of higher energies. The seven dimensional case was settled completely earlier in \cite{IMV1}. A conformal curvature tensor was found in \cite{IV}, which should prove useful in establishing existence of a solution to the qc-Yamabe problem in the qc locally non-flat case.

Finally, we mention \cite{ChLZh1,ChLZh2} where the sharp Hardy-Littlewood-Sobolev inequalities in the quaternion and octonian versions of the approach found by Frank and Lieb was developed. In particular, at this point the sharp constants in the Hardy-Littlewood-Sobolev inequalities on all groups of Iwasawa type are known.

\section{The Folland-Stein inequality on groups of Iwasawa type}\label{s:FS inequality}

 We start by recalling the following
embedding theorem due to Folland and Stein \cite{FS}.
\label{T:Folland and Stein} Let $ {\bG}$ be a Carnot group
${\bG}$ of homogeneous dimension $Q$, fixed metric $g$ on the the "horizontal"  bundle spanned by the first layer {\ and Haar measure $dH$}. For any $1<p<Q$ there exists
$S_p=S_p({\bG})>0$ such that for $u\in C^\infty_o(\bG)$ we have
\begin{equation}  \label{FS}
\left( \int_\bG \, |u|^{p^*}\, dH\right)^{1/p^*} \leq\ S_p\ \left(\int_\bG |Xu|^p\, dH\right)^{1/p},
\end{equation}
where $|Xu|=\sum_{j=1}^m |X_ju|^2$ with $X_1,\dots, X_m$ denoting an orthonormal  basis of the first layer of ${\bG}$
{and $p^*= \frac {pQ}{Q-p}$.}
In the case $\bG=\Rn$ this embedding is nothing but the Sobolev embedding theorem.  We insist on $X_1,\dots, X_m$ denoting an orthonormal  basis of the first layer in order to have a well defined constant which obviously depends on the chosen (left invariant) metric.  For the sake of brevity we do not give the definition of a Carnot group since our focus is in the particular case of groups of Iwasawa type, in which case there is a natural metric. Also, the case $p=1$ which we did not include above, is the isoperimetric inequality, see  \cite{CDG2} for the proof in a much wider setting, which as well known \cite{FeFl60,Maz61}, see also \cite{Ta} and \cite{S-C02},   implies the whole range of inequalities \eqref{FS}.

The most basic fact of the above inequality is its invariance under translations and dilations. The latter fact determines the relation between the exponents $p$ and $p^*$  appearing in both sides. For a function $u\in C^{\infty}_o (\bG) $ we let
\begin{equation}\label{translation}
\tau_h u\ \overset{def}{=}\  u\circ \tau_h, \quad\quad \quad h\in \bG ,
\end{equation}
where $\tau_h:\bG\to \bG$ is the operator of left-translation $\tau_h(g) = hg$,
 and also
\begin{equation}\label{scaling}
u_\lambda\ \equiv\ \lambda^{Q/p\text{*}}\  \delta_\lambda u\ \overset{def}{=}\ \lambda^{Q/p\text{*}}\ u\circ \delta_\lambda, \quad\quad\quad \lambda >0.
\end{equation}
It is easy to see that the norms in the two sides of  the Folland-Stein inequality  are invariant under the translations (\ref{translation}) and the rescaling \eqref{scaling}.

Let $S_p$ be the best constant in the Folland-Stein inequality, i.e., the smallest constant for which \eqref{FS} holds. The  equality  is
achieved on the space ${\overset{o}{\mathcal{D}}}\,^{1,p}(\bG)$, where for a domain $\Omega\subset \bG$ the space $\domoOm$  is defined as the closure of $C_o^{\infty}(\Omega)$
with respect to the norm
\begin{equation}
 \norm{u}_{\domoOm} = \left(\int_{\Omega} |Xu|^p dH\right)^{1/p}.
\end{equation}
 This fact  was proved in \cite{Va1} with the help of P.L. Lions' method of concentration compactness. The question of determining the norm of the embedding, i.e., the value of the best constant is open with the exception of the Euclidean case and the $p=2$ case on the (two step Carnot) groups of Iwasawa type.
 In the Euclidean case,  a symmetrization argument involving symmetric decreasing rearrangement, see \cite{Ta62}, can be used to show that equality is achieved for radial functions which can be determined explicitly. As of now there is no such argument in the non-Euclidean setting which to a large degree is the reason for the much more sophisticated analysis in the sub-Riemannian setting. However, the recently found approach \cite{FrLi} and \cite{BFM} based on the center of mass argument allows the determination of the sharp constant (in fact  in the Hardy-Littlewood-Sobolev inequality) in the geometric setting of groups of Iwasawa type when $p=2$. This analysis exploits the Cayley transform and the conformal invariance of the associated Euler-Lagrange equation
which is {the} Yamabe equation on the corresponding Iwasawa group,
  \begin{equation}\label{Yamabeomegasymm}
\lap u\ =- u^{\frac{Q+2}{Q-2}}, \qquad u\in \DoG, \quad u\geq 0.
\end{equation}
Of course, in order to give a geometric meaning of the equation one needs to use the relevant geometries and their "canonical" connections which we do {in} Section \ref{s:Iwasawa sub-Riem geom}. In the Euclidean and CR cases these are just the well known Levi-Civita and Tanaka-Webster connections. In the quaternionic and octonian {cases} the geometric picture emerged only after the work of Biquard \cite{Biq1,Biq2}. The goal of this section is to give some ideas surrounding the analysis of the Yamabe equation as a partial differential equation and some of the known results on the optimal constants which largely belong to the area of analysis.  The key results on the optimal constants are summarized in the following two theorems in which $m$ is the dimension of the first layer, while $k$ is the dimension of the center of the Iwasawa algebra.

\begin{thrm}[\cite{JL3},\cite{FrLi},\cite{IMV1,IMV2},\cite{ChLZh1}]\label{T:Iwasawa groups Minimizers}
Let $\bG$ be a group of Iwasawa type. For every $u\in D^{1,2}( \bG)$ one has the Folland-Stein inequality \eqref{FS}
with
\begin{equation}\label{best}
S_2= \frac{1}{\sqrt{m(m + 2(k-1))}}\ 4^{\frac {k}{m+2k}} \pi^{-\frac {m+k}{2(m+2k)}}\ \left(\frac{\Gamma(m + k)} {\Gamma\left(\frac {m+k}{2}\right)}\right)^{\frac {1}{m+2k}} .
\end{equation}
An extremal is given by the function
\begin{equation}\label{e:FS extremal fns}
F(g)= \gamma(m,k)\ \left[(1 + |x(g)|^2)^2+ 16 |y(g)|^2)\right]^{-(Q-2)/4},
\end{equation}
where
\[
\gamma(m,k)= \left[4^k\ \pi^{-(m+k)/2(m+2k)}\ \frac{\Gamma(m + k)}{\Gamma((m + k)/2)} \right]^{(m+2(k-1))/2(m+2k)}.
\]
Any other non-negative extremal is obtained from $F$ by
\eqref{translation} and \eqref{scaling}.
\end{thrm}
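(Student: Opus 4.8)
The plan is to separate the statement into a qualitative rigidity assertion — that every non-negative extremal of \eqref{FS} for $p=2$ is, up to the translations \eqref{translation} and the rescalings \eqref{scaling}, the function $F$ of \eqref{e:FS extremal fns} — and the quantitative formula \eqref{best}, which then falls out of a direct computation. First I would establish that an extremal exists. Normalizing $\norm{Xu}_{L^2(\bG)}=1$ and using that both sides of \eqref{FS} are invariant under \eqref{translation} and \eqref{scaling}, a minimizing sequence may be taken translation- and dilation-normalized; P.-L. Lions' concentration-compactness principle, applied as in \cite{Va1}, excludes vanishing and dichotomy and yields a non-negative extremal $u\in\DoG$. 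After introducing a Lagrange multiplier and invoking subelliptic regularity, such a $u$ solves the associated Euler-Lagrange equation, which after a positive rescaling of $u$ is the Yamabe equation \eqref{Yamabeomegasymm}.

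The heart of the matter is the classification of the positive solutions of \eqref{Yamabeomegasymm}. The key tool is the Cayley transform $\mathcal C$ from $\bG$ onto the associated Iwasawa sphere minus a point, which is a conformal equivalence of the relevant sub-Riemannian structures and therefore turns \eqref{Yamabeomegasymm} into the Yamabe equation on the sphere within the conformal class of the standard structure, the conformal factor of $\mathcal C$ furnishing a reference solution. Two routes, both present in the cited works, complete the classification. The Obata route — Jerison and Lee in the CR case \cite{JL3}, Ivanov, Minchev and Vassilev in the qc case \cite{IMV1,IMV2} — differentiates the equation and applies Bianchi-type identities and integration by parts, in the spirit of the Riemannian Obata argument of the Introduction but now using the Tanaka-Webster, respectively Biquard, connection, to force the vanishing of a trace-free curvature tensor; this pins the solution down to a translate and dilate of $F$. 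The mass-transport route — Frank and Lieb \cite{FrLi}, and \cite{ChLZh1} in the octonionic case — works instead with the dual Hardy-Littlewood-Sobolev inequality and a center-of-mass/competing-symmetries argument on the sphere, isolating the same extremal without any rearrangement; this is the argument that goes through uniformly on all groups of Iwasawa type, though, as the text notes, it does not by itself reproduce the sharper uniqueness statement of \cite{JL3}. Either way the extremal set is the $\mathcal C$-orbit of $F$ under \eqref{translation} and \eqref{scaling}.

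With the extremal identified, \eqref{best} is obtained by inserting $F$ from \eqref{e:FS extremal fns} into \eqref{FS} and computing $S_2=\norm{F}_{L^{2^*}(\bG)}/\norm{XF}_{L^2(\bG)}$. This reduces to evaluating integrals of the shape $\int_\bG \big[(1+|x|^2)^2+16|y|^2\big]^{-s}\,dH$ over $\Rm\times\Rk$, together with the analogous integral for $|XF|^2$; separating the horizontal and central variables, passing to polar coordinates, and recognizing Beta integrals turns each into a product of $\Gamma$-factors, and collecting them — the volumes of $S^{m-1}$ and $S^{k-1}$ supplying the dimensional dependence — gives the closed form \eqref{best}, with the constant $\gamma(m,k)$ chosen precisely so that the stated $F$ already has $\norm{XF}_{L^2(\bG)}$ and $\norm{F}_{L^{2^*}(\bG)}$ in the ratio \eqref{best}.

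The main obstacle is the classification step. Unlike the Euclidean case, there is no symmetric-decreasing rearrangement available in the sub-Riemannian setting, so one cannot reduce \eqref{Yamabeomegasymm} to an ordinary differential equation; the Obata-type Bochner identities are intricate and strongly geometry-dependent, while the Frank-Lieb argument, although uniform, rests on the full conformal apparatus of the Cayley transform together with a delicate compactness/rigidity input on the sphere. By comparison, the existence of an extremal via concentration-compactness and the final $\Gamma$-function bookkeeping are routine.
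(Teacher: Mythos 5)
Your proposal is correct and follows essentially the same route as the paper: existence of an extremal via concentration compactness as in \cite{Va1}, transfer to the sphere by the Cayley transform, the Szeg\"o--Hersch center-of-mass normalization in the style of \cite{FrLi} and \cite{BFM}, and a final Gamma-function computation for \eqref{best}. The only point you leave as a black box is the paper's actual rigidity mechanism --- testing the second variation at a well-centered local minimizer against coordinate functions times $u$ and using that the first eigenspace of the sub-Laplacian on the model sphere consists of restrictions of linear functions (Theorem \ref{t:first eigenspace Iwasawa}), which forces such a minimizer to be constant; note that this pins down (local) minimizers only, which is all the theorem asserts, so the full classification of positive solutions of \eqref{Yamabeomegasymm} that you call the heart of the matter is not actually needed here.
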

We remark that  \eqref{e:FS extremal fns} is a solution to the Yamabe equation on any group of Heisenberg type  \cite{GV2} which was found earlier (and seems to have been forgotten) in the case of Iwasawa groups  in  \cite[Proposition2]{KapPutz1}.
It also should be noted that \cite{JL3} and \cite{IMV1} actually determine all critical points of the associated to \eqref{FS} variational problem rather than only the functions with lowest energy. In fact, \cite{JL3} solves completely the Yamabe equation \eqref{Yamabeomegasymm} on the Heisenberg group while \cite{IMV1} achieves this on the seven dimensional quaternionic Heisenberg group (the higher dimensional case {is settled in the preprint \cite{IMV15a}}). We report on the ideas behind these proofs in Sections \ref{ss:Jerison and Lee} and \ref{ss:7D QC Heisenebrg Yamabe} which involve ideas inspired by Theorem \ref{t:Obata Yamabe}.
In the remaining cases of Iwasawa type groups the partial result in the next Theorem \ref{T:CS} supports the general agreement that \eqref{e:FS extremal fns} gives all solutions.
\begin{thrm}[\cite{GV}]\label{T:CS}
All partially symmetric solutions  of the Yamabe equation on a group of Iwasawa type are given {by} \eqref{e:FS extremal fns} up to dilation and translation.
\end{thrm}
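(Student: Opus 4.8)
The plan is to use the partial symmetry to collapse the Yamabe equation \eqref{Yamabeomegasymm} to a Baouendi--Grushin type equation, to exploit the conformal invariance of that equation together with the method of moving planes in the central variables in order to promote partial symmetry to full cylindrical symmetry, and finally to identify the cylindrically symmetric solutions by a rigidity argument. Write $g=(x,y)$ with $x$ in the first layer (of dimension $m$) and $y$ in the center (of dimension $k$), so that $Q=m+2k$, and let $u\ge 0$ be a partially symmetric solution of \eqref{Yamabeomegasymm}, that is $u(g_0\circ g)=\hat u(|x|,y)$ for some $g_0\in\bG$ and some function $\hat u$.

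First I would carry out the reduction. Since $\bG$ is of Iwasawa (hence Heisenberg) type, a direct computation shows that for a function of the form $\hat u(|x|,y)$, regarded as an $x$-radial function on $\R^m\times\Rk$, the sub-Laplacian coincides with the Baouendi--Grushin operator
\[
\lap \hat u \;=\; \Delta_x \hat u+\frac{|x|^2}{4}\,\Delta_y \hat u ,
\]
whose homogeneous dimension is exactly $Q$; hence $\frac{Q+2}{Q-2}$ is its critical Sobolev exponent and $\hat u\ge 0$ is a finite-energy entire solution, radial in $x$, of the corresponding critical Grushin--Yamabe equation on $\R^m\times\Rk$. From the Folland--Stein embedding and the regularity theory for this equation one obtains the a priori facts needed below: smoothness off the degeneracy set $\{x=0\}$, strict positivity, and the decay $\hat u(g)\lesssim N(g)^{-(Q-2)}$ with $N(g)=(|x|^4+16|y|^2)^{1/4}$, together with the matching gradient decay.

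Next I would run the method of moving planes in the central variables. The conformal Kelvin inversion for the Grushin operator --- $g\mapsto g^{*}$ with $N(g^{*})=N(g)^{-1}$, together with $\hat u\mapsto N^{-(Q-2)}\hat u(g^{*})$ --- preserves the equation, and the operator has coefficients independent of $y$, so translations and orthogonal reflections in $y$ commute with it; reflections in $y$ are \emph{not} automorphisms of $\bG$, which is precisely why the partial-symmetry reduction is needed to make them available. Fixing a unit direction $e$ in the center and using the Kelvin inversion to bring the point at infinity to a finite point, the classical moving-plane scheme applies to the hyperplanes $\{y\cdot e=\lambda\}$; the comparison principle and Hopf lemma that it requires remain valid across the characteristic set $\{x=0\}$ because that set is of high codimension. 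Carrying this out in every direction of the center produces a common centre of symmetry, and after a central translation we conclude $\hat u=\hat u(|x|,|y|)$, so $u$ is cylindrically symmetric with respect to a translate of $g_0$.

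It remains to identify $\hat u$. Writing $\rho=|x|$, $s=|y|$ and $\hat u=w(\rho,s)$, the equation becomes a two-variable problem whose decaying positive solutions I would pin down either by applying the Kelvin inversion based at a unit horizontal vector $e_0$ --- a conformal symmetry of the equation that breaks partial but not cylindrical symmetry --- and running the method of moving spheres with these inversions, or by producing a divergence identity of Obata--Jerison--Lee type whose vanishing forces the overdetermined first-order system characterising $F$; either way one gets $w(\rho,s)=\gamma(m,k)\bigl[(a+\rho^2)^2+16 s^2\bigr]^{-(Q-2)/4}$ for some $a>0$, which is the dilation orbit of \eqref{e:FS extremal fns}, and undoing the translation by $g_0$ yields the stated description. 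The main obstacle is the moving-plane step for the \emph{degenerate} Grushin operator: one must establish a maximum principle and a boundary-point (Hopf) lemma that survive the characteristic set $\{x=0\}$ where ellipticity is lost, and have sharp enough decay at infinity to initiate the motion of the hyperplanes through the Kelvin inversion. A secondary difficulty is the final identification, since outside the Heisenberg group there is no CR contact structure underlying Jerison--Lee's tensorial identity, so one must either recover its analogue for cylindrically symmetric functions in the Heisenberg-type setting or carry the moving-sphere argument all the way through, verifying that the only conformal orbits compatible with cylindrical symmetry and the fundamental-solution decay rate are those of $F$.
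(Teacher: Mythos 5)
Your proposal follows essentially the same route as the paper's (i.e.\ Garofalo--Vassilev's) proof: the moving-plane method in the central variables, made available by the Kor\'anyi--CDKR inversion/Kelvin transform and by the reduction of the sub-Laplacian on partially symmetric functions to a Grushin-type operator, upgrades partial to cylindrical symmetry, and the cylindrically symmetric solutions are then identified through a two-variable reduction and a Weinberger/Jerison--Lee-type divergence identity whose integration forces the overdetermined first-order system characterising $F$. The obstacles you single out --- the comparison principle across the characteristic set, the decay needed to initiate the planes, and the final rigidity identity --- are precisely the points the cited proof resolves, so no genuinely different argument is being offered here.
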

For the definition of partially symmetric solution we refer to Section \ref{s:Iwasawa partial symmetry}.

\subsection{Groups of H-type and the Iwasawa groups}

Let $\algn$ be a 2-step nilpotent Lie algebra equipped with a scalar product $<.,.>$ for which $\algn = V_1 \oplus V_2$-an orthogonal direct sum, $V_2$ is the center of $\algn$.  Consider the map $J:V_2\to End(V_1)$ defined by
\begin{equation}\label{J1}
<J(\xi_2)\xi_1',\xi_1''>\ =\ <\xi_2,[\xi_1',\xi_1'']>,\  \text{ for}\ \xi_2\in V_2\ \text{ and }\ \xi_1', \xi_1''\in V_1 .
\end{equation}
By definition we have that $J(\xi_2)$ is skew-symmetric. Adding the additional condition that it is actually an almost complex structure on $V_1$ when $\xi_2$ is of unit length  \cite{K1} motivates the next definitions.
A 2-step nilpotent Lie algebra $\algn$ is said to be of \emph{Heisenberg type} \index{Heisenberg type!algebra} if for every $\xi_2 \in V_2$, with $|\xi_2|=1$, the map $J(\xi_2):V_1\to V_1$ is orthogonal.
A simply connected connected Lie group $\bG$ is called of Heisenberg type (or H-type) \index{H-type group} if its Lie algebra $\algn$ is of Heisenberg type.
We shall  use the exponential coordinates and regard $G=exp\ \algn$, so that the product of two elements of $N$ is
\begin{equation}\label{e:H-type group product}
(\xi_1,\xi_2)\cdot (\xi'_1,\xi'_2)=(\xi_1+\xi'_1, \xi_2+\xi'_2+\frac 12[\xi_1,\xi'_1]),
\end{equation}
taking into account the Baker-Campbell-Hausdorff formula. Correspondingly we shall use $V_i$, $i=1,2$ to also denote the sub-bundle of left invariant vector fields
which coincides with the given $V_i$ at the identity element.
In \cite{K1} Kaplan found the explicit form of the fundamental solution of the sub-Laplacian on every group of H-type, where the sub-Laplacian is the operator
\begin{equation}\label{e:sub-laplacian H-type}
\lap\ =\  \underset {j=1}{\overset
{m}{\sum}}\,X_j^2,
\end{equation}
for vector fields $X_j$, $j=1, \dots,m$ which are an orthonormal basis of $V_1$.

On a group $N$ of Heisenberg type there is a very important homogeneous norm (gauge) \index{homogeneous!norm} given by \index{gauge}
\begin{equation}\label{Hgauge}
N(g)=\bigl( \abs{\xi_1(g)}^4+16\abs{\xi_2(g)}^2\bigr)^{1/4},
\end{equation}
which induces a left-invariant distance. Kaplan proved in \cite{K1} that in a group of Heisenberg type, in particular in every Iwasawa group, the fundamental solution $\Gamma$ of the sub-Laplacian $\mathcal{L}$, see \eqref{e:sub-laplacian H-type}, is given  by the formula
\begin{equation}\label{GammaH}
\Gamma(g,h)\ =\ C_Q\ N(h^{-1} g)^{-(Q-2)}, \hskip.7in g,h\in N, g\neq h,
\end{equation}
where $C_Q$ is a suitable constant.
\begin{rmrk}\label{r:gromov limit}
It is known that the distance induced by the gauge \eqref{Hgauge} is the Gromov limit of a one parameter family of Riemannian metrics on the group $N$ \cite{Ko2}, see also \cite{BRed} and \cite{CDPT07}.
\end{rmrk}
Kaplan and Putz \cite{KapPutz2}, see also [\cite{Ko2}, Proposition 1.1], observed that the nilpotent part $N$ in the Iwasawa \index{Iwasawa!decomposition} decomposition $\bG=NAK$ of every semisimple Lie group $\bG$ of real rank one is of Heisenberg type. We shall refer to such a group as \emph{Iwasawa group} \index{Iwasawa!group} and call the corresponding Lie algebra \emph{Iwasawa algebra}.

The Heisenberg type groups allowed for the generalization of many important concepts in harmonic analysis and geometry, see \cite{KapPutz2}, \cite{KapR}, \cite{Ko2}, \cite{DamRic2} and the references therein, in addition to the above cited papers.   Another milestone was achieved in \cite{CDKR}, which allowed {to circumvent}  the classification {of the }rank one symmetric spaces and  the heavy machinery of the semisimple Lie group theory, when studying the non-compact symmetric spaces of real rank one. Specifically, in \cite{CDKR} the authors considered the H-type algebras satisfying the so called $J^2$ condition defined in \cite{CDKR}, see also \cite{CDKR2}.
\begin{dfn}\label{d:J2 cond}
We say that the H-type algebra $\algn$ satisfies the $J^2$ condition \index{$J^2$ condition} if for every $\xi_2, \xi_2'\in V_2$ which are orthogonal to each other, $<\xi_2, \xi_2'>=0$, there exists $\xi_2''\in V_2$ such that
\begin{equation}\label{e:J2 cond}
J(\xi_2)J(\xi_2')=J(\xi_2'').
\end{equation}
\end{dfn}

A noteworthy result here is the following Theorem of \cite{CDKR}, see also \cite{Ciatti}, which can be used to show that if $N$ is an H-type group, then the  Riemannian space $S=NA$ is symmetric iff the Lie algebra $\algn$ of $N$ satisfies the $J^2$ condition, see [\cite{CDKR}, Theorem 6.1].
\begin{thrm}\label{t:Iwasawa is J2}
If $\algn$ is an H-type algebra satisfying the
$J^2$-condition, then  $\algn$ is an Iwasawa type algebra.
\end{thrm}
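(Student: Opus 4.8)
The plan is to use the $J^2$ condition to build a normed division algebra out of the centre $V_2$, to conclude from its classification that $\dim V_2\in\{0,1,3\}$, and finally to recognize $\algn$ as one of the (generalized) Heisenberg algebras that make up the nilpotent parts of the Iwasawa decompositions of the real rank one simple Lie groups. Throughout, fix an orthonormal basis $Z_1,\dots,Z_k$ of $V_2$ and write $J_a=J(Z_a)$. Polarizing the H-type identity $J(\xi)^2=-\abs{\xi}^2\mathrm{Id}_{V_1}$ gives the Clifford relations $J(\xi)J(\eta)+J(\eta)J(\xi)=-2\langle\xi,\eta\rangle\,\mathrm{Id}$; in particular each $J(\xi)$ is skew-adjoint, $\abs{J(\xi)v}=\abs{\xi}\abs{v}$ for all $\xi\in V_2$, $v\in V_1$, and $J\colon V_2\to\mathrm{End}(V_1)$ is injective.

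The main step is the construction of the algebra. For orthogonal $\xi,\eta\in V_2$ the operator $J(\xi)J(\eta)$ is skew-adjoint with square $-\mathrm{Id}$, so by the $J^2$ condition $J(\xi)J(\eta)=J(\xi'')$ for a unique $\xi''\in V_2$, with $\abs{\xi''}=\abs{\xi}\abs{\eta}$. Splitting a general product into its symmetric and skew parts and using the $J^2$ condition (together with the decomposition of $\eta$ along $\xi$ and $\xi^{\perp}$) one obtains, by injectivity and linearity of $J$, a bilinear map $\rho\colon V_2\times V_2\to V_2$ with
\[
J(\xi)J(\eta)=-\langle\xi,\eta\rangle\,\mathrm{Id}+J\big(\rho(\xi,\eta)\big)\qquad\text{for all }\xi,\eta\in V_2 .
\]
Equip $\mathbb A:=\R\cdot 1\oplus V_2$ with the bilinear product for which $1$ is a unit and $\xi\cdot\eta:=-\langle\xi,\eta\rangle\,1+\rho(\xi,\eta)$, with the conjugation $\overline{t1+\xi}:=t1-\xi$ and the norm $\abs{t1+\xi}^2:=t^2+\abs{\xi}^2$. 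The linear extension $\widetilde J\colon\mathbb A\to\mathrm{End}(V_1)$, $\widetilde J(t1+\xi):=t\,\mathrm{Id}+J(\xi)$, then satisfies $\widetilde J(xy)=\widetilde J(x)\widetilde J(y)$ and $\widetilde J(\bar x)=\widetilde J(x)^{*}$; since $\bar x x=\abs{x}^2\,1$ this gives $\widetilde J(x)^{*}\widetilde J(x)=\abs{x}^2\mathrm{Id}$, hence $\abs{\widetilde J(x)v}=\abs{x}\abs{v}$, and applying this in $\widetilde J(xy)=\widetilde J(x)\widetilde J(y)$ yields $\abs{xy}=\abs{x}\abs{y}$. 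Thus $\mathbb A$ is a finite-dimensional real unital normed algebra, hence has no zero divisors and is a division algebra; moreover $\widetilde J$ embeds $\mathbb A$ into the associative algebra $\mathrm{End}(V_1)$, so $\mathbb A$ is associative, and by Frobenius' theorem $\mathbb A\in\{\R,\mathbb C,\mathbb H\}$. Equivalently $k=\dim V_2\in\{0,1,3\}$.

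It remains to identify $\algn$. Setting $x\cdot v:=\widetilde J(x)v$ makes $V_1$ a left $\mathbb A$-module whose inner product is $\mathbb A$-Hermitian (because $\abs{x\cdot v}=\abs{x}\abs{v}$); hence $V_1$ is isometric to $\mathbb A^n$ with $n=\dim_{\R}V_1/\dim_{\R}\mathbb A$. Conversely, the defining relation $\langle J(\xi)u,v\rangle=\langle\xi,[u,v]\rangle$ expresses the bracket of $\algn$ through the module structure and produces an isometric isomorphism of metric Lie algebras between $(\algn,\langle\cdot,\cdot\rangle)$ and the generalized Heisenberg algebra $\mathbb A^n\oplus\mathrm{Im}\,\mathbb A$ with its standard bracket and inner product. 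Finally, one recalls that $\R^n$, the Heisenberg algebra $\mathbb C^n\oplus\R$ and the quaternionic Heisenberg algebra $\mathbb H^n\oplus\R^3$ are precisely the nilpotent factors $N$ in the Iwasawa decompositions $\bG=NAK$ of $SO(n+1,1)$, $SU(n+1,1)$ and $Sp(n+1,1)$ — the Kaplan--Putz observation recalled above (see \cite{KapPutz2}) together with the classification of real rank one symmetric spaces. Therefore $\algn$ is an Iwasawa type algebra.

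The delicate point is the first step: converting the pointwise form of the $J^2$ condition (one orthogonal pair at a time) into a single bilinear unital product on $\R\oplus V_2$, and checking the multiplicativity $\abs{xy}=\abs{x}\abs{y}$ without circular reasoning — this rests on the H-type normalization $\abs{J(\xi)v}=\abs{\xi}\abs{v}$, the skew-adjointness of the $J(\xi)$, and the injectivity of $J$. Once $\mathbb A$ is known to be a normed (associative, division) algebra, the module identification $V_1\cong\mathbb A^n$ and the recognition of $\mathbb A^n\oplus\mathrm{Im}\,\mathbb A$ as an Iwasawa algebra are routine.
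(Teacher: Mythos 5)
The paper states this theorem without proof, quoting \cite{CDKR} and \cite{Ciatti}, so your argument has to be measured against the result those papers actually establish. Read against the literal wording of Definition \ref{d:J2 cond}, your construction is internally coherent: the operator identity $J(\xi)J(\eta)=-\langle\xi,\eta\rangle\,\mathrm{Id}+J(\rho(\xi,\eta))$ does produce a bilinear $\rho$, the subspace $\mathbb{R}\,\mathrm{Id}\oplus J(V_2)$ is then closed under composition inside the associative algebra $\mathrm{End}(V_1)$, every nonzero element of it is a scalar multiple of an orthogonal map, Frobenius forces $\dim V_2\in\{0,1,3\}$, and the identification of $\algn$ with $\mathbb{R}^m$, $\mathbb{C}^n\oplus\mathbb{R}$ or $\mathbb{H}^n\oplus\mathrm{Im}\,\mathbb{H}$ is routine from there.

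But that conclusion is a red flag rather than a success: it excludes the octonionic Heisenberg algebra $\mathbb{O}\oplus\mathrm{Im}\,\mathbb{O}$ (with $\dim V_2=7$), which is the nilpotent Iwasawa factor of $F_{4(-20)}$ and is precisely the example the $J^2$ condition was invented to capture; if your argument were the right one, the ``$S=NA$ symmetric iff $J^2$'' equivalence quoted just before the theorem would fail for the octonionic hyperbolic plane. The explanation is that the $J^2$ condition of \cite{CDKR} is \emph{pointwise}: for every $v\in V_1$ and every orthogonal pair $\xi,\xi'\in V_2$ there exists $\xi''\in V_2$, \emph{depending on $v$}, with $J(\xi)J(\xi')v=J(\xi'')v$. (The quantifier over $v$ has been dropped in Definition \ref{d:J2 cond}; as an operator identity the condition is strictly stronger and, as your own computation shows, can only hold when $\dim V_2\le 3$.) Under the correct hypothesis your first step collapses: for the octonions $J(\xi)J(\xi')$ is not equal to any single $J(\xi'')$ as an operator, because non-associativity gives $\xi(\xi'v)\neq(\xi\xi')v$ for generic $v$ even though for each fixed $v$ a suitable $\xi''(v)$ exists. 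Hence there is no bilinear $\rho$, no embedding of $\mathbb{R}\oplus V_2$ into the associative algebra $\mathrm{End}(V_1)$, and Frobenius is unavailable (at best one reaches a composition algebra and Hurwitz's theorem, and even that needs a different construction). The proofs in \cite{CDKR} and \cite{Ciatti} take a genuinely different route, working with the solvable extension $S=NA$ and identifying it with a rank-one symmetric space. So you have proved the implication for the condition as literally printed, but not the theorem that is actually being cited; the step you yourself single out as the delicate point --- passing from the pointwise statement to a single bilinear product --- is exactly where the intended argument cannot go through.
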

This fundamental result has many consequences, among them {it allows the}  unified proof of some classical results on symmetric spaces, in addition to some beautiful properties of extensions of the classical Cayley transform, inversion and Kelvin transform, which are of a particular importance for our goals.

From a geometric point of view,  the above Iwasawa groups can be seen as the nilpotent part in the Iwasawa decomposition of the isometry group of the non-compact symmetric spaces  $M$ of real rank one.   Such a space can be expressed as a homogeneous space  $G/K$ where $G$  is the identity component of the isometry group of $M$, i.e., one of the simple Lorentz groups $SO_o(n,1)$, $SU(n,1)$, $Sp(n,1)$ or $F_{4(-20)}$,
and $K$ is a maximal compact subgroup of $G$, see \cite{Helgason}, namely, $K = SO(n)$, $SU(n)$, $Sp(n)Sp(1)$, or $Spin(9)$, respectively, see for example \cite[Theorem 8.12.2]{Wolf} or \cite{Helgason}. Thus $M=H^n_\mathbb{K}$ is one of the hyperbolic spaces  over the real, complex, quaternion or Cayley (octonion) numbers, respectively. As well known, these spaces carry canonical
Riemannian metrics with sectional curvature $k= -1$ for $\mathbb{K}=\mathbb{R}$ and $-1 < k <
-1/4$ in the remaining cases cases. Here, $\mathbb{K}$ denotes one of the real division algebras: the real numbers $\mathbb{R}$, the complex numbers $\mathbb{C}$, the quaternions $\mathbb{H}$, or the octonions $\mathbb{O}$.

Writing $G=NAK$ and letting $S=NA$, $A$-one-dimensional Abelian subalgebra, we have that $S$ is a closed subgroup of $G$, which is isometric with the hyperbolic space \index{hyperbolic space} $M$, thus  giving the corresponding hyperbolic space \index{hyperbolic space!Lie group structure} a Lie group structure.  The nilpotent part $N$ is isometrically isomorphic to $\mathbb{R}^n$ in the degenerate case when the Iwasawa group is Abelian or to one of the Heisenberg groups  $\QK \ =\ \Kn\times\text {Im}\, \mathbb{K}$ \index{Heisenberg group $\QK$} with the group law
given by
\begin{equation}\label{e:H-type Iwasawa groups}
  (q_o, \omega_o)\circ(q, \omega)\ =\ (q_o\ +\ q, \omega\ +\ \omega_o\ + \ 2\ \text
{Im}\  q_o\, \bar q),
\end{equation}
where $q,\ q_o\in\Kn$ and $\omega, \omega_o\in \text {Im}\, \mathbb{K}$.  In particular, in the non-Euclidean case the Lie algebra $\algn$ of $N$ has center of dimension $\dim V_2=1$, $3$, or $7$.

Iwasawa groups are distinguished also by the properties of the sphere product $S_1(R_1)\times S_2(R_2)$, where, for $j=1,2$, $S_j(R_j)$ is the  sphere of radius $R_j$ in $V_j$-the two layers of the 2-step nilpotent Lie algebra.  In fact, for a group of Iwasawa type the Kostant double-transitivity theorem \index{Kostant double-transitivity theorem} shows that the action of $A(N)$ is transitive, where as before $A(N)$ stands for the orthogonal automorphisms of $N$, see \cite[Proposition 6.1]{CDKR2}.  This fact points to the importance of the bi-radial or cylindrically symmetric functions. Notice that both the fundamental solution  of the sub-Laplacian and the known solutions  of the Yamabe equation have such symmetry, see \eqref{GammaH} and \eqref{e:FS extremal fns}.

Motivated by the way the Iwasawa type groups appear as "boundaries" of the hyperbolic spaces,  Damek \cite{Dam2} introduced a generalization of the hyperbolic spaces as follows. For a group $N$ of H-type consider a semidirect product with a one dimensional Abelian group, i.e., take the multiplicative group $A=\mathbb{R}^+$ acting on an H-type group by dilations given in exponential coordinates by the formula $\della(\xi_1,\xi_2)=(a^{1/2}\xi_1, a \xi_2)$ and define $S=NA$ as the corresponding semidirect product. Thus, the Lie algebra of $S$ is  $\algs = V_1\oplus V_2\oplus \alga$, $\algn=V_1\oplus V_2$, with the bracket  extending the one on $\algn$ by adding the rules
\begin{equation}\label{e:solvable extension bracket}
[\zeta,\xi_1 ] = \frac 12\xi_1, \quad [\zeta,\xi_2]=\xi_2\quad \xi_i\in V_i,
\end{equation}
where $\zeta$ is a unit vector in $\alga$, so that $S$ is the connected simply connected Lie group with Lie algebra $\algs$.
 In the coordinates $(\xi_1, \xi_2, a)=\exp (\xi_1+\xi_2) \exp (\log a\zeta)$, $a>0$, which parameterize $S=exp\, \algs$, the product rule of $S$ is given by the formula
\begin{equation}\label{e:product on S}
(\xi_1,\xi_2,a)\cdot (\xi'_1,\xi'_2,a')=(\xi_1+a^{1/2}\xi'_1,\, \xi_2+a\xi'_2+\frac 12a^{1/2}[\xi_1,\xi'_1],\, aa'),
\end{equation}
for all $(\xi_1,\xi_2,a),\,  (\xi'_1,\xi'_2,a')\in \algn\times \mathbb{R}^+$.
Notice that $S$ is a solvable group. We equip the Lie algebra $\algs$ with the inner product
\begin{equation}\label{e:metric on S}
<(\xi_1, \xi_2, a), (\tilde\xi_1, \tilde\xi_2, \tilde a)>=<(\xi_1, \xi_2), (\tilde\xi_1, \tilde\xi_2)> + a\tilde a
\end{equation}
 using the fixed inner product on $\algn$ and then define a corresponding translation invariant Riemannian metric on $S$. The main result of \cite{Dam2} is that the group of isometries $Isom(S)$ of $S$ is as small as it can be and equals $A(S)\ltimes S$ with $S$ acting by left translations, unless $N$ is one of the Heisenberg groups \eqref{e:H-type Iwasawa groups}, i.e., $S$ is one of the classical hyperbolic spaces. Here, $A(S)$ denotes the group of automorphisms of $S$ (or $\algs$) that preserve the  left-invariant metric on $S$. The spaces constructed in this manner became known as Damek-Ricci \index{Damek-Ricci space} spaces, see \cite{BeTrVa} for more details. It was shown in \cite{DamRic}  that the just described solvable extension \index{solvable extension} of H-type groups, which are not of Iwasawa type, provide noncompact counter-examples to a conjecture of Lichnerowicz, which asserted that harmonic Riemannian spaces \index{harmonic Riemannian spaces} must be rank one symmetric spaces.

\subsection{The Cayley transform}\label{ss:cayley tranform H-type}
 In this section we focus on the Cayley transform, of which we shall make extensive use  later.  Here, we give the well known abstract definition valid in the setting of groups of H-type. Other explicit formulas will be given in the CR and QC cases in Sections \ref{ss:CR geometry} and \ref{ss:qc sphere}. Starting from an $H$-type group, its solvable extension $S$ defined above has  the following  realizations, \cite{DamRic2}, \cite{CDKR} and \cite{CDKR2}.

First, consider the "Siegel domain"  or an upper-half plane model of the hyperbolic space
\begin{equation}\label{e:siegel model}
D=\{ p=(\xi_1,\xi_2,a)\in \algs=V_1\oplus V_2\oplus \alga: \ a>\frac 14|\xi_1|^2\}.
\end{equation}
Consider the map $\Theta:S\rightarrow S$,
\begin{equation}\label{e:Theta}
\Theta(\xi_1,\xi_2, a) = (\xi_1,\xi_2, a+\frac 14|\xi_1|^2),
\end{equation}
which is injective map of $S$ into itself. Here we use $a$ to denote the element $a\zeta\in A$, $\zeta$ defined after \eqref{e:solvable extension bracket}, and we regard $D$ as a subset of $S$ using the exponential coordinates. Thus, the group $S$ acts simply transitively on $D$ by conjugating left
multiplication in the group $S$ by $\Theta$, $s\cdot p=\Theta s\cdot(\Theta^{-1}p)$ for $s\in S$ and $p\in D$, while $N$ acts simply transitively on the level sets of $h=a-\frac 14|\xi_1|^2$. In particular, we can define an invariant metric on $D$ by pulling via $\Theta$
the left-invariant metric \eqref{e:metric on S} of $S$  to $D$, thus making $\Theta$ an isometry, cf. [\cite{CDKR2}, (3.3)].

Second,  there is the "ball" model of $S$,
\begin{equation}\label{e:ball model}
B = \{ (\xi_1,\xi_2, a) \in   \algs=V_1\oplus V_2\oplus \alga: \ |\xi_1|^2+|\xi_2|^2+a^2< 1\},
\end{equation}
equipped with the metric obtained  from $D$ via the inverse of the so called \emph{Cayley transform} $\mathcal{C}:B\rightarrow D$ \index{Cayley transform} defined by $\mathcal{C}(\xi_1,\xi_2, a)=(\xi_1{'},\xi_2{'}, a{'} )$, where
\begin{equation}\label{e:cayley for H-type}
\begin{aligned}
& \xi_1'= \frac {2}{(1 - a)^2 + |\xi_2|^2}\, \left ( (1 - a)\xi_1 + J(\xi_2)\xi_1\right),\\
& \xi_2'= \frac {2}{(1 - a)^2 + |\xi_2|^2}\, \xi_2, \qquad a'=\frac {1-a^2 - |\xi_2|^2 }{(1 - a)^2 + |\xi_2|^2}.
\end{aligned}
\end{equation}
The inverse map $\mathcal{C}^{-1}:D\rightarrow B$  is given by $\mathcal{C}^{-1}(\xi_1{'},\xi_2{'}, a{'} )= (\xi_1,\xi_2, a)$, where
\begin{equation}\label{e:inverse cayley for H-type}
\begin{aligned}
& \xi_1= \frac {2}{(1 +a{'} )^2 + |\xi_2{'}|^2}\, \left ( (1 +a{'})\xi_1{'}  - J(\xi_2{'})\xi_1{'} \right),\\
& \xi_2 = \frac {2}{(1 +a{'} )^2 + |\xi_2{'}|^2}\, \xi_2{'}, \qquad a=\frac {-1+a{'} ^2 - |\xi_2{'} |^2}{(1 +a{'} )^2 + |\xi_2{'}|^2}.
\end{aligned}
\end{equation}
For other versions of the Cayley transform see [\cite{FarKor}, Chapter X].
The Jacobian of $\mathcal{C}$ and its determinant were computed in \cite{DamRic2}. The latter is given by the formula
$\det\mathcal{ C}' (\xi_1,\xi_2, a) = 2^{m+k+1}\left ((1-a)^2+|\xi_2|^2  \right )^{-(m+2k+2)/2}$,
where, as before, $m=\dim V_1$, $k=\dim V_2$.

It is very important and we shall make use of the fact that the Cayley transform can be extended by continuity to a bijection (denoted by the same letter!)
\begin{equation}\label{e:Cayley transform to Siegel bdry}
C:\partial B \setminus \{(0,0,1)\}\rightarrow \partial D,
\end{equation}
where $(0,0,1)$ (referred to as "$\zeta$" for short) is the point on the sphere where $\xi_1=\xi_2=0$ and the third component is $\zeta$ in agreement with out notation set after equation {\eqref{e:Theta}}.
The boundaries of the ball and Siegel domain models are, respectively,
\begin{equation}\label{e:bdry siegel model}
\Sigma\equiv\partial D=\{ p=(\xi_1',\xi_2',a')\in \algs=V_1\oplus V_2\oplus \alga: \ a'=\frac 14|\xi_1'|^2\}
\end{equation}
and
\begin{equation}\label{e:bdry ball model}
\partial B = \{ (\xi_1,\xi_2, a) \in   \algs=V_1\oplus V_2\oplus \alga: \ |\xi_1|^2+|\xi_2|^2+a^2= 1\},
\end{equation}
The group of Heisenberg type $N$ can be identified with $\Sigma$ via the map
\begin{equation}\label{e:identify bdry of siegel and H-group}
(\xi_1', \xi_2')\mapsto (\xi_1', \xi_2', \frac 14|\xi_1'|^2).
\end{equation}
With this identification we obtain the form of the Cayley transform (stereographic projection) identifying the sphere minus the point "$\zeta$" and the H-type group,
$\mathcal{C}:\partial B \setminus \{(0,0,1)\}\rightarrow N$\index{Cayley transform} defined by $\mathcal{C}(\xi_1,\xi_2, a)=(\xi_1{'},\xi_2{'})$, where
\begin{equation}\label{e:Cayley transform to H-type group}
\begin{aligned}
& \xi_1'= \frac {2}{(1 - a)^2 + |\xi_2|^2}\, \left ( (1 - a)\xi_1 + J(\xi_2)\xi_1\right),\\
& \xi_2'= \frac {2}{(1 - a)^2 + |\xi_2|^2}\, \xi_2.
\end{aligned}
\end{equation}
Later, we shall make use of this "boundary" Cayley transform in the case of the Heisenberg and quaternionic Heisenberg group in which place we shall give some other explicit formulas. In particular, we shall use that the Cayley transform is a pseudoconformal map in the CR case and  quaternionic contact conformal  transformation in the QC case.   The Cayley transform is also  a 1-quasiconformal map \cite{Banner}, see also \cite{ACD}. The definition of the "horizontal" space  in the tangent bundle of the sphere and the distance function on the sphere require a few more details for which we refer to \cite{CDKR2} and \cite{Banner}.  Multicontact maps and their rigidity in Carnot groups have been studied  in \cite{Pansu87}, \cite{Reimann01}, \cite{Ko05}, \cite{CdMKR02}, \cite{CdMKR05}, \cite{CC06}, \cite{dMO10}, \cite{Ot05}, \cite{Ot08}, \cite{OtWa11}.

 \subsection{Regularity of solutions to the Yamabe equation}\label{s:regularity}
 In order for the geometric analysis to proceed we need the next regularity result for the Euler-Lagrange equation associated to the problem of the optimal constant in \eqref{FS}.
 \begin{thrm}\label{t:harnack and holder regul}
Let $\Om$ be an open set in a Carnot group $\bG$. Suppose $u\in \domoOm$ is a weak solution to the equation
\begin{equation}\label{e:e7.10}
\sum_{i=1}^m X_i(|Xu|^{p-2}X_iu)\ =\ -\ V\ u^{p-1} \quad \quad \text{in} \quad \Om.
\end{equation}
\begin{enumerate}[a)]
\item If  $u\geq 0$ and
 $V\in L^t(\Om)$ for some  $t\ >\ \frac{Q}{p}$, then $u$ satisfies the Harnack inequality: for any Carnot-Carath\'edory (or gauge) ball $B_{R_0}(g_0)\subset\Om$ there exists a constant $C_0>0$ such that
\begin{equation}\label{e:harnack}
\esssup {B_R} \ u \leq C_0 \,\essinf {B_R} \ u,
\end{equation}
for any Carnot-Carath\'edory (or gauge) ball $B_{R}(g)$ such that $B_{4R}(g)\subset B_{R_0}(g_0)$.
\item If  $u\in \domoOm$ is a weak solution to \eqref{e:e7.10} and $V\in L^t(\Om)\cap L^{Q/p}(\Om)$, then $u\in \Gamma_\alpha (\Om) $ for some $0<\alpha<1$.
\item If $u\in\DoOm$ is a non-negative  solution of the Yamabe equation on the domain $\Om$,
\begin{equation}\label{e:Yamabe in sec regul}
\lap u \ = \ - u^{2^*-1},
\end{equation}
then either $u>0$ and $u\in \mathcal{C}^\infty\, (\Om)$ or $u\equiv 0$.
\end{enumerate}
\end{thrm}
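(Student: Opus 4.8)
The statement is a three-step regularity ladder, and the plan is to prove the parts in order, reducing (c) to (a)--(b) together with subelliptic Schauder estimates. In every step the Euclidean Sobolev inequality is replaced by the Folland--Stein inequality \eqref{FS}: it is what supplies the fixed gain in integrability that drives each iteration, and, since no symmetric rearrangement is available on $\bG$, the arguments must be run entirely through energy (Caccioppoli) estimates and covering/John--Nirenberg lemmas rather than through comparison functions. The doubling property of Haar measure on Carnot balls and the weak $(1,p)$-Poincar\'e inequality for the horizontal frame are used freely.

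\emph{Part (a).} First I would prove local boundedness by Moser iteration. Testing \eqref{e:e7.10} with $\phi=\eta^{p}\bar u^{1+\gamma}$, $\bar u=u+\epsilon$, $\eta$ a horizontal cutoff, $\gamma\ge 0$, gives a Caccioppoli inequality for $w=\eta\,\bar u^{(p+\gamma)/p}$; the lower-order term $\int V\bar u^{p-1+\gamma}\eta^{p}$ is estimated by H\"older with exponents $t,t'$ as $\|V\|_{L^{t}}\,\|w\|_{L^{pt'}}^{p}$, and the condition $t>Q/p$ is exactly the statement $pt'<p^{*}$, so $\|w\|_{L^{pt'}}$ interpolates between $\|w\|_{L^{p}}$ and the Folland--Stein-controlled $\|w\|_{L^{p^{*}}}$ and a small multiple of the latter is absorbed. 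Iterating over a geometric sequence of radii yields $\esssup{B_{R}}u$ bounded by a multiple of the $L^{p}$-average of $u$ on $B_{2R}$ (and similarly with any fixed positive exponent in place of $p$). For the infimum I would run the same scheme on negative powers of $\bar u$ and, from $\phi=\eta^{p}\bar u^{1-p}$, obtain the logarithmic (Moser) estimate showing $\log\bar u\in BMO$ on the relevant balls; a Bombieri--Giusti/John--Nirenberg lemma then lets one cross the exponent $0$, and combining the two halves gives \eqref{e:harnack}.

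\emph{Part (b).} With the Harnack machinery available, H\"older continuity is obtained from oscillation decay. Writing $M(r)=\esssup{B_{r}}u$, $m(r)=\essinf{B_{r}}u$ and applying the weak Harnack inequality from (a) to the nonnegative supersolutions $M(2r)-u$ and $u-m(2r)$ yields $\operatorname{osc}_{B_{r}}u\le\theta\,\operatorname{osc}_{B_{2r}}u+C\,\omega(2r)$ with $\theta\in(0,1)$, where $\omega(r)$ controls the contribution of the forcing term on $B_{r}$; the extra hypothesis $V\in L^{Q/p}(\Om)$ forces $\|V\|_{L^{Q/p}(B_{r})}\to0$ as $r\to0$, hence $\omega(r)\to0$, and the standard iteration of this inequality produces $\operatorname{osc}_{B_{r}}u\le Cr^{\alpha}$ for some $\alpha\in(0,1)$, i.e. $u\in\Gamma_{\alpha}(\Om)$. (Equivalently one may run De Giorgi's oscillation lemma directly, which avoids any appeal to the sign of $u$.)

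\emph{Part (c), and the main obstacle.} Here $p=2$ and the natural potential is $V=u^{2^{*}-2}=u^{4/(Q-2)}$; from $u\in\DoOm\subset L^{2^{*}}(\Om)$ one gets only $V\in L^{Q/2}(\Om)=L^{Q/p}(\Om)$, the borderline case \emph{not} covered by (a)--(b). Crossing this borderline is the hard part, and the plan is the Brezis--Kato/Trudinger truncation argument adapted to the Folland--Stein inequality: with $u_{L}=\min(u,L)$, test \eqref{e:Yamabe in sec regul} with $\phi=\eta^{2}\,u\,u_{L}^{2(\beta-1)}$, split $V=V\chi_{\{u\le N\}}+V\chi_{\{u>N\}}$, and note that $\|V\chi_{\{u>N\}}\|_{L^{Q/2}}$ can be made arbitrarily small by choosing $N$ large; the small piece is absorbed into the Folland--Stein term and the bounded piece produces a finite constant, upgrading $u\in L^{2^{*}}$ to $u\in L^{q}_{\mathrm{loc}}$ for some $q>2^{*}$ and, by iteration in $\beta$, to $u\in L^{q}_{\mathrm{loc}}$ for every $q<\infty$. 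Then $V=u^{2^{*}-2}\in L^{t}_{\mathrm{loc}}(\Om)$ for some $t>Q/2$, so (a) and (b) apply: $u$ is locally bounded and lies in $\Gamma_{\alpha}(\Om)$, and \eqref{e:harnack} forces the dichotomy $u>0$ on $\Om$ or $u\equiv0$ --- if $u(g_{0})=0$ at an interior point then the Harnack inequality on a small ball centered at $g_{0}$ gives $\esssup{}u=0$ there, so $\{u=0\}$ is open as well as closed and $\Om$ is connected. Finally, when $u>0$ it is bounded away from $0$ and $\infty$ on every compact subset, so $s\mapsto s^{2^{*}-1}$ is smooth on the range of $u$ and $u^{2^{*}-1}$ has the same local regularity as $u$; feeding $u^{2^{*}-1}\in\Gamma_{\alpha}$ into the subelliptic Schauder estimates for $\lap=\sum X_{i}^{2}$ (Folland's regularity theory on stratified groups) gives $u\in\Gamma^{2,\alpha}_{\mathrm{loc}}$, hence $u^{2^{*}-1}\in\Gamma^{2,\alpha}_{\mathrm{loc}}$, and a routine bootstrap up the scale of subelliptic H\"older spaces $\Gamma^{k,\alpha}$, combined with H\"ormander's condition on the horizontal frame, yields $u\in C^{\infty}(\Om)$. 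The genuinely delicate step is the borderline truncation in (c); the rest is the Carnot-group De Giorgi--Nash--Moser and Schauder machinery, where the only real novelty relative to the classical case is that rearrangement and pointwise comparison are systematically replaced by the doubling/Poincar\'e/Folland--Stein package.
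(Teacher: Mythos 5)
Your proposal is correct and follows essentially the same route as the paper, which proves the theorem by assembling exactly the ingredients you reconstruct: the Moser-type Harnack inequality and H\"older continuity for H\"ormander operators from Capogna--Danielli--Garofalo, the boundedness of the weak solution via the borderline truncation argument from \cite{Va1}, and the subelliptic Schauder bootstrap of Folland--Stein for the $\mathcal{C}^\infty$ conclusion. The only difference is presentational: the paper cites these results rather than rederiving them, while you sketch the Moser iteration, oscillation decay, and Brezis--Kato step explicitly, correctly identifying the borderline case $V=u^{2^*-2}\in L^{Q/2}$ as the one genuinely delicate point.
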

The H\"older regularity of weak solutions of equation \eqref{e:e7.10} follows from a suitable adaptation of the classical De Giorgi-Nash-Moser result. The higher regularity when $p=2$ follows by an iteration argument based on  sub-elliptic regularity. A detailed proof of Theorem \ref{t:harnack and holder regul} can be found in \cite[Theorem 1.6.9]{IV2}. It is simply a combination of the fundamental  Harnack's inequality of \cite[Theorem  3.1]{CDG1} (valid for H\"ormander type operators), the boundedness of the weak solution  \cite[Theorem 4.1]{Va1}, the regularity of \cite[Theorem 3.35]{CDG1}, and the sub-elliptic regularity result concerning H\"ormander type operators acting on non-isotropic Sobolev or Lipschitz spaces of \cite{FS,F2}, see also \cite{F'77} for a general overview and further details. Note {that} these results together with the idea of \cite{FS} to "osculate" with the Heisenberg group carry over to obtain $\mathcal{C}^\infty$ regularity  in the CR and QC settings, see  \cite{JL1,JL2} and \cite{Wei} for details.

\subsection{Solution of the Yamabe type equation with partial symmetry}\label{s:Iwasawa partial symmetry}

By Theorem \ref{t:harnack and holder regul} any weak solution of the Yamabe equation is actually a smooth bounded function which is everywhere strictly positive, $u>0$ and $u\in \mathcal{C}^\infty\, (\Om)$.
The symmetries we are concerned are the following.
\begin{dfn}\label{D:symm}
 Let $\bG$ be a Carnot group of step two with Lie algebra $\algg = V_1 \oplus V_2$. We say that a function $U:\bG\to \mathbb R$ has \emph{partial  symmetry} \index{partial  symmetry} (with respect to a point $g_o\in \bG$) if there exists a function $u:[0,\infty)\times V_2 \to \mathbb R$ such that for every $g = \exp(x(g) + y(g)) \in \bG$ one has
$\tau_{g_o}\ U (g)=  u(|x(g)|, y(g)).$
 A function $U$ is said to have \emph{cylindrical symmetry} \index{cylindrical symmetry} (with respect to $g_o\in \bG$) if there exists $\phi:[0,\infty)\times [0,\infty) \to \mathbb R$ for which
$\tau_{g_o}\ U(g)=  \phi(|x(g)|, |y(g)|),$
for every $g\in \bG$.
\end{dfn}
 The proof of Theorem \ref{T:CS}  due to \cite{GV} consists of  two steps, first one shows that any entire solution with partial symmetry  is cylindrically symmetric and then  that all entire solutions with cylindrical symmetries.

 The proof of the first result relies {on} an adaption of   the  method of moving hyper-planes due to Alexandrov \cite{Al} and Serrin \cite{S4}. The moving plane technique was developed further in the two celebrated papers \cite{GNN}, \cite{GNN2} by Gidas, Ni and Nirenberg to obtain symmetry for semi-linear equations with critical growth in $\mathbb R^n$ or in a ball. {The proof of \cite{GV}} incorporate{s also} some important simplification of the proof in \cite{GNN2} due to Chen and Li \cite{CL}. We mention that a crucial role is played by the knowledge of the explicit solutions \eqref{e:FS extremal fns} and also by the inversion and the related Kelvin transform introduced by Kor\'anyi for the Heisenberg group \cite{Ko1}, and subsequently generalized to groups of Heisenberg type in \cite{CK}, \cite{CDKR}, see also \cite{GV2} for properties of the Kelvin transform.

 The proof of the second main result has been strongly influenced by the approach of Jerison and Lee for the Heisenberg group, see Theorem 7.8 in \cite{JL2}. After a change in the dependent variable, which relates the Yamabe equation to a new non-linear pde in a quadrant of the Poincar\'e half-plane, one is led to prove that the only positive solutions of the latter are quadratic polynomials of a certain type.

Besides {ideas from} Jerison and Lee's paper, the proof {of Theorem \ref{T:CS}  in \cite{GV}} has  some features of the method of the so-called $P$-\emph{functions} introduced by Weinberger in \cite{W}. Given a solution $u$ of a certain partial differential equation, such method is based on the construction of a suitable non-linear function of $u$ and $grad\ u$,  a $P$-function, which is itself solution (or sub-solution) to a related partial differential equation, and therefore satisfies a maximum principle. In fact, starting with a cylindrical solution $U$ of the Yamabe equation \ref{Yamabeomegasymm}, the function $\phi = v^{-4/(Q-2)}$ where $v=\left(\frac{Q-2}{4}\right)^{-(Q-2)/2}U$ satisfies
\begin{equation}\label{YPhi}
\mathcal L \phi= (\frac{Q-2}{4} + 1)\ \frac{|X\phi|^2}{\phi}+ \frac{Q-2}{4}.
\end{equation}
By the cylindrical symmetry assumption $\Phi$ is a function of the variables \begin{equation}\label{coord}
y= \frac{|\xi_1|^2}{4}, \quad\quad x= |xi_2|,
\end{equation}
which satisfies the equation
\begin{equation}\label{Yfinal}
\Delta \phi= \frac{n + 2}{2}\ \frac{|\nabla \phi|^2}{\phi}- \frac{a}{x}\ \phi_x- \frac{b}{y}\ \phi_y+ \frac{n}{2 y} ,
\end{equation}
in $\Om = \{(x,y)\in \mathbb R^2 \mid x>0, y>0 \}$ with
$a= k - 1\ \geq\ 0$, $b= \frac{m}{2}\ \geq\ 1$ and $ n= a + b\ \geq 1$.  The case $k=1$ corresponds to the Heisenberg group $\Hn$, and it was considered earlier in \cite{JL2}.
A long calculation shows that with $h = x^a y^b \phi^{-(n+1)}$, $$F=2<\nabla \phi, \nabla \phi_x>- 2\frac{n}{2b}\ \phi_{xy}-\phi_x\ \frac{|\nabla \phi|^2}{\phi}\quad \text{ and }\quad G=- 2<\nabla \phi, \nabla \phi_y>+ 2 \frac{n}{2b}\ \phi_{yy}+(\phi_y - \delta)\ \frac{|\nabla \phi|^2}{\phi},$$ the following identity holds true
\begin{multline*}
 (h F)_x- (h G)_y =\ h\ \bigg\{\bigg[ 2\ ||\nabla^2 \phi||^2- (\Delta \phi)^2 \bigg]
 +\ \frac{n+2}{n}\ \left(\Delta \phi- \frac{|\nabla \phi|^2}{\phi} \right)^2
+ \frac{2ab}{n}\ \bigg(\frac{\phi_x}{x} - (\frac{\phi_y}{y}- \frac{n}{2by}) \bigg)^2\bigg\}.
\end{multline*}
An integration over the first quadrant, noting that the integrals are finite as a consequence of the properties of the Kelvin transform on a group of Iwasawa type, we obtain
\begin{equation}\label{positivity}
2\ ||\nabla^2 \phi||^2= (\Delta \phi)^2, \quad \Delta \phi- \frac{|\nabla \phi|^2}{\phi}= 0, \quad \frac{\phi_x}{x}= \frac{\phi_y}{y}- \frac{n}{2by} .
\end{equation}
We remark that  the Kelvin transform allows us to find the asymptotic behavior of every  solution of the Yamabe equation, including all its derivatives. The behaviour at infinity of a finite energy solution can be found in more general settings with the method of \cite{LU}.
From the first two equations in \eqref{positivity}  we conclude  (see, e.g., \cite{W} or also \cite{JL2}) that $\phi$ must be of form
\begin{equation}\label{sym1}
\phi(x,y)= A^2\ (x^2+ y^2)+ 2 A \alpha x+ 2 B \beta y+ \alpha^2+ \beta^2
\end{equation}
for some numbers $A, B, \alpha$ and $\beta$, with $A^2 = B^2$. On the other hand, the third equation in \eqref{positivity} implies that $\alpha= 0$ and $ \beta= \frac{n}{4 b B}$.
Recalling that $x = |\xi_2|, y = |\xi_1|^2/4$ one easily concludes from the above that
\begin{equation}\label{JL}
\phi(|\xi_1|,|\xi_2|)= \frac{A^2}{16}\ \left[(\frac{a + b}{b A^2} + |\xi_1|^2)^2+ 16 |\xi_2|^2 \right]
\end{equation}
for some $A \not = 0$, hence
\begin{equation}
\phi(|\xi_1|,|\xi_2|)= \frac{Q-2}{16 m \epsilon^2}\ [(\epsilon^2 + |\xi_1|^2)^2+ 16 |\xi_2|^2]
\end{equation}
where $\epsilon^2 = \frac{Q-2}{m A}$. Finally,  the relation between $\Phi$ and $U$, we obtain
\[
U(g)= C_\epsilon\ ((\epsilon^2 + |x(g)|^2)^2+ 16 |y(g)|^2)^{-(Q-2)/4},
\]
with $C_\epsilon = [m(Q-2)\epsilon^2]^{(Q-2)/4}$. All other cylindrically symmetric solutions are obtained from this one by left-translation, which completes the proof of Theorem \ref{T:CS}.

We remark that \eqref{Yfinal} was used in \cite{V11}, see also \cite{MFS}, to establish the sharp constant and the extremals in a $L^2$ Hardy-Sobolev inequality involving distance to a lower dimensional subspace.

\subsection{The  best constant in the $L^2$ Folland-Stein inequality on the quaternionic Heisenberg groups}
In this section we explain the ideas behind the proof of Theorem \ref{T:Iwasawa groups Minimizers}.

 The proof relies on the  realization made in \cite{BFM} and used more recently in \cite{FrLi}  that the "center of mass" idea of Szeg\"o  \cite{Sz} and Hersch \cite{He}  can
be used to find the sharp form of (logarithmic) Hardy-Littlewood-Sobolev type inequalities on the Heisenberg group. This method does not give all solutions of the  Yamabe equation on the Iwasawa group, but is enough to determine the best  constant.

 The Cayley transform and the conformal nature of the problem are crucial for its solution. Another key is  Theorem~\ref{t:first eigenspace Iwasawa}  which will be used to see that the constants are the only minimizers on the sphere among all positive local minimizers which viewed as densities place the center of mass of the sphere at the origin. \emph{We shall focus here on the qc case \cite{IMV2} but the argument is valid in any of the groups of Iwasawa type using the just mentioned facts{, see also \cite{ChLZh1,ChLZh2}}.}

Let $\tileta$, cf. \eqref{e:stand cont form on S}, be the standard qc structure on the unit sphere $S^{4n+3}$. Szeg\"o and Hersch's center of mass method suggests  the following lemma.
\begin{lemma}\label{l:hersch}
For every $v\in L^1(S^{4n+3})$ with $\int_{S^{4n+3}} v \ Vol_{\tilde\eta}=1$ there is a quaternionic contact
conformal transformation $\psi:(S^{4n+3}, \tilde\eta)\rightarrow (S^{4n+3}, \tilde\eta)$ such that $$\int_{S^{4n+3}} \psi\, v \ Vol_{\tilde\eta} =0.$$
\end{lemma}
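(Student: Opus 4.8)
The statement is the quaternionic-contact analogue of the classical Hersch center-of-mass balancing trick, so the plan is to follow Hersch's topological argument adapted to the conformal group of the round qc sphere. The first step is to recall the structure of the qc-conformal group of $(S^{4n+3},\tilde\eta)$: via the Cayley transform of Section \ref{ss:cayley tranform H-type} one identifies $S^{4n+3}$ minus a point with the quaternionic Heisenberg group $\QH$, and the qc-conformal transformations correspond to compositions of left translations, dilations $\delta_\lambda$, and the (qc-conformal) inversion, together with the isometries of the sphere. In particular, for each point $\xi\in B^{4n+4}$ (the open unit ball in $\mathbb{R}^{4n+4}$) there is a canonical qc-conformal "translation-like" transformation $\psi_\xi$ of the sphere, with $\psi_0=\mathrm{id}$, depending continuously on $\xi$, and such that as $\xi\to\zeta\in S^{4n+3}=\partial B^{4n+4}$ the maps $\psi_\xi$ concentrate: for any fixed $v\in L^1(S^{4n+3})$ with $v\,Vol_{\tilde\eta}$ a probability measure, $\psi_\xi$ pushes this measure toward the Dirac mass at $-\zeta$ (or $\zeta$, depending on the convention), so $\int_{S^{4n+3}}\psi_\xi\, v\ Vol_{\tilde\eta}\to -\zeta$ (or $\zeta$) as $\xi\to\zeta$.

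The second step is the degree-theoretic argument. Define the vector field $V:B^{4n+4}\to \mathbb{R}^{4n+4}$ by
\[
V(\xi)=\int_{S^{4n+3}} \psi_\xi\, v\ Vol_{\tilde\eta},
\]
viewing the sphere as sitting in $\mathbb{R}^{4n+4}$. This is continuous on the open ball, and by the concentration property of the previous paragraph it extends continuously to $\overline{B^{4n+4}}$ with $V|_{\partial B^{4n+4}}$ equal to the identity (or the antipodal map) of the sphere. Since a continuous map of the closed ball to itself that restricts to a map of nonzero degree on the boundary must be surjective, $V$ vanishes at some interior point $\xi_0$, and $\psi=\psi_{\xi_0}$ is the desired qc-conformal transformation. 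Equivalently one invokes Brouwer's theorem on the map $\xi\mapsto \xi - V(\xi)$ (after adjusting signs) or a standard no-retraction argument.

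The third step is to verify the boundary behaviour carefully, which is where the real work lies. One must check (a) that $\psi_\xi$ genuinely depends continuously on $\xi$, including uniformly up to the boundary in an appropriate weak sense, and (b) the concentration limit: that for $v\in L^1$, conjugating $v\,Vol_{\tilde\eta}$ by the highly-dilated qc-conformal maps produces measures converging weakly to a point mass. Point (b) uses that the Jacobian of a qc-conformal map of the sphere, expressed through the Cayley transform and the gauge $N$ of \eqref{Hgauge}, concentrates near a point under large dilations $\delta_\lambda$, together with the fact that $L^1$ functions have no atoms so no mass "escapes" to the concentration point from the absolutely continuous part—this is exactly the standard argument that for $v\in L^1$ with unit integral, $\int \psi_\xi v\, Vol_{\tilde\eta}$ tends to the concentration point, not to some interior average. \textbf{The main obstacle} I anticipate is making this boundary/concentration analysis rigorous with merely $L^1$ density (as opposed to continuous $v$): one must rule out that part of the mass stays spread out, which follows from absolute continuity but requires a uniform-integrability style estimate on how $\psi_\xi$ distorts sets. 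Once the boundary map is identified and continuity up to $\partial B^{4n+4}$ is established, the topological conclusion is immediate.
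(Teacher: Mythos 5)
Your proposal is correct and takes essentially the same route as the paper: a family of dilation-type qc-conformal maps in Cayley charts parameterized by points $rP$ of the closed ball, the center-of-mass map $\Psi(rP)=\int_{S^{4n+3}}\psi_{1-r,P}\,v\ Vol_{\tilde\eta}$, and the no-retraction/Brouwer argument once $\Psi$ extends continuously to the closed ball with $\Psi=\mathrm{id}$ on the boundary. The only remark is that the boundary concentration step you single out as the main obstacle is simpler than you fear: the integrand is $\psi_\xi(Q)v(Q)$ with $|\psi_\xi(Q)|\le 1$ and $\psi_\xi\to P$ pointwise away from the antipodal point, so dominated convergence against the fixed $L^1$ density $v$ already yields $\int_{S^{4n+3}}\psi_\xi\, v\ Vol_{\tilde\eta}\to P$, with no Jacobian, distortion, or uniform-integrability estimates required.
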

\begin{proof}
 Fix a point $P\in S^{4n+3}$ on the quaternionic sphere and denote by $N$  its antipodal point and consider
the local coordinate system near $P$ defined by the Cayley transform $\mathcal{C}_N$ from $N$, see \eqref{e:QC Cayley}. We know that
$\mathcal{C}_N$ is a quaternionic contact conformal transformation between $ S^{4n+3}\setminus {N}$  and the
quaternionic Heisenberg group, cf. \eqref{e:Cayley transf ctct form}. Notice that in this coordinate system $P$ is mapped to the identity of the group.
For every $r$, $0<r<1$, let $\psi_{r,P}$ be the qc conformal transformation of the sphere, which in the fixed
coordinate chart is given on the group by a dilation  with center the identity by a factor $\delta_{r}$. If we
select a coordinate system in $\mathbb{R}^{4n+4}=\Hn\times\mathbb{H}$ so that $P=(1,0)$ and $N=(-1,0)$. Applying  the Cayley transform  \eqref{e:QC Cayley} to
$(q^*,p^*)=\psi_{r,P}(q,p)$ we have
\begin{equation*}
\begin{aligned}
q^* & =2r\left (  1 + r^2(1+p)^{-1} (1-p) \right )^{-1} \left ( 1+p \right ) q\\
p^* & =\left ( 1 +r^2(1+p)^{-1}(1-p) \right )^{-1} \left (  1-r^2(1+p)^{-1}(1-p) \right ), i.e,
\end{aligned}
\end{equation*}
Consider the map $\Psi: B\rightarrow \bar B$, where $B$  ( $\bar B$ ) is the open (closed) unit ball
in $\mathbb{R}^{4n+4}$, by the formula $$\Psi(rP)=\int_{S^{4n+3}} \psi_{1-r,P}\, v\  Vol_{\tilde\eta}.$$ Notice
that $\Psi$ can be continuously extended to $\bar B$ since for any point $P$ on the sphere, where $r=1$, we have
$\psi_{1-r,P}(Q)\rightarrow P$ when $r\rightarrow 1$. In particular, $\Psi=id$ on $ S^{4n+3}$. Since the sphere
is not a homotopy retract of the closed ball it follows that there are $r$ and $P\in S^{4n+3}$ such that
$\Psi(rP)=0$, i.e., $\int_{S^{4n+3}} \psi_{1-r,P}\,v\  Vol_{\tilde\eta}=0$. Thus, $ \psi=\psi_{1-r,P}$ has the
required property.
\end{proof}

In the  next step one proves that there is a minimizer of the Folland-Stein inequality which satisfies the
zero center of mass condition.  A number of well known invariance properties of the Yamabe functional are
exploited. For the rest of the Section, given a qc form $\eta$ and a function $u$ we will denote by $\nabla^{\eta}u$ the horizontal gradient of $u$.

We shall call a (positive) function $u$ on the sphere a \textit{well centered} function when viewing $u^{2^*}$  as a density it places the center of mass of the sphere at the origin, i.e.,
\begin{equation}\label{e:zero mass}
\int_{S^{4n+3}} P\, u^{2^*}(P) \, Vol_{\tilde\eta}=0, \qquad P\in \mathbb{R}^{4n+4}=\Hn\times\mathbb{H}.
\end{equation}
For the next Lemma recall the functionals $\mathcal{E}_{\tileta}$ and $\mathcal{N}_{\tileta}$ introduced in \eqref{e:E and N  functional}.
\begin{lemma}\label{l:zero mass is enough}
Let $v$ be a smooth positive function on the sphere with $\mathcal{N}_{\tileta}(u) =1$. There is a well centered smooth positive function $u$ such that $\mathcal{E}_{\tileta}(u)=\mathcal{E}_{\tileta}(v)$
and $\mathcal{N}_{\tileta}(u) =1$.
In particular, the Yamabe constant \eqref{e:Yamabe constant Iwasawa}
is achieved for a positive function $u$ which is well centered, i.e., for a function $u$ satisfying \eqref{e:zero mass}.
\end{lemma}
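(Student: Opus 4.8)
The plan is to obtain the lemma purely from Hersch's center-of-mass Lemma~\ref{l:hersch} together with the conformal invariance of the qc Yamabe functional, so that essentially no new analysis is needed. Recall that a quaternionic contact conformal diffeomorphism $\psi$ of $(S^{4n+3},\tileta)$ acts on smooth positive functions by $w\mapsto w_\psi:=h_\psi\cdot(w\circ\psi)$, where $h_\psi>0$ is the smooth function determined from the conformal factor of $\psi$ by the normalization $h_\psi^{2^*}\,Vol_{\tileta}=\psi^*(Vol_{\tileta})$. By the conformal covariance of the qc Yamabe operator both $\mathcal{E}_{\tileta}$ and $\mathcal{N}_{\tileta}$ are invariant under this action; moreover $w_\psi^{2^*}\,Vol_{\tileta}=\psi^*\bigl(w^{2^*}\,Vol_{\tileta}\bigr)$, whence, by the diffeomorphism invariance of the integral, for every scalar- or $\mathbb{R}^{4n+4}$-valued function $f$ on the sphere
\begin{equation*}
\int_{S^{4n+3}} f(P)\,(w_\psi)^{2^*}(P)\,Vol_{\tileta}\;=\;\int_{S^{4n+3}} (f\circ\psi^{-1})(Q)\,w^{2^*}(Q)\,Vol_{\tileta}.
\end{equation*}

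So, given $v$ smooth and positive with $\mathcal{N}_{\tileta}(v)=1$, I would apply Lemma~\ref{l:hersch} to the density $v^{2^*}$ (rescaled to total mass $1$ with respect to $Vol_{\tileta}$ if necessary); clearing the normalizing constant, this produces a qc conformal transformation $\psi_0$ of the sphere with $\int_{S^{4n+3}}\psi_0(Q)\,v^{2^*}(Q)\,Vol_{\tileta}=0$. Since the qc conformal transformations form a group, $\psi_0^{-1}$ is again one, and I set $u:=v_{\psi_0^{-1}}=h_{\psi_0^{-1}}\cdot(v\circ\psi_0^{-1})$. Taking $f=\mathrm{id}_{S^{4n+3}}$ (the position vector $P\mapsto P$) and $\psi=\psi_0^{-1}$ in the displayed identity gives
\[
\int_{S^{4n+3}} P\,u^{2^*}(P)\,Vol_{\tileta}=\int_{S^{4n+3}}\psi_0(Q)\,v^{2^*}(Q)\,Vol_{\tileta}=0,
\]
so $u$ is well centered in the sense of \eqref{e:zero mass}. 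Furthermore $u$ is smooth and strictly positive, since $\psi_0^{-1}$ is a diffeomorphism of the entire closed sphere --- the maps $\psi_{r,P}$ used in Lemma~\ref{l:hersch} are built from group dilations, which extend smoothly across their two fixed poles just as $z\mapsto rz$ extends across $\infty$ on the Riemann sphere --- and $h_{\psi_0^{-1}}>0$ is smooth. Finally, invariance of the functionals gives $\mathcal{N}_{\tileta}(u)=\mathcal{N}_{\tileta}(v)=1$ and $\mathcal{E}_{\tileta}(u)=\mathcal{E}_{\tileta}(v)$, which is the first assertion.

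For the last statement, I would first exhibit a smooth positive minimizer on the sphere and then center it. By \cite{Va1}, equality in the Folland--Stein inequality \eqref{FS} with $p=2$ on $\bG=\QH$ is attained; transporting such an extremal to $S^{4n+3}$ through the Cayley transform \eqref{e:QC Cayley}, and using the conformal invariance of the Yamabe functional together with the regularity statement Theorem~\ref{t:harnack and holder regul} to see that the result is smooth and strictly positive, produces a $v$ realizing the qc Yamabe constant \eqref{e:Yamabe constant Iwasawa}. Applying the first part of the lemma to this $v$ then gives a well-centered positive minimizer $u$, as claimed.

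The topological input, Hersch's argument, is already available, so the substance of the proof --- and the main point to get right --- is the bookkeeping of the conformal action: that $w\mapsto w_\psi$ with the correct power of the conformal factor leaves $\mathcal{E}_{\tileta}$ and $\mathcal{N}_{\tileta}$ individually unchanged (this is where the conformal transformation laws for $Vol_{\tileta}$, for the horizontal gradient $\nabla^{\tileta}$ and for the qc scalar curvature enter), that this action intertwines the center-of-mass map $w\mapsto\int_{S^{4n+3}}P\,w^{2^*}\,Vol_{\tileta}$ with the natural action of the qc conformal group on $S^{4n+3}\subset\mathbb{R}^{4n+4}$, and that smoothness and positivity are preserved. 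All of this reduces to the known conformal transformation formulas for the standard qc structure together with the fact that the $\psi_{r,P}$ are honest diffeomorphisms of the closed sphere.
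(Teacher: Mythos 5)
Your proof is correct and follows essentially the same route as the paper: apply the Hersch-type centering Lemma~\ref{l:hersch} to the density $v^{2^*}$ and transport $v$ by the resulting qc conformal transformation with the conformal weight $u=\phi^{-1}(v\circ\psi^{-1})$, so that $u^{2^*}\,Vol_{\tileta}$ is the pushforward of $v^{2^*}\,Vol_{\tileta}$ and the conformal invariance of $\mathcal{E}_{\tileta}$ and $\mathcal{N}_{\tileta}$ (equation \eqref{e:conf Yamabe volume}) gives the conclusion. You merely spell out the change-of-variables bookkeeping and the existence of a minimizer (via \cite{Va1} and the Cayley transform) more explicitly than the paper does.
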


\begin{proof}
Given a positive function
$v$ on the sphere $\int_{S^{4n+3}}  v^{2^*}\, Vol_{\tilde\eta}=1$, consider the function
\begin{equation}\label{e:zero mass transform}
u=\phi^{-1}(v\circ
\psi^{-1}),
 \end{equation}
 where $\psi$ is the qc conformal map of Lemma \ref{l:hersch}, $\eta\equiv(\psi^{-1})^*\tilde\eta$, and $\phi$ is the corresponding conformal factor of $\psi$. The claim of the Lemma follows directly from the conformal invariance  \eqref{e:conf Yamabe volume}.
\end{proof}
The next step shows that a well centered minimizer has to be constant.
\begin{lemma}\label{l:const mnimizer}
If $u$ is a well centered local minimum of the problem \eqref{e:Yamabe constant Iwasawa} for $M=(S^{4n+3},\tileta)$, then $u\equiv const$.
\end{lemma}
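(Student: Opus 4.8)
The plan is to exploit that a well centered local minimizer satisfies the qc Yamabe equation while, being a local minimum, its second variation is non-negative, and to feed into that second variation the test functions built from the Euclidean coordinate functions of the sphere. The \emph{well centered} hypothesis \eqref{e:zero mass} is exactly what makes those functions admissible variations, and Theorem \ref{t:first eigenspace Iwasawa} supplies the sharp numerology that forces the horizontal gradient of $u$ to vanish. By Theorem \ref{t:harnack and holder regul} a nontrivial minimizer is smooth and strictly positive, so all manipulations below are legitimate. Writing $\mathcal E_{\tileta}(v)=\int_{S^{4n+3}}\big(b_n|\nabla^{\tileta}v|^2+S_{\tileta}\,v^2\big)\,Vol_{\tileta}$, where $b_n$ is the dimensional constant of the qc Yamabe operator, $S_{\tileta}$ the (constant) qc scalar curvature of the round sphere, and $\|\cdot\|_2=\|\cdot\|_{L^2(Vol_{\tileta})}$, the standard first and second variation computations for a local minimum of \eqref{e:Yamabe constant Iwasawa} under the constraint $\mathcal N_{\tileta}(u)=\int_{S^{4n+3}}u^{2^*}\,Vol_{\tileta}=1$ produce a multiplier $c=\mathcal E_{\tileta}(u)$ (so that $u$ solves the qc Yamabe equation with coefficient $c$), together with the fact that, for every $w$ with $\int_{S^{4n+3}}u^{2^*-1}w\,Vol_{\tileta}=0$, one has
\[
\mathcal E_{\tileta}(w)\;\ge\;(2^*-1)\,c\int_{S^{4n+3}}u^{2^*-2}\,w^2\,Vol_{\tileta}.
\]

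Let $P_1,\dots,P_{4n+4}$ be the restrictions to $S^{4n+3}\subset\mathbb R^{4n+4}=\Hn\times\mathbb H$ of the Euclidean coordinate functions and set $w_A:=P_A\,u$. The hypothesis \eqref{e:zero mass} reads $\int_{S^{4n+3}}P_A\,u^{2^*}\,Vol_{\tileta}=0$, i.e. $\int_{S^{4n+3}}u^{2^*-1}w_A\,Vol_{\tileta}=0$, so each $w_A$ is an admissible variation. I will use three elementary identities on the round sphere: $\sum_A P_A^2\equiv1$; hence $\sum_A P_A\,\nabla^{\tileta}P_A=\tfrac12\nabla^{\tileta}\big(\sum_A P_A^2\big)\equiv0$; and $\sum_A|\nabla^{\tileta}P_A|^2$ equals the trace of the qc metric on the horizontal space $H$, which for the standard structure is the restriction of the Euclidean metric, hence $\equiv\dim_{\mathbb R}H=4n$. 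Moreover, by Theorem \ref{t:first eigenspace Iwasawa} the linear span of the $P_A$ is exactly the first eigenspace of the sub-Laplacian; combined with the two previous identities this pins down the first eigenvalue $\lambda_1=4n$ and, equivalently, the cancellation identity $S_{\tileta}/b_n=4n(n+1)$, which records that the round sphere is a degenerate critical point of the Yamabe functional whose second-variation kernel is the first eigenspace (the directions $P_A u_0$ at the constant $u_0$ being conformal Jacobi fields).

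Expanding $\nabla^{\tileta}w_A=u\,\nabla^{\tileta}P_A+P_A\,\nabla^{\tileta}u$ and summing over $A$, the three identities collapse the sum to $\sum_A|\nabla^{\tileta}w_A|^2=|\nabla^{\tileta}u|^2+4n\,u^2$ and $\sum_A w_A^2=u^2$; since $S_{\tileta}$ is constant, $\sum_A\mathcal E_{\tileta}(w_A)=\mathcal E_{\tileta}(u)+4n\,b_n\|u\|_2^2=c+4n\,b_n\|u\|_2^2$, while $\sum_A\int_{S^{4n+3}}u^{2^*-2}w_A^2\,Vol_{\tileta}=\int_{S^{4n+3}}u^{2^*}\,Vol_{\tileta}=1$. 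Summing the second-variation inequality over $A$ and using $2^*-2=\tfrac{1}{n+1}$ gives $c+4n\,b_n\|u\|_2^2\ge(2^*-1)\,c=c+\tfrac{c}{n+1}$, i.e. $c\le 4n(n+1)\,b_n\|u\|_2^2$. Substituting $c=\mathcal E_{\tileta}(u)=b_n\int_{S^{4n+3}}|\nabla^{\tileta}u|^2\,Vol_{\tileta}+S_{\tileta}\|u\|_2^2$ and the identity $S_{\tileta}/b_n=4n(n+1)$ yields
\[
\int_{S^{4n+3}}|\nabla^{\tileta}u|^2\,Vol_{\tileta}\;\le\;0 ,
\]
whence $\nabla^{\tileta}u\equiv0$; since the horizontal distribution is bracket generating and $S^{4n+3}$ is connected, $u\equiv const$.

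The algebra is routine; the delicate points are (i) making the constrained second-variation inequality rigorous for a mere \emph{local} minimum of \eqref{e:Yamabe constant Iwasawa} — one must construct admissible second-order variations that respect $\mathcal N_{\tileta}=1$, which is where the smoothness from Theorem \ref{t:harnack and holder regul} is used — and, more essentially, (ii) the exact cancellation $S_{\tileta}/b_n=4n(n+1)$ on the round sphere, equivalently the sharp determination of the first eigenvalue of the sub-Laplacian and of its eigenspace (Theorem \ref{t:first eigenspace Iwasawa}), which is where the qc-conformal geometry of the sphere enters decisively: with only a non-sharp eigenvalue bound one would obtain a positive, rather than vanishing, upper bound for $\int_{S^{4n+3}}|\nabla^{\tileta}u|^2\,Vol_{\tileta}$. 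Identifying the span of the $P_A$ with the first eigenspace also quietly uses that the qc-isometry group of the sphere acts irreducibly on $\mathbb R^{4n+4}$.
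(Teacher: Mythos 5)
Your proposal is correct and is essentially the paper's own proof: test the second variation at $v=\zeta u$ with $\zeta$ running over the Euclidean coordinate functions (admissible exactly because $u$ is well centered), sum over $\zeta$ using $\sum_\zeta\zeta^2\equiv 1$ and the fact that the $\zeta$ span the first eigenspace of the sub-Laplacian, and invoke the sharp relation between $\lambda_1$ and the scalar curvature to conclude $\nabla^{\tilde\eta}u\equiv 0$. The only discrepancy is one of normalization: for the form $\tilde\eta$ of \eqref{e:stand cont form on S} the paper has $S_{\tilde\eta}=8n(n+2)$ and $\lambda_1=2n$, so $S_{\tilde\eta}/b_n=2n(n+1)$ rather than your $4n(n+1)$ (your constants correspond to a rescaling of $\tilde\eta$ for which $g\vert_H$ is literally the Euclidean restriction); since the decisive cancellation $4\lambda_1(2^*-1)=(2^*-2)S_{\tilde\eta}$, i.e.\ $\lambda_1=S_{\tilde\eta}/(Q+2)$, is invariant under rescaling of the contact form, this does not affect the argument.
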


\begin{proof}
Let $\zeta$ be a smooth function on the sphere $S^{4n+3}$. Recalling \eqref{e:E and N  functional}, with the help of the divergence formula \eqref{div} we obtain the formula
\begin{equation}\label{e:Upsilon for zeta u}
\mathcal{E} (\zeta u) = \int_{S^{4n+3}}\zeta^2 \Bigl(4\frac {Q+2}{Q-2}\ \lvert \nabla^{\tileta} u \rvert^2 +
\tilde{S}\, u^2\Bigr)\tvol
 -  4\frac {Q+2}{Q-2}\int_{S^{4n+3}} u^2 \zeta\, {\tlap}
\zeta \tvol.
\end{equation}
At this point we let $\zeta$ be an eigenfunction corresponding to the first eigenvalue of the sub-Laplacian $\tlap$ associated to $\tileta$,  $\tlap \zeta =-\lambda_1 \zeta$.
Remarkably, the first eigenspace of the standard sub-Laplacian is spanned by restrictions to the sphere of the linear (coordinate functions) in $R^{4n+4}=\Hn\times\mathbb{H}$, see Theorem \ref{t:first eigenspace Iwasawa}.

Computing the second variation $\delta^2 \Upsilon(u)v = \frac {d^2}{dt^2} \Upsilon(u+tv)_{|_{t=0}}$ of $\Upsilon(u)$ we see that the local minimum condition $\delta^2 \Upsilon (u)v\geq 0$ implies
\begin{equation*}
\mathcal{E} ( v) -(2^*-1)\mathcal{E} ( u)\int_{S^{4n+3}}  u^{2^*-2}v^2\ Vol_{\tilde\eta}  \ \geq 0
\end{equation*}
for any function $v$ such that $\int_{S^{4n+3}}  u^{2^*-1}v\ Vol_{\tilde\eta}=0$.
Therefore, for $\zeta$ being any of the coordinate functions in $\Hn\times\mathbb{H}$ we have (taking $v=\zeta u$ and recalling that $u$ is well centered)
$$
\mathcal{E} (\zeta u) -(2^*-1)\mathcal{E} ( u)\int_{S^{4n+3}}  u^{2^*}\zeta^2\ Vol_{\tilde\eta}  \ \geq 0,
$$
which after a summation over all coordinate functions and a use of  \eqref{e:Upsilon for zeta u} gives
\begin{equation*}
\mathcal{E}(u)  - (2^*-1)\mathcal{E} ( u) +4\lambda_1(2^*-1)   \int_{S^{4n+3}} u^2 \ Vol_{\tilde\eta}\geq 0,
\end{equation*}
which implies, recall $2^*-1=( {Q+2})/({Q-2})$,
\begin{equation*}
0\leq 4(2^*-1)\left ( 2^*-2\right)\int_{S^{4n+3}} | \nabla^{\tilde\eta} u|^2\ Vol_{\tilde\eta}
\leq \ \left (4\lambda_1(2^*-1) -\left (2^*-2\right )\tilde S \right )  \int_{S^{4n+3}}  u^{2^*}\
Vol_{\tilde\eta}.
\end{equation*}
By Theorem a \ref{t:first eigenspace Iwasawa} we
 have actually equality $\lambda_1= {\tilde S}/{(Q+2)}$, hence $| \nabla^{\tilde\eta} u|=0$,
which completes the proof.
\end{proof}

After these preliminaries we turn to the proof of Theorem \ref{T:Iwasawa groups Minimizers}.
\begin{proof}[Proof of Theorem \ref{T:Iwasawa groups Minimizers}]
 Let $F$ be a minimizer (local minimum) of the Yamabe functional $\mathcal{E}$ on $\QH$ and $f$ the corresponding function on the sphere defined with the help of the Cayley transform by
 \begin{equation}\label{e:g def}
f=\mathcal{C}^*(F\Phi^{-1}),
\end{equation}
where $\Phi$ is a solution of the Yamabe equation on $\QH$  defined in \eqref{e:h and Phi}.
By the conformality of the qc structures on the group and the sphere we have by \eqref{e:vol conf change}
$Vol_{\Theta}=\Phi^{2^*}  Vol_{\tilde\Theta}$,
hence $F^{2^*} Vol_{\tilde\Theta} = f^{2^*}
\phi^{-2^*}  Vol_{\tilde\eta}$, where $\phi=\mathcal{C}^*(\Phi)$. This, together with the Yamabe equation implies that the Yamabe integral is preserved
\begin{equation}\label{e:invariance of dirichlet}
\int_{\QH}\ a|\nabla^{\tilde\Theta} F|^2\ Vol_{\tilde\Theta}\ =\  \int_{S^{4n+3}}\ \left ( a|\nabla^{\tilde\eta} f|^2 +  {\tilde S} f^2  \right )\tvol,
\end{equation}
where  $a=4(Q+2)/(Q-2)$. By Lemma \ref{l:zero mass is enough} and
\eqref{e:zero mass transform} the function $f_0=\phi^{-1}(f\circ
\psi^{-1})$ will be well centered  minimizer (local
minimum) of the Yamabe functional $\Upsilon$ on $S^{4n+3}$. The
latter claim uses also the fact that the map $v\mapsto u$ of
equation \eqref{e:zero mass transform} is one-to-one and onto on
the space of smooth positive functions on the sphere. Now, from
Lemma  \ref{l:const mnimizer}  we conclude that $g_o=const$.
Looking back at the corresponding functions on the group we see
that
\begin{equation*}
F_0 =\gamma\,{\left [(1+|q'|^2)^2+ |\omega'|^2 \right ]}^{-(Q-2)/4}
\end{equation*}
for some $\gamma=const.>0.$ Furthermore, the  proof of Lemma
\ref{l:hersch} shows that $F_0$ is obtained from $F$ by a
translation \eqref{translation} and dilation \eqref{scaling}.

\begin{rmrk}
We remark that the above argument shows that any local minimum of the Yamabe
functional $\Upsilon$ on the sphere (or the Iwasawa group) has to be a
global one.
\end{rmrk}

The Yamabe constant of the sphere is  calculated immediately by
taking a constant function in the Yamabe functional and a use of  \eqref{e:S of standard qc sphere}.

The remaining part of the proof (the value of the best constant $S_2$) is quite straightforward. Since it involves mainly calculations depending on the chosen normalization of the contact form we refer to \cite[Section 6.7]{IV2} for the details.
This completes the proof of Theorem \ref{T:Iwasawa groups Minimizers}.
\end{proof}

\begin{rmrk}\label{e:constant for Iwasawa basis}
One should keep in mind that the the standard basis \eqref{qHh}
is not an orthonormal basis which turns  the group $\QH$ into a group of H-type,  cf. also
\eqref{e:H-type basis for Hn} and the paragraph above it. The two
constants differ by a multiple of 
$4^{-k}$ in the
general case of a group of Iwasawa type with center of dimension
$k$. For more details on the relation between the Haar measure and the volume form associated to the contact form, as well as the exact relation between the best constants computed with respect to different bases {see} \cite[p. 188--189]{IV2}.
\end{rmrk}

\section{Sub-Riemannian geometry as conformal infinities}

\subsection{Riemannian case}
Let $(N,h)$ be a Riemannian manifold with boundary $M=\partial N$ with defining function $r >0$ on interior of $N $ which vanishes of order one on $M$. Suppose that $r\cdot h$ extends continuously to $M$ thus defining a "conformal structure" on the boundary $M$. Fefferman \& Graham  \cite{FeGr85} reversed the construction {and} used "canonical asymptotically hyperbolic (AH) filling" metrics to obtain conformal invariants. This is of interest also because  of the AdS/CFT correspondence in physics relating gravitational theories on $N$ with conformal theories on $M$. More specifically, if one can associate to a conformal
class on $M$ a “canonical” AH filling, then the Riemannian invariants for the interior metric give conformal invariants of the boundary structure.

For a basic example, consider on the open unit ball $B$ in $\mathbb{R}^n$  the hyperbolic metric $$h=\frac {4}{\rho^2}g_{euc},  \qquad \rho=1-|x|^2.$$ The conformal infinity is the conformal class of $g_{euc}\vert_{\partial B}$ - the standard metric on the unit sphere. Graham \& Lee  \cite{GrL91} gave the first general examples of AH Einstein metrics. The idea has been very useful especially due to the {mentioned} relation with the AdS/CFT correspondence \cite{Mal98} in physics.

\subsection{Conformal Infinities and Iwasawa Sub-Riemannian geometries} \label{s:Iwasawa sub-Riem geom}
The main references here are \cite{Biq1,Biq2} where the sub-Riemannian structures and geometries on the spheres at infinity of the hyperbolic spaces were used as model spaces for a wide class of sub-Riemannian structures which we shall call Iwasawa Sub-Riemannian geometries.
As a motivation we start with a few examples based on the real, complex and quaternion hyperbolic cases which. An explicit  description of the octonian hyperbolic plane and the ball model can be found in \cite{M} while \cite{Biq1} is the reference for the corresponding conformal infinity.

 On the open unit ball $B$ in $\mathbb{C}^{n+1}$ consider the Bergman metric $$h\ =\ \frac {1}{\rho}g_{euc}\ +\ \frac{1-\rho}{\rho^2}\left ( (d\rho)^2+(Id\rho)^2\right ),  \qquad \rho\ =\ 1-|x|^2.$$
    Notice that as $\rho\rightarrow 0$ we have that $\rho\cdot h$ is finite only on $H$-the so called \emph{horizontal space}, $H\ = \ Ker\, (I\,d\rho),$ which is the kernel of the contact form $\theta\ =\ I \,d\rho.$ The conformal infinity of $\rho\cdot h$ is the conformal class of a pseudohermitian CR structure defined by $H$ and $\theta$.
    If we look for K\"ahler-Einstein deformations Cheng \& Yau \cite{ChYa80} showed that  any smooth (in fact $C^2$) strictly pseudoconvex domain in $\mathbb{C}^{n+1}$ admits a unique complete K\"ahler-Einstein metric of Ricci curvature $-1$ which is asymptotic to the CR-structure of the boundary, see also \cite{MoYau83} for an extension to an arbitrary bounded domain of holomorhy.

 In the quaternion case, consider the open unit ball $B$ in $\mathbb{H}^{n+1}$ consider the hyperbolic metric $h=\frac {1}{\rho}g_{euc}+\frac{1}{4\rho^2}\left ( (d\rho)^2+(I_1d\rho)^2+(I_2d\rho)^2+(I_3d\rho)^2\right ).$ The conformal infinity is the conformal class of a quaternionic contact structure. In fact, $\rho h$ defines a conformal class of degenerate metrics with kernel $$H=\cap_{j=1}^3 Ker\, (I_j\,d\rho).$$ Biquard showed that the infinite dimensional family \cite{LeB91} of complete quaternionic-K\"ahler deformations of the quaternion hyperbolic metric have conformal infinities which provide an infinite dimensional family of examples of qc structures. Conversely, according to \cite{Biq1} every real analytic qc structure on a manifold $M$ of dimension at least eleven is the conformal infinity of a unique quaternionic-K\"ahler metric defined in a neighborhood of $M$.

 Finally,  \cite{Biq1} considered CR and qc structures as boundaries of infinity of Einstein metrics rather than only as boundaries at infinity of  K\"ahler-Einstein and quaternionic-K\"ahler metrics, respectively.  In fact, \cite{Biq1} showed that in each of the three  cases (complex, quaternionic, octoninoic)  any small perturbation of the standard Carnot-Carath\'eodory structure on the boundary is the conformal infinity of an essentially unique Einstein metric on the unit ball, which is asymptotically symmetric.  Various explicit examples of qc structures were constructed in \cite{AFISUV1}.

  In the above examples the geometry at the conformal infinity is asymptotic to  the hyperbolic geometry of the corresponding symmetric space of noncompact type of real rank one $G/K$, see paragraphs after Theorem \ref{t:Iwasawa is J2}. The corresponding geometries at infinity  are conformal metrics, CR structures, quaternionic-contact structures or octonionic contact structures as we define below following \cite{Biq1,Biq2}.  The symmetric case {belongs to the} “parabolic geometries” modelled on $G/P$, where $P$ is a minimal parabolic subgroup of $G$, see \cite{CS09}. We mention another  class of asymptotic geometries  considered in \cite{ABiq10} which are no {longer} asymptotic to a symmetric space, but  the model at infinity is  a homogeneous Einstein space, which m\emph{ay vary from point to point} on the boundary at infinity. Such a construction is motivated by Heber \cite{He98} who showed that
every deformation of the solvable group $S = NA$ carries a unique homogeneous Einstein metric. Thus, deformations of the nilpotent
Lie algebra $\algn$ give a homogeneous Einstein metric on the corresponding
solvable group $S$.

  Leaving the real case aside, we turn to the precise definition of the general (sub-Riemannian) geometric setting of the above constructions. Let $G$ be one of the groups $ U(n)$, $Sp(n)Sp(1)$ or $Spin(7)$, corresponding to the complex, quaternionic or octonionic cases, respectively, recalling the homogeneous models of the corresponding boundary spheres of the hyperbolic space, namely,  $ S^{2n+1}=U(n+1)/U(n)$, $S^{4n+3}= Sp(n+1)Sp(1)/Sp(n)Sp(1)$ or $S^7=Spin(9)/Spin(7)$. Let $M$ be a manifold with a 1-form $\eta$ with values in $\R,\R^3$ or $\R^7$, respectively, whose kernel  $H = Ker\, \eta$  - the so called \emph{horizontal distribution} - is of co-dimension $k=1,\ 3$ or $7$, respectively. Following Biquard \cite{Biq1}, a Carnot-Carath\'{e}odory metric (positive definite symmetric two tensor) compatible with $d\eta$ is
defined to be a metric $g$ on $H$ such that:
\begin{enumerate}[i)]
\item in the complex case, the restriction $\omega=\frac 12 d\eta\vert_H$  is a symplectic form on  $H$
compatible with $g$, i.e., $\omega(\cdot,\cdot) = g(I\cdot, \cdot)$ where $I$ is an almost
complex structure on $H$;
\item in the quaternionic case, the three 2-forms $\omega_i=\frac 12 d\eta_i\vert_H$, $i=1,2,3$, on $H$ are the fundamental forms of
a quaternionic structure compatible with $g$, i.e.,  $\omega_i(\cdot,\cdot) =
g(I_i\cdot, \cdot)$ for almost complex structures $I_i$ satisfying the quaternionic
commutation relations;
\item in the octonionic case, the seven 2-forms $\omega_i=\frac 12 d\eta_i\vert_H$, $i=1,\dots,7$ on $H$ provide
a $Spin(7)$ structure compatible with $g$, i.e., $\omega_i(\cdot,\cdot) =
g(I_i\cdot, \cdot)$ for almost complex structures $I_i$ satisfying the octonionic commutation
relations.
\end{enumerate}
We shall call the geometric structures above  \emph{Iwasawa sub-Riemannian geometries} since at every point the osculating nilpotent group \cite{FS}, \cite{RS76} is isomorphic to the corresponding (non-degenerate) Iwasawa group.
For simplicity, the above definition of CR, qc and octonionic contact structures requires the existence of a global 1-form defining $H$. The obstructions to global existence of such a form  in the CR and qc cases are the first Stiefel-Whitney class and the first Pontryagin class of $M$, respectively.

The complex case defines a strictly pseudoconvex almost CR manifold with a fixed pseudo-Hermitian structure, which is a CR structure when the integrability condition $[IX,IY]-[X,Y]\in H$ for $X,Y\in H$ holds. In the quaternionic and octonionic cases, a distribution $H$  for which a Carnot-Carath\'{e}odory $H$-metric exists will be called a quaternionic contact structure  or an octonionic contact structure.  The focus here will be mainly the CR and qc cases.  Note that the topological dimensions of these manifolds are $2n+1$ and $4n+3$, respectively. The so called \emph{homogeneous dimension} of $M$ is $Q=m+ 2k$ where $m=\text{dim } H$ and $k=\text{codim } H$. {W}e shall denote with $\vol$ the volume form
$$\vol=\eta\wedge(d\eta)^n\ \quad \text{and }\quad \vol=\eta_1\wedge\eta_2\wedge\eta_3\wedge\Omega^n,$$
  in the CR ($n=m/2$) and qc ($n=m/4$) case respectively, where $\Omega=\omega_1\wedge\omega_1+\omega_2\wedge\omega_2+\omega_3\wedge\omega_3$ is the \emph{fundamental 4-form}. There is a Riemannian metric on $M$ obtained by extending in a natural way the horizontal metric $g$ to a true Riemannian metric, denoted by $h$, explicitly given by
\begin{equation}\label{hmetric}
h=g+\sum_{i=1}^k(\eta_i)^2.
\end{equation}
The Riemannian volume form  is up to a constant multiple the just defined volume form $\vol$.

For each of the considered geometries there is a canonically defined connection $\nabla=\nabla^{\eta}$ with torsion $T$.  In the integrable CR case this is the Tanaka-Webster connection, see \cite{Ta62}and \cite{We2}. In the qc and octonionic cases this is the Biquard connection, see \cite{Biq1,Biq2} and \cite{D}.  The curvature tensor of the corresponding canonical connection $\nabla$ and the associated   (0,4)  tensor, which is denoted with the same letter, are
\begin{equation}\label{e:curv tensor}
{R}(A,B)C=[\nabla_A, \nabla_B]C- \nabla_{[A,B]}C, \qquad R(A,B,C,D)\overset{def}{=}h(R(A,B)C,D).
\end{equation}
Let $\{e_1,\dots,e_{m}\}$, $m=\dim H$, be a local orthonormal basis of the \emph{horizontal space} $H$, $g(e_a,e_b)=\delta_{ab}$.  The Ricci type and scalar curvature tensors
are obtained by taking \emph{horizontal traces}
\begin{equation}  \label{qscs}
Ric(A,B)=\sum_{b=1}^m R(e_b,A,B,e_b),\quad S=\sum_{a,b=1}^m R(e_b,e_a,e_a,e_b), \ A,B\in T(M),
 \end{equation}
which manifests the sub-Riemannian nature of these tensors. In the qc case these tensors are called \emph{qc-Ricci tensor} and \emph{qc-scalar curvature}  tensor of the Biquard connection.

The
(horizontal) divergence of a horizontal vector field/one-form $%
\sigma\in\Lambda^1\, (H)$ defined by $\nabla^*\, \sigma\
=tr^g\nabla\sigma= \nabla \sigma(e_a,e_a)$ supplies the "integration
by parts" over compact $M$ formula, \cite{Ta62}, \cite{IMV}, see also \cite{Wei},
\begin{equation}  \label{div}
\int_M (\nabla^*\sigma)\vol\ =\ 0.
\end{equation}

\subsubsection{The first eigenvalue on the sphere}\label{ss:eigenspace Iwasawa}
\begin{thrm}\label{t:first eigenspace Iwasawa}
The eigenspaces of the first eigenvalue of the sub-Laplacian of the canonical Iwasawa sub-Riemannian structures on the  spheres at infinity of the hyperbolic spaces are the restrictions of all real-linear functions in the corresponding Eucliden space to the sphere.
\end{thrm}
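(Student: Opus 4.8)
The real (Euclidean Iwasawa) case is the classical fact that the first nonzero eigenspace of the round Laplacian on $S^N$ is spanned by the restrictions of the linear functions, so the plan concerns the CR, qc and octonionic model spheres $S^{2n+1}\subset\mathbb C^{n+1}$, $S^{4n+3}\subset\mathbb H^{n+1}$ and $S^{15}\subset\mathbb R^{16}$; write $N$ for the sphere dimension, so $N=m+k$ with $m=\dim H$ and $k=\dim V_2\in\{1,3,7\}$. The argument proceeds in two steps: first, that each restriction $\ell|_{S^N}$ of a real-linear coordinate function is an eigenfunction of the sub-Laplacian $\Delta_b$ of the canonical connection (Tanaka--Webster in the CR case, Biquard otherwise), all with one and the same eigenvalue; second, by a direct spectral decomposition, that this eigenvalue is the smallest positive one and that the $\ell|_{S^N}$ already span its eigenspace.

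For the first step the key point is the operator identity on the model sphere
\[
\Delta_{g_0}\ =\ \Delta_b\ +\ \Delta_v,\qquad \Delta_v:=-\sum_{i=1}^{k}\xi_i^2,
\]
with $g_0$ the round metric and $\xi_1,\dots,\xi_k$ the Reeb vector fields; up to a constant fixed by the normalization of the contact form (which does not affect eigenspaces) this holds because on the model sphere the Reeb directions span totally geodesic fibres carrying round metrics, with $\nabla^{g_0}_{\xi_i}\xi_i=0$ (the sphere being Sasakian, $3$-Sasakian, or octonionically contact according as $k=1,3,7$). Now $\ell|_{S^N}$ is a degree-one spherical harmonic, $\Delta_{g_0}\ell=N\ell$; and since $\xi_i$ generates the one-parameter group $t\mapsto(\cos t+\mathbf{e}_i\sin t)\cdot z$ with great-circle orbits, linearity of $\ell$ forces $\xi_i^2\ell=-\ell$, hence $\Delta_v\ell=k\ell$ and $\Delta_b\ell=(N-k)\ell=m\ell$. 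Thus $\mathcal H_1:=\operatorname{span}\{\ell|_{S^N}\}$ lies in a single eigenspace, with eigenvalue $\lambda_0=m$.

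For the second step I would use that $\Delta_{g_0}$ and $\Delta_v$ are commuting non-negative self-adjoint operators (the $\xi_i$ are $g_0$-Killing), so $L^2(S^N)$ is the orthogonal sum of their joint eigenspaces; each such eigenspace lies inside the space of restrictions of harmonic polynomials of some degree $d$, on which $\Delta_{g_0}=d(d+N-1)$, so there $\Delta_b=d(d+N-1)-c$ with $c$ the non-negative value of $\Delta_v$. On $\mathcal H_1$ one has $c\equiv k$ and $\Delta_b\equiv m$; on the constants $\Delta_b\equiv 0$. The remaining content is the identity
\[
\max\bigl(\operatorname{spec}\Delta_v|_{\text{degree }d}\bigr)\ =\ d(d+k-1),
\]
which gives $\Delta_b|_{\text{degree }d}\ge d(d+N-1)-d(d+k-1)=d(N-k)=dm$; since $\min_{d\ge1}dm=m$ is attained only at $d=1$, and since for $d\ge2$ achieving $\Delta_b=m$ would require $c=d(d+N-1)-m=d(d+k-1)+(d-1)m>d(d+k-1)$, the smallest positive eigenvalue of $\Delta_b$ equals $m$ and its eigenspace is exactly $\mathcal H_1$, the real span of the $N+1$ linear coordinate functions. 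In the CR case $\Delta_v=(p-q)^2$ on bidegree-$(p,q)$ harmonics, with maximum $d^2=d(d+k-1)$ over degree $d$; in the qc case $\Delta_v$ is the suitably normalized Casimir of the $\mathfrak{sp}(1)$ generated by $\xi_1,\xi_2,\xi_3$, equal to $d(d+2)=d(d+k-1)$ on the maximal-spin ($d/2$) component of the degree-$d$ harmonics (realized by powers of a complex-linear function, and maximal by the $U(1)$-weight bound); so these two cases are complete, and in the CR case one notes in addition that $\mathcal H_1=\mathcal H_{1,0}\oplus\mathcal H_{0,1}$ splits as a sum of two $U(n+1)$-irreducibles carrying a common $\Delta_b$-eigenvalue.

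The step needing real work is the last identity in the octonionic case: there the seven Reeb directions do not close into a Lie algebra (the ``fibre'' is the sphere of unit octonions, not a group), so $\Delta_v$ is not a genuine Casimir and its spectrum on the degree-$d$ harmonics must be extracted from the branching of those spaces under $Spin(9)$, equivalently from the spectral decomposition of $L^2(S^{15})$ along the octonionic Hopf fibration $S^7\hookrightarrow S^{15}\to S^8$ (in degree $2$, for instance, the harmonics split as $\mathbb R^9\oplus\Lambda^4\mathbb R^9$ with $\Delta_v$ equal to $0$ and $16=2(2+6)$, confirming the value $d(d+k-1)$). The other point to handle carefully is the first-order-term-free identity $\Delta_{g_0}=\Delta_b+\Delta_v$ on the models, i.e.\ that the torsion of the canonical connection on the standard sphere is of the special (Sasakian, $3$-Sasakian, resp.\ octonionic) type that makes the Reeb fibres totally geodesic; with these in hand the proof is complete. (For the CR and qc spheres the statement is also contained in the Lichnerowicz--Obata results for CR and quaternionic contact manifolds, but the self-contained spectral route above avoids circularity, since the present theorem is used earlier, in the Folland--Stein argument.)
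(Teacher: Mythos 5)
Your argument is correct and is essentially the second route the paper alludes to (the paper itself gives no self-contained proof, only pointers: a direct computation with the explicit sub-Laplacians as in \cite{FrLi,BFM}, \cite[Lemma 2.3]{IMV2}, \cite{ChLZh1}, or the relation to the Laplace--Beltrami operator as in \eqref{obsa} and \eqref{llex}, or the abstract spherical-harmonics approach of \cite{ACD}). Your splitting $\Delta_{g_0}=\Delta_b+\Delta_v$ is exactly \eqref{obsa}/\eqref{llex} specialized to the torsion-free models, and your analysis of $\Delta_v$ on degree-$d$ harmonics is in the spirit of \cite{ACD}; what you gain over the ``simplest'' direct computation is uniformity across the three cases and the identification of the full eigenspace, not just the eigenvalue on linear functions. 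The numerology all checks out: $\Delta_b\ell=m\ell$ gives $2n$ and $4n$ in the CR and qc cases, matching the values quoted before Theorems \ref{t:CR Lich} and \ref{mainpan}. One improvement you should make: for the conclusion you only need the \emph{upper} bound $\Delta_v\le d(d+k-1)$ on degree-$d$ harmonics, not the exact maximum, and this upper bound follows in all three cases at once from the fiber picture you have already set up, with no $Spin(9)$ branching. Indeed, since the integral curves of each $\xi_i$ are great circles, $\nabla^{g_0}_{\xi_i}\xi_i=0$ and $-\sum_i\xi_i^2f=-\operatorname{tr}_V\nabla^2f$ is, because the fibers are totally geodesic great $k$-spheres cut out by $(k+1)$-dimensional linear subspaces, the intrinsic round Laplacian of the fiber applied to $f$ restricted to the fiber; a degree-$d$ polynomial on the ambient Euclidean space restricts to each such linear subspace as a polynomial of degree at most $d$, hence decomposes on the fiber into spherical harmonics of degree at most $d$, whence
\[
\langle \Delta_v f,f\rangle_{L^2(S^N)}\ \le\ d(d+k-1)\,\|f\|^2_{L^2(S^N)}
\]
by integrating over the base. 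This closes the only gap you flagged (your degree-$2$ check $135=9+126$ with vertical eigenvalues $0$ and $16$ is a nice consistency test but is no longer needed), and the rest of your counting argument --- $\Delta_b\ge dm$ on degree $d$, with $m$ attained only at $d=1$ and there on the whole $(N+1)$-dimensional space of linear functions --- is complete as written.
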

The exact value of the eigenvalue depends on the normalization of the "standard" form $\eta$, which will be made explicit  later. Of course, in the real case the eigenspace is  the space of spherical harmonics of order one. Various proofs of Theorem \ref{t:first eigenspace Iwasawa} are possible. The  simplest proof is a direct computation based on the explicit definition of the corresponding sub-Laplacians, see \cite{FrLi,BFM} for the complex case, \cite[Lemma 2.3]{IMV2} for the quaternion, and \cite{ChLZh1} for the octonian cases. Alternatively, one can relate the sub-Laplacian to the corrsponding Laplace-Beltrami operator on the sphere, see \eqref{obsa} and \eqref{llex} for the complex and quaternion case. Finally, the result follows from an abstract approach as in \cite{ACD} where the corresponding "spherical harmonics" are studied.

\subsubsection{The Yamabe problem on Iwasawa sub-Riemannian manifolds}\label{ss:Yamabe Iwasawa mnfld}
\begin{dfn}\label{d:sub-Riemannian conf transf}
  The "conformal" class of $[\eta]$  consists of all 1-forms $\bar\eta=\phi^{4/(Q-2)} \Psi\eta$ for a smooth positive function $\phi$ and $\Psi\in SO(k)$ with smooth functions as entries.
  \end{dfn}
  In the CR case $\Psi\equiv 1$, while in the QC case $\Psi$ is an $SO(3)$ matrix with entries smooth functions.  We note that the canonical connection is independent of $\Psi$, but depends on $\phi$, which brings us to the Yamabe type problems.
The Yamabe functional is
\begin{equation}\label{e:Yamabe functional def}
\Upsilon_{[\eta]} (\phi) = \left ( \int_M\Bigl(4\frac {Q+2}{Q-2} \lvert \nabla \phi \rvert^2 + S\, \phi^2\Bigr)
Vol_\eta \right ) \Big / \left ( \int_M \phi^{2^*}\ Vol_\eta \right )^{2/2^*},
\end{equation}
where $2^*=2Q/(Q-2)$. $\nabla=\nabla^\eta$ is the  connection  of $\eta$, $S$ is the scalar curvature \eqref{qscs} of $(M,\, \eta)$ and $|\nabla \phi|=\left (\sum_{a=1}^m (d\phi(e_a))^2 \right)^{1/2}$ is the length of the horizontal gradient.
It will be useful to introduce the functionals
\begin{equation}\label{e:E and N  functional}
\mathcal{E}_{\eta}(\phi)\overset{def}{=}\int_{M} \Bigl(4\frac {Q+2}{Q-2}\ \lvert \nabla^{\eta} \phi\rvert^2\ +\
{S}\, \phi^2\Bigr)\vol,\hskip.4in
\mathcal{N}_{\eta}(\phi)=\left ( \int_{M}  \phi^{2^*}\vol\right)^{2/2^*},
  \end{equation}
 hence the Yamabe functional  can be written as $\Upsilon(\phi){=} \mathcal{E}(\phi) / N(\phi)$ (dropping the subscript $\eta$ when there is no confusion). The \emph{Yamabe constant} is
\begin{equation}\label{e:Yamabe constant Iwasawa}
 \Upsilon(M,[\eta]) = \inf \{ \Upsilon_{[\eta]}(\phi):\, \phi\in \mathcal{C}^\infty (M) \}= \inf \{\mathcal{E}_{\eta}(\phi):\, \mathcal{N}_{\eta}(\phi)=1,\ \phi\in \mathcal{C}^\infty (M) \}.
\end{equation}
In the above notation we tacitly introduced $[\eta]$ as a subscript. The reason for this notation is that the Yamabe functional is conformally invariant, which follows from the formulas relating the (sub-Riemannian) scalar curvatures of the associated to $\eta$ and $\bar\eta$ connections, see below, together with the formula for the change of the volume form, \begin{equation}\label{e:vol conf change}
\vol=\phi^{2^*}\tvol.
\end{equation}
Finding the Yamabe constant in the case of the standard  Iwasawa sub-Riemannian structures on the unit spheres is equivalent to the problem of determining the best constant in the $L^2$ Folland \& Stein \cite{FS} Sobolev type embedding inequality on the corresponding Heisenberg group. As noted earlier the best constant in the $L^2$ Folland \& Stein inequality together with the minimizers were determined recently in \cite{FrLi,IMV2,ChLZh1} by the method of Frank \& Lieb \cite{FrLi}, see also \cite{BFM}. Nevertheless this simpler approach does not yield the conjectured uniqueness (up to an automorphism) in the case of the spheres.

Finding the  {Yamabe constant } is closely related to the \emph{Yamabe problem} which   seeks  all  Iwasawa sub-Riemannian structures of  constant scalar curvature conformal to a given structure $\eta$. In fact, taking the conformal factor in the form $\bar\eta=\phi^{4/(Q-2)}%
\eta$ as we did above, a calculation (done separately for each of the cases) gives the following \emph{Yamabe equation},
\begin{equation}\label{e:conf Yamabe}
\mathcal{L} \phi\overset{def}{=} 4\frac {Q+2}{Q-2}\ \triangle \phi -\ S\, \phi \ =\ - \  \overline{S}\,\phi^{2^*-1},
\end{equation}
where $\triangle $ is the horizontal sub-Laplacian, $\triangle \phi\ =\ tr^g_H\,(\nabla du)$, $S$ and
$\overline{S}$ are the  scalar curvatures corresponding to the associated to  $\eta$ and $\bar\eta$ canonical connections.

A natural question is to find
all solutions of the  Yamabe equation \eqref{e:conf Yamabe}. As usual the two fundamental problems are
related by noting that on a compact manifold $M$ with a fixed conformal class $[\eta]$ the Yamabe equation characterizes the non-negative extremals of the Yamabe functional. The operator $\mathcal{L} \phi$ in \eqref{e:conf Yamabe} is the so called conformal sub-Laplacian. Using the divergence formula \eqref{div} we can write equation \eqref{e:conf Yamabe} in the form
\begin{equation}\label{e:conf Yamabe volume}
\phi^{-1}v\,
\mathcal{\bar L}(\phi^{-1}v)\ Vol_{\bar\eta}=v\mathcal{ L}(v)\ Vol_{\eta},
\end{equation}
for any $v\in C^\infty(M)$, which makes explicit the conformal invariance. Here $\mathcal{\bar L}$ denotes the conformal sub-Laplacian associated to the canonical connection $\overline \nabla$ of $\bar\eta$.

\subsection{CR Manifolds}\label{ss:CR geometry}

A CR manifold is a smooth manifold $M$ of real dimension 2n+1, with a fixed
n-dimensional complex sub-bundle $\mathcal{H}$ of the complexified tangent
bundle $\mathbb{C}TM$ satisfying $\mathcal{H} \cap \overline{\mathcal{H}}=0$
and $[ \mathcal{H},\mathcal{H}]\subset \mathcal{H}$. If we let $H=Re\,
\mathcal{H}\oplus\overline{\mathcal{H}}$, the real sub-bundle $H$ is
equipped with a formally integrable almost complex structure $J$. We assume
that $M$ is oriented and there exists a globally defined compatible contact
form $\eta$ such that the \emph{horizontal space} is given by ${H}=Ker\,\eta.$ In other words, the hermitian
bilinear form
$g(X,Y)= 1/2 \,d\eta(JX,Y)$
is non-degenerate. The CR structure is called strictly pseudoconvex if $g$
is a positive definite tensor on $H$. For brevity we shall frequently use the term \emph{CR manifold} to refer to a strictly pseudoconvex pseudohermitian   manifold. In other words, unless specified otherwise a CR manifold will be an \emph{integrable strictly pseudoconvex CR manifold with a fixed pseudohermitian stricture}.

The almost complex structure $J$ is formally integrable in the
sense that
$([JX,Y]+[X,JY])\in {H}$
and the Nijenhuis tensor vanishes
$N^J(X,Y)=[JX,JY]-[X,Y]-J[JX,Y]-J[X,JY]=0.$
A CR manifold $(M,\eta,g)$ with a fixed compatible contact form $\theta$
is called \emph{a pseudohermitian manifold}. In this case the 2-form
$d\eta_{|_{{H}}}\overset{def}{=}2\omega$
is called the fundamental form. The contact form whose kernel is the horizontal space $H$ is determined up
to a conformal factor, i.e., $\bar\theta=\nu\theta$ for a positive smooth
function $\nu$, defines another pseudohermitian structure called
pseudo-conformal to the original one.

A Riemannian metric is defined in the usual way, written with a slight imprecision as $h=g +\eta^2$. The vector field $\xi$ dual to $\eta$
with respect to $g $ satisfying $\xi\lrcorner d\eta=0$ is called the Reeb
vector field.
\subsubsection{Invariant decompositions}

As usual any endomorphism $\Psi$ of $H$ can be decomposed with respect to
the complex structure $J$ uniquely into its $U(n)$-invariant $(2,0)+(0,2)$
and $(1,1)$ parts. In short we will denote these components correspondingly
by $\Psi_{[-1]}$ and $\Psi_{[1]}$. Furthermore, we shall use the same
notation for the corresponding two tensor, $\Psi(X,Y)=g(\Psi X,Y)$.
Explicitly, $\Psi=\Psi=\Psi_{[1]}+\Psi_{[-1]}$, where
\begin{equation}{\label{compc}}
\Psi_{[1]}(X,Y)=%
\frac {1}{2}\left [ \Psi(X,Y)+\Psi(JX,JY)\right ],\qquad \Psi_{[-1]}(X,Y)=%
\frac {1}{2}\left [ \Psi(X,Y)-\Psi(JX,JY)\right ].
\end{equation}
The above notation is justified by the fact that the $(2,0)+(0,2)$ and $%
(1,1) $ components are the projections on the eigenspaces of the operator
$\Upsilon =\ J\otimes J, \quad (\Upsilon \Psi) (X,Y)\overset{def}{=}\Psi
(JX,JY)$
corresponding, respectively, to the eigenvalues $-1$ and $1$. Note that both
the metric $g$ and the 2-form $\omega$ belong to the [1]-component, since $%
g(X,Y)=g (JX,JY)$ and $\omega(X,Y)=\omega (JX,JY)$. Furthermore, the two
components are orthogonal to each other with respect to $g$.

The Tanaka-Webster connection \cite{Ta62,We2,W} is the unique linear connection $%
\nabla$ with torsion $T$ preserving a given pseudohermitian
structure, i.e., it has the properties that the almost complex structure $J$ and the contact form are parallel, $\nabla\xi=\nabla J=\nabla\eta=\nabla g=0$,  and the torsion tensor is of pure type, i.e., for $X,\, Y\in H$ we have
\begin{gather} \notag
 T(X,Y)=d\eta(X,Y)\xi=2\omega(X,Y)\xi,\\ \label{torha}
 \quad T(\xi,X)\in {H},
  g(T(\xi,X),Y)=g(T(\xi,Y),X)=-g(T(\xi,JX),JY).
\end{gather}

The (Webster) torsion $A$ of the pseudohermitian manifold is the symmetric tensor $A\overset{def}{=}T(\xi,.):H\rightarrow H$. Clearly, equation \eqref{torha} shows that $A\in \Psi_{[-1]}$.
It is also well known \cite{Ta62} that $A$ is the obstruction for a pseudohermitian  manifold
to be Sasakian. We recall that  a contact manifold $(M,\eta)$ is Sasakian if its Riemannian cone $C=M\times \mathbb R^+$ with metric $t^2h+ dt^2$ is K\"ahler (see e.g. \cite{Bl75,BGN}).

\subsubsection{Curvature tensors of the Tanaka-Webster connection}
 The curvature tensors are defined in a standard fashion using \eqref{e:curv tensor} and \eqref{qscs}, noting again that traces are taken only on the horizontal space. {The  Ricci 2-form  is  defined by}

$
\rho (A,B)=\frac 12\,R(A,B,e_a,Je_a).
$
{The horizontal part of the Ricci 2-form is (1,1)-form with respect to $J$ and  the first Bianchi identity implies}
\begin{equation}\label{e:CR Ricci 2-from}
\rho (X,JY)=\frac 12\,R(e_a,Je_a,X,JY).
\end{equation}
{The tensor $\rho(X,JY)\in \Psi_{[1]}$ is a symmetric tensor and is frequently also called  the Webster Ricci tensor.}
The CR Ricci tensor has the following type decomposition in $J$ invariant and skew-invariant forms \cite{Ta62}, \cite{We2}, see also \cite{DT} and \cite[Chapter 7]{IV2}
\begin{equation}\label{e:CR Ric type decomp}
Ric(X,Y)=\rho(JX,Y)+2(n-1)A(JX,Y).
\end{equation}

It is well known that a pseudohermitian manifold with a flat
Tanaka-Webster connection is locally isomorphic to the (complex) Heisenberg
group. For $n>1$ the vanishing of the horizontal part of the Tanaka-Webster connection implies the vanishing of the whole curvature. If $n=1$, in addition to the vanishing of the horizontal part of the curvature one needs also the vanishing of the pseudohermitian torsion to have zero curvature.

\subsubsection{The Heisenberg group}\label{ss:Heis group}
Given the ubiquitous role of the Heisenberg group $\QC\equiv\Cn\times \mathbb R$ in analysis and being the flat model of the considered CR geometries {(}the Tanaka-Webster connection coincides with the invariant flat connection on the group{) w}e shall write  explicitly a number of formulas in this special setting. {Some of these formulas} will  be made explicit in Section \ref{ss:qc geometry} for quaternionic contact structures in which case the quaternionic Heisenberg group will play the role of the flat model space. The group $\QC$ arises as the nilpotent part in the Iwasawa decomposition of the complex hyperbolic space. Thus, $\QC$ \index{Heisenberg group $\QC$}is a Lie group whose underlying manifold is $\mathbb{C}^n \times \R$ with group law given by \eqref{e:H-type Iwasawa groups}
where for $z, z' \in \mathbb{C}^n$ we  let $z\cdot z' = \sum_{j=1}^n z_j {z}'_j$. A (real) basis for the Lie algebra of left-invariant vector fields on $\QC$ is given by
\begin{equation}\label{e:basis for Hn}
X_j = \frac{\partial}{\partial x_j} +  2 y_j \frac{\partial}{\partial t}, \quad X_{n+j}\equiv Y_j =  \frac{\partial}{\partial y_j} -  2 x_j \frac{\partial}{\partial t},\quad \xi=2\frac{\partial}{\partial t}, \quad j=1,...,n,
\end{equation}
with corresponding contact form
\[\tilde\Theta=\frac 12dt+\sum_{j=1}^n(x_jdy_j-y_jdx_j)=\frac 12dt+\sum_{j=1}^n \Im(\bar z_jdz_j)\]
Here, we have identified $z=x+iy \in \mathbb{C}^n$, with the real vector $(x,y)\in \mathbb{R}^{2n}$. Since $[X_j, Y_k] = - 4 \delta_{jk} \frac{\partial}{\partial t}$, the Lie algebra is generated by the system $X=\{X_1,...,X_{2n}\}$. The sub-Laplacian is $\mathcal{L} = \sum_{j=1}^{2n} X_j^2$ which is the real part of the Kohn complex Laplacian. In this case the exponential map is the identity and, as for any group of step two, we have the the parabolic dilations
$\delta_\lambda(z,t)=(\lambda z, \lambda^2 t)$. The corresponding homogeneous dimension is $Q= 2n + 2$.

In regards to the theory of groups of Heisenberg type, cf. Section \ref{s:FS inequality}, some care has to be taken when defining the scalar product which turns $\QC$ into a group of Heisenberg type. For example, the standard inner product of $\mathbb{C}^n \times \R$, i.e., the inner product in which the basis of left invariant vector fields given in \eqref{e:basis for Hn} is an orthonormal basis will not make the Heisenberg group $\QC$ a group of Heisenberg type.  An orthonormal basis of an H-type compatible metric is given by, $j=1,...,n$,
\begin{equation}\label{e:H-type basis for Hn}
X_j = \frac{\partial}{\partial x_j} +  2 y_j \frac{\partial}{\partial t}, \ X_{n+j}\equiv Y_j =  \frac{\partial}{\partial y_j} -  2 x_j \frac{\partial}{\partial t},\ T=\frac {1}{4}\frac{\partial}{\partial t}
\end{equation}
and homogeneous gauge \eqref{Hgauge}  given by
$N(z,t)\ =\ (|z|^4 + 16t^2)^{1/4}$,
$|z|= \left (\sum_{j=1}^{n}(x_j^2+y_j^2)\right )^{1/2}$.

\subsection{The CR sphere and the Cayley transform}
The simplest CR manifolds are the  three hyperquadrics in complex space, \cite{ChM}
\begin{gather}
Q_{+}: \ r=|z_1|^2+\dots+|z_n|^2 + |w|^2=1,\qquad
 Q_{-}: \ \ r=|z_1|^2+\dots+|z_n|^2 - |w|^2=-1\\
  Q_{0}:\ \ r=|z_1|^2+\dots+|z_n|^2 -\Im(w)=0,
  \end{gather}
where $(z_1,z_2,\dots z_n,w)\in \mathbb{C}^{n}\times \mathbb{C}$ with corresponding contact forms $\tileta\overset{def}{=}\eta_{+}$, $\eta_{-}$ and $\tilde\Theta{=}\eta_0$ equal to $-i\partial r$ which define  strictly pseudoconvex pseudohermitian structures. Of course, these are the "standard" (up to a multiplicative factor depending on the reference) pseudohermitian  structures on the sphere $S^{2n+1}$, hyperboloid, and Heisenberg group $\Hn$, the latter identified with the boundary of the Siegel domain via the map $(z,t)\mapsto (z,t+i|z|^2)$.
A transformation mapping $Q_{-}$ onto $Q_{+}$ minus a curve is given by $w=1/w'$ and $z_j=z_j'/w'$. On the other hand, the transformation
$$\mathcal{C}(Z,W)=\Big(\frac {iZ}{1-W}, i\frac {1+W}{1-W} \Big), \qquad \text{with inverse}\qquad \mathcal{C}^{-1}(z,w)=\Big(\frac{2z}{ i+w},\frac{w-i}{w+i}\Big),$$ maps  the sphere $S^{2n+1}\setminus(0,0,...,0,1)$ onto $\Hn$. The map $\mathcal{C}$ is the Cayley transform (with a pole at $(0,0,...,0,1)$). These transformations clearly preserve the CR structure since they are restrictions of holomorphic maps, but do not preserve the contact forms and are in fact  pseudoconformal pseudohermitian maps
\[\tilde\Theta=\frac 12dw-i\bar z \cdot dz=\frac {1}{|1-W|^2}\tileta, \qquad \tileta=-i \left (\overline WdW+ \bar Z\cdot dZ \right).
\]

\subsubsection{CR conformal flatness}
A fundamental fact characterizing CR conformal flatness is the
 Cartan-Chern-Moser  theorem \cite{Car,ChM,W}. A proof
based on the classical approach used by H. Weyl in Riemannian
geometry (see e.g. \cite{Eis}) can be found in \cite{IV2}, see also \cite{IVZ}.
\begin{thrm}\protect[\cite{Car,ChM,W}]\label{crmain}
Let $(M,\theta,g)$ be a 2n+1-dimensional { non-degenerate}
pseudo-hermitian manifold.
If $n>1$ then $(M,\eta,g)$ is locally pseudoconformaly equivalent to
a hyperquadric in $\mathbb{C}^{n+1}$ if and only if the Chern-Moser tensor vanishes, $\mathcal{S}=0$, \cite{ChM,W}.
In the case  $n=1$,  $(M,\eta,g)$ is locally pseudoconformaly equivalent to a hyperquadric in $\mathbb{C}^{2}$ if and only the tensor $F^{car}$ given below vanishes, $F^{car}=0$, \cite{Car}.
\end{thrm}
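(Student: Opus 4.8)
The plan is to follow the pattern of H.\ Weyl's proof of the Weyl--Schouten conformal flatness criterion in Riemannian geometry, transported to the Tanaka--Webster connection. The necessity direction is immediate: by construction the Chern--Moser tensor $\mathcal{S}$ (for $n>1$) and Cartan's tensor $F^{car}$ (for $n=1$) are pseudoconformally invariant -- they transform only by a conformal weight under $\bar\theta=e^{2f}\theta$ -- and they vanish on the flat model, since the Heisenberg group $\QC$ carries a flat Tanaka--Webster connection (see \ref{ss:Heis group}) and the hyperquadrics $Q_\pm$ are pseudoconformal to it via the Cayley transform. Hence any $(M,\theta,g)$ locally pseudoconformal to a hyperquadric has $\mathcal{S}=0$ (resp.\ $F^{car}=0$).

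For the sufficiency direction, assume first $n>1$ and $\mathcal{S}=0$. I would begin by recording the decomposition of the Tanaka--Webster curvature into its totally trace-free part $\mathcal{S}$, a Kulkarni--Nomizu-type combination of the pseudohermitian Schouten tensor $P$ (built from the Webster--Ricci tensor $\rho$ and the Webster scalar curvature $S$) with the metric $g$, plus the terms carrying the torsion $A$. Next I would write down the behaviour under a pseudoconformal change $\bar\theta=e^{2f}\theta$: the trace-free part transforms conformally, $\bar{\mathcal{S}}=e^{2f}\mathcal{S}$, while the $J$-invariant tensor $P$ picks up the $[1]$-component of the pseudohermitian Hessian $\nabla df$ (plus quadratic terms in $df$ and a Reeb-direction contribution) and the torsion $A$ picks up its $[-1]$-component. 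The goal is to choose $f$ locally so that the connection of $\bar\theta$ is flat; since $\bar{\mathcal{S}}=0$ automatically, and since for $n>1$ the vanishing of the horizontal curvature forces the full curvature to vanish, it suffices to arrange $\bar P=0$ and $\bar A=0$, i.e.\ to prescribe the full symmetric Hessian of $f$ -- its $[1]$, $[-1]$ and Reeb components -- to equal explicit expressions in $P$, $A$ and the lower-order unknowns $df$, $\xi f$.

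This turns the problem into the local integrability of a first-order total differential (Frobenius) system for the jet $(f,df,\nabla df)$, prolonged in the usual way. Differentiating once more and substituting the Tanaka--Webster Bianchi identities, the integrability obstruction is a CR Cotton-type tensor built from $\nabla P$ (suitably skew-symmetrized, together with $\nabla A$); the contracted second Bianchi identity expresses $\nabla^{*}\mathcal{S}$ in terms of exactly this tensor, so $\mathcal{S}=0$ annihilates the obstruction and the Frobenius theorem yields $f$ on a neighbourhood of any point. With this $f$ the structure equations of $\bar\theta$ reduce to those of $\QC$, so $(M,\bar\theta)$ is locally pseudo-Hermitian isomorphic to the Heisenberg group, and composing with the Cayley transform gives the local pseudoconformal equivalence with a hyperquadric. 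For $n=1$ the Chern--Moser tensor vanishes identically, so the obstruction above is vacuous at this order; one must prolong one step further, at which stage Cartan's higher-order tensor $F^{car}$ is the genuine obstruction, and the same Frobenius argument applies once $F^{car}=0$, after a preliminary normalization that also disposes of the torsion.

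The hard part will be the bookkeeping rather than any single idea: deriving the transformation laws for $P$ and $A$ under $\bar\theta=e^{2f}\theta$ with the correct coefficients, checking that the prescribed Hessian is consistent with the commuting prolongation relations, and -- above all -- extracting from the CR Bianchi identities the precise identity $\nabla^{*}\mathcal{S}\sim(\text{CR Cotton tensor})$ that makes $\mathcal{S}=0$ the exact integrability condition. The $n=1$ case is genuinely more delicate, since the relevant analysis must be pushed to one higher order where $F^{car}$, rather than any trace-free piece of the curvature, plays the role of the obstruction.
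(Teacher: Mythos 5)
The paper does not prove Theorem~\ref{crmain}; it only cites \cite{Car,ChM,W} and remarks that a proof ``based on the classical approach used by H. Weyl in Riemannian geometry'' is carried out in \cite{IV2}, see also \cite{IVZ}. Your proposal is exactly that Weyl--Schouten-type argument (pseudoconformal invariance plus flatness of the model for necessity; a prolonged Frobenius system for the conformal factor whose integrability obstruction is a CR Cotton-type tensor killed by $\mathcal{S}=0$ via the contracted Bianchi identity, with the extra prolongation step producing $F^{car}$ when $n=1$), so it coincides in approach with the proof the paper points to.
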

Here, the Chern-Moser tensor $\mathcal{S}$ is a pseudoconformaly
invariant tensor, i.e., if $\phi$ is a
smooth positive function and  $\tileta=\phi\eta$, then
$\mathcal{S}_{\bar\eta}=\phi\mathcal{S}_{\eta}$. The Chern-Moser tensor  $\mathcal{S}$ \cite{ChM}  is
determined completely by the {(1,1)-part of the curvature and the} Ricci 2-form,
\begin{multline}\label{crpinv}
S(X,Y,Z,V)
=\frac12\Big[R(X,Y,Z,V)+R(JX,JY,Z,V)\Big]\\
-\frac{S}{4(n+1)(n+2)}\Bigl[g(X,Z)g(Y,V)-g(Y,Z)g(X,V)
+\Omega(X,Z)\Omega(Y,V)-\Omega(Y,Z)\Omega(X,V)+2\Omega(X,Y)\Omega_s(Z,V)
 \Bigr]\\ -\frac1{2(n+2)}\Big[g(X,Z)\rho(Y,JV)-g(Y,Z)\rho(X,JV)+g(Y,V)\rho(X,JZ)-g(X,V)\rho(Y,JZ)\Big]\\
 -\frac1{2(n+2)}\Big[\Omega(X,Z)\rho(Y,V)-\Omega(Y,Z)\rho(X,V)+\Omega(Y,V)\rho(X,Z)-\Omega(X,V)\rho(Y,Z)\Big]\\
 -\frac1{n+2}\Big[\Omega(X,Y)\rho(Z,V)+\Omega(Z,V)\rho(X,Y)\Big].
\end{multline}

For $n=1$ the tensor $S$ vanishes identically and the Cartan condition can be expressed by the vanishing of   the \emph{the Cartan tensor} $[-1]$ type tensor $F^{car}$  defined on ${H}$ by (\cite{IVZ,IV2}
\begin{multline}\label{crtreeF}
F^{car}(X,Y)=\Cr^2 S(X,JY)+\Cr^2S(Y,JX)+16(\Cr
^2_{Xe_a}A)(Y,e_a)
+16(\Cr^2_{Ye_a}A)(X,e_a)+36SA(X,Y)\\
+48(\Cr^2_{e_aJe_a}A)(X,JY)+3g(X,Y)\Cr^2S(e_a,Je_a).
\end{multline}

\subsection{Quaternionic Contact Structures}\label{ss:qc geometry}
Following Biquard \cite{Biq1, Biq2}, a 4n+3 dimensional manifold $M^{4n+3}$ is quaternionic contact (qc) if we have:
\begin{enumerate}[i)]
\item a co-dimension three distribution $H$, which locally is the intersction of the kernels of three 1-forms on $M$, $H=\bigcap_{s=1}^3 Ker\, \eta_s$, $\eta_s\in \Gamma(M:T^*M)$;
\item a 2-sphere bundle $\mathcal{Q}$ of "almost complex structures" locally generated by
$I_s\,:H \rightarrow H,\quad I_s^2\ =\ -1$, satisfying $I_1I_2=-I_2I_1=I_3$;
 \item a "metric" tensor $g$ on $H$, such that,  $g(I_sX,I_sY)\ =\ g(X,Y)$, and $2g(I_sX,Y) = d\eta_s(X,Y)$, for all $X,Y\in H$.
 \end{enumerate}
We let $\omega_s(X,Y)=g(I_sX,Y)$ be the associated  2-forms.
The "canonical" Biquard connection is the unique linear connection
defined by the following theorem \cite{Biq1}.
\begin{thrm}[ \cite{Biq1}]
If $(M,\eta)$ is a qc manifold  and $n>1$, then
there exists a unique complementary to $H$ subbundle $V$, $TM=H\oplus V$ and a linear connection $\nabla$  with the properties that
  $V$, $H$, $g$  and the 2-sphere bundle $\mathcal{Q}$ 
 are parallel and  the torsion $T$ of $\nabla$ satisfies:
       a) for $ X, Y\in H, \quad T(X,Y)=-[X,Y]|_V\in V$;\hskip.2truein
        b) for $ \xi\in V,\ X\in H$, $T_\xi(X)\equiv T(\xi,X)\in H$ and $ T_\xi\in (sp(n)+sp(1))^{\perp}$.
\end{thrm}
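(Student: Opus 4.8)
The plan is to produce $V$ and $\nabla$ in two stages, each governed by a uniqueness principle, and to isolate the single place where the hypothesis $n>1$ is genuinely needed. In Stage 1 I would show that the triple $(\eta_1,\eta_2,\eta_3)$ canonically determines a three-dimensional complement $V$ to $H$, by constructing a distinguished frame of Reeb vector fields $\xi_1,\xi_2,\xi_3$ with $\eta_s(\xi_k)=\delta_{sk}$ satisfying Biquard's normalisation conditions on the $d\eta_s$. In Stage 2, with $V$ and the auxiliary Riemannian metric $h=g+\sum_{s=1}^{3}\eta_s^2$ now fixed, I would run an analogue of the fundamental theorem of Riemannian geometry in which the torsion is not required to vanish but is instead prescribed by a) and b) together with the demand that $V$, $H$, $g$ and $\mathcal Q$ all be parallel. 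The claim I would then single out is that $n>1$ enters \emph{only} in Stage 1; once $V$ is available, Stage 2 closes for every $n\ge 1$ — which is precisely why in dimension seven it suffices to postulate the Reeb fields (or $V$) as part of the structure.

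\emph{Stage 1 (the vertical bundle).} Fix any smooth complement to $H$ with local frame $\xi^0_1,\xi^0_2,\xi^0_3$ normalised by $\eta_s(\xi^0_k)=\delta_{sk}$; every admissible triple is then $\xi_s=\xi^0_s+X_s$ with $X_s$ a horizontal vector field, and $\eta_s(\xi_k)=\delta_{sk}$ holds automatically. Using $d\eta_s|_H=2\omega_s=2g(I_s\,\cdot\,,\cdot)$ and the nondegeneracy of $\omega_s$ on $H$, the requirement $(\xi_s\lrcorner d\eta_s)|_H=0$ already determines each $X_s$ uniquely, so $V$, if it exists, is unique. The remaining normalisation conditions (the off-diagonal ones, and the ones involving vertical arguments) become compatibility identities among the $\xi_s$; the heart of the matter is that, by a representation-theoretic computation for $Sp(n)Sp(1)$ acting on $H$, these identities hold automatically when $\dim H=4n\ge 8$, whereas for $n=1$ they are a genuine extra condition on the qc structure.

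\emph{Stage 2 (the connection).} With $V$ fixed, every $h$-metric linear connection preserving the splitting $TM=H\oplus V$ and the bundle $\mathcal Q$ is uniquely determined by its torsion: writing $\nabla=\nabla^h+\mathcal A$ with $\nabla^h$ the Levi-Civita connection of $h$, metricity makes each $\mathcal A_X$ $h$-skew and the torsion recovers $\mathcal A$ through the Koszul-type identity
\[
2h(\mathcal A_X Y,Z)=h(T(X,Y),Z)-h(T(Y,Z),X)+h(T(Z,X),Y).
\]
So it remains to exhibit one $T$ for which the resulting $\nabla$ is admissible with torsion of the shape a), b). Condition a) fixes $T$ on $H\times H$; condition b) fixes the $(sp(n)+sp(1))^{\perp}$-component of each $T_\xi$; and the undetermined pieces — the $sp(n)+sp(1)$-component of $T_\xi$ and $T$ on $V\times V$ — are then pinned down by the parallelism of $\mathcal Q$ and of the splitting. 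The uniqueness of $\nabla$ along horizontal directions is automatic because the first prolongation of $sp(n)\oplus sp(1)$ vanishes (being a subalgebra of an orthogonal Lie algebra, whose prolongation is zero), which is what makes Stage 2 insensitive to $n$. One finishes by checking directly that this $\nabla$ parallelises $g$, $V$, $H$ and $\mathcal Q$ — noting that it is $\mathcal Q$, not each $I_s$ individually, that is parallel, since $\nabla I_s=\sum_t\alpha_{st}I_t$ for an $sp(1)$-valued connection form — and that its torsion meets a) and b).

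The main obstacle is Stage 1: establishing that the compatibility identities for the Reeb frame hold exactly when $n>1$. This is the step that genuinely uses the quaternionic (not merely almost-complex) structure, where the analogy with the CR/Tanaka--Webster construction breaks and must be replaced by honest $Sp(n)Sp(1)$-representation theory on $H$ — concretely, by showing that a certain equivariant gadget built from $H$, $\Lambda^2 H$ and $\mathrm{End}(H)$ has the relevant component vanishing precisely in that range. Everything else — the explicit Reeb correction, the Koszul formula, fixing the remaining torsion components, and verifying the parallelism and torsion type — is an organised but routine computation.
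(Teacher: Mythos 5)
The paper does not actually prove this statement: it is imported from Biquard's \cite{Biq1}, and the surrounding text only records its consequences --- the characterization of $V$ by the Reeb normalisation $\eta_s(\xi_k)=\delta_{sk}$, $(\xi_s\lrcorner d\eta_s)_{|H}=0$, $(\xi_s\lrcorner d\eta_k)_{|H}=-(\xi_k\lrcorner d\eta_s)_{|H}$, and Duchemin's observation that in dimension seven the Reeb conditions must be postulated. Your two-stage architecture is consistent with that discussion: you correctly note that the diagonal condition $(\xi_s\lrcorner d\eta_s)_{|H}=0$ determines each candidate Reeb field uniquely via the nondegeneracy of $\omega_s$ on $H$, that the off-diagonal symmetry is then the genuine obstruction to existence, and that once $V$ is available the connection is determined for every $n\ge 1$ because $sp(n)\oplus sp(1)\subset so(4n)$ has vanishing first prolongation. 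This localisation of the role of $n>1$ matches the paper's remark that in dimension seven one simply assumes the Reeb conditions and still obtains the connection.

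As a proof, however, the proposal has a gap exactly at its centre. The assertion that the off-diagonal compatibility identities ``hold automatically when $4n\ge 8$ by a representation-theoretic computation for $Sp(n)Sp(1)$'' \emph{is} the content of the existence half of the theorem: you neither identify the equivariant tensor whose relevant isotypic component must vanish nor show that it vanishes for $n\ge 2$ while being a genuine condition for $n=1$. Similarly, Stage 2 prescribes only part of the torsion --- a) fixes $T$ on $H\times H$ and b) merely constrains $T_\xi$ to lie in $(sp(n)+sp(1))^{\perp}$ --- and then asserts the remaining pieces are ``pinned down'' by parallelism of $g$, the splitting and $\mathcal{Q}$; one must actually check that this overdetermined system for the contorsion is consistent, which is where the decomposition of $T_\xi$ into symmetric and skew parts and into $Sp(n)Sp(1)$-components does real work (compare the splitting $T_{\xi_j}=T^0_{\xi_j}+I_jU$ used later in the paper). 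Until those two computations are supplied, what you have is a correct and well-organised roadmap of Biquard's argument rather than a proof.
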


Biquard also showed that the  "vertical" space $V$ is generated by the Reeb vector fields
$\{\xi_1,\xi_2,\xi_3\}$
determined by
$
\eta_s(\xi_k)=\delta_{sk}, \quad (\xi_s\lrcorner d\eta_s)_{|H}=0,
\quad
(\xi_s\lrcorner d\eta_k)_{|H}=-(\xi_k\lrcorner d\eta_s)_{|H}.
$
If the dimension of $M$ is seven, $n=1$, the Reeb vector fields might not exist.
D. Duchemin \cite{D} showed  that if we assume their existence, then there is a connection as before. Henceforth, by a qc structure in dimension $7$ we 
mean a qc structure satisfying the Reeb conditions.

Note that the extended Riemannian metric  $h$ given by \eqref{hmetric} as well as the Biquard connection do not depend on the action of $SO(3)$ on $V$, but both change  if $\eta$ is multiplied by a conformal factor.

\subsubsection{Invariant decompositions}

As usual any endomorphism $\Psi$ of $H$ can be decomposed with respect to
the hypercomplex complex structure $I_s$, $s=1,2,3$ uniquely into its two $Sp(n)Sp(1)$-invariant  parts. In short we will denote these components correspondingly
by $\Psi_{[-1]}$ and $\Psi_{[3]}$. Furthermore, we shall use the same
notation for the corresponding two tensor, $\Psi(X,Y)=g(\Psi X,Y)$.
Explicitly, $\Psi=\Psi=\Psi_{[3]}+\Psi_{[-1]}$, where
\begin{equation}{\label{comp}}
\begin{aligned}
\Psi_{[3]}(X,Y)=%
\frac {1}{4}\left [ \Psi(X,Y)+\Psi(I_1X,I_1Y)+\Psi(I_2X,I_2Y)+\Psi(I_3X,I_3Y)\right ],\\ \Psi_{[-1]}(X,Y)=%
\frac {1}{4}\left [3 \Psi(X,Y)-\Psi(I_1X,I_1Y)-\Psi(I_2X,I_2Y)-\Psi(I_3X,I_3Y)\right ].
\end{aligned}
\end{equation}
The above notation is justified by the fact that the $[3]$ and $%
[-1]$ components are the projections on the eigenspaces of the operator
$\Upsilon =\ I_1\otimes I_1+I_2\otimes I_2+I_3\otimes I_3, \quad (\Upsilon \Psi) (X,Y)\overset{def}{=}\Psi(I_1X,I_1Y)+\Psi(I_2X,I_2Y)+\Psi(I_3X,I_3Y)$
corresponding, respectively, to the eigenvalues $3$ and $-1$. Note that
the metric $g$  belong{s} to the [3]-component, {while}  the 2-forms $\omega_s, s=1,2,3$ belong to the [-1]-component since. Furthermore, the two
components are orthogonal to each other with respect to $g$. For $n=1$ the [3]-component is 1-dimensional, $\Psi_{[3]}=\frac{tr\Psi}4g$.

\subsubsection{Curvature of a Quaternionic Contact Structure}
The curvature tensors are defined in a standard fashion using \eqref{e:curv tensor} and \eqref{qscs}.
In addition we define the qc Ricci 2-forms
\begin{equation}\label{neww}
{\rho_s(A,B)=\frac{1}{4n}R(A,B,e_a,I_se_a)}, \quad s=1,2,3.
\end{equation}
Biquard \cite{Biq1}  showed that the map $T_{\xi _j}=T^0_{\xi_j}\ +\ I_j U$,  where $T^0_{\xi_j}$ is symmetric while $I_j U$ is a skew-symmetric map and $U\in \Psi_{[3]}$,  $I_sU=UI_s$, $s=1,2,3$. Further properties were found in \cite{IMV}. A remarkable fact, \cite[Theorem 3.12]{IMV}, is that (unlike the CR case!)
the torsion endomorphism determines the (horizontal) qc-Ricci tensor
and the (horizontal) qc-Ricci forms of the Biquard connection. For this we also need  the torsion type tensor $T^0\overset{def}{=}  T^0_{\xi _1}\, I_1+T^0_{\xi _2}\, I_2+T^0_{\xi _3}\, I_3\in \Psi_{[-1]}$ introduced in \cite{IMV}.
\begin{thrm}[\cite{IMV}] On a QC manifold $(M,\eta)$ we have
\begin{equation}\label{sixtyfour}
\begin{aligned}
& Ric(X,Y) \ =\ (2n+2)T^0(X,Y) +(4n+10)U(X,Y)+\frac{S}{4n}g(X,Y)\\
 &\rho_s(X,I_sY) \  =\
-\frac12\Bigl[T^0(X,Y)+T^0(I_sX,I_sY)\Bigr]-2U(X,Y)-%
8n(n+2)Sg(X,Y).
\end{aligned}
\end{equation}
\end{thrm}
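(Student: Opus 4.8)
The plan is to read off both identities by contracting curvature identities for the Biquard connection $\nabla$ that follow from its structure equations. The starting point is the first Bianchi identity for a connection with torsion,
\begin{equation*}
\mathfrak{S}_{X,Y,Z}\, R(X,Y)Z \ =\ \mathfrak{S}_{X,Y,Z}\,\Bigl[\, (\nabla_X T)(Y,Z) \ +\ T\bigl(T(X,Y),Z\bigr)\,\Bigr],
\end{equation*}
where $\mathfrak{S}$ denotes the cyclic sum. First I would record the structural inputs to be used: for $X,Y\in H$ we have $T(X,Y)=2\sum_{s=1}^{3}\omega_s(X,Y)\,\xi_s\in V$ (property a) of the Biquard connection together with $2g(I_sX,Y)=d\eta_s(X,Y)$ and $\eta_s(\xi_k)=\delta_{sk}$); the vertical torsion decomposes as $T_{\xi_j}=T^0_{\xi_j}+I_jU$ with $T^0_{\xi_j}$ symmetric and trace-free and $U\in\Psi_{[3]}$, $I_sU=UI_s$; and $\nabla$ preserves $g$, $H$, $V$ and the sphere bundle $\mathcal{Q}$, so that $\nabla_X I_i=-\alpha_k(X)I_j+\alpha_j(X)I_k$ (cyclic) for local connection $1$-forms $\alpha_s$, with the same $\mathfrak{sp}(1)$-rotation rule applying to $\xi_s$, $\eta_s$ and $\omega_s$.

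Evaluating the Bianchi identity on horizontal arguments and splitting into the $H$- and $V$-components, the horizontal part becomes a purely algebraic relation expressing $\mathfrak{S}_{X,Y,Z}\,R(X,Y,Z,V)$ through the $\omega_s$ and the vertical torsion $T_{\xi_s}=T^0_{\xi_s}+I_sU$, while the vertical part records the qc structure equations $\mathfrak{S}_{X,Y,Z}\,(\nabla_X\omega_s)(Y,Z)=(\alpha\text{-terms})$. Contracting the horizontal relation in slots $1,4$, respectively in slots $3,4$ against $I_s$ — using that $R(X,Y)|_H\in\mathfrak{sp}(n)+\mathfrak{sp}(1)$ because $g$ and $\mathcal{Q}$ are $\nabla$-parallel — already produces the \emph{skew-symmetric} part of $Ric$ and linear relations among the $\rho_s$. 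To capture the \emph{symmetric} part of the qc-Ricci tensor I would bring in the curvature components with one vertical entry: differentiating the structure equations for $d\eta_s$ and using the $\nabla^2$ Ricci identity, one expresses $R(\xi_s,X,Y,Z)$ in terms of $(\nabla_\bullet T^0_{\xi_s})$, $(\nabla_\bullet U)$ and algebraic torsion terms, and a trace in the remaining slot then gives $Ric$ and $\rho_s(\,\cdot\,,I_s\,\cdot\,)$ purely in terms of $T^0_{\xi_s}$ and $U$. Throughout, the $\alpha_s$-terms must be checked to cancel after the cyclic sum and the contractions; this uses $\nabla g=0$, $I_1I_2=I_3$ and the $SO(3)$-invariance of the contractions.

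The final step I would carry out is the algebraic bookkeeping: with $\sum_a g(I_te_a,I_se_a)=4n\,\delta_{st}$, $\sum_a\omega_t(e_a,I_se_a)=\mp 4n$ for the cyclically ordered pairs $t\neq s$, the symmetry and trace-freeness of $T^0_{\xi_s}$, and $I_sU=UI_s$, all the relevant traces collapse to fixed multiples of $T^0$, $U$ and $g$, producing the coefficients $2n+2$ and $4n+10$ together with the combination $T^0(X,Y)+T^0(I_sX,I_sY)$ in the second identity; imposing $S=\sum_a Ric(e_a,e_a)$ then forces $\mathrm{tr}\,U=0$ and pins down the scalar-curvature coefficient. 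The main obstacle is exactly the middle step: the symmetric part of the qc-Ricci tensor is invisible to the first Bianchi identity alone, so one must control the mixed horizontal--vertical curvature and verify that, after contraction, no tensor beyond $T^0$, $U$ and $g$ can survive — this is where the special orthogonality $T_\xi\in(\mathfrak{sp}(n)+\mathfrak{sp}(1))^{\perp}$ and the refined torsion decomposition of \cite{IMV} do the essential work, and where keeping track of the $\mathfrak{sp}(1)$-connection forms $\alpha_s$ makes the computation delicate.
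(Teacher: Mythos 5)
The paper states this theorem without proof, quoting it from \cite{IMV}, so the only benchmark is the argument given there. Your skeleton matches it: the first Bianchi identity for the Biquard connection with its vertical-valued horizontal torsion $T(X,Y)=2\sum_s\omega_s(X,Y)\xi_s$, the decomposition $T_{\xi_j}=T^0_{\xi_j}+I_jU$ with $T_\xi\in(\mathfrak{sp}(n)+\mathfrak{sp}(1))^{\perp}$, the $\mathfrak{sp}(1)$-connection forms $\alpha_s$, and contractions. Your observation that the contracted horizontal Bianchi identity only controls the skew part of $Ric$ and linear relations among the traces is also exactly how \cite{IMV} proceeds.

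The step you single out as the main obstacle is, however, misdescribed in a way that would stall the computation. Tracing $R(\xi_s,X,Y,Z)$ over any pair of its horizontal slots always leaves $\xi_s$ occupying an argument, so no such trace can produce $Ric(X,Y)$ or $\rho_s(X,I_sY)$ for horizontal $X,Y$; in \cite{IMV} the mixed curvature $R(\xi_s,\cdot,\cdot,\cdot)$ feeds into $Ric(\xi_s,X)$ and the divergence formulas \eqref{div:To}, not into \eqref{sixtyfour}. What actually delivers the symmetric part is the identification of $\rho_s$ with the curvature of the induced connection on $\mathcal{Q}$: since $R(X,Y)|_H\in\mathfrak{sp}(n)\oplus\mathfrak{sp}(1)$ and the two summands are trace-orthogonal, $4n\,\rho_s(X,Y)=\sum_aR(X,Y,e_a,I_se_a)$ isolates the $\mathfrak{sp}(1)$-component $(d\alpha_s+\alpha_j\wedge\alpha_k)(X,Y)$, and Biquard's construction gives $\alpha_s$ explicitly in terms of $d\eta$, the torsion and the vertical commutators; evaluating $d\alpha_s$ on $H\wedge H$ (which brings in $\alpha_s(\xi_t)$ through the vertical part of $[X,Y]$) yields the second line of \eqref{sixtyfour}, and the Bianchi-derived relations linking $Ric(X,Y)$ to the $\rho_s(I_sX,Y)$ then give the first line. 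Two smaller slips: $\sum_a\omega_t(e_a,I_se_a)=0$ for $t\neq s$ (not $\mp4n$), and the trace-freeness of $U$ is built into the torsion decomposition via $T_\xi\in(\mathfrak{sp}(n)+\mathfrak{sp}(1))^{\perp}$ rather than being forced at the end by $S=\sum_aRic(e_a,e_a)$.
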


We say that $M$ is a \emph{qc-Einstein manifold} if the horizontal
Ricci tensor is proportional to the horizontal metric  $g$,
\begin{equation}\label{qcA}Ric(X,Y)=\frac{S}{4n}g(X,Y)
\end{equation}
which taking
into account  \eqref{sixtyfour}  is equivalent to $T^0=U=0$.
Furthermore, by \cite[Theorem 4.9]{IMV} and \cite[Theorem 1.1]{IMV3} any
qc-Einstein structure has  constant qc-scalar curvature. It should be mentioned that  qc-Einstein structures have proved useful in the construction of metrics with special holonomy \cite{AFISUV1} and heterotic string theory, see Section \ref{s:strings} for some details on the latter. Such applications are possible due to the following properties/ characterization of the qc-Einstein structures, see \cite[Theorem 1.3]{IV2} and \cite[Theorem 4.4.4]{IV3} for $S\not=0$ and \cite[Theorem 5.1]{IMV3} for $S=0$ cases, {respectively}.

\begin{thrm}\label{str_eq_mod_th} Let  $M$ be a qc manifold. The following conditions are equivalent:
\begin{enumerate}[a)]
\item $M$ is a qc-Einstein manifold;
\item  locally, the given qc-structure is defined by  1-form $(\eta_1,\eta_2,\eta_3)$  such that for some constant $S$ we have
\begin{equation}\label{str_eq_mod}
d\eta_i=2\omega_i+\frac{S}{8n(n+2)}\eta_j\wedge\eta_k;
\end{equation}
\item locally,  the given qc-structure is defined by  1-form $(\eta_1,\eta_2,\eta_3)$  such that the corresponding connection 1-forms vanish on $H$, in fact, $\nabla I_i=-\alpha_j\otimes I_k+\alpha_k\otimes I_j$,
$\nabla\xi_i=-\alpha_j\otimes\xi_k+\alpha_k\otimes\xi_j$ with $\alpha_s=-\frac{S}{8n(n+2)}\eta_s$.
\end{enumerate}
\end{thrm}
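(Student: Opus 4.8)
The plan is to prove the cycle of implications a)$\Rightarrow$b)$\Rightarrow$c)$\Rightarrow$a), treating a)$\Rightarrow$b) — producing the local normal form — as the substantive step. Throughout I would use two facts already available: by \eqref{sixtyfour} together with the orthogonality of the $\Psi_{[-1]}$ and $\Psi_{[3]}$ components, condition a) is equivalent to the vanishing of the torsion endomorphism, $T^0=U=0$; and a qc-Einstein structure has constant qc-scalar curvature $S$ by \cite[Theorem~4.9]{IMV} and \cite[Theorem~1.1]{IMV3}. It is convenient to abbreviate $c=\frac{S}{8n(n+2)}$.

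The implications b)$\Rightarrow$c) and c)$\Rightarrow$a) are direct computations with the first Cartan structure equations of the Biquard connection. For b)$\Rightarrow$c), I would differentiate \eqref{str_eq_mod}, impose $d^2=0$, and invoke the \emph{uniqueness} clause of Biquard's theorem to identify $\nabla$; collecting the $\eta_j\wedge\eta_k$ terms forces the connection $1$-forms to be $\alpha_s=-c\,\eta_s$ and $\nabla$ to act on $I_i,\xi_i$ exactly as in c), so in particular the horizontal connection forms vanish. For c)$\Rightarrow$a), I would substitute $\nabla\xi_i=-\alpha_j\otimes\xi_k+\alpha_k\otimes\xi_j$ with $\alpha_s=-c\,\eta_s$ into the identity $d\eta_i(A,B)=(\nabla_A\eta_i)(B)-(\nabla_B\eta_i)(A)+\eta_i(T(A,B))$ and read off the torsion; the torsion endomorphisms $T_{\xi_j}$ so obtained vanish, hence $T^0=U=0$, i.e.\ \eqref{qcA} holds and $M$ is qc-Einstein (this also re-derives, from $d^2=0$, that $S$ must be constant, consistent with the cited fact).

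The main work is a)$\Rightarrow$b). Starting from a qc-Einstein structure we have $T^0=U=0$ and $dS=0$, and for \emph{any} admissible local $1$-form $(\eta_1,\eta_2,\eta_3)$ the metric compatibility $2g(I_iX,Y)=d\eta_i(X,Y)$ already gives $d\eta_i|_{H\times H}=2\omega_i$; so only the mixed components $d\eta_i(\xi_j,\cdot)$ and the vertical components $d\eta_i(\xi_j,\xi_k)$ remain to be normalized. Here I would feed $T^0=U=0$ and $dS=0$ into Biquard's explicit formulas for the connection $1$-forms $\alpha_s$ of $\nabla$ (which in general involve the torsion and $dS$); these collapse and force $\alpha_s|_H=0$ for the given frame. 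The remaining gauge freedom — the $SO(3)$-action on $V$ together with the conformal rescaling $\bar\eta=\mu\,\eta$ — is then used to bring $\alpha_s|_V$ into the form $-c\,\eta_s$, after which the Cartan structure equations collapse exactly to \eqref{str_eq_mod}. The cases $S\ne 0$ and $S=0$ call for slightly different normalizations of the local $1$-form and are carried out in \cite[Theorem~1.3]{IV2}, \cite[Theorem~4.4.4]{IV3} and \cite[Theorem~5.1]{IMV3}, respectively.

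I expect the normalization step inside a)$\Rightarrow$b) to be the main obstacle: it requires the full torsion decomposition of the Biquard connection and Biquard's formulas for its connection $1$-forms, and then a careful verification that the $SO(3)\times\mathbb{R}^+$ gauge freedom is \emph{exactly} enough to annihilate the horizontal connection forms while simultaneously normalizing the vertical ones. By contrast, b)$\Rightarrow$c)$\Rightarrow$a) are essentially algebraic consequences of the structure equations and the uniqueness of the Biquard connection.
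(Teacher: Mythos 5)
The paper itself does not prove this theorem: it quotes it from \cite[Theorem 1.3]{IV2}, \cite[Theorem 4.4.4]{IV3} (for $S\neq 0$) and \cite[Theorem 5.1]{IMV3} (for $S=0$), and your outline is broadly consistent with the strategy of those references — reduce a) to $T^0=U=0$ via \eqref{sixtyfour}, use constancy of $S$, and analyze the $sp(1)$-connection forms $\alpha_s$. But as a proof your proposal has two concrete problems. First, the step you yourself identify as "the main work", a)$\Rightarrow$b), is in the end outsourced to exactly the references being proved ("\ldots are carried out in \cite[Theorem~1.3]{IV2}\ldots"), so the substantive content — Biquard's explicit formulas for $\alpha_s|_H$ and $\alpha_s|_V$ in terms of the torsion and of $[\xi_i,\xi_j]$, the integrability of $V$ for qc-Einstein structures, and the constancy of $S$ (which in dimension seven does not follow from the Bianchi identities alone and requires the conformal tensor $W^{qc}$) — is not actually supplied. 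Also, the "conformal rescaling" you invoke as gauge freedom is not available: a non-constant rescaling changes the horizontal metric and hence the qc-Einstein condition, and the theorem concerns the \emph{given} structure; only the pointwise $SO(3)$ rotation of $(\eta_1,\eta_2,\eta_3)$ is at one's disposal.

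Second, the implication c)$\Rightarrow$a) as you sketch it does not go through. The identity $d\eta_i(A,B)=(\nabla_A\eta_i)(B)-(\nabla_B\eta_i)(A)+\eta_i(T(A,B))$ detects only the $\eta_i$-component, i.e.\ the \emph{vertical} part, of the torsion: for $A,B\in H$ it merely reproduces $d\eta_i|_H=2\omega_i$, and for $A=\xi_j$, $B=X\in H$ it reads $0=0$ because $T(\xi_j,X)\in H$ by Biquard's theorem. It therefore gives no information about the horizontal endomorphisms $T_{\xi_j}\colon H\to H$, which are precisely the carriers of $T^0$ and $U$. To close this step you should instead either compute $\mathcal{L}_{\xi_i}g$ and $\mathcal{L}_{\xi_i}I_j$ from \eqref{str_eq_mod} and use Biquard's expressions for the symmetric and antisymmetric parts of $T_{\xi_i}$ in terms of these Lie derivatives, or compute the $sp(1)$-curvature $d\alpha_i-\alpha_j\wedge\alpha_k=-\tfrac{S}{4n(n+2)}\,\omega_i+(\text{vertical terms})$ and compare its horizontal part with the formula for $\rho_s(X,I_sY)$ in \eqref{sixtyfour}: summing over $s$ and using that $T^0\in\Psi_{[-1]}$ and $U\in\Psi_{[3]}$ are trace-free and mutually orthogonal forces $T^0=U=0$, hence \eqref{qcA}.
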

In particular, in the positive scalar curvature case the qc-Einstein manifold are exactly the locally 3-Sasakian manifolds, i.e, for every $p\in M$  there exist an open neighborhood $U$ of $p$ and a  matrix $\Psi\in \mathcal{C}^\infty(U:SO(3))$, s.t., $\Psi\cdot\eta$ is 3-Sasakian.
 {A (4n + 3)-dimensional
(pseudo) Riemannian manifold $(M,g)$  is 3-Sasakian  if the cone metric  is a (pseudo) hyper-K\"ahler metric \cite{BG,BGN}.  We note explicitly that in some questions it is useful to define 3-Sasakian manifolds in the wider sense of positive (the usual terminology) or negative 3-Sasakian structures, cf. \cite[Section 2]{IV2} and \cite[Section 4.4.1]{IV3} where the "negative" 3-Sasakian term was adopted in the case when the Riemannian cone is hyper-K\"ahler of signature $ (4n,4)$.  As well known, a positive 3-Sasakian manifold is Einstein with a positive Riemannian scalar curvature \cite{Kas} and, if complete,  it is compact with finite fundamental group due to Myer’s
theorem.  The negative 3-Sasakian structures are Einstein with respect to the corresponding pseudo-Riemannian metric of signature $(4n,3)$ \cite{Kas,Tan}.  In this case, by a simple change of signature, we obtain a positive definite $nS$ metric on $M$,  \cite{Tan,Jel,Kon}.}

\subsubsection{The quaternionic Heisenberg Group $\QH$.}\label{ss:qc Heisenberg}
{The basic  example of a qc manifold is provided by the quaternionic Heisenberg group $\QH$}  on which we introduce coordinates by regarding $\QH=\mathbb{H}^n\times\text{Im}\mathbb{H}$, $\quad
(q,\omega)\in \QH$ so that the multiplication takes the form \eqref{e:H-type Iwasawa groups}.

The "\emph{standard}" qc contact form in quaternion variables is
$
\tilde\Theta= (\tilde\Theta_1,\ \tilde\Theta_2, \
\tilde\Theta_3)= \frac 12\ (d\omega - q \cdot d\bar q + dq\,
\cdot\bar q) $ or, using real coordinates,
\begin{gather}\label{e:Heisenbegr ctct forms}\notag
\tilde\Theta_1 = \frac 12\ dx- x^\alpha d t^\alpha+ t^\alpha d
x^\alpha-z^\alpha d y^\alpha + y^\alpha d z^\alpha, \quad
 \tilde\Theta_2= \frac 12\ dy- y^\alpha d t^\alpha+ z^\alpha d x^\alpha+ t^\alpha d y^\alpha - x^\alpha d z^\alpha,\\
\tilde\Theta_2= \frac 12\ dz- z^\alpha d t^\alpha- y^\alpha d
x^\alpha+ x^\alpha d y^\alpha + t^\alpha d z^\alpha.
\end{gather}
The left-invariant horizontal vector fields are

\begin{equation}\label{qHh}
\begin{aligned}
T_{\alpha} = \dta {} +2x^{\alpha}\dx {}+2y^{\alpha}\dy
{}+2z^{\alpha}\dz {} , \
  X_{\alpha} = \dxa {}-2t^{\alpha}\dx {}-2z^{\alpha}\dy
{}+2y^{\alpha}\dz {} ,\\
Y_{\alpha} =  \dya {} +2z^{\alpha}\dx {}-2t^{\alpha}\dy
{}-2x^{\alpha}\dz {},  \  Z_{\alpha} = \dza {} -2y^{\alpha}\dx
{}+2x^{\alpha}\dy {}-2t^{\alpha}\dz {}\,,
\end{aligned}
\end{equation}
with corresponding sub-Laplacian
\begin{equation}\label{e:qc Heis sub-laplacian}
\lap_{\tilde\Theta} u=\sum_{\alpha=1}^n \left (T_{\alpha}^2u+ X_{\alpha}^2u+Y_{\alpha}^2u+Z_{\alpha}^2u \right ).
\end{equation}
The (left-invariant vertical) Reeb  fields $\xi_1,\xi_2,\xi_3$ are
$
\xi_1=2\dx {}, \quad \xi_2=2\dy {},\quad \xi_3=2\dz {}.
$
 On $\QH$, the left-invariant flat connection  is the
Biquard connection, hence $\QH$ is a flat qc structure. It should be noted that the latter property characterizes (locally) the qc structure ${\tilde\Theta}$ by \cite[Proposition 4.11]{IMV}, but in fact vanishing of the curvature on the {horizontal} space is {enough} \cite[Propositon 3.2]{IV}.
Thus, by \cite[Propositon 3.2]{IV}, a quaternionic contact manifold is locally isomorphic to the
quaternionic Heisenberg group exactly when the curvature of the
Biquard connection restricted to $H$ vanishes, $R_{|_H}=0$.

\subsubsection{Standard qc-structure on 3-Sasakain sphere and the qc Cayley transform}\label{ss:qc sphere}
{The second example is the {"s}tandard{"} qc-structure on the 3-Sasakain sphere.}
 The "standard" qc 3-form on the sphere $S^{4n+3}=
\{\abs{q}^2+\abs{p}^2=1 \}\subset \Hn\times\mathbb{H}$, is
\begin{equation}\label{e:stand cont form on S}
\tilde\eta\ =\ dq\cdot \bar q\ +\ dp\cdot \bar p\ -\ q\cdot d\bar q -\
p\cdot d\bar p.
\end{equation}
We identify $\QH$ with the boundary $\Sigma$ of a Siegel domain in
$\Hn\times\mathbb{H}$,
$
\Sigma\ =\ \{ (q',p')\in \Hn\times\mathbb{H}\ :\ \text{Re} {\ p'}\ =\
\abs{q'}^2 \},
$
by using the map $(q', \omega')\mapsto (q',\abs{q'}^2 - \omega')$.
The Cayley transform,
$\mathcal{C}:S\setminus{\{\text{pt.}\}}\rightarrow \Sigma$, is
\begin{equation*}\label{e:QC Cayley}
 (q', p')\ =\ \mathcal{C}\ \Big ((q, p)\Big)=( (1+p)^{-1} \ q , (1+p)^{-1} \ (1-p)).
  \end{equation*}
 By \cite[Section 8.3]{IMV} we have on $\QH$
\begin{equation}\label{e:Cayley transf ctct form group}
\Theta\ \overset{def}{=}\ \lambda\ \cdot (\mathcal{C}^{-1})^*\, \tilde\eta\ \cdot \bar\lambda\ =\ \frac
{8}{\abs{1+p'\, }^2}\, \tilde\Theta.
\end{equation}
where $\lambda\ = {\abs {1+p\,}}\, {(1+p)^{-1}}$ is a unit quaternion.
Alternatively, on the sphere this can be written as
\begin{equation}\label{e:Cayley transf ctct form}
\eta\overset{def}{=}\mathcal{C}^*\,\tilde{\Theta}\ =\ \frac
{1}{2\,\abs {1+p\, }^2}\, \lambda\, \tilde\eta\, \bar \lambda,
\end{equation}
where $\lambda$ is a unit quaternion.
 In any case, the above formulas  show  the Cayley transforms is a conformal quaternionic contact map. In addition, we can use it to determine the qc scalar curvature of the sphere $(S^{4n+3}, \tileta)$ and find a solution of the Yamabe equation on $\QH$.
For  $(q',p')\in \Sigma\subset \Hn\times\mathbb{H}, \quad p'=|q'|^2+ \omega'$, consider the function
\begin{equation}
\begin{aligned}\label{e:h and Phi}
 h& =\frac  {1}{16}|1+p'|^2=\frac {1}{16}\left [(1+|q'|^2)^2+
 |\omega'|^2 \right ], \\
\Phi & =\left({2h} \right)^{-(Q-2)/4} = 8^{(Q-2)/4}\ {\left [(1+|q'|^2)^2+ |\omega'|^2 \right ]}^{-(Q-2)/4},
\end{aligned}
\end{equation}
so that we have $$\Theta=\frac {1}{2h}\tilde\Theta =\Phi^{4/(Q-2)}\tilde\Theta.$$
 A small calculation shows that the sub-laplacian of $h$ with respect to $\tilde\Theta$ is given
by $\triangle_{\tilde\Theta} h = \frac {Q-6}{4} + \frac {Q+2}{4}|q'|^2$ and thus  $\Phi$ is a solution of the qc Yamabe equation
on the  Heisenberg group,
\begin{equation}\label{e:Yamabe for Phi}
\triangle_{\tilde\Theta} \Phi = -K\, \Phi^{2^*-1}, \qquad K=(Q-2)(Q-6)/8.
\end{equation}
Denoting with $\Lap$ and $\Lap_{\tilde\Theta}$ the
conformal sub-laplacians of $\Theta$ and $\tilde\Theta$, respectively, we have (see also \eqref{e:conf Yamabe volume})
$$\Phi^{-1}\Lap (\Phi^{-1}u) = \Phi^{-2^*}\Lap_{\tilde\Theta} u.$$
Taking $u=\Phi$ we come to $\Lap ( 1 ) = \Phi^{1-2^*}\triangle_{\tilde\Theta} \Phi$, since the qc structure ${\tilde\Theta}$ is flat, which shows \begin{equation}\label{e:S of standard qc sphere}
S_{\tileta}= S_{\Theta}=4\frac {Q+2}{Q-2}K=8n(n+2)
 \end{equation}
 using  that the two structures are isomorphic via the diffe{o}morphism $\mathcal{C}$, or rather its extension, since we can consider $\mathcal{C}$ as a quaternionic contact conformal transformation between the whole sphere $ S^{4n+3}$  and the compactification $\hat{\Sigma}\cup {\infty}$ of the quaternionic Heisenberg group by adding the point at infinity, cf. \cite[Section 5.2]{IMV1}.

\subsubsection{QC conformal flatness \cite{IV}}
A QC manifold $(M,\eta)$ is called locally qc conformally flat if there is a local  diffeomorphims $F:\QH\rightarrow M$, such that $F^*\eta=\phi\Psi \tilde\Theta$ for some positive function $\phi$.

The qc-conformal flatness of a manifold is characterized by the vanishing of the  \emph{qc-conformal curvature} tensor $W^{qc}$ found in \cite{IV},

\begin{multline*}
W^{qc}(X,Y,Z,V)=\frac14\Big[R(X,Y,Z,V)+\sum_{s=1}^3R(I_sX,I_sY,Z,V)\Big]-\frac12\sum_{s=1}^3\omega_s(Z,V)\Big[T^0(X,I_sY)-T^0(I_sX,Y)\Big]\\+\frac{S}{32n(n+2)}\Big[(g\owedge g)(X,Y,Z,V)+\sum_{s=1}^3(\omega_s\owedge\omega_s)(X,Y,Z,V)\Big]+(g\owedge U)(X,Y,Z,V)+\sum_{s=1}^3(\omega_s\owedge I_sU)(X,Y,Z,V),
\end{multline*}
{where $(A\owedge B)$ denotes the Kulkarni-Nomizu product of two tensor, i.e.,
$$(A\owedge B)(X,Y,Z,V)=A(X,Z)B(Y,V)-A(Y,Z)(B(X,V)+A(Y,V)B(X,Z)-A(X,V)(B(Y,Z).$$

\begin{thrm}[\cite{IV}]\label{T:flat}
a)$W^{qc}$ is qc-conformal invariant, i.e., if
$\bar\eta=\kappa\Psi\eta$ then $ W^{qc}_{\bar\eta}=\phi\,
W^{qc}_{\eta}$, where
$0<\kappa\in \mathcal{C}^\infty(M)$, and $\Psi\in \mathcal{C}^\infty(M:SO(3))$.

b) A qc-structure  is locally
qc-conformal to the standard flat qc-structure on the
quaternionic Heisenberg group $\QH$  if and only if  $W^{qc}=0$.
\end{thrm}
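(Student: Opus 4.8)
The plan is to mimic H. Weyl's classical argument for the Riemannian conformal (Weyl) tensor, with the Biquard connection in place of the Levi--Civita connection and the structure equations of Theorem~\ref{str_eq_mod_th} replacing the flatness criterion. For part a), I would first observe that the Biquard connection and all horizontal curvature data $R$, $Ric$, $S$, $T^0$, $U$ are unchanged by the $SO(3)$ action on $V$, so $W^{qc}$ is automatically $\Psi$-invariant and it suffices to treat a pure conformal change $\bar\eta = \kappa\eta$. Writing the conformal factor as a convenient power of $\phi$ and differentiating the transformation law of the Biquard connection $1$-forms, one obtains explicit formulas for $\bar R$, and hence, by taking horizontal traces and using \eqref{sixtyfour}, for $\bar{Ric}$, $\bar S$, $\bar{T^0}$, $\bar U$, in terms of the unbarred quantities together with the horizontal Hessian and horizontal gradient of the conformal factor.

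The content of a) is then a bookkeeping verification: substituting these laws into the defining expression for $W^{qc}$, every term containing derivatives of the conformal factor produced in $\bar R$ is exactly matched by the corresponding contributions from the variation of the $S$-term, the $T^0$-terms, and the Kulkarni--Nomizu products $g\owedge U$ and $\sum_s\omega_s\owedge I_sU$, so that $W^{qc}_{\bar\eta} = \kappa\, W^{qc}_{\eta}$. Together with $SO(3)$-invariance this proves a). It also immediately gives the easy direction of b): on $\QH$ one has $R_{|H}=0$, hence by \eqref{sixtyfour} also $T^0=U=S=0$, so $W^{qc}_{\tilde\Theta}=0$, and conformal invariance then forces $W^{qc}=0$ on every locally qc-conformally flat manifold.

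For the converse, assume $W^{qc}=0$. By definition this expresses the $[3]$-component of the horizontal curvature as an explicit algebraic combination of $g$, the $\omega_s$, $S$, $T^0$ and $U$. Differentiating this identity and feeding it into the second Bianchi identity of the Biquard connection (with its torsion corrections) produces a first-order overdetermined system for an unknown conformal factor $\phi$ --- the qc analogue of the Riemannian equation relating the Hessian of the conformal factor to the Schouten tensor --- whose integrability conditions reduce, after using the relations \eqref{sixtyfour} among $Ric$, the $\rho_s$ and the torsion, precisely to $W^{qc}=0$. Solving this system locally by the Frobenius theorem and passing to the representative $\bar\eta=\phi\eta$ of the conformal class it determines, one obtains a qc-structure with vanishing horizontal curvature, $\bar R_{|H}=0$; by \cite[Proposition 3.2]{IV} (cf. also \cite[Proposition 4.11]{IMV} and Theorem~\ref{str_eq_mod_th}) such a structure is locally qc-isomorphic to the flat $\QH$. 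Composing this isomorphism with the conformal change, and absorbing any residual $SO(3)$-rotation into $\Psi$, yields the desired local diffeomorphism $F:\QH\to M$ with $F^*\eta=\phi\Psi\tilde\Theta$.

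The main obstacle is this converse direction, and within it the verification that $W^{qc}=0$ is \emph{exactly} the integrability obstruction for the conformal-factor system: one must check that the compatibility conditions coming from the second Bianchi identity collapse to the vanishing of $W^{qc}$ with no surviving lower-order (``qc-Cotton'') obstruction. This bookkeeping has to be carried out uniformly in the dimension, with the seven-dimensional case $n=1$ --- where the $[3]$-component is one-dimensional and several of the tensorial identities degenerate --- treated separately.
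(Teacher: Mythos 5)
Your plan follows the same Weyl-type strategy as the proof in \cite{IV} to which the paper defers (the survey itself only states the result): compute the conformal transformation laws of $R$, $S$, $T^0$, $U$ via the change of the Biquard connection, verify invariance of $W^{qc}$ by direct bookkeeping, get the forward direction of b) from $R_{|H}=0$ on $\QH$ together with a) and \eqref{sixtyfour}, and prove the converse by exhibiting $W^{qc}=0$ as the integrability condition of an overdetermined system for the conformal factor, solved by Frobenius and combined with \cite[Proposition 3.2]{IV}. The one caution you flag --- a surviving Cotton-type obstruction or a degenerate case at $n=1$ --- does not in fact materialize: unlike the CR Cartan--Chern--Moser theorem, where the $n=1$ case requires the separate tensor $F^{car}$, the qc result holds uniformly in all dimensions $4n+3\ge 7$ with $W^{qc}$ as the sole obstruction.
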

Taking into account the qc Cayley transform we also have
the quaternionic sphere $S^{4n+3}$ if and only if the qc conformal
curvature vanishes, $W^{qc}=0$.

We end this section with the remark that unlike the CR case the realization of qc manifolds as hypersurfaces in a hyper-K\"ahler manifold is very restrictive. For example, it was shown in \cite{IMV'14} that if $M$ is a connected qc-hypersurface in the flat quaternion space  $\R^{4n+4}\cong \Hnn$ then, up to a quaternionic affine transformation of $\Hnn$, $M$ is contained in one of the following three hyperquadrics:
$$(i) \ \ |q_1|^2+\dots+|q_n|^2 + |p|^2=1,\qquad
(ii)\ \  |q_1|^2+\dots+|q_n|^2 - |p|^2=-1 ,\qquad (iii)\ \ |q_1|^2+\dots+|q_n|^2 +\R{e}(p)=0.$$
Here $(q_1,q_2,\dots q_n,p)$ denote the standard quaternionic coordinates of $\Hnn.$
In particular, if $M$ is a compact qc-hypersurface of $\R^{4n+4}\cong \Hnn$ then, up to a quaternionic affine transformation of $\Hnn$, $M$ is the standard 3-Sasakian sphere. For other results and more details we refer to \cite{IMV'14}.

\section{The CR Yamabe problem and the CR Obata theorem}

The CR Yamabe problem seeks pseudoconformal pseudohermitian transformation of a compact CR pseudohermitian manifold which lead to constant scalar curvature of the canonical Tanaka-Webster connection, see Section \ref{ss:Yamabe Iwasawa mnfld}. After the works of D. Jerison \& J. Lee \cite{JL1,JL2,JL3,JL4} and N. Gamara \& R. Yacoub \cite{Ga}, \cite{GaY} the solution of the CR Yamabe problem on a compact manifold is complete.

{Let $(M^{2n+1}, \, \eta)$ be a  strictly
pseudoconvex CR manifold andf $\Upsilon(M,[\eta])$ be the CR-Yamabe constant (cf. \eqref{e:Yamabe constant Iwasawa}). The CR-Yamabe constant $\Upsilon (M,[\eta])$ depends only on the CR structure of M, not {on} the choice of $\eta$. }

{The solution of the CR-Yamabe problem is outlined in the next fundamental results.}

\begin{thrm} [\cite{JL2,JL4,Ga,GaY}]
{Let $(M^{2n+1}, \, \eta)$ be a  strictly
pseudoconvex CR manifold. The CR-Yamabe constant satisfies the ineguality  $ \Upsilon(M,[\eta]) \leq \Upsilon(S^{2n+1},\bar{\eta})$, where $S^{2n+1}\subset \mathbb{C}^{n+1}$ is the sphere with its standard CR structure $\bar{\eta}$.}
 \begin{enumerate}
 \item[a)]  If $\Upsilon([M,\eta]) < \Upsilon(S^{2n+1},[\bar\eta])$, then the Yamabe
equation has a solution. \cite{JL2}
\item[b)]  If $n\geq 2$ then the Yamabe constant  satisfies
\[
\Upsilon(M,[\eta_\epsilon])=
\begin{cases}
    \Upsilon(S^{2n+1},[\bar\eta])\left ( 1-c_n\abs {S(q)}^2\epsilon^4\right ) +\mathcal{O}(\epsilon^5),  \quad{n\geq 2;} \\
    \Upsilon(S^{5},\bar\eta])\left ( 1+c_2\abs {S(q)}^2\epsilon^4\ln \epsilon\right ) +\mathcal{O}(\epsilon^4), \quad{n=2.}
\end{cases}
\]
{Thus, if M is not locally CR equivalent to $(S^{2n+1},\bar\eta)$, then $\Upsilon(M,[\eta]) <
\Upsilon(S^{2n+1},[\bar\eta])${,}} \cite{JL4}{.}
\item[c)] If $n=1$ or $M$ is locally CR equivalent to $S^{2n+1}$, then the Yamabe equation has a
solution{,} \cite{Ga,GaY}.
\end{enumerate}
\end{thrm}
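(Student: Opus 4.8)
The plan is to follow the now-standard strategy for Yamabe-type problems, transcribed to the sub-elliptic setting, treating the three parts of the theorem in turn. For part~a) I would first run the subcritical approximation. For $2<q<2^*=\twost$ (with $Q=2n+2$) the inclusion of the natural Folland--Stein Sobolev space into $L^q(M)$ is compact, so $\phi\mapsto\mathcal{E}_\eta(\phi)/\bigl(\int_M|\phi|^q\vol\bigr)^{2/q}$ attains its infimum $\Upsilon_q$ at a positive smooth $u_q$ with $\int_M u_q^q\vol=1$ solving $\mathcal{L}u_q=-\Upsilon_q u_q^{q-1}$; positivity and $\mathcal{C}^\infty$ regularity follow from Theorem~\ref{t:harnack and holder regul}. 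One checks $\Upsilon_q\to\Upsilon(M,[\eta])$ as $q\uparrow 2^*$ and that $\{u_q\}$ stays bounded in the Sobolev space, so either a subsequence converges strongly in $L^{2^*}$ --- giving a minimizer of the Yamabe functional and hence a solution --- or concentration occurs at some $p\in M$. In the concentration case, a blow-up rescaled by the dilations~\eqref{scaling} in coordinates adapted to the osculating nilpotent group at $p$, which for a CR manifold is the Heisenberg group $\QC$ \cite{FS,RS76}, produces in the limit a nonzero finite-energy solution of the flat Yamabe equation~\eqref{Yamabeomegasymm} on $\QC$ whose energy, by the sharp Folland--Stein inequality~\eqref{FS} and Theorem~\ref{T:Iwasawa groups Minimizers}, is at least that of the sphere. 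A concentration--compactness (Brezis--Lieb) accounting then forces $\Upsilon(M,[\eta])\ge\Upsilon(S^{2n+1},[\bar\eta])$, contradicting the hypothesis; so concentration cannot happen and a minimizer exists. This is the argument of \cite{JL2}.

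For part~b) I would build an explicit test function: fix $q\in M$, work in pseudohermitian normal coordinates at $q$ (the CR analogue of Riemannian normal coordinates, in which $\eta$ osculates the flat Heisenberg contact form to high order), and transplant the Folland--Stein extremal~\eqref{e:FS extremal fns} (with $k=1$), rescaled by $\delta_\epsilon$ and multiplied by a fixed cutoff supported in a small gauge ball about $q$, to obtain $\phi_\epsilon$. Expanding $\Upsilon_{[\eta]}(\phi_\epsilon)$ from~\eqref{e:Yamabe functional def} in $\epsilon$, using the Taylor expansions of the volume form and of $S$ at $q$: the leading term is $\Upsilon(S^{2n+1},[\bar\eta])$, the odd-order terms vanish by the symmetry of the bubble, and the first nonzero correction --- of order $\epsilon^4$ when $n\ge 3$ and $\epsilon^4\log\epsilon$ when $n=2$ --- is a definite-sign multiple of a squared local pseudoconformal curvature norm at $q$ (the $|S(q)|^2$ of the statement), the relevant integrals converging precisely because $n\ge 2$. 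That quantity vanishes identically iff $M$ is locally CR equivalent to $(S^{2n+1},\bar\eta)$, by Theorem~\ref{crmain}; hence if it does not vanish then $\Upsilon_{[\eta]}(\phi_\epsilon)<\Upsilon(S^{2n+1},[\bar\eta])$ for small $\epsilon$, which yields the strict inequality and, with part~a), a solution. This is \cite{JL4}.

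For part~c) --- the cases $n=1$, or $M$ locally CR equivalent to $S^{2n+1}$ --- the correction in part~b) either vanishes (flat case) or sits at too high an order (low dimension), so one cannot beat the sphere's constant by a local perturbation and must confront the loss of compactness directly. Following \cite{Ga,GaY} the approach is the CR analogue of the Bahri--Coron \emph{critical points at infinity} method: one describes the non-compact ends of the sublevel sets of the Yamabe functional as superpositions of rescaled standard bubbles~\eqref{e:FS extremal fns} concentrating at points of $M$, computes the bubble-interaction ($\epsilon$-)expansion and the associated reduced finite-dimensional functional on the configuration space, and then runs a topological/degree argument to produce a critical point at finite energy. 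In the locally CR-flat subcase the decisive sign in this expansion is governed by a positive mass-type invariant of the Green function of the conformal sub-Laplacian $\mathcal{L}$ --- a CR positive mass theorem, known unconditionally when $n=1$ \cite{ChMaYa13} --- while the three-dimensional non-flat case additionally requires a careful expansion of the CR Green function.

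I expect part~c) to be the main obstacle. The variational problem is at the critical Folland--Stein exponent, so genuine bubbling occurs and Palais--Smale sequences fail to be precompact, and --- unlike the Euclidean case --- there is no symmetrization available in the sub-Riemannian setting to fall back on (cf. Section~\ref{s:FS inequality}); the construction and counting of critical points at infinity, together with the input of a CR positive mass theorem in the flat case, are each substantial. By contrast, parts~a) and~b) are faithful transcriptions of Yamabe's subcritical argument and Aubin's test-function computation, the only genuinely new ingredients being the sub-elliptic regularity of Theorem~\ref{t:harnack and holder regul}, the sharp Folland--Stein constant and extremals of Theorem~\ref{T:Iwasawa groups Minimizers}, and the bookkeeping of pseudohermitian normal coordinates.
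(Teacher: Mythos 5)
This theorem is quoted in the paper as a survey of the literature \cite{JL2,JL4,Ga,GaY} and is not proved there, so there is no internal argument to compare against; I can only assess your sketch against the cited works. For parts a) and b) your outline is an accurate account of Jerison--Lee's strategy: subcritical minimization plus a concentration--compactness/blow-up dichotomy controlled by the relation between $\Upsilon(S^{2n+1},[\bar\eta])$ and the best Folland--Stein constant for a), and the transplanted-bubble expansion in CR normal coordinates with the Chern--Moser tensor (via Theorem \ref{crmain}) detecting local sphericity for b). Two small remarks: the blow-up step in a) needs only that the sphere's Yamabe constant equals the sharp Folland--Stein constant (via the Cayley transform), not the explicit value or the classification of extremals in Theorem \ref{T:Iwasawa groups Minimizers}; and you never explicitly dispatch the preamble inequality $\Upsilon(M,[\eta])\le\Upsilon(S^{2n+1},[\bar\eta])$, though it drops out of the leading term of your part-b) test-function computation without any normal-coordinate refinement.

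The one genuine inaccuracy is in part c). The proofs of Gamara and Gamara--Yacoub are purely topological in Bahri's ``critical points at infinity'' style: they analyze the bubble-interaction expansion and run a degree/Morse-theoretic argument on the reduced functional, precisely in order to \emph{avoid} any positive-mass input (this is the same reason Bahri's approach works for locally conformally flat Riemannian manifolds without Schoen--Yau). The CR positive mass theorem is the ingredient of the \emph{alternative}, Schoen-type route, not of \cite{Ga,GaY}; moreover the three-dimensional CR positive mass theorem of \cite{ChMaYa13} is not unconditional --- it requires nonnegativity of the CR Paneitz operator --- so the sentence ``known unconditionally when $n=1$'' is false as stated. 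Since this misattributed ingredient is not actually needed for the cited proofs, the slip does not invalidate your overall plan, but if you intend part c) to follow \cite{Ga,GaY} you should delete the positive-mass discussion and instead flesh out the expansion of the Green function of the conformal sub-Laplacian and the topological argument on the space of bubble configurations, which is where the real work lies.
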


\subsection{Solution of the CR  Yamabe problem on the sphere and Heisenberg group}\label{ss:Jerison and Lee}

{The CR version of the Obata theorem was proved by D. Jerison and J. Lee \cite{JL3}.}
\begin{thrm}[\cite{JL3}]
If $\eta$ is the contact form of a pseudo-Hermitian structure
proportional to the standard contact form $\bar\eta$ on the unit sphere
in $\mathbb{C}^{n+1}$ and the pseudohermitian scalar curvature $S_\eta=$const, then up to a multiplicative
constant $\eta=\Phi^* \,\bar\eta$ with $\Phi$ a CR automorphism of the
sphere.
\end{thrm}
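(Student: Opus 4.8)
The plan is to adapt the Riemannian Obata argument sketched after Theorem~\ref{t:Obata Yamabe} to the CR setting, following Jerison and Lee. Write $\eta = \phi^{4/(Q-2)}\bar\eta$ with $Q = 2n+2$ and $\phi$ a positive smooth function, and regard $\bar\eta$ as the background structure in the CR Yamabe equation \eqref{e:conf Yamabe}. Since both $S_{\bar\eta}$ and $S_\eta$ are constant, $\phi$ solves $4\frac{Q+2}{Q-2}\,\triangle_{\bar\eta}\phi - S_{\bar\eta}\,\phi = -S_\eta\,\phi^{2^*-1}$ on the compact manifold $S^{2n+1}$; by Theorem~\ref{t:harnack and holder regul} $\phi$ is smooth and everywhere positive, so the sign of $S_\eta$ is pinned down, and after replacing $\eta$ by a constant multiple we may normalize $S_\eta$ to the value $S_{\bar\eta}$ it has on the round sphere.

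First I would produce the CR counterpart of the Hessian equation \eqref{e:Riem Hess eqn}. In the Riemannian case Obata's computation forced $(\nabla^2\phi)_0 = 0$; in the pseudohermitian setting the correct object is a tensor $D$ assembled from the second-order Tanaka--Webster covariant derivatives of $\phi$, its horizontal trace, the contact form, and the Webster torsion $A$ (which must enter because of the type decomposition \eqref{e:CR Ric type decomp}), engineered so that $D$ vanishes exactly for the restrictions to $S^{2n+1}$ of real-linear functions on $\mathbb{C}^{n+1}$ --- the first eigenfunctions of the standard sub-Laplacian, Theorem~\ref{t:first eigenspace Iwasawa}. Starting from the CR Bochner formula --- the analog of \eqref{e:RBochner}, which however carries additional terms in the Reeb direction and terms quadratic in $A$ --- together with the commutation relations for $\nabla$, the structure equations, and the normalized constant-scalar-curvature hypothesis, a long computation should yield a pointwise identity of the form
\[
\nabla^*(\text{horizontal vector field built from }\phi) \;=\; (\text{positive weight})\cdot|D|^2,
\]
the CR face of the Obata $P$-function identity used in \eqref{e:Obata Einstein 6}.

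Integrating this identity over the compact sphere and applying the divergence formula \eqref{div} annihilates the left-hand side, forcing $D \equiv 0$. The overdetermined system $D = 0$, together with the torsion constraints it encodes, can then be integrated: either by a local argument in the spirit of the Liouville-type reasoning referenced in Section~\ref{ss:Obata uniqueness}, or by transferring through the Cayley transform of Section~\ref{ss:cayley tranform H-type} --- which is pseudoconformal --- so that $\phi$ becomes a finite Folland--Stein energy solution of $\triangle u = -u^{2^*-1}$ on $\QC$, which one classifies, up to \eqref{translation}--\eqref{scaling}, as the standard extremal \eqref{e:FS extremal fns}, the decay needed to justify the integration by parts on the non-compact group being supplied by the Kelvin transform as in the discussion around \eqref{JL}. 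Either way one concludes that, up to a multiplicative constant and an additive constant, $\phi$ is a first eigenfunction of the sub-Laplacian of the round sphere; the associated conformal change of contact form is then realized by a CR automorphism $\Phi$ of $S^{2n+1}$ --- translations and dilations of $\QC$ correspond under the Cayley transform to elements of the automorphism group of the sphere --- so that $\eta = \Phi^*\bar\eta$ up to the constant, which is the claim. I expect the main obstacle to be the discovery of $D$ and the proof of the divergence identity: unlike \eqref{e:RBochner}, the sub-Laplacian Bochner formula contains genuinely new Reeb-direction and torsion-quadratic terms, so the ``divergence equals a weighted square'' structure emerges only after a very delicately balanced combination, and coaxing the torsion terms into this form --- which in the three-dimensional case $n=1$ is entangled with the Cartan tensor \eqref{crtreeF} --- is the crux of the matter.
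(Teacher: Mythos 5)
Your overall architecture --- a divergence identity whose integral over the compact manifold forces an overdetermined system, followed by an integration of that system via the Cayley transform or a first-eigenfunction characterization --- is indeed the Jerison--Lee strategy, and your concluding step is essentially right. The genuine gap is in the first step, and it is not only that you defer ``the discovery of $D$'': your ansatz for what must be discovered points in the wrong direction. The quantity that the integrated identity kills is not a single Hessian-type tensor of the conformal factor extracted from the CR Bochner formula \eqref{bohh}; what one proves first (Theorem~\ref{jerl}) is that the constant-scalar-curvature structure $\eta$ is itself Sasaki--Einstein, i.e.\ that its Webster torsion $A$ and its traceless pseudohermitian Ricci tensor $B$ vanish. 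The identity that achieves this, Theorem~\ref{t:JL thrm}, is not of the form $\nabla^*(\cdot)=(\text{weight})\,|D|^2$: it involves \emph{three} horizontal 1-forms $d$, $e$, $u$ obtained by contracting $A$, $B$ and $\nabla^*A$ with $\nabla h$ as in \eqref{oneforms}, combined with the specific weight $f=\tfrac12+h+\tfrac{|\nabla h|^2}{4h}$ and Reeb-direction corrections $dh(\xi)Jd$, $dh(\xi)Je$, $-6\,dh(\xi)Ju$; its right-hand side \eqref{divmf} is a sum of manifestly non-negative terms including $\tfrac{h}{2}Q(d,e,u)$ with $Q$ a $3\times 3$ quadratic form whose positive definiteness (eigenvalues $\tfrac{15\pm\sqrt{209}}{4}$ and $\tfrac12$) is itself a nontrivial verification. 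Moreover the derivation does not start from the Bochner identity but from the conformal transformation laws \eqref{e:A conf change}, \eqref{e:Ric_0 conf change h}, the Ricci identities \eqref{Riden} and the contracted Bianchi identities \eqref{bia}; the paper stresses that, unlike the Riemannian case, the Bianchi identities alone are \emph{not} enough here, which is precisely why no single-square Bochner-type argument of the kind you sketch is known to close.

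Two smaller corrections. The computation is carried out with the unknown structure $\eta$ as the background connection and the Sasaki--Einstein $\bar\eta=\tfrac{1}{2h}\eta$ as the overlined one, so that $\bar A=\bar B=0$ turns \eqref{tor} and \eqref{be} into expressions for $A$ and $B$ in terms of $\nabla^2h$; only after $A=B=0$ is established do these become the Hessian constraints you propose to integrate. Also, your worry that the case $n=1$ is entangled with the Cartan tensor \eqref{crtreeF} is misplaced: the divergence formula of Theorem~\ref{t:JL thrm} is uniform in $n\geq 1$, and the Cartan tensor enters conformal flatness, not this uniqueness theorem. Granting $A=B=0$, your final step --- identifying the conformal factor, via the Cayley transform and the classification on $\QC$ or via the eigenvalue characterization, and realizing the conformal change by a CR automorphism --- matches the routes described after Theorem~\ref{t:JL thrm} and in Section~\ref{ss:Sasaki-Einstein uniqieness Yamabe}.
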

 A key step of the  proof consists of showing that  a CR structure with a constant pseudohermitian scalar curvature is pseudoconformal to the standard pseudo-Einstein torsion-free structure on the  CR sphere iff  it is a pseudo-Einstein with vanishing Webster torsion.
It is well known
that a strictly pseudoconvex torsion-free CR manifold is Sasakian.
In addition, if the CR space is pseudo{-}Einstein then  it is not
hard to observe that it is a Sasaki-Einstein space with respect to the associated Riemannian metric $h$. Indeed, the
Ricci tensors $Ric^g$ and $Ric$ of the Levi-Civita and the
Tanaka-Webster  connection, respectively, of a torsion-free CR
space are connected by \cite{DT} $Ric^g(X,Y)=Ric(X,Y)-2g(X,Y)$,
$Ric^g(\xi,\xi)=2n$. Because of the second identity, a Sasaki{-}
Einstein space has Riemannian scalar curvature $S^g={2n(2n+1)}$.
Hence, a torsion-free pseudo Einstein CR manifold is a Sasaki
Einstein if the pseudohermitian scalar  curvature is equal to
$S=4n(n+1)$ and the  Jerison-Lee theorem  can be stated as follows.
\begin{thrm}[\protect\cite{JL3}]\label{jerl}
If a compact Sasaki-Einstein manifold $(M,\bar\eta)$ is
pse{u}doconformal to a CR manifold $(M,\eta=2h\bar\eta)$ with
constant positive pseudohermmitian scalar curvature $S=4n(n+1)$
then $(M,\eta)$ is again a Sasaki-Einstein space.
\end{thrm}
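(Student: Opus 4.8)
The plan is to imitate the Riemannian Obata argument used in the proof of Theorem~\ref{t:Obata Yamabe}a), taking the structure $\eta$ (the one with constant scalar curvature) as the background structure, exactly as in that proof the "new" metric was used as background. The one genuinely new feature is that in the CR setting "Einstein" is replaced by the \emph{pair} of conditions "pseudo-Einstein" and "torsion-free", so two tensors — the trace-free part $Ric_0$ of the Webster Ricci tensor and the Webster torsion $A$ — must be forced to vanish simultaneously. Once both vanish, $(M,\eta)$ is torsion-free, hence Sasakian, and with the normalization $S=4n(n+1)$ the discussion preceding Theorem~\ref{jerl} identifies it as a Sasaki-Einstein manifold, which is the desired conclusion.

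\emph{Step 1: conformal change formulas.} Write the pseudoconformal relation as $\bar\eta = u^{2/n}\,\eta$ with $u>0$ (consistent with the CR Yamabe normalization $Q=2n+2$). Record the pseudohermitian analogues of \eqref{e:Ric_o conf change}, i.e.\ the transformation laws under a conformal change of contact form of: the $[1]$-part of the Webster Ricci (equivalently the Ricci form $\rho$), the $[-1]$-part of the torsion $A$, and the Webster scalar curvature $S$ (the last being the CR Yamabe equation \eqref{e:conf Yamabe}). Since $(M,\bar\eta)$ is Sasaki-Einstein we have $\overline{Ric}_0=0$, $\bar A=0$, and $\bar S=4n(n+1)$ constant. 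Substituting this into the transformation laws expresses, purely in terms of the conformal factor $u$: (i) the $[1]$-part of the Webster Hessian $\nabla^2 u$ as a multiple of $Ric_0$ (the pseudohermitian counterpart of $(\nabla^2\phi)_0=-\tfrac{\phi}{n-2}Ric_0$); (ii) a relation between the $[-1]$/mixed part of $\nabla^2 u$ and $A$, coming from $\bar A=0$; (iii) $\triangle u$ in terms of $u$ and constants, using that $S$ is constant.

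\emph{Step 2: contracted Bianchi identities and integration by parts.} Invoke the contracted Bianchi identities for the Tanaka-Webster connection. Because $S$ is constant, the horizontal divergence of $\rho$ (hence of $Ric$ via \eqref{e:CR Ric type decomp}) collapses to a combination of the torsion $A$, its covariant derivatives, and Reeb-direction terms. Contract these identities with the horizontal gradient $\nabla u$, use the relations of Step~1 to trade Hessian-of-$u$ terms for $Ric_0$ and $A$, and integrate over compact $M$ using the divergence formula \eqref{div}. The target is an integral identity which after rearrangement has the shape $\int_M\big(c_1\,u\,|Ric_0|^2+c_2\,u\,|A|^2\big)\,Vol_\eta=0$ with $c_1,c_2>0$; since $u>0$ this forces $Ric_0=0$ and $A=0$, so $(M,\eta)$ is pseudo-Einstein and torsion-free, completing the argument as described above.

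\emph{Main obstacle.} Unlike the Riemannian situation, the pseudohermitian contracted Bianchi identities couple $Ric_0$ to the torsion $A$ and drag in the Reeb direction, so naively contracting with $\nabla u$ and integrating does \emph{not} by itself produce a manifestly nonnegative integrand — uncontrolled divergence terms and torsion cross-terms survive. Removing them is the heart of Jerison and Lee's proof and requires assembling the integral identity from several further ingredients: a Bochner-type formula for $|\nabla u|^2$, a commutation identity for $\triangle$ with the Reeb field acting on $u$, a Bianchi-type identity for the divergence of $A$, and the pseudohermitian structure equations. At the technical level one is led (in the spirit of the Weinberger $P$-function method) to build an auxiliary function of $u$ and $|\nabla u|^2$ whose sub-Laplacian can be controlled, which in turn yields the needed vanishing of an appropriate trace-free part of $\nabla^3 u$. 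Finally, the three-dimensional case $n=1$, where the Webster Ricci carries no trace-free $[1]$-part, the torsion term in \eqref{e:CR Ric type decomp} weighs differently, and the Cartan tensor replaces the Chern-Moser tensor, needs a separate but parallel treatment.
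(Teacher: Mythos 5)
Your overall strategy is the right one and your diagnosis of where the difficulty lies is accurate: the naive Riemannian-style argument (contract the Bianchi identity with $\nabla h$ and integrate) does not close up in the pseudohermitian setting, and what is needed instead is a divergence identity whose right-hand side is a manifestly nonnegative combination of $|A|^2$ and the trace-free Webster Ricci, built with the help of an auxiliary function of $h$ and $|\nabla h|^2$. This is exactly the paper's route: the entire content of the proof is the Jerison--Lee divergence formula, stated here as Theorem~\ref{t:JL thrm}, and Theorem~\ref{jerl} is then immediate by integrating \eqref{divm} over the compact manifold via \eqref{div} and concluding that $D=-4A$ and $E=\tfrac{2}{n+2}B$ both vanish, so that $(M,\eta)$ is torsion-free and pseudo-Einstein, hence Sasaki--Einstein with the given normalization.

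The gap is that you stop precisely at this point. Your Step~2 as first formulated (whose failure you yourself concede in the ``Main obstacle'' paragraph) is not a proof, and the paragraph that replaces it describes what a proof would have to contain rather than supplying it. Concretely, what is missing is: (i) the specific one-forms $d$, $e$, $u$ of \eqref{oneforms} (essentially $A(\cdot,J\nabla h)$, $B(\cdot,J\nabla h)$ and $\nabla^*A(J\cdot)$, each normalized by powers of $h$); (ii) the auxiliary function $f=\frac12+h+\frac{|\nabla h|^2}{4h}$ of \eqref{function} and, crucially, the particular combination $f[d+e]-dh(\xi)Jd+dh(\xi)Je-6dh(\xi)Ju$ whose divergence is taken; (iii) the individual divergence computations \eqref{divD}, \eqref{divJD}, \eqref{divE}, \eqref{divJE} and \eqref{funcder}, which use the Ricci and Bianchi identities for the Tanaka--Webster connection together with the conformal change formulas specialized to $\bar A=\bar B=0$, $\bar S=S=4n(n+1)$; and (iv) the algebraic identity \eqref{e:key 3-tensors} plus the verification that the resulting $3\times 3$ matrix $Q$ is positive definite (eigenvalues $\frac{15\pm\sqrt{209}}{4}$ and $\frac12$). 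The choice of the coefficients $f$, $-dh(\xi)J$, $+dh(\xi)J$, $-6dh(\xi)J$ is not forced by any general principle and is the discovery at the heart of \cite{JL3}; without producing it, or an equivalent sum-of-squares identity, the argument does not go through. A minor further point: for $n=1$ the symmetric $[1]$-part of a two-tensor on $H$ is a multiple of $g$, so the pseudo-Einstein condition is automatic and the only content is $A=0$; the same divergence formula covers this case, and the Cartan tensor plays no role in this particular theorem.
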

The proof follows trivially from the  divergence formula
discovered in \cite{JL3}
which we  state  in real coordinates
in Theorem~\ref{t:JL thrm}. First, we need some definitions. Let $h>0$ be a smooth function on a pseudohermitian  manifold $(M, \eta, g)$ and
$\bar\eta=\frac{1}{2h}\eta$ be a pseudoconformal to $\eta$ contact form. We will denote the connection, curvature and torsion tensors  of $\bar\eta$ by over-lining the same object corresponding to $\eta$.  The new  Reeb vector field  $\bar\xi$ is
$
\bar\xi\ =\ 2h\,\xi\ + 2h\ J\Cr h, $
where $\Cr h$ is the horizontal gradient, $g(\Cr h,X)=dh(X)$. The
Webster torsion and the pseudohermitian Ricci tensors of $\eta$
and $\bar\eta$ are related by \cite{L2},
\begin{gather}\label{e:A conf change}
4h\bar A(X,JY)=4hA(X,JY)+\Cr^2h(X,Y)-\Cr^2 h(JX,JY)
\end{gather}
\begin{multline}\label{e:Ric conf change h}
\overline {\rho}(X,JY)=\rho(X,JY)-(4h)^{-1}(n+2)[\Cr^2h(X,Y)+\Cr^2h(Y,X)+\Cr^2h(JX,JY)+\Cr^2h(JY,JX)]\\+(2h^2)^{-1}(n+2)[ dh(X)dh(Y)+dh(JX)dh(JY) ]
-(2h)^{-1}\left(\lap h- h^{-1}(n+2)|\Cr h|^2\right)g(X,Y),
\end{multline}
{where $\triangle h=\Cr^*dh=\sum_{a=1}^{2n}\Cr^2 h(e_a,e_a)$ is the sublaplacian. The pseudohermitian scalar curvatures changes
according to \cite{L2},
\begin{equation}\label{e:conf change scalar curv h}
\overline {S} = 2hS -
2(n+1)(n+2)h^{-1}|\Cr h|^2  +4(n+1)\lap h.
\end{equation}
Let $B$ be the traceless part of $\rho$, $B(X,JY)\overset{def}{=}\rho(X,JY)+\frac {S}{2n}g(X,Y)$ since by \eqref{e:CR Ric type decomp}, we have $\rho(e_a,Je_a)=-Ric(e_a,e_a)=-S$. The above formulas imply
\begin{multline}\label{e:Ric_0 conf change h}
\bar B(X,JY)=B(X,JY)-\frac{n+2}{4h}[\Cr^2h(X,Y)+\Cr^2h(Y,X)+\Cr^2h(JX,JY)+\Cr^2h(JY,JX)]\\+\frac{n+2}{2h^2}[ dh(X)dh(Y)+dh(JX)dh(JY) ]
+\frac {n+2}{2n}\left(\frac{\lap h}{h}-\frac 1{h^2}|\Cr h|^2\right)g(X,Y).
\end{multline}

Suppose $\bar \theta$ is Sasaki-Einstein structure, i.e., $\bar
A=\bar B=0$ and both pseudo-Hermitain structures are of constant
pseudohermitian scalar curvatures $\bar S=S=4n(n+1)$. With these assumptions \eqref{e:conf change scalar curv h} becomes
\begin{equation}\label{sublap}
\lap h=n-2nh +\frac{(n+2)}{2h}|\Cr h|^2.
\end{equation}
At this point we recall the Ricci identities for the
Tanaka-Webster connection \cite{L2} (see e.g. {\cite{IVZ,IV2}} for
these and other expressions in real coordinates),
\begin{equation}\label{Riden}
\begin{split}
\Cr^2h(X,Y)-\Cr^2h(Y,X)=-2\omega(X,Y)dh(\xi){,}\qquad
\Cr^2h(X,\xi)-\Cr^2{h}(\xi,X)=A(X,\Cr h){,}\\
\Cr^3 h(X,Y,Z)-\Cr^3{h}(Y,X,Z)=-R(X,Y,Z,\Cr h)-2\omega(X,Y)\Cr^2 h(\xi,Z).
\end{split}
\end{equation}
The contracted Bianchi identities for Tanaka-Webster connection \cite{L2} are
\begin{equation}\label{bia}
\begin{split}
dS(X)=2\sum_{a=1}^{2n}(\Cr_{e_a}Ric)(e_aX)=-2\sum_{a=1}^{2n}(\Cr_{e_a}\rho)(e_a,JX)+4
(n-1)\sum_{a=1}^{2n}(\Cr_{e_a}A)(e_a,JX);\\
Ric(\xi,X)=\sum_{a=1}^{2n}(\Cr_{e_a}A)(e_a,Z);\quad
dS(\xi)=2\sum_{a,b=1}^{2n}(\Cr^2_{e_ae_b}A)(e_a,e_b).
\end{split}
\end{equation}
When $\bar A=0$, \eqref{e:A conf change} takes the form
\begin{equation}\label{tor}
4A(X,JY)=-\Big[\Cr^2h(X,Y){-}\Cr^2h(JX,JY)\Big] .
\end{equation}
{Differentiating \eqref{tor} using the equation $\Cr J=0$,
taking the trace in the obtained equality and applying  the Ricci identities \eqref{Riden},  \eqref{e:CR Ric type decomp} and the CR Yamabe equation \eqref{e:conf change scalar curv h}, we  find the next formula for the divergence of $A$,}

\begin{multline}\label{divA1}
\Cr^*A(JX)=-2\rho(JX,\Cr h)-\frac{n+2}h[\Cr^2(\Cr
h,X)-2dh(JX)dh(\xi)]\\+2ndh(X)+\frac{n+2}{2h^2}|\Cr
h|^2dh(X)-(2n+4)\Cr^2(JX,\xi),
\end{multline}
where the divergence of a 1-form $\alpha$ is $\Cr^*\alpha=\sum_{a=1}^{2n}(\Cr_{e_a}\alpha)e_a $.
A substitution of \eqref{sublap} into \eqref{e:Ric_0 conf change h}  and a use {of} the Ricci identities together with  $\bar B=0$ give
\begin{multline}\label{be}
B(X,JY)=\frac{n+2}{2h}\Big[\Cr^2h(Y,X)+\Cr^2h(JY,JX)-2\omega(X,Y)dh(\xi)\Big]\\-\frac{n+2}{2h^2}[
dh(X)dh(Y)+dh(JX)dh(JY) ] -\frac {n+2}{2}\left(\frac1{h}-2+\frac
1{2h^2}|\Cr h|^2\right)g(X,Y).
\end{multline}
From $\rho(X,JY)=B(X,JY)-2(n+1)g(X,Y{)}$ and
\eqref{be} it follows
\begin{multline}\label{rho1}
\rho(X,J\Cr h)=\frac{n+2}{2h}\Big[\Cr^2h(\Cr h,X)+\Cr^2h(J\Cr
h,JX)-2dh(\xi)dh(JX)-\frac3{2h}|\Cr h|^2dh(X)-dh(X)\Big]\\-ndh(X).
\end{multline}
Substituting equation \eqref{rho1} into equation \eqref{divA1} shows
\begin{equation}\label{divA2}
\Cr^*A(JX)=\frac{n+2}4\Big[\frac{\Cr^2{\color{blue} h}(J\Cr
h,JX)}{h^2}-\frac{|\Cr
h|^2}{h^3}dh(X)-\frac1{h^2}dh(X)-\frac2{h}\Cr^2h(JX,\xi)\Big].
\end{equation}
With the help of \eqref{tor}, \eqref{be} and \eqref{divA2} we define the
following 1-forms

\begin{equation}
\begin{split}
d(X)& =-4h^{-1}A(X,J\nabla h)=\frac{\nabla ^{2}h(\nabla h,X)-\nabla
^{2}h(J\nabla h,JX)}{h^{2}};\hspace{3cm} \\
e(X)& =\frac{2}{n+2}h^{-1}B(X,J\nabla h)=\frac{\nabla ^{2}h(\nabla
h,X)+\nabla ^{2}h(J\nabla h,JX)}{h^{2}}-\frac{2dh(\xi )dh(JX)}{h^{2}} \\
& -\Big(\frac{1}{h^{2}}-\frac{2}{h}+\frac{3|\nabla h|^{2}}{2h^{3}}\Big)dh(X);
\\
u(X)& =\frac{4}{n+2}\nabla ^{\ast }A(JX)=\frac{\nabla ^{2}{h}(J\nabla h,JX)}{%
h^{2}}-\frac{2}{h}\nabla ^{2}h(JX,\xi )-\Big(\frac{|\nabla h|^{2}}{h^{3}}+%
\frac{1}{h^{2}}\Big)dh(X).
\end{split}
\label{oneforms}
\end{equation}%
We obtain easily from \eqref{oneforms} the next identity
\begin{equation}
u(X)=\frac{e(X)-d(X)}{2}-\frac{2\nabla ^{2}h(JX,\xi )}{h}-\frac{1}{h^{2}}%
\Big(\frac{1}{2}+h+\frac{|\nabla h|^{2}}{4h}\Big)dh(X)+\frac{dh(\xi )dh(JX)}{%
h^{2}}.  \label{iden1}
\end{equation}%
Define the following {tensors}
\begin{equation}
\begin{split}
{D}(X,Y)& =-4A({X,Y}),\quad D^{h}(X,Y,Z)=h^{-1}\left [ {D}(.,Z)dh(.)\right ]_{{[1]}}, \\
{E}(X,Y)& =\frac{2}{n+2}B(X,Y),\quad E^{h}(X,Y,Z)=h^{-1}\left [{E}%
(X,.)dh(.)\right ]_{{[-1]}}.
\end{split}
\label{3tens}
\end{equation}
{In other words, we have
\begin{align*}
D^{h}(X,Y,Z) & =\frac{1}{2h}\left[ dh(X)D(Y,Z)+dh(JX){D}(JY,Z)%
\right],\\
 E^{h}(X,Y,Z)& =\frac{1}{2h}\left[ dh(Z){E}(X,Y)-dh(JZ){E}(X,JY)%
\right].
\end{align*}
}
At this point we can state one of the main results of \cite{JL3}.

\begin{thrm}[\protect\cite{JL3}]
\label{t:JL thrm} Let $(M,\bar{\eta})$ be a Sasaki-Einstein manifold
pseudoconformally equivalent to a CR manifold $(M,\eta ,\bar{\eta}=\frac{1}{%
2h}\eta )$ of constant pseudohermitian scalar curvature so that $\bar{S}%
=S=4n(n+1)$. For
\begin{equation}
f=\frac{1}{2}+h+\frac{|\nabla h|^{2}}{4h},  \label{function}
\end{equation}%
we have
\begin{multline}
\nabla ^{\ast }\Big(f[{d}+{e}]-dh(\xi )J{d}+dh(\xi )J{e}-6dh(\xi )J{u}\Big)
\label{divm} \\
=\frac{1}{2}\left( \frac{1}{2}+h\right) \left( |{D}|^{2}+|{E}|^{2}\right) +\frac{%
h}{8}|E^h{+}D^h|^{2}+Q(d,e,u).
\end{multline}%
where $Q(d,e,u)$ is non-negative quadratic form of {the vector fields} $(d,e,u)
$.
\end{thrm}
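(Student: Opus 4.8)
The plan is to prove the divergence identity \eqref{divm} by a direct, if lengthy, computation organized around the three one-forms $d$, $e$, $u$ of \eqref{oneforms} and the three tensors $D$, $E$, together with $D^h$, $E^h$, of \eqref{3tens}. The essential input is that the given hypotheses ($\bar A=\bar B=0$, $\bar S=S=4n(n+1)$, and the conformal relation $\bar\eta=\tfrac1{2h}\eta$) convert the conformal-change formulas \eqref{e:A conf change}, \eqref{e:Ric_0 conf change h}, \eqref{e:conf change scalar curv h} into the closed algebraic relations \eqref{sublap}, \eqref{tor}, \eqref{be}, which express $\nabla^2 h$ (in each of its invariant parts), $A$, and $B$ in terms of $h$, $dh$, $dh(\xi)$, $|\nabla h|^2$, and the metric $g$. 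So the first step is to record these substitutions carefully and to re-derive \eqref{divA1}, \eqref{rho1}, \eqref{divA2} exactly as in the excerpt, using the Ricci identities \eqref{Riden}, the contracted Bianchi identities \eqref{bia}, the type decomposition \eqref{e:CR Ric type decomp}, and $\nabla J=0$. At that point $d$, $e$, $u$ and the relation \eqref{iden1} among them are known explicitly as rational expressions in $h$ and its horizontal/vertical first derivatives.

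Next I would compute the horizontal divergences $\nabla^*(fd)$, $\nabla^*(fe)$, $\nabla^*\big(dh(\xi)Jd\big)$, $\nabla^*\big(dh(\xi)Je\big)$, $\nabla^*\big(dh(\xi)Ju\big)$ separately. Here $f=\tfrac12+h+\tfrac{|\nabla h|^2}{4h}$ is the "$P$-function" of the argument, and the point of including exactly this $f$ and exactly the combination $f[d+e]-dh(\xi)Jd+dh(\xi)Je-6\,dh(\xi)Ju$ on the left of \eqref{divm} is that, upon differentiating, all the indefinite-sign "cross terms" (those linear in the curvature tensors $D$, $E$, $U$ paired against gradient terms, and the terms involving $\nabla^2 h$ contracted with $\nabla h$ in a way not already expressible through $D$, $E$) cancel. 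Concretely: each application of $\nabla^*$ produces a Hessian $\nabla^2 h$, a third-order term $\nabla^3 h$, and a term $\nabla^2 h(\cdot,\xi)$; the third-order terms are handled by the third Ricci identity in \eqref{Riden} (commuting $\nabla^3$) which brings in $R(\cdot,\cdot,\cdot,\nabla h)$, hence (after tracing and using \eqref{e:CR Ric type decomp}) the Ricci 2-form $\rho$ and the torsion $A$, i.e.\ $B$ and $D$ again; and the mixed term $\tfrac{dh(\xi)dh(JX)}{h^2}$ appearing in \eqref{iden1} is precisely what is needed so that the $dh(\xi)$-weighted pieces telescope. Collecting everything, the right-hand side should organize into the manifestly nonnegative pieces $\tfrac12(\tfrac12+h)(|D|^2+|E|^2)$ and $\tfrac h8|E^h+D^h|^2$ — the latter coming from completing the square in the terms quadratic in $D^h$, $E^h$ that survive — plus a remainder which is the quadratic form $Q(d,e,u)$.

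The final step is to verify that this remainder $Q(d,e,u)$ is a nonnegative quadratic form in the one-forms $d$, $e$, $u$. I would do this by writing $Q$ in the basis $\{d, e, u\}$ (at each point, these are horizontal covectors), computing its symmetric coefficient matrix from the explicit expressions produced above — note that by \eqref{iden1}, $u$ is not independent of $d$, $e$ and the "remaining" gradient/vertical-derivative combinations, so one must decide whether to present $Q$ as a genuine form in three variables with a PSD matrix or to eliminate $u$ first — and then checking positive semidefiniteness by the minors / by exhibiting it as a sum of squares. I expect \textbf{this last step to be the main obstacle}: the bookkeeping in the divergence computation, while voluminous, is mechanical given the Ricci and Bianchi identities, but pinning down the exact coefficients of $Q$ and proving the PSD property (rather than merely $Q\ge -c(|D|^2+|E|^2)$ with a constant that could be absorbed) is where the delicacy lies, and it is what makes the identity sharp enough to force $D=E=d=e=u=0$ and hence (after the companion argument, using that the vanishing of $A$ and $B$ returns a Sasaki--Einstein structure) the Jerison--Lee rigidity. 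A useful sanity check along the way is the case $k=1$ analogue in \eqref{positivity}: the structure of "sum of three squares = difference of a Hessian-norm term and a Laplacian-squared term" there is the model for how the nonnegative pieces must assemble here.
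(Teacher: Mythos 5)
Your overall strategy is the same as the paper's: compute the divergences of $d$, $Jd$, $e$, $Je$, $Ju$ one at a time from \eqref{oneforms}, using the Ricci identities \eqref{Riden} and the contracted Bianchi identities \eqref{bia} (which give $\nabla^*(Ju)=0$ and tie $\nabla^*B$ and $\nabla^*A$ to $u$), combine them with $df$ from \eqref{funcder} and the relation \eqref{iden1}, and then exhibit the result as a sum of manifestly nonnegative terms. Up to the final regrouping this is exactly the computation of \cite{JL3} in the real-coordinate form \eqref{divmf}, and your plan for that part is sound.

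The genuine gap is in the last step. What actually falls out of the divergence computation is
\[
\tfrac12\Bigl(\tfrac12+h+\tfrac{|\nabla h|^2}{4h}\Bigr)\bigl(|D|^2+|E|^2\bigr)+\tfrac h2\bigl(|d|^2+|e|^2+6|u|^2+4g(d,u)-4g(u,e)\bigr),
\]
and the quadratic form in $(d,e,u)$ appearing here has matrix $\bigl(\begin{smallmatrix}1&0&2\\0&1&-2\\2&-2&6\end{smallmatrix}\bigr)$, whose determinant equals $-2$: it is \emph{not} positive semidefinite. So the program ``write down the symmetric matrix of $Q$ and check its minors'' fails as stated, and eliminating $u$ via \eqref{iden1} does not help since $u$ is not a combination of $d$ and $e$ alone. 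The missing ingredient is the pointwise algebraic identity \eqref{e:key 3-tensors},
\[
\frac{|\nabla h|^{2}}{4h}\left( |{D}|^{2}+|{E}|^{2}\right) =\frac{h}{2}\,|D^h+E^h|^{2}-{h}\,g(d,e),
\]
proved by expanding $|D^h+E^h|^2$ directly from the definitions \eqref{3tens}. It is used to trade the $\frac{|\nabla h|^2}{4h}$ portion of the coefficient $f$ multiplying $|D|^2+|E|^2$ for the square $|D^h+E^h|^2$ \emph{plus} a $-g(d,e)$ cross-term; only after this trade does the form become $Q$ with matrix $\bigl(\begin{smallmatrix}1&-1/2&2\\-1/2&1&-2\\2&-2&6\end{smallmatrix}\bigr)$, which has eigenvalues $\frac{15\pm\sqrt{209}}{4}$ and $\frac12$ and hence is positive definite. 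In particular your heuristic that the $|E^h+D^h|^2$ term arises ``from completing the square in the terms quadratic in $D^h$, $E^h$ that survive'' cannot work: $D^h$ and $E^h$ never appear in the divergence computation at all; they are introduced precisely to borrow the needed $g(d,e)$ term from the $|D|^2+|E|^2$ coefficient. Without this identity (or an equivalent regrouping) the proof does not close.
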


\begin{proof} {We recall, that for a horizontal 1-form $\alpha$ the 1-form $J\alpha$ is
defined by $J\alpha(X)=-\alpha(JX)$. The divergences of the involved vector fields are calculated  using \eqref{oneforms} and the Bianchi identies %
\eqref{bia}. Since $S=4n(n+1)$ the Bianchi identities \eqref{bia} take the
form
\begin{equation}  \label{biac}
\nabla^*B(JX)=2(n-1)\nabla^*A(JX)=\frac{(n+2)(n-1)}2u(X), \quad \nabla^*Ju=%
\frac{4}{n+2}\sum_{a,b=1}^{2n}(\nabla^2_{e_ae_b}A)(e_a,e_b)=0.
\end{equation}
A direct computation gives
\begin{equation}  \label{divD}
\nabla^*d=\sum_{a=1}^{2n}(\nabla_{e_a}D)(e_a)=-h^{-1}D(\nabla
h)-(n+2)h^{-1}u(J\Cr h)+\frac12|D|^2.
\end{equation}
Using the properties of $A$, we calculate
\begin{equation}  \label{divJD}
\nabla^* (Jd)=h^{-1}d(J\Cr h)+(n+2)h^{-1}u(J\Cr h),
\end{equation}
taking into account $\sum_{a,b=1}^{2n}A(e_a,Je_b)\nabla^2h(Je_a,e_b)=0$ due to \eqref{tor}%
. Similarly, we calculate
\begin{equation}  \label{divE}
\nabla^* e =(n-1)h^{-1}u(\Cr h) +\frac12|{E}|^2
\end{equation}
after using the equality $\frac2{n+2}h^{-1}\sum_{a,b=1}^{2n}B(e_a,Je_b)%
\nabla^2h(e_a,e_b)=h^{-1}e(\Cr h)+\frac12|{E}|^2$
follows from \eqref{be} and \eqref{3tens}.
Finally, we have
\begin{equation}  \label{divJE}
\nabla^*(Je)=(n-1)h^{-1}u(J\Cr h)
\end{equation}
since $B(JX,JX)=0$ and $\sum_{a,b=1}^{2n}B(e_a,Je_b)\nabla^2h(Je_a,e_b)=0$
due to \eqref{be}. We obtain from \eqref{function} after applying the Ricci
identities and \eqref{oneforms} the identity
\begin{equation}  \label{funcder}
df(X) =\frac{h}2\Big(u(X)+d(X)\Big)+\nabla^2h(JX,\xi)+h^{-1}fdh(X)-h^{-1}dh(%
\xi)dh(JX).
\end{equation}
At this point we are ready to calculate the divergence formula using \eqref{divD},\eqref{divJD},%
\eqref{divE},\eqref{divJE}, \eqref{funcder} and \eqref{iden1} which give
\begin{multline}  \label{divmf}
\nabla ^{\ast }\Big(f[d+e]-dh(\xi )Jd+dh(\xi )Je-6dh(\xi )Ju\Big) \\
=\frac{1}{2}\left ( \frac{1}{2}+h+\frac{|\nabla h|^{2}}{4h} \right)\Big[|{D}|^{2}+|{E}|^{2}\Big]+\frac{h}{2}\Big[%
|d|^{2}+|e|^{2}+6|u|^{2}+4g(d,u)-4g(u,e)\Big] \\
=\frac{1}{2}\left( \frac{1}{2}+h\right) \left( |{D}|^{2}+|{E}|^{2}\right) +%
\frac{h}{4}|D^h+E^h|^{2}+\frac{h}{2}\left(
|d|^{2}+|e|^{2}+6|u|^{2}+4g(d,u)-4g(u,e)-2g(d,e)\right) \\
=\frac{1}{2}\left( \frac{1}{2}+h\right) \left( |{D}|^{2}+|{E}|^{2}\right) +%
\frac{h}{4}|D^{h}+E^{h}|^{2}+\frac{h}{2}Q(d,e,u)
\end{multline}
with $Q=%
\begin{bmatrix}
1 & -1/2 & 2 \\
-1/2 & 1 & -2 \\
2 & -2 & 6%
\end{bmatrix}%
$ using the next identity in the last equality
\begin{equation}\label{e:key 3-tensors}
\frac{|\nabla h|^{2}}{4h}\left( |{D}|^{2}+|{E}|^{2}\right) =\frac{h}{2}%
|D^h+E^h|^{2}-{h}g(d,e).
\end{equation}
Notice  that  $Q$ has eigenvalues $\frac {15\pm\sqrt {209}}{4}$ and $\frac 12$, hence it is a positive definite matrix.
}
{Finally, the validity of \eqref{e:key 3-tensors} can be seen as follows,
\begin{multline}
|D^h+E^h|^2=\frac {1}{4h^2}\left \vert dh(e_a){D}(e_b,e_c)+dh(Je_a){D}(Je_b,e_c)
+dh(e_c){E}(e_a,e_b)-dh(Je_c){E}(e_a,Je_b) \right\vert^2\\
=\frac {1}{2h^2}|\nabla h|^2\left (| {D}|^2+ |{E}|^2\right) +\frac
{2}{h^2}\left ({D}(\nabla h,e_a){E}(\nabla h,e_a)\right ) =\frac
{1}{2h^2}|\nabla h|^2\left (| {D}|^2+ | {E}|^2\right) + {2}g(d,e).
\end{multline}
}

\end{proof}

\subsection{The uniqueness theorem in a Sasaki-Einstein class}\label{ss:Sasaki-Einstein uniqieness Yamabe}
Motivated by Theorem \ref{t:Obata Yamabe} it is natural to
investigate the uniqueness of the pseudohermitian structures of
constant scalar curvature in the Sasaki-Einstein case, especially
in view of  Theorem \ref{jerl}. The fact that the divergence
formula of \cite{JL3} can  be stated as in Theorem \ref{jerl} was
observed earlier in \cite{IMV} which influenced the results
\cite{IMV,IMV1} in which there is a clear separation of the two
steps of Jerison and Lee's argument,  the first involving the
conformal equivalence of an "Einstein structure" to a structure of
"constant scalar curvature" and the second involving the
characterization of the conformal equivalence of two (conformally
flat) Einstein structures. A corresponding QC version of the Obata
uniqueness theorem was formulated by the second author. Clearly,
{in the CR case} Theorem \ref{jerl}
addresses the first step, while a part of the second step is
contained in \cite{JL3} where  the (suitable) conformal factor is
characterized as a pluriharmonic function. For the completion of
the second step one can reduce to the result mentioned in Remark
\ref{r:sphere charcat} with an argument employed in \cite[Theorem
1.3]{IVO} (see also the end of Section \ref{ss:CR cpct Obata
proof}) rather than relying on the calculation on the Heisenberg
group when in the pseudoconformal class of the Sasaki-Einstein
sphere as Jerison \& Lee did. For details of this last reduction
see \cite{Wang1}. {Alternatively, a
conceptual proof using again in the first step the Jerison \&
Lee's divergence formula and as a second step a generalization of
\eqref{e:lap of div conf v.f.} is possible based on the proof
found in the quaternionic contact case 
\cite{IMV15a}. Next, we sketch briefly the obvious adaption of the
argument from the quaternionic contact case \cite{IMV15a}. First,
we use the well known fact that a vector field $Q$ on a CR
pseudo-Hermitian manifold is an infinitesimal CR transformation
iff there is a (smooth real-valued) function $\sigma$ such that
$Q=-\frac 12J\nabla\sigma-\sigma(\xi)\xi$ and $\sigma$ satisfies
the second order equation $\mathcal{L}_QJ=0$, see \cite{ChLee}. In
fact, decomposing $Q$ in its horizontal and vertical parts
$Q=Q_H-\sigma\xi$  it follows that $Q_H$  ("contact Hamiltonian
field") is determined by $\eta(Q_H)=0$ and $i_{Q_H}\, d\eta\equiv
0 \ \ (mod\ \eta)$ while the preservation of the complex structure
gives  the second order system
$[\nabla^2\sigma]_{[-1]}(X,Y)=\sigma A(JX,Y)$. Next,  as a
consequence of the CR Yamabe equation one obtains a formula as in
Lemma \ref{e:lap of div conf v.f.} for an infinitesimal CR
automorphism $Q$ on $(M,\eta)$, namely
 \begin{equation}\label{e:lapdivCR}
 \Delta(\nabla^*Q_H)\ =\ -\ \frac{n-2}{2(n+2)}Q(\text{Scal})\ -\ \frac{\text{Scal}}{2(n+2)}\nabla^*Q_H,
\end{equation}
where $Q_H$ is the horizontal part of $Q$. In our case, where $A=0$, for $\sigma=dh(\xi)$  it follows from Ricci's identity \eqref{e:CR XYxi Ricci}
and \eqref{tor} that the vector field $Q$ defined by
$Q=-\frac 12 J \nabla dh(\xi)-dh(\xi)\xi$ is an infinitesimal CR  vector field unless it vanishes. Now, for  $f$ defined in \eqref{function}, from \eqref{iden1} and \eqref{funcder} it follows
$Q=-\frac 12 \nabla f-dh(\xi)\xi$. This implies that $\phi=\triangle f$
either vanishes identically or is an eigenfuction of the sublaplacian realizing the smallest
possible eigenvalue on a (pseudo-Einstein) Sasakian manifold. Finally, if $h\not=const$ then the CR Lichnerowicz-Obata theorem \cite{CC09a,CC09b}, see Section \ref{ss:CR Obata}, shows that $(M,\eta)$ is homothetic to the CR unit sphere, which completes the proof. We note that the above arguments have as a corollary that in
 Jerison \& Lee's identity \cite[(3.1)]{JL3}, letting $\phi=\triangle_b Re(f)$  we have $\triangle_b \phi =-2n\phi$. }

Thus,  a pseudoconformal class of a Sasaki-Einstein pseudohermitian form different from the {standard} Sasaki-Einstein form on the round sphere contains a unique (up to homothety) pseudohermitian form of constant CR scalar curvature, namely, the Sasaki-Einstein form itself.

\section{The qc-Yamabe problem and the Obata type uniqueness theorem}

In this section we consider the quaternionic contact version of the Yamabe problem described in Section\ref{ss:Yamabe Iwasawa mnfld}.
{We begin by quoting the next result of Wang \cite{Wei} which follows from the known techniques in the Riemannian and CR settings.}
\begin{thrm}[\cite{Wei}] Let $(M,\eta)$ be a compact quaternionic contact manifold of real dimension $4n+3$.
\begin{enumerate}[a)]
\item The qc Yamabe constant satisfies the inequality $\Upsilon (M,[\eta])\leq \Upsilon (S^{4n+3},
([\tilde\eta])$.
\item If $\Upsilon(M,[\eta])<\Upsilon(S^{4n+3},
[\tilde\eta])$, then the Yamabe problem has a solution.
\end{enumerate}
\end{thrm}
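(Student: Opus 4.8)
The plan is to establish the two parts exactly as in the Riemannian and CR cases, replacing the classical objects by their qc-analogues from Section \ref{ss:qc geometry} and Section \ref{ss:Yamabe Iwasawa mnfld}. For part a), I would argue by testing the qc Yamabe functional \eqref{e:Yamabe functional def} near a point with a suitably rescaled copy of the extremal \eqref{e:FS extremal fns} for the quaternionic Heisenberg group. Fix $p\in M$, and using the exponential-type coordinates associated to the Biquard connection (or a qc-normal frame), transplant the group extremal $F_\epsilon$ via a cutoff; since the osculating nilpotent group at $p$ is the quaternionic Heisenberg group $\QH$ (see Section \ref{s:Iwasawa sub-Riem geom}), the leading term of $\Upsilon_{[\eta]}(\chi F_\epsilon)$ as $\epsilon\to 0$ reproduces the Folland--Stein quotient on $\QH$, whose infimum is $\Upsilon(S^{4n+3},[\tilde\eta])$ by Theorem \ref{T:Iwasawa groups Minimizers} together with the Cayley transform identification \eqref{e:Cayley transf ctct form}. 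The subleading corrections are controlled because $S$ is bounded on compact $M$ and the qc-scalar curvature term scales to lower order relative to the horizontal Dirichlet energy, exactly as the scalar-curvature term does against $\abs{\nabla u}^2$ in the classical Aubin computation. Passing $\epsilon\to 0$ gives $\Upsilon(M,[\eta])\leq\Upsilon(S^{4n+3},[\tilde\eta])$.

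For part b), the plan is the direct-method/subcritical-approximation scheme of Yamabe--Trudinger--Aubin adapted to the sub-Riemannian setting. For $2<q\le 2^*$ consider the subcritical functionals $\Upsilon_{[\eta]}^q(\phi)=\mathcal{E}_\eta(\phi)\big/\bigl(\int_M\phi^{q}\,\vol\bigr)^{2/q}$ and let $\Upsilon^q$ be the corresponding infima over positive $\phi\in\mathcal{C}^\infty(M)$. For $q<2^*$ the embedding $\mathcal{S}^{1,2}\hookrightarrow L^q$ is compact (this is the subcritical Folland--Stein embedding on a compact manifold, an immediate consequence of \eqref{FS} on charts plus Rellich--Kondrachov-type arguments for Hörmander operators), so a positive minimizer $u_q$ exists, smooth and strictly positive by Theorem \ref{t:harnack and holder regul}, normalized by $\int_M u_q^{q}\,\vol=1$, and solving the subcritical qc Yamabe equation $\mathcal{L}u_q=-\Upsilon^q\,u_q^{q-1}$. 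One then lets $q\uparrow 2^*$: a standard Moser-type iteration based on the Harnack inequality of Theorem \ref{t:harnack and holder regul}(a) gives a uniform $L^\infty$ bound on $\{u_q\}$ provided $\Upsilon([\eta])<\Upsilon(S^{4n+3},[\tilde\eta])$ — the strict inequality is exactly what prevents the minimizing sequence from concentrating at a point (blow-up would, after rescaling via the osculating $\QH$-structure, produce an entire solution on $\QH$ of energy $\Upsilon(S^{4n+3},[\tilde\eta])$, contradicting the strict gap). With the uniform bound one extracts a subsequential limit $u>0$ in $\mathcal{C}^\infty$, which solves the critical qc Yamabe equation \eqref{e:conf Yamabe} with $\overline S$ constant, i.e. $\bar\eta=u^{4/(Q-2)}\eta$ has constant qc-scalar curvature.

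The main obstacle is the concentration-compactness step: ruling out bubbling when $\Upsilon([\eta])<\Upsilon(S^{4n+3},[\tilde\eta])$ requires a precise local description of how the functional behaves under rescaling around a point, which in turn rests on the fact that the osculating group is $\QH$ with the sharp constant known from Theorem \ref{T:Iwasawa groups Minimizers}; quantifying the error between the Biquard-connection geometry near $p$ and the flat model \eqref{e:Heisenbegr ctct forms} (the qc analogue of conformal normal coordinates) is the technically delicate part, and is where the sub-Riemannian analysis departs substantially from the Riemannian case. A secondary difficulty is the sub-elliptic regularity and uniform Harnack estimate needed along the subcritical family, but these are supplied by Theorem \ref{t:harnack and holder regul} and the references therein. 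I would remark that this is precisely the content of Wang's argument in \cite{Wei}, which transplants the Yamabe--Trudinger--Aubin--Schoen machinery to the qc setting using the Biquard connection and the qc-conformal change formulas from Section \ref{ss:qc geometry}.
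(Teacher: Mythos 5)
Your proposal is correct and follows essentially the route the paper itself attributes to Wang \cite{Wei}: the paper only quotes this theorem without proof, remarking that it ``follows from the known techniques in the Riemannian and CR settings,'' and your concentration test-function argument for part a) together with the subcritical approximation and blow-up exclusion for part b) are precisely those techniques, transplanted via the Biquard connection and the qc-conformal change formulas. The only minor observation is that part a) does not require the sharp constant of Theorem \ref{T:Iwasawa groups Minimizers} (which postdates \cite{Wei}); the identification of the Folland--Stein infimum on the quaternionic Heisenberg group with $\Upsilon(S^{4n+3},[\tilde\eta])$ through the conformal invariance of the Cayley transform already suffices.
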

In view of the  Riemannian and CR cases, it is  expected that on a compact qc manifods $\Upsilon(M,[\eta])< \Upsilon(S^{4n+3},
[\tilde\eta])$ unless the qc manifold $(M,[\eta])$ is locally qc conformal to $(S^{4n+3},[\tilde\eta])$.
  Some steps towards the proof of this result include the qc-normal coordinates constructed in \cite{Kun} and the qc conformal tensor, $|W^{qc}|^2$ \cite{IV}.

 Another known general result is the (local) classification of all (local) qc-conformal transformations of the flat structure on the $\QH$ which are also qc-Einstein. This classification is used as a replacement of the theory of the pluriharmonic functions that appear in the CR case. Some attempts in extending the latter in the quaternionic (contact) case can be found in \cite{IMV}. However, so far, these extensions have not proven useful in the solution of the Yamabe problem. The following Theorem precises the result of \cite[Theorem 1.1]{IMV} where only the vanishing qc-scalar curvature case was considered.
\begin{thrm}\label{t:einstein preserving}
Let $\Theta=\frac{1}{2h}\tilde\Theta$ be a conformal deformation of the standard
qc-structure $\tilde\Theta$ on the quaternionic Heisenberg group $\QH$. If $\Theta$ is
also qc-Einstein, then up to a left translation the function $h$ is given by

\begin{equation}\label{e:Liouville conf factor}
h(q,\omega) \ =\ c_0\ \Big [  \big ( \sigma\ +\
 |q+q_0|^2 \big )^2\  +\  |\omega\ +\ \omega_o\ +
\ 2\ \text {Im}\  q_o\, \bar q|^2 \Big ],
\end{equation}
for some fixed $(q_o,\omega_o)\in \QH$ and constants $c_0>0$ and $\sigma\in \mathbb{R}$. Furthermore,
\begin{equation}\label{e:scal for qc-einstein conf to flat}
S_{\Theta}=128n(n+2)c_0\sigma
\end{equation}
\end{thrm}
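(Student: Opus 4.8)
The plan is to mimic, in the flat quaternionic Heisenberg setting, the structure of Obata's argument as it was used in Theorem~\ref{t:Obata Yamabe} and in the CR case (Theorem~\ref{jerl}), but now exploiting the very rigid consequences of the qc-Einstein condition together with qc-conformal flatness of $\QH$. Recall that $\widetilde\Theta$ is flat, so $S_{\widetilde\Theta}=0$, $T^0=U=0$, and the Biquard connection of $\widetilde\Theta$ is the left-invariant flat connection. Writing $\Theta=\tfrac1{2h}\widetilde\Theta=\phi^{4/(Q-2)}\widetilde\Theta$ with $\phi=(2h)^{-(Q-2)/4}$, the qc-Yamabe equation \eqref{e:conf Yamabe} reduces to
\begin{equation*}
\triangle_{\widetilde\Theta}\,\phi \ =\ -\,\frac{Q-2}{4(Q+2)}\,S_\Theta\,\phi^{2^*-1},
\end{equation*}
i.e. $\phi$ solves a Yamabe-type equation on $\QH$ with constant right-hand side. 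First I would use the qc-conformal transformation formulas for the qc-Ricci tensor and torsion (the quaternionic analogues of \eqref{e:Ric conf change h}, \eqref{e:A conf change}, \eqref{e:conf change scalar curv h}, as recorded in \cite{IMV,IV2}) to translate the hypothesis that $\Theta$ is qc-Einstein (equivalently $T^0_\Theta=U_\Theta=0$ by \eqref{sixtyfour}) into a closed second-order (really, after one differentiation, third-order) PDE system for $h$ on the flat group. The flatness of $\widetilde\Theta$ makes every curvature term on the $\widetilde\Theta$-side vanish, so these become pure identities among $\nabla^2 h$, $\nabla h$, $h$ and the constant $S_\Theta$.

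The core of the argument is then to extract from this system the statement that a certain scalar built from $h$ has vanishing (horizontal) Hessian-type derivative, exactly as $(\nabla^2\phi)_0=0$ appears in the Riemannian Obata argument and as the $P$-function machinery produces \eqref{positivity} in the proof of Theorem~\ref{T:CS}. Concretely, I expect that the qc-Einstein condition forces the horizontal Hessian $\nabla^2 h$ (with respect to the flat connection) to be a pure multiple of $g$ on $H$, together with companion relations for the mixed $\nabla^2 h(X,\xi_s)$ and $\nabla^2 h(\xi_s,\xi_t)$ components — this is the quaternionic analogue of the fact (Remark~\ref{r:sphere charcat}) that on the sphere the conformal factor satisfies $\nabla^2\phi=\tfrac{\triangle\phi}{n}g$. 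Having such a "gradient qc-conformal vector field" type equation for $h$, one integrates it along the Lie-group directions: the left-invariant vector fields $T_\alpha,X_\alpha,Y_\alpha,Z_\alpha,\xi_s$ of \eqref{qHh} act on $h$ in a controlled way, and the ODE one gets along the "radial" directions is solved by the quadratic-plus-constant profile, forcing $h$ to be a polynomial of degree at most $4$ in the coordinates. Matching the precise shape — degree two in $q$ and the specified $\mathrm{Im}\,q_o\bar q$ cross term in $\omega$ — uses left-invariance to absorb a translation by $(q_o,\omega_o)$ and reduces to determining two scalars, which become $c_0>0$ and $\sigma\in\mathbb R$; positivity of $c_0$ comes from $h>0$.

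Finally, to get the scalar curvature formula \eqref{e:scal for qc-einstein conf to flat}, I would simply plug the explicit $h$ from \eqref{e:Liouville conf factor} back into the transformation law for the qc-scalar curvature (the analogue of \eqref{e:conf change scalar curv h}), using $S_{\widetilde\Theta}=0$; after the routine but lengthy computation of $\triangle_{\widetilde\Theta} h$, $|\nabla h|^2$ and $dh(\xi_s)$ for the quadratic profile — essentially the same computation that gives $\triangle_{\widetilde\Theta}h=\tfrac{Q-6}{4}+\tfrac{Q+2}{4}|q'|^2$ in \eqref{e:h and Phi} and \eqref{e:Yamabe for Phi} — one reads off $S_\Theta=128n(n+2)c_0\sigma$. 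The main obstacle, in my view, is the first step: verifying that the qc-Einstein condition genuinely collapses to the clean "$\nabla^2 h$ proportional to $g$ on $H$" system rather than to a messier coupled system in the three vertical directions. This requires careful bookkeeping of the $[3]$ and $[-1]$ projections in \eqref{comp} and of the $T^0,U$ decomposition in \eqref{sixtyfour}, and it is precisely where the quaternionic case is genuinely harder than the CR one, since the $\mathfrak{sp}(1)$-part of the connection and the interaction of the three Reeb fields must be handled. Once that system is in hand, the integration and the final curvature computation are essentially forced, following \cite[Theorem~1.1]{IMV} and its extension to $S_\Theta\neq0$.
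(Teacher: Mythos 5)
Your overall strategy coincides with the paper's, which gives no independent argument here but simply defers to a careful re-reading of \cite[Theorem 1.1]{IMV}: impose the qc-Einstein condition $T^0_\Theta=U_\Theta=0$ (equivalent to qc-Einstein by \eqref{sixtyfour}) in the qc-conformal transformation laws \eqref{Torh}, \eqref{defU}, \eqref{defs} with the flat background $\tilde\Theta$, so that all unbarred torsion and curvature terms drop out; this produces an overdetermined system for $h$ whose solutions are quartic polynomials of the form \eqref{e:Liouville conf factor}, and $S_\Theta$ is then read off from \eqref{defs} with $S_{\tilde\Theta}=0$. One correction, however, at precisely the step you flag as the main obstacle: the system does \emph{not} collapse to ``$\nabla^2h$ proportional to $g$ on $H$''. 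What \eqref{Torh} with $\bar T^0=T^0=0$ gives is that the $[-1]$-part of the horizontal Hessian reduces to the antisymmetric multiple of $\sum_s dh(\xi_s)\,\omega_s$ forced by the torsion of the Biquard connection, while \eqref{defU} with $\bar U=U=0$ gives that the trace-free $[3]$-part of $\nabla^2h$ equals a multiple of $h^{-1}$ times the trace-free $[3]$-part of $dh\otimes dh+\sum_s I_sdh\otimes I_sdh$ (in the notation of \eqref{comp}); it is exactly these quadratic first-order terms that push the solution up to degree four, whereas a literal Obata-type equation $\nabla^2h=\frac{\triangle h}{4n}\,g$ would force $h$ to be quadratic and could never yield \eqref{e:Liouville conf factor}. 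With the system stated in this corrected form, the remainder of your plan --- differentiation and integration along the left-invariant frame \eqref{qHh}, normalization of $(q_o,\omega_o)$ by a left translation, positivity of $c_0$ from $h>0$, and the final substitution into \eqref{defs} (consistent with the sample computation in \eqref{e:h and Phi}--\eqref{e:S of standard qc sphere}, which recovers $S=8n(n+2)$ for $c_0=1/16$, $\sigma=1$) --- is exactly what the paper intends by ``making the necessary changes'' to \cite[Theorem 1.1]{IMV}.
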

The proof follows from a careful reading of the proof of \cite[Theorem 1.1]{IMV} and making of the necessary changes.  As in \cite[Theorem 1.1]{IMV},  $h$ satisfies a system of partial differential equations whose solution is a family of polynomial of degree four.

The final known general result concerns the seven dimensional case {while  the higher dimensions are settled in the preprint \cite{IMV15a}}.

\begin{thrm}[\cite{IMV1}]\label{t:div formula} {\ If a quaternionic contact
structure   $(M^7,\eta)$ is conformal to a qc-Einstein structure $(M^7,\bar\eta)$}, $%
\bar\eta\ =\ \frac{1}{2h}\, \eta$ so that $S=\bar S=16n(n+2)$, then $(M^7,\eta)$ is also qc-Einstein.
\end{thrm}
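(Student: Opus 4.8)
Since $\dim M^7=7$ we have $n=1$, so the target value is $S=\bar S=16n(n+2)$ and, crucially, the $[3]$-part of any symmetric endomorphism of $H$ is a multiple of the identity; this one-dimensionality of $\Psi_{[3]}$ is what makes the seven-dimensional case amenable to a single divergence identity, in complete analogy with the CR proof of Theorem \ref{jerl} via the Jerison--Lee divergence formula (Theorem \ref{t:JL thrm}). The plan is therefore to produce a horizontal one-form whose divergence is a sum of non-negative terms built from the Biquard torsion components $T^0$ and $U$ of $\eta$, and then integrate over the compact $M^7$ using \eqref{div}.

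First I would record, for $\bar\eta=\tfrac1{2h}\eta$, the QC conformal transformation laws that play the role of \eqref{e:A conf change}, \eqref{e:Ric conf change h}, \eqref{e:conf change scalar curv h}: the change of the qc-scalar curvature, the change of the qc-Ricci tensor and of the qc-Ricci forms $\rho_s$, and hence, via \eqref{sixtyfour}, the change of the torsion tensors $T^0$ and $U$. Imposing $S=\bar S=16n(n+2)$ turns the scalar-curvature law into a second order equation $\lap h=\cdots$ expressing $\lap h$ through $h$, $|\nabla h|^2$ and the vertical derivatives $dh(\xi_s)$ (the analogue of \eqref{sublap}); imposing $\bar T^0=\bar U=0$ turns the torsion laws into formulas expressing the $[-1]$- and $[3]$-parts of the horizontal Hessian $\nabla^2 h$ in terms of $h$, $\nabla h$, $dh(\xi_s)$, and the torsion $T^0,U$ of $\eta$ — precisely the QC analogues of \eqref{tor} and \eqref{be}.

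Next I would introduce, as in \eqref{oneforms}, the horizontal one-forms obtained by contracting $T^0$, $U$, $\nabla^*T^0$, $\nabla^*U$, together with the auxiliary combinations carrying $\nabla^2 h(\cdot,\xi_s)$ and $I_s\nabla h$, against $\nabla h$; and the function $f=\tfrac12+h+\tfrac{|\nabla h|^2}{4h}$ as in \eqref{function}. Using the Ricci identities and the contracted Bianchi identities for the Biquard connection one computes the divergences of these one-forms. Assembling a suitable linear combination with coefficients built from $f$ and the $dh(\xi_s)$, substituting the Hessian formulas of the previous step, and using an algebraic identity of the type \eqref{e:key 3-tensors} (relating $|\nabla h|^2(|T^0|^2+|U|^2)$ to the ``tensorized'' one-forms), one arrives at an identity of the shape
\[
\nabla^*(\,\cdots\,)\ =\ \tfrac12\big(\tfrac12+h\big)\big(|T^0|^2+|U|^2\big)\ +\ \tfrac h8\,|\,\cdots\,|^2\ +\ Q(\cdots),
\]
with $Q$ a positive semidefinite quadratic form in the one-forms. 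Integrating over $M^7$ and invoking \eqref{div} forces every term on the right to vanish; in particular $T^0=U=0$, hence by \eqref{sixtyfour} the qc-Ricci tensor of $\eta$ equals $\tfrac{S}{4n}g$, i.e. $(M^7,\eta)$ is qc-Einstein, cf. \eqref{qcA}. (Recall qc-Einstein then automatically has constant qc-scalar curvature.)

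The main obstacle is the bookkeeping of the $Sp(1)$-part of the Biquard connection: unlike the CR case, the commutation of iterated covariant derivatives of $h$ involves the connection one-forms acting on the Reeb directions, and the vertical derivatives $dh(\xi_s)$ enter in ways with no CR counterpart. Organizing the auxiliary one-forms (those carrying the $\xi_s$-derivatives) so that, after taking divergences and using $\bar T^0=\bar U=0$, all the mixed terms assemble into a \emph{positive semidefinite} quadratic form $Q$ rather than an indefinite remainder is the delicate point, and it is exactly here that the $n=1$ feature — the scalar nature of $U$ and the one-dimensionality of $\Psi_{[3]}$ — is exploited; this is also why the higher-dimensional version required the separate argument of \cite{IMV15a}, which instead follows the CR template of infinitesimal qc-automorphisms combined with the QC Lichnerowicz--Obata theorem.
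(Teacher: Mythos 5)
Your proposal follows essentially the same route as the paper's proof: the conformal transformation laws for $T^0$, $U$ and $S$ specialized to $\bar T^0=\bar U=0$ and $S=\bar S$, intrinsic horizontal one-forms obtained by contracting the torsion and the vertical derivatives $dh(\xi_s)$ with $\nabla h$, the function $f=\tfrac12+h+\tfrac{|\nabla h|^2}{4h}$, a divergence identity whose right-hand side is $f\,\|T^0\|^2$ plus a positive semidefinite quadratic form in those one-forms, and integration over the compact manifold via \eqref{div}. The only refinement worth noting is that for $n=1$ the $[3]$-torsion $U$ vanishes identically (it is the trace-free part of a one-dimensional component), so the right-hand side of the paper's identity \eqref{e:div formula} carries only $\|T^0\|^2$ and the form $hVLV^{t}$ in $V=(D_1,D_2,D_3,A_1,A_2,A_3)$, rather than the $|T^0|^2+|U|^2$ combination you anticipate.
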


The above results lead to a complete solution of the qc Yamabe problem on the standard qc \emph{seven dimensional} sphere and quaternionic Heisenberg groups. In particular, as conjectured in \cite{GV}, all solutions of the qc Yamabe equations are given by those that realize the Yamabe constant of the sphere or the best constant in the Folland-Stein inequality.

\begin{thrm}[\cite{IMV1}]\label{t:Yamabe} 
\begin{enumerate}[a)] \item  Let $\tilde\eta=\frac{1}{2h}\eta$ be a conformal
deformation of the standard qc-structure $\tilde\eta$ on the quaternionic
unit sphere $S^{7}$. If $\eta$ has constant qc-scalar curvature, then up to
a multiplicative constant $\eta$ is obtained from $\tilde\eta$ by a
conformal quaternionic contact automorphism. In particular, $\Upsilon(S^7)=
48\, (4\pi)^{1/5}$ and this minimum value is achieved only by $\tilde\eta$
and its images under conformal quaternionic contact automorphisms.
 \item On the the seven dimensional quaternionic Heisenberg group the only solutions of the qc-Yamabe equation, up to translations \eqref{translation} and dilations \eqref{scaling}, are those given in \eqref{e:FS extremal fns}.
\end{enumerate}
\end{thrm}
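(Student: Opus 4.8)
The plan is to derive Theorem~\ref{t:Yamabe} as a combination of the earlier structural results rather than a fresh computation. For part a), suppose $\eta = \frac{1}{2h}\tilde\eta$ on $S^7$ has constant qc-scalar curvature $S_\eta$. After a harmless rescaling we normalize $S_\eta = 16n(n+2)$ so that $S_\eta = S_{\tilde\eta}$, the qc-scalar curvature of the standard $3$-Sasakian sphere (cf. \eqref{e:S of standard qc sphere} with $n=1$). Since $(S^7,\tilde\eta)$ is qc-Einstein, Theorem~\ref{t:div formula} applies and forces $(S^7,\eta)$ to be qc-Einstein as well. Now we invoke the qc-conformal flatness characterization, Theorem~\ref{T:flat}: the standard sphere has $W^{qc}=0$, and by part a) of that theorem $W^{qc}$ is qc-conformally invariant, so $W^{qc}_\eta = 0$ too; hence $\eta$ is locally qc-conformal to the flat structure on $\QH$, and globally, after transporting through the Cayley transform, we are in the situation of Theorem~\ref{t:einstein preserving} (applied on $\QH$ via $\mathcal{C}$).

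The next step is to pull everything back to the quaternionic Heisenberg group. Let $f = \mathcal{C}^*(\eta \cdot \Phi^{-1}\text{-type factor})$ be the corresponding conformal factor on $\QH$, exactly as in \eqref{e:g def}--\eqref{e:h and Phi} and Section~\ref{ss:qc sphere}: the Cayley transform sends $(S^7,\tilde\eta)$ to the flat $(\QH,\tilde\Theta)$ up to conformal factor $\Phi$ given by \eqref{e:h and Phi}, so a qc-Einstein structure conformal to $\tilde\eta$ on the sphere corresponds to a qc-Einstein structure conformal to $\tilde\Theta$ on $\QH$. Theorem~\ref{t:einstein preserving} then pins down the conformal factor on the group: it must be, up to left translation, of the polynomial form \eqref{e:Liouville conf factor},
\[
h(q,\omega) = c_0\left[(\sigma + |q+q_0|^2)^2 + |\omega + \omega_o + 2\,\mathrm{Im}\,q_o\bar q|^2\right],
\]
with $S_\Theta = 128 n(n+2)c_0\sigma$ by \eqref{e:scal for qc-einstein conf to flat}. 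For the normalization forcing $S_\eta>0$ we get $\sigma>0$, and transferring back through $\mathcal{C}$ (which is itself a qc-conformal automorphism between $S^7$ and the compactified $\QH$) exhibits $\eta$ as the image of $\tilde\eta$ under a conformal quaternionic contact automorphism of $S^7$ — precisely the composition of $\mathcal{C}$, a Heisenberg translation and dilation, and $\mathcal{C}^{-1}$. The explicit value $\Upsilon(S^7) = 48(4\pi)^{1/5}$ then drops out of Theorem~\ref{T:Iwasawa groups Minimizers} with $m=4n=4$, $k=3$, $Q=10$, together with the remark that the constant structure achieves the infimum. Part b) is the $\QH$-side restatement: a positive solution of $\lap_{\tilde\Theta} u = -u^{2^*-1}$ on $\QH$ gives, via $\bar\Theta = u^{4/(Q-2)}\tilde\Theta$, a qc structure of constant positive qc-scalar curvature conformal to the flat one; finiteness of the Folland--Stein energy plus the regularity Theorem~\ref{t:harnack and holder regul} lets us apply the argument above (the qc-Einstein conclusion comes from Theorem~\ref{t:div formula} transported to the sphere, or directly via the $\QH$ version of the divergence formula in Theorem~\ref{t:einstein preserving}), so $u$ must be of the form \eqref{e:FS extremal fns} up to \eqref{translation} and \eqref{scaling}.

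The real content, and the step I expect to be the genuine obstacle, is Theorem~\ref{t:div formula} — the qc analogue of the Jerison--Lee divergence identity in dimension seven. Everything else in the proof above is bookkeeping: conformal invariance of $W^{qc}$, the explicit Cayley transform formulas, and the polynomial integration behind Theorem~\ref{t:einstein preserving}. But the passage "constant qc-scalar curvature $\Rightarrow$ qc-Einstein" has no soft proof; it rests on constructing, on $(M^7,\eta)$, a carefully chosen horizontal $1$-form (built from the torsion tensors $T^0$, $U$ and horizontal derivatives of $h$) whose divergence, after repeated use of the qc Ricci identities, the contracted Bianchi identities from Theorem~\cite[Theorem 3.12]{IMV}, and the conformal change formulas for the Biquard connection, equals a manifestly non-negative quadratic expression in those torsion tensors plus a non-negative quadratic form — so that integrating over the compact $M^7$ and applying \eqref{div} forces the torsion to vanish, i.e. $T^0 = U = 0$, which by \eqref{sixtyfour} is exactly the qc-Einstein condition. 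The dimension restriction $n=1$ enters because in higher dimensions the analogous identity was only completed later \cite{IMV15a}; the seven-dimensional case is where the quadratic form is positive (semi)definite and the computation closes. I would quote Theorem~\ref{t:div formula} as given and not attempt to reprove it here, since its proof is the technical heart of \cite{IMV1}.
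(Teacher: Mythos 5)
Your proposal follows essentially the same route as the paper: Theorem~\ref{t:div formula} forces $\eta$ to be qc-Einstein, the Cayley transform moves the problem to $\QH$ where Theorem~\ref{t:einstein preserving} pins down the conformal factor as a translate/dilate of the standard one, and part b) is lifted to the sphere (the paper handles the extension of the lifted structure across the pole via the Kelvin transform, which you only gesture at through finite energy and regularity). The one superfluous step is your detour through $W^{qc}$-conformal flatness, which is automatic here since $\eta$ is already globally conformal to $\tilde\eta$ and hence, via the Cayley transform, to the flat structure on $\QH$.
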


The proof of Theorem~\ref{t:Yamabe} relies on Theorem~\ref{t:einstein preserving} and  Theorem~\ref{t:div formula} and will be sketched near the end of the Section.

\subsection{The Yamabe problem on a 7-D qc-Einstein manifold. Proof of Theorem \ref{t:div formula}}

In this section we give some details on the proof of Theorem \ref{t:div formula}.  The analysis involves a number of intrinsic to the structure vector fields/ 1-forms which are defined in any dimension, so here $n\geq 1$. We shall consistently keep the notation introduced in  \cite{IMV1}, which can be consulted for details.
\subsubsection{Intrinsic vector fields and their divergences}\label{ss:v.f. and div any n}

We begin by defining the horizontal 1-forms $A_s$, also letting  $ A=A_1+A_2+A_3$,
\begin{equation}\label{d:A_s}
A_i(X)=\omega_i([\xi_j,\xi_k],X).
\end{equation}
The contracted Bianchi identity on a $(4n+3)$-dimensional qc manifold with
constant qc-scalar curvature reads \cite{IMV}, Theorem 4.8],
\begin{equation}  \label{div:To}
\nabla^*T^0=(n+2){A}, \qquad \nabla^*U=\frac{1-n}{2}{A}.
\end{equation}
Let $h$ be a positive smooth function on a qc manifold
$(M, g, \eta)$
and $\bar\eta\ =\ \frac{1}{2h}\, \eta$ be a conformal deformation of the qc
structure $\eta$. As usual, the objects related to $\bar \eta$ will be denoted by an over-line. Thus,
$$d\bar\eta=-\frac1{2h^2}dh\wedge\eta+\frac1{2h}d\eta,\quad \bar g=\frac1{2h}g.$$ The new  Reeb vector fields  $\{\bar\xi_1,\bar\xi_2,\bar\xi_3\}$ are $\xi_s=2h\xi_s+I_s\nabla h, s=1,2,3.$
The components  $T^0$, $U$ of the Biquard connection and the qc scalar curvatures change as follows \cite{IMV1}
\begin{equation}\label{Torh}
\bar T^0(X,Y)=T^0(X,Y)+\frac1{4h}\Big(3\nabla^2h(X,Y)-\sum_{s=1}^3\nabla^2h(I_sX,I_sY)\Big)
-\frac1{2h}\sum_{s=1}^3dh(\xi_s)\omega_s(X,Y).
\end{equation}
\begin{multline}\label{defU}
\bar U(X,Y)=U(X,Y)+\frac1{8h}\Big(\nabla^2h(X,Y)+\sum_{s=1}^3\nabla^2h(I_sX,I_sY)\Big)\\
-\frac1{4h^2}\Big(dh(X)dh(Y)+\sum_{s=1}^3dhI_sX)dh(I_sY)\Big)-\frac1{8h}\Big(\triangle h-\frac2{h}|\nabla h||^2\Big)g(X,Y),
\end{multline}
\begin{equation}\label{defs}
\bar S=2hS+8(n+2)\triangle h-8(n+2)^2h^{-1}{|\nabla h|}^2.
\end{equation}
{Suppose  $\bar\eta$ is a positive 3-Sasakian structure, i.e. $\bar T^0=\bar U=0, \bar S=16n(n+2)$. Then  \eqref{defs} takes the form
\begin{equation}\label{defsy}
2n=4nh+\triangle h-(n+2)h^{-1}{|\nabla h|}^2,
\end{equation}
which is the qc Yamabe equation. We also  have
the formulas
\begin{multline}  \label{e:A_s}
A_1(X)\ =\ -\frac12 h^{-2}dh(X)\ -\ \frac 12h^{-3}\lvert \nabla h
\rvert^2dh(X)
-\ \frac 12 h^{-1}\Bigl (\ {\nabla dh} (I_2X, \xi_2)\ +\ \ {\nabla dh}
(I_3X, \xi_3) \Bigr )\\ +\ \frac 12 h^{-2}\Bigl (dh(\xi_2)\,dh (I_2X)\ +\
dh(\xi_3)\,dh (I_3X) \Bigr )
+\ \frac 14 h^{-2}\Bigl ( \ {\nabla dh} (I_2X, I_2 \nabla h)\ +\ \ {\nabla dh%
} (I_3X, I_3 \nabla h) \Bigr ).
\end{multline}
The expressions for $A_2$ and $A_3$ can be obtained from the above formula
by a cyclic permutation of $(1,2,3)$. Thus, we obtain
\begin{multline}  \label{e:A}
A(X)\ =\ -\frac32 h^{-2}dh(X)\ -\ \frac 32h^{-3}\lvert \nabla h \rvert^2dh(X)
-\ h^{-1}\sum_{s=1}^3\ {\nabla dh} (I_sX, \xi_s)\ \\+\
h^{-2}\sum_{s=1}^3dh(\xi_s)\,dh (I_sX)\ +\ \frac 12 h^{-2}\sum_{s=1}^3\ {%
\nabla dh} (I_sX, I_s \nabla h).
\end{multline}
We  need the divergences of various vector/1-forms defined above in addition to a few more.
 We recall that
an orthonormal frame 
$\{e_1,e_2=I_1e_1,e_3=I_2e_1,e_4=I_3e_1,\dots,
e_{4n}=I_3e_{4n-3}, \xi_1, \xi_2, \xi_3 \}$
is called
qc-normal frame at a point of a qc manifold  if the connection 1-forms of the
Biquard connection vanish at that point. As shown in \cite{IMV}, see also \cite[Lemma 6.2.1]{IV2}, a qc-normal frame exists at each point of a qc manifold. If $\sigma$ is a horizontal 1-form, then with respect to a qc-normal frame,  the divergence of $I_s\sigma$, ( $I_s\sigma(X) = -\sigma(I_sX)$) is given by
\begin{equation*}
\nabla^* (I_s\sigma)\ \ =\ - \sum_{a=1}^{4n}(\nabla_{e_a} \sigma)(I_se_a).
\end{equation*}

With  some  calculations  using \eqref{e:A}, \eqref{e:A_s} and the properties of the torsion and curvature of the Biquard connection, we obtain
\begin{equation}\label{diverAA}
\begin{aligned}
 \nabla^*\, \Bigl (\sum_{s=1}^3 dh(\xi_s) I_sA_s\Bigr )=
\sum_{s=1}^3\sum_{a=1}^{4n} \ \nabla dh\,(I_s e_a, \xi_s)A_s(e_a),\\ \nabla^*\, \Bigl
(\sum_{s=1}^3 dh(\xi_s) I_sA \Bigr )= \sum_{s=1}^3 \sum_{a=1}^{4n}\ \nabla dh\,(I_s e_a,
\xi_s)A(e_a). \end{aligned}
\end{equation}
We define  the following one-forms for $s=1,2,3,$
\begin{equation}  \label{d:D_s}
\begin{aligned}
D_s(X)\ =\ - \frac1{2h}\Big[T^0(X,\nabla h)+T^0(I_sX,I_s\nabla h)\Big],
D\ =\ - \frac1{h}\,T^{0}(X,\nabla h),
F_s(X)\  =\ - \frac1{h}\, {T^0}(X,I_s\nabla h).
\end{aligned}
\end{equation}
Using the fact  that the tensor $T^0$ belongs to the [-1]-component we  obtain from  \eqref{d:D_s}
\begin{equation}  \label{d:D_s1}
\begin{aligned}
D\ =\ D_1\ +\ D_2\ +\ D_3,\qquad
F_i(X)\ =\ -D_i(I_iX)\ +\ D_j(I_i X)\ +\ D_k(I_iX),
\end{aligned}
\end{equation}
where  $(ijk)$ is a cyclic permutation of (1,2,3).
As a consequence of \eqref{div:To}, \eqref{e:A_s}, \eqref{e:A}, the qc Yamabe equation \eqref{defsy} and \eqref{Torh} taken with $\bar A=0$, we obtain after some calculations, see  \cite{IMV1} for details, the following theorem.

\begin{lemma}[\cite{IMV1}]
\label{l:div of D} Suppose $(M, \eta)$ is a quaternionic contact manifold
with constant {\ qc-}scalar curvature $S=16n(n+2)$. {Suppose $%
\bar\eta=\frac1{2h}\eta$ has vanishing $[-1]$-torsion component $\overline
T^0=0$}. Then we have
\begin{equation*}
D(X)\ =\ \frac 14 h^{-2}\Bigl (3\ {\nabla dh} (X, \nabla h)\ -\
\sum_{s=1}^3\ {\nabla dh} (I_sX, I_s\nabla h) \Bigr )+\
h^{-2}\sum_{s=1}^3dh(\xi_s)\,dh (I_sX).
\end{equation*}
The divergence of $D$ is given by
$\nabla^*\, D\ =\ \lvert T^0 \rvert^2\ -h^{-1}\sum_{a=1}^{4n}dh(e_a)D(e_a)\ -\ h^{-1}
(n+2)\,\sum_{a=1}^{4n}dh(e_a)A(e_a),$
while the divergence of $\sum_{s=1}^3 dh(\xi_s) F_s$ is
\begin{multline*}
\nabla^*\, \Bigl (\sum_{s=1}^3 dh(\xi_s) F_s\Bigr )\ =\ \sum_{s=1}^3 \sum_{a=1}^{4n}\Bigl [%
\ \nabla dh\, (I_se_a,\xi_s)F_s(I_se_a)\Bigr] \\
+ \ h^{-1}\sum_{s=1}^3 \sum_{a=1}^{4n}\Bigl[dh(\xi_s)dh (I_se_a)D(e_a)\ +(n+2)\,dh(\xi_s)dh
(I_s e_a)\, A(e_a)\Bigr ].
\end{multline*}
\end{lemma}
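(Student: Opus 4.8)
\emph{Strategy.} The plan is to derive all three identities from the conformal transformation law \eqref{Torh} for the $[-1]$-torsion, together with the contracted Bianchi identities \eqref{div:To} and the explicit expressions \eqref{e:A_s}, \eqref{e:A} for $A_s$, $A$. First I would impose the hypothesis $\overline T^0=0$ in \eqref{Torh} to express $T^0$ of the structure $\eta$ in terms of the horizontal Hessian of $h$ and the vertical derivatives $dh(\xi_s)$; this is the qc counterpart of the CR relation \eqref{tor}. All subsequent computations I would carry out in a qc-normal frame at a point, where the Biquard connection $1$-forms vanish and $\nabla^*(\cdot)=\sum_a(\nabla_{e_a}\,\cdot)(e_a)$ reduces to a plain sum of derivatives of component functions; the recurring algebraic inputs would be the $Sp(n)Sp(1)$-decomposition \eqref{comp}, the symmetry of $T^0$ and its membership in $\Psi_{[-1]}$, and the Ricci identities for the Biquard connection.

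\emph{The formula for $D$.} Substituting $Y=\nabla h$ into the definition $D(X)=-h^{-1}T^0(X,\nabla h)$ from \eqref{d:D_s} and using $\omega_s(X,\nabla h)=dh(I_sX)$, the expression for $T^0$ obtained above gives the first claim directly, once the non-symmetric part of the Hessian is rewritten through $\nabla dh(X,Y)-\nabla dh(Y,X)=-2\sum_s\omega_s(X,Y)dh(\xi_s)$ so as to normalize the coefficient of $\sum_s dh(\xi_s)dh(I_sX)$.

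\emph{The divergences.} For $\nabla^*D$ I would differentiate $D(X)=-h^{-1}T^0(X,\nabla h)$ in a qc-normal frame and collect three contributions: the derivative of $h^{-1}$ yields $-h^{-1}\sum_a dh(e_a)D(e_a)$ after recognizing $h^{-2}T^0(e_a,\nabla h)=-h^{-1}D(e_a)$; the derivative of $T^0$ yields $-h^{-1}(\nabla^*T^0)(\nabla h)=-(n+2)h^{-1}\sum_a dh(e_a)A(e_a)$ by \eqref{div:To}; and the derivative falling on the argument $\nabla h$ yields $-h^{-1}\sum_{a,b}\nabla dh(e_a,e_b)T^0(e_a,e_b)$, which equals $|T^0|^2$ once one pairs the $T^0$-formula against $T^0(e_a,e_b)$ and invokes $\sum_s T^0(I_se_a,I_se_b)=-T^0(e_a,e_b)$ together with the vanishing of the contraction of the skew forms $\omega_s$ against the symmetric tensor $T^0$. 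For $\nabla^*\bigl(\sum_s dh(\xi_s)F_s\bigr)$ I would expand by Leibniz: since $\nabla_{e_a}\xi_s=0$ at the point, $e_a\!\bigl(dh(\xi_s)\bigr)=\nabla dh(e_a,\xi_s)$, and relabeling $e_a\mapsto I_se_a$ produces the term $\sum_{s,a}\nabla dh(I_se_a,\xi_s)F_s(I_se_a)$; the remaining piece $\sum_s dh(\xi_s)\nabla^*F_s$ is treated just as for $D$ — the $h^{-1}$-derivative, the $T^0$-derivative (again via \eqref{div:To}), and the derivative on the argument $I_s\nabla h$ (with $I_s$ parallel in the normal frame) combine, after using the $[-1]$-symmetry of $T^0$ and the identities \eqref{e:A_s}, \eqref{e:A}, \eqref{diverAA} to reorganize the resulting Hessian and $F_s(\nabla h)$ sums, into $h^{-1}\sum_{s,a}\bigl[dh(\xi_s)dh(I_se_a)D(e_a)+(n+2)\,dh(\xi_s)dh(I_se_a)A(e_a)\bigr]$.

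\emph{The main obstacle.} The first two identities are short; the hard part is the third divergence. There I expect to have to commute the $I_s$ repeatedly through covariant derivatives of $dh$, split into the $Sp(n)Sp(1)$-irreducible components of \eqref{comp}, and — most delicately — keep apart the three families of terms (those carrying the vertical Hessian $\nabla dh(\cdot,\xi_s)$, those quadratic in $dh(\xi_s)\,dh(I_s\cdot)$, and those purely in the horizontal Hessian), while systematically discarding the contributions that vanish because $T^0$ is symmetric and of $[-1]$ type. Establishing and applying correctly the requisite Ricci identities for the Biquard connection, and carrying the vertical derivatives through the explicit formulas \eqref{e:A_s}, \eqref{e:A}, is where essentially the whole length of the argument in \cite{IMV1} resides; once those bookkeeping identities are secured, the final assembly into the stated divergence formulas is mechanical.
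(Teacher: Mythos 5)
Your proposal is correct and follows essentially the same route the paper indicates: the paper itself only lists the ingredients (\eqref{Torh} with $\overline{T}^0=0$, the contracted Bianchi identity \eqref{div:To}, the formulas \eqref{e:A_s}--\eqref{e:A} and \eqref{diverAA}) and defers the computation to \cite{IMV1}, and your Leibniz expansion in a qc-normal frame reproduces each term — in particular $-h^{-1}(\nabla^*T^0)(\nabla h)=-(n+2)h^{-1}\sum_a dh(e_a)A(e_a)$ and the pairing of the $[-1]$-symmetric part of the Hessian against $T^0$ giving $|T^0|^2$ — exactly as required. The only point to watch is the one you already flag: the coefficient of $\sum_s dh(\xi_s)\,dh(I_sX)$ in the formula for $D$ comes out right only after the antisymmetric part of the Biquard Hessian is normalized via the Ricci identity, and the analogous cancellation of the mixed term $\sum_{a,b}T^0(e_a,I_se_b)\nabla dh(e_a,e_b)$ in the third divergence is where the residual bookkeeping lives.
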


\subsubsection{Solution of the qc-Yamabe equation in 7-D}\label{ss:7D QC Heisenebrg Yamabe} At this point we restrict our considerations to the 7-dimensional case turn to the proof of a key divergence formula motivated by the Riemannian and CR cases of the considered problem. As in the CR case \cite{JL3}, the Bianchi identities \cite[Theorem 4.8]{IMV}  are not enough for the proof, unlike what happens in the Riemannian case as we saw in the proof of Theorem \ref{t:Obata Yamabe}.

In fact, the proof of Theorem~\ref{t:div formula} follows by an   integration of the following divergence formula \eqref{e:div formula}, which implies  $T^0=0$. In dimension  seven the tensor $U$ vanishes identically, $U=0$ , and \eqref{sixtyfour} yields the claim. Thus, the crux of the proof of Theorem \ref{t:div formula} is the next formula, in which for $
f\ = \ \frac 12\ +\ h\ +\ \frac 14 h^{-1}\lvert \nabla h \rvert^2$,
 the following identity holds true
\begin{equation}\label{e:div formula}
\nabla^*\Big(fD\ +\ \sum_{s=1}^3 dh(\xi_s)\, F_s \ +\ 4\sum_{s=1}^3
dh(\xi_s)I_sA_s \ -\ \frac {10}{3}\sum_{s=1}^3 dh(\xi_s)\,I_s A \Big) =
f||T^0||^2 + hVLV^t.
\end{equation}
Here, $L$ is the following  positive semi-definite matrix
\begin{equation*}
L=\left[ {%
\begin{array}{cccccc}
2\, & 0 & 0 & {\displaystyle\frac{10}{3}}\, & -{\displaystyle\frac{2}{3}}\,
& -{\displaystyle\frac{2}{3}}\, \\[2ex]
0 & 2\, & 0 & -{\displaystyle\frac{2}{3}}\, & {\displaystyle\frac{10}{3}}\,
& -{\displaystyle\frac{2}{3}}\, \\[2ex]
0 & 0 & 2\, & -{\displaystyle\frac{2}{3}}\, & -{\displaystyle\frac{2}{3}}\,
& {\displaystyle\frac{10}{3}}\, \\[2ex]
{\displaystyle\frac{10}{3}}\, & -{\displaystyle\frac{2}{3}}\, & -{%
\displaystyle\frac{2}{3}}\, & {\displaystyle\frac{22}{3}}\, & -{\displaystyle%
\frac{2}{3}}\, & -{\displaystyle\frac{2}{3}}\, \\[2ex]
-{\displaystyle\frac{2}{3}}\, & {\displaystyle\frac{10}{3}}\, & -{%
\displaystyle\frac{2}{3}}\, & -{\displaystyle\frac{2}{3}}\, & {\displaystyle%
\frac{22}{3}}\, & -{\displaystyle\frac{2}{3}}\, \\[2ex]
-{\displaystyle\frac{2}{3}}\, & -{\displaystyle\frac{2}{3}}\, & {%
\displaystyle\frac{10}{3}}\, & -{\displaystyle\frac{2}{3}}\, & -{%
\displaystyle\frac{2}{3}}\, & {\displaystyle\frac{22}{3}}\,%
\end{array}%
}\right]
\end{equation*}%
and $V=( D_1, D_2, D_3,A_1,A_2, A_3)$
with $A_s$, $D_s$ defined, correspondingly, in \eqref{d:A_s} and %
\eqref{d:D_s}.

We sketch the proof of \eqref{e:div formula}. Recall that in dimension seven, $n=1$, the [3]-part of the Biquard torsion vanishes identically, $U=0$. Then \eqref{defU} together with the Yamabe equation \eqref{defsy} imply
\begin{equation}\label{defUh}
\nabla^2h(X,\nabla h)+\sum_{s=1}^3\nabla^2h(I_sX,I_s\nabla h)- (2-4h+3h^{-1}{|\nabla h|}^2)dh(X)=0.
\end{equation}
Combining \eqref{e:A_s}, \eqref{diverAA}, \eqref{defUh} and formulas in Lemma~\ref{l:div of D}  it is easy to check the formula of the theorem.  It is not hard to see that
the eigenvalues of $L$ are given by
\begin{equation*}
\{0,\quad 0,\quad 2\,(2+\sqrt{2}),\quad 2\,(2-\sqrt{2}),\quad 10,\quad 10\},
\end{equation*}%
\noindent which shows that $L$ is a non-negative matrix.

\subsubsection{The 7-D qc Yamabe problem on the sphere and qc Heisenberg group}
At this point we are ready to complete the  proof of Theorem \ref{t:Yamabe}. Recall, that the  Cayley transform \eqref{e:Cayley transf ctct form} is a conformal quaternionic
contact diffeomorphism, hence up to a
constant multiplicative
factor and a quaternionic contact automorphism the forms $\mathcal{C}%
_*\tilde\eta$ and $\tilde\Theta$ are conformal to each other. It follows
that the same is true for $\mathcal{C}_*\eta$ and $\tilde\Theta$. In
addition, $\tilde\Theta$ is qc-Einstein by definition, while $\eta$ and
hence also $\mathcal{C}_* \eta$ are qc-Einstein as we already observed. According to Theorem~\ref{t:einstein preserving}, up to a
multiplicative constant factor, the forms $\mathcal{C}_*\tilde\eta$ and $%
\mathcal{C}_*\eta$ are related by a translation or dilation on the
Heisenberg group. Hence, we conclude that up to a multiplicative constant, $%
\eta$ is obtained from $\tilde\eta$ by a conformal quaternionic contact
automorphism which proves the first claim of Theorem \ref{t:Yamabe}. From the
conformal properties of the Cayley transform and the existence Theorem \cite{Va1} it follows
that the minimum $\Upsilon(S^{4n+3},[\tileta])$ is achieved by a smooth 3-contact form,
{\ which due to the Yamabe equation is of constant qc-scalar curvature.}
This completes the proof of Theorem \ref{t:Yamabe} a). The proof of part b) is reduced to part a) by "lifting" the analysis to the sphere via the Cayley transform. A point, which requires some analysis is that we actually obtain  qc-structures on the whole sphere. This follows from the properties of the Kelvin transform which sends a solution of the Yamabe equation to a solution of the Yamabe equation, see \cite[Section 5.2]{IMV1} or \cite[Section 6.6]{IV2} for the details.

A similar argument will be used in the proof of Theorem \ref{t:qcLiouville}.

\subsection{The uniqueness theorem in a 3-Sasakin conformal class. }
We mention that similarly similarly to the Riemannian and CR cases it is expected that the {qc-conformal} class of a unit volume qc-Einstein {structure} contains a unique metric of constant scalar curvature, with the exception of the 3-Sasakian sphere, see Section \ref{ss:Sasaki-Einstein uniqieness Yamabe} for a comparison with the CR {case}.  What was problematic was the first step as outlined in Section \ref{ss:Sasaki-Einstein uniqieness Yamabe}, since here Theorem \ref{t:div formula} supplies  the first step  in dimension seven while the higher dimensional cases {was} open. {A proof extending the seven dimensional case can be found in \cite{IMV15a} where the reader can also find a proof of the uniqueness result.}

\section{The CR Lichneorwicz and  Obata  theorems}\label{s:CR Lichnerowicz-Obata}
In accordance with  Convention \ref{convention}, in this section  we use the non-negative definite sub-Laplacian, $\lap u=-tr^g(\nabla^2 u)$ for a function $u$ on a strictly pseudoconvex pseudohermitian  manifold $M$ with a Tanaka-Webster connection $\nabla$. Also, the divergence of a vector field is taken with a "-", hence we have $\lap u=\nabla^*(\nabla u)=-\sum_{a=1}^{2n}\nabla^2u(e_a,e_a)$ for an orthonormal basis of the horizontal space.

\subsection{The CR Lichneorwicz first eigenvalue estimate} From the sub-ellipticity of the sub-Laplacian on a strictly pseudoconvex CR manifold it follows that {on a compact manifold its spectrum is
discrete}. It is therefore natural to ask if
there is a sub-Riemannian version of Theorem \ref{t:Riem LichObata}. In fact, a CR analogue of the Lichnerowicz theorem was found by
Greenleaf \cite{Gr} for dimensions $2n+1>5$,  while the corresponding results for  $n=2$ and  $n=1$ were achieved later in \cite{LL}  and
\cite{Chi06}, respectively. As already observed in Theorem \ref{t:first eigenspace Iwasawa}, the standard Sasakian unit sphere has first eigenvalue
equal to 2n with eigenspace spanned by the restrictions of
all linear functions to the sphere, hence the following result is sharp.

\begin{thrm}[\cite{Gr,LL,Chi06}]\label{t:CR Lich}
Let $(M,\eta)$ be a compact strictly pseudoconvex pseudohermitian  manifold of dimension $2n+1$ such that for some $k_0=const>0$ we have the Lichnerowicz-type bound
\begin{equation}  \label{condm-app}
Ric(X,X)+ 4A(X,JX)\geq k_0 g(X,X), \qquad X\in H.
\end{equation}
 If $n>1$, then any eigenvalue $\lambda$ of the sub-Laplacian  satisfies  $\lambda \ge \frac{n}{n+1}k_0$.
 If $n=1$ the estimate $\lambda \ge \frac{1}{2}k_0$ holds
assuming in addition that the CR-Paneitz operator is non-negative
, i.e., for any smooth function $f$ we have
$
\int_M f\cdot Cf \vol\geq 0, 
$
where  $Cf$ is the CR-Paneitz operator.
\end{thrm}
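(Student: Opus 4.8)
The plan is to mimic the Riemannian Bochner argument sketched after Theorem \ref{t:Riem LichObata}, but using the CR Bochner formula adapted to the Tanaka–Webster connection. First I would record the CR Bochner identity: for a function $f$ on a strictly pseudoconvex pseudohermitian manifold, integrating by parts over the compact $M$ yields an identity of the schematic form
\begin{equation*}
0 = \int_M \Bigl( |\nabla^2 f|^2 + \langle \nabla(\lap f), \nabla f\rangle_{\text{sign}} + Ric(\nabla f,\nabla f) + 4A(J\nabla f,\nabla f) + (\text{terms involving } df(\xi)) \Bigr)\vol,
\end{equation*}
where the contact (vertical) derivative $df(\xi)$ produces the extra terms that have no Riemannian analogue. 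Splitting the horizontal Hessian into its $J$-invariant $[1]$ and anti-invariant $[-1]$ parts and using the torsion symmetries \eqref{torha}, I would isolate the traceless $[1]$-part $(\nabla^2 f)_{[1],0}$, bound it below by $\frac{1}{n}(\lap f)^2$ via Cauchy–Schwarz on the $n$-dimensional $[1]$-block, and feed in the eigenvalue equation $\lap f = \lambda f$ together with the curvature hypothesis \eqref{condm-app}. The coefficient $\frac{n}{n+1}$ (versus the Riemannian $n/(n-1)$ with $n\mapsto 2n$) emerges precisely from the $2n$ real horizontal dimensions splitting as an $n$-dimensional $[1]$-piece carrying the trace plus the contact direction contributing one more effective dimension — this bookkeeping is the computational heart of the $n>1$ case.

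For the case $n=1$ the above argument breaks down because the $[1]$-component of the traceless horizontal Hessian is one-dimensional (it is spanned essentially by $g$ itself), so the Cauchy–Schwarz step gives nothing and the "good" term that controlled the vertical derivative $df(\xi)$ in higher dimensions is absent. Here the plan is to supplement the Bochner identity with a second integral identity obtained by testing against $(\lap f + \text{(vertical correction)})$ and then invoking non-negativity of the CR–Paneitz operator $C$: one writes $\int_M f\cdot Cf\,\vol \ge 0$ as an a priori inequality that exactly compensates for the missing term, after which the same Cauchy–Schwarz/eigenvalue substitution delivers $\lambda \ge \frac{1}{2}k_0$. I would cite \cite{Chi06} for the precise form of $C$ and the identity linking $\int f\,Cf$ to the integrated Bochner defect; the routine but lengthy step is verifying that the Paneitz term and the vertical-derivative term in the Bochner formula combine with the right sign.

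The main obstacle I anticipate is the careful bookkeeping of the vertical (Reeb-direction) derivatives $df(\xi)$ and $\nabla^2 f(\cdot,\xi)$ that appear after integration by parts: in the Riemannian case these do not occur, and in the CR case they must either be absorbed into a manifestly non-negative quantity (when $n>1$) or controlled by the CR–Paneitz assumption (when $n=1$). A secondary technical point is making sure the integration by parts is legitimate — this is where one needs sub-ellipticity of $\lap$ and the discreteness/regularity of the spectrum, so that an eigenfunction $f$ is smooth and all the integrals converge; these facts follow from the standard subelliptic estimates already invoked in Section \ref{s:regularity}. Finally, the sharpness claim is immediate from Theorem \ref{t:first eigenspace Iwasawa}: on the standard Sasakian sphere $Ric + 4A(\cdot,J\cdot) = 2n\,g$ (torsion-free, so $A=0$, and $Ric = 2n\,g$) and the first eigenvalue is $2n = \frac{n}{n+1}\cdot 2(n+1)$, realized by restrictions of linear functions.
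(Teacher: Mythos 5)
Your skeleton (CR Bochner identity, trace decomposition of the horizontal Hessian, curvature hypothesis, Paneitz positivity for $n=1$) matches the paper's proof in Section \ref{ss:CR Lich est}, but the step you leave vague is precisely the one that carries the whole argument: the treatment of the vertical cross term $4\int_M\nabla^2 f(\xi,J\nabla f)\vol$ coming from the Bochner formula \eqref{bohh}. The paper evaluates this integral in \emph{two} independent ways --- once via the Ricci identity $g(\nabla^2f,\omega)=-2n\,df(\xi)$, which gives $\int_M\nabla^2f(\xi,J\nabla f)\vol=-\int_M\frac{1}{2n}g(\nabla^2f,\omega)^2+A(J\nabla f,\nabla f)\vol$, and once in terms of $(\triangle f)^2$ and the $P$-form, which produces a $-\frac{1}{2n}\int_M P_f(\nabla f)\vol$ term --- and then takes the specific combination (weights $1$ and $3$) that simultaneously (i) yields the coefficient $\frac{n+1}{n}$ on $(\triangle f)^2$, (ii) turns the torsion contribution into exactly $+4A(J\nabla f,\nabla f)$ so that hypothesis \eqref{condm-app} applies verbatim, and (iii) leaves the remainder as $|(\nabla^2f)_{[0]}|^2-\frac{3}{2n}P_f(\nabla f)$, where $(\nabla^2f)_{[0]}$ is the part of the Hessian orthogonal to \emph{both} $g$ and $\omega$. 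Your plan to bound the $[1]$-block below by $\frac1n(\triangle f)^2$ via Cauchy--Schwarz accounts only for the $g$-trace; without also subtracting the $\omega$-trace (which \emph{is} the vertical derivative) and invoking the $P$-form identity, the term $4\nabla^2 f(\xi,J\nabla f)$ has no sign and the argument does not close, even for $n>1$. In particular the Paneitz operator enters for every $n$, not only for $n=1$: when $n>1$ its non-negativity is supplied by the Graham--Lee identity \eqref{l:GrLee}, $\int_M|B_0|^2\vol=\frac{n-1}{2n}\int_M f\cdot Cf\vol$, so it is a theorem rather than ``manifest.''

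Your diagnosis of the $n=1$ case is also off. The inequality chain, and the resulting bound $\lambda\ge\frac{n}{n+1}k_0$ (which at $n=1$ \emph{is} $\frac12 k_0$), is identical for all $n\ge1$; the trace-free Hessian term is simply discarded as non-negative, so nothing ``breaks'' when it happens to degenerate. What fails at $n=1$ is only the automatic positivity of $\int_M f\cdot Cf\vol$, because the factor $n-1$ in the Graham--Lee identity vanishes --- hence the extra hypothesis. Finally, a small slip in your sharpness remark: on the standard Sasakian sphere $Ric=2(n+1)g$ rather than $2n\,g$, so $k_0=2(n+1)$ and $\frac{n}{n+1}k_0=2n$ matches the first eigenvalue from Theorem \ref{t:first eigenspace Iwasawa}.
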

We recall that the  fourth-order CR-Paneitz operator written in real coordinates is defined by the formula
\begin{equation*}
Cf=\sum_{a,b=1}^{2n}\nabla ^{4}f(e_a,e_a,e_{b},e_{b})+\sum_{a,b=1}^{2n}\nabla
^{4}f(e_a,Je_a,e_{b},Je_{b})
-4n\nabla^* A(J\nabla f)-4n\,g(\nabla^2 f,JA).
\end{equation*}
In view of the prominent role of the CR-Paneitz operator in the geometric analysis on a three dimensional CR manifold we pause for a moment to give an idea of several occurrences. We start with a few  definitions,
Given a function $f$ we define the one form,
\[
P_{f}(X)=\sum_{b=1}^{2n}\nabla ^{3}f(X,e_{b},e_{b})+\sum_{b=1}^{2n}\nabla
^{3}f(JX,e_{b},Je_{b})+4nA(X,J\nabla f)
\]
so we have $Cf=-\nabla ^{\ast }P$.
The CR Paneitz operator is called non-negative if
$$
\int_M f\cdot Cf \vol=-\int_MP_f(\gr) \vol \geq 0, \qquad f\in \mathcal{C}^\infty_0(M).
$$

In the three dimensional case the positivity condition is a CR invariant since it is independent of the
choice of the contact form which follows from the conformal invariance of $C$
proven in \cite{Hi93}. In the case of vanishing pseudohermitian torsion,  we have, up to a
multiplicative constant, $C=\Box_b\bar\Box_b$, where $\Box_b$ is the Kohn
Laplacian, hence the CR Paneitz operator is also non-negative. This property is in fact true for any $n>1$ which can be seen through the relation between the CR Paneitz operator and the $[1]$-component of the horizontal
Hessian $(\nabla ^{2}f)(X,Y)$ found in \cite{L1,GL88}. For this, consider the tensor $B(X,Y)$ defined by
\begin{equation*}
B(X,Y)\equiv B[f](X,Y)=(\nabla ^{2}f)_{[1]}(X,Y)
=\frac{1}{2}\left[ (\nabla
^{2}f)(X,Y)+(\nabla ^{2}f)(JX,JY)\right]
\end{equation*}
and also  the completely traceless part of $B,$
\begin{equation*}
B_{0}(X,Y)\equiv B_{0}[f](X,Y)=B(X,Y)+\frac{\triangle f}{2n} g(X,Y)-\frac{1}{2n}%
g(\nabla^2f,\omega)\,\omega (X,Y).
\end{equation*}
Then we have the formula \cite{L1,GL88},
\begin{equation}\label{l:GrLee}
\begin{aligned}
\sum_{a=1}^{2n}(\nabla_{e_{a}} B_0)(e_{a},X) & =\frac{n-1}{2n}P_f(X), \\
\int_M |B_0|^2\vol  & =-\frac{n-1}{2n}\int_M P_f(\gr)\vol =\frac{n-1}{2n}\int_M f\cdot (Cf)\vol.
\end{aligned}
\end{equation}
In particular, if $n>1$ the CR-Paneitz operator is non-negative. As an application of this result, we recall \cite{L1}, see also \cite{BF74} and \cite{Be80}, according to which if $
n\geq 2$, a function $f\in \mathcal{C}^3(M)$ is CR-pluriharmonic, i.e,
locally it is the real part of a CR holomorphic function, if and only if $%
B_0[f]=0$. By \eqref{l:GrLee} only one  fourth-order equation $Cf=0$ suffices for $B_0[f]=0$ to
hold.  When $n=1$ the
situation is more delicate. In the three dimensional case, CR-pluriharmonic
functions are characterized by the kernel of the third order operator $%
P[f]=0 $ \cite{L1}. However, the single equation $Cf=0$ is enough again
assuming the vanishing of the pseudohermitian torsion \cite{GL88}, see also \cite%
{Gra83ab}. On the other hand, \cite{CCC07} showed that if the pseudohermitian torsion
vanishes the CR-Paneitz operator is essentially positive, i.e., there is a
constant $\Lambda>0$ such that
\begin{equation*}
\int_M f\cdot (Cf) \vol \geq \Lambda\int_M f^2 \vol.
\end{equation*}
for all real smooth functions $f\in (Ker\, C)^{\perp}$, i.e., $\int_M f\cdot
\phi \vol =0$ if $C\phi=0$. In addition, the non-negativity of the
CR-Paneitz operator is relevant in the embedding problem for a three
dimensional strictly pseudoconvex CR manifold. In the Sasakian case, it is
known  that $M$ is embeddable, \cite{Le92}, and the CR-Paneitz operator is nonnegative, see \cite{Chi06}, \cite{CCC07}. Furthermore, \cite{ChChY13} showed that if the pseudohermitian scalar curvature   of $M$ is positive and $C$ is non-negative, then $M$ is embeddable in some $\mathbb{C}^n$.

After these preliminaries we are ready to sketch the proof of Theorem \ref{t:CR Lich}.


\subsubsection{Proof of the CR Lichnerowicz type estimate}\label{ss:CR Lich est}
 We shall use  real coordinates as in \cite{IVZ,IV2} and rely on the proof described in details in \cite[Section 8.3]{IVO} valid for $n\geq 1$.
Not surprisingly, a key to the solution is the CR Bochner identity due to \cite{Gr},
\begin{equation}  \label{bohh}
-\frac12\triangle |\nabla f|^2=|\nabla df|^2-g(\nabla(\triangle f),\nabla f)+Ric(\nabla
f,\nabla f)+2A(J\nabla f,\nabla f)
+ 4\nabla df(\xi,J\nabla f).
\end{equation}
The last term can be related to the traces of $\nabla^2 f$, \cite{Gr},
\[
\int_M\nabla^2f(\xi,J\nabla f)\vol=-\int_M\frac{1}{2n}g(\nabla^2f,\omega)^2
+A(J\nabla f,\nabla f) \vol\]
and also using the CR-Paneitz operator
\[
\int_M\nabla^2f(\xi,J\nabla f)\vol
=\int_{M}-\frac{1}{2n}\left(
\triangle f\right) ^{2}+A(J\nabla f,\nabla f)-\frac{1}{2n}P_f(\gr)\vol.
\]

Integrating the CR Bochner idenity (for arbitrary function $f$) and using the last two formulas for the  term $\int_M\nabla^2f(\xi,J\nabla f)\vol$ we find

\begin{multline*}
0=\int_M Ric(\nabla f,\nabla f)+4A(J\nabla f,\nabla f)-\frac {n+1}{n}(\triangle f)^2\vol\\
+\int_M \left |(\nabla^2f)\right |^2
-\frac {1}{2n}(\triangle f)^2 -\frac {1}{2n}g(\nabla^2 f,\omega)^2\vol
+\int_M\Big[-\frac {3}{2n}P(\gr)\Big]\vol.
\end{multline*}
Noticing that $\Big \{\frac{1}{\sqrt{2n}}%
g,\ \frac{1}{\sqrt{2n}}\omega\Big\}$ is an orthonormal set  in the $[1]$-space with non-zero traces, we have
 $$\left |(\nabla^2f)_{[0]}\right |^2 \overset{def}{=}\left |(\nabla^2f)\right |^2
-\frac {1}{2n}(\triangle f)^2 -\frac {1}{2n}g(\nabla^2 f,\omega)^2.$$
Let us assume at this point that  $\triangle f=\lambda f$ and the "Ricci" bound \eqref{condm-app} to obtain the inequality
\begin{equation*}
0\geq\int_M\left (k_0 -\frac {n+1}{n}\lambda\right )|\gr|^2\vol+ \int_M\left
|(\nabla^2f)_{[0]}\right |^2\vol
-\frac {3}{2n}\int_MP_f(\gr)\vol,
\end{equation*}
which implies
$
\lambda \ge \frac{n}{n+1}k_0$ with equality holding iff
\begin{equation}\label{e:CR equality hessian}
\nabla^2f= \frac {1}{2n}(\triangle f)\cdot g+\frac {1}{2n}g
(\nabla^2f,\omega)\cdot\omega
\end{equation}
and $\int_M P_f(\gr)\vol=0$ taking into account  the extra assumption for $n=1$. The proof of Theorem \ref{t:CR Lich} is complete.

\subsection{The CR Obata type theorem }\label{ss:CR Obata}

\begin{thrm}[\cite{IVO} ]\label{main11} Let $(M, \theta)$ be a  strictly pseudoconvex
pseudohermitian CR manifold of dimension $2n+1$ with a
divergence-free  pseudohermitian torsion, $\bi^*A=0$. Assume, further, that $M$ is
complete with respect to the associated Riemannian metric \eqref{hmetric}.
 If $n\geq 2$ and there is a smooth function $f\not\equiv 0$ whose Hessian with respect
to the Tanaka-Webster connection satisfies
\begin{equation}\label{e:hessian}
 \nabla ^{2}f(X,Y)=-fg(X,Y)-df(\xi )\omega (X,Y), \qquad X, Y \in H=Ker\, \theta,
\end{equation}
then up to a scaling of $\theta$ by a positive constant
$(M,\theta)$ is the standard (Sasakian) CR structure on the unit
sphere in $\mathbb{C}^{n+1}$. In dimension three, $n=1$,
  the above result holds provided the pseudohermitian torsion vanishes, $A=0$.
\end{thrm}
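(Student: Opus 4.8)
\textbf{Proof proposal for Theorem \ref{main11}.}

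The plan is to follow the same two-step strategy used in the Riemannian Obata theorem (Theorem \ref{t:Obata Yamabe}) and its CR refinement, first extracting algebraic-geometric consequences of the Hessian equation \eqref{e:hessian} and then using them, together with completeness, to identify $(M,\theta)$ with the round sphere. First I would differentiate \eqref{e:hessian} and commute covariant derivatives using the Ricci identities \eqref{Riden} for the Tanaka-Webster connection. This should produce explicit formulas for $R(X,Y,Z,\nabla f)$ in terms of $f$, $df(\xi)$, $g$ and $\omega$, in analogy with \eqref{e:Obata Einstein 1} in the Riemannian case; taking horizontal traces then yields $Ric(X,\nabla f)$ and, combined with the torsion decomposition \eqref{e:CR Ric type decomp}, information forcing the torsion-related terms to be controlled. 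The hypothesis $\nabla^*A=0$ (and $A=0$ when $n=1$) is exactly what is needed to run the divergence arguments cleanly: integrating the differentiated identities against a suitable test tensor and applying the divergence formula \eqref{div} should show that $f$ realizes the lowest eigenvalue of the sub-Laplacian, i.e. $\triangle f = 2nf$, so that in particular $(M,\theta)$ sits in the equality case of the CR Lichnerowicz estimate of Theorem \ref{t:CR Lich}; note equation \eqref{e:CR equality hessian} is precisely \eqref{e:hessian} after using $\nabla^2 f(X,Y)-\nabla^2 f(Y,X)=-2\omega(X,Y)df(\xi)$ to identify the $\omega$-coefficient with $df(\xi)$.

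Next I would upgrade this to a \emph{Riemannian} Hessian equation for the extended metric $h=g+\eta^2$ of \eqref{hmetric}. The key computation is to express $\nabla^{h,2}f$ (Levi-Civita Hessian) in terms of the Tanaka-Webster Hessian $\nabla^2 f$, the torsion $A$ (which vanishes or is divergence-free by hypothesis), and the vertical derivative $df(\xi)$; one should also derive an equation for the vertical component, e.g. $\xi\xi f$ in terms of $f$, by applying $\xi$ to the relation obtained from tracing \eqref{e:hessian} and using $\triangle f = 2nf$. The goal is to show that $u = f$ (possibly after adding a constant, or with a mild modification involving $df(\xi)$) satisfies a Riemannian Obata-type Hessian equation $\nabla^{h,2}u = -c\, u\, h$ on the complete Riemannian manifold $(M,h)$, or at least satisfies the hypotheses of the sphere-characterization results collected in Remark \ref{r:sphere charcat} and in \cite{Kuh88}. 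Since $M$ is assumed complete with respect to $h$, Obata's complete version (the sphere being the only complete Riemannian manifold admitting a nonconstant solution to $\nabla^2 u = -u\,h$, quoted after \eqref{e:Riem Hess eqn}) then forces $(M,h)$ to be isometric to a round sphere; one finally checks that under this isometry the contact structure $\theta$ is carried to the standard Sasakian contact form on $S^{2n+1}\subset\mathbb{C}^{n+1}$ up to a constant scale, using that the torsion vanishes (so $M$ is Sasakian) and that $f$ restricts to a first spherical harmonic, compatibly with Theorem \ref{t:first eigenspace Iwasawa}.

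I expect the main obstacle to be the passage from the sub-Riemannian (horizontal) Hessian identity to a genuinely \emph{Riemannian} Obata equation on $(M,h)$: the horizontal Hessian \eqref{e:hessian} says nothing directly about the mixed directions $\nabla^{h,2}f(X,\xi)$ or the pure vertical direction $\nabla^{h,2}f(\xi,\xi)$, and controlling these requires the Ricci identities \eqref{Riden} together with the torsion hypothesis and careful bookkeeping of the difference tensor between $\nabla$ and the Levi-Civita connection of $h$. In the $n=1$ case the hypothesis is strengthened to $A=0$ precisely because the weaker bound in Theorem \ref{t:CR Lich} needs the CR-Paneitz positivity, which holds automatically only when the torsion vanishes (cf. \cite{GL88}); so in dimension three one should verify that $A=0$ makes $(M,h)$ Sasakian and lets the same Riemannian reduction go through. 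A secondary technical point is to confirm that the constant obtained in $\nabla^{h,2}u=-c\,u\,h$ is the correct one ($c=1$ after normalizing $\theta$) so that the resulting sphere has the standard radius; this is a normalization check rather than a genuine difficulty. The remainder of the argument — transferring the Tanaka-Webster data through the isometry and reading off that $\theta$ is the standard pseudo-Einstein contact form — is then routine, using conformal flatness of the sphere and the classification of Sasakian structures on it.
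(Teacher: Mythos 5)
Your overall architecture coincides with the paper's: differentiate \eqref{e:hessian}, use the Ricci identities to produce a formula for $R(Z,X,Y,\nabla f)$ and its traces, control the torsion, determine the mixed and vertical components of the Hessian, and finally reduce to the complete Riemannian Obata theorem for the extended metric $h$. The reduction you sketch in your second paragraph is exactly what the paper does, via the identities \eqref{e:vhessianc} and the comparison \eqref{wh} between the Tanaka--Webster and Levi-Civita connections.

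There is, however, a genuine gap in your first step. Theorem \ref{main11} assumes only that $M$ is \emph{complete}, not compact, so the divergence formula \eqref{div} and any ``integration against a suitable test tensor'' over $M$ are unavailable; such integral arguments are what power the separate \emph{compact} statement (Theorem \ref{t:CR Obata}, following Li--Wang), not this one. In the complete setting the vanishing of $A$ must be, and in the paper is, established pointwise: one shows from the $[-1]$-part of the curvature identity that $A$ is algebraically determined by the single scalar $A(J\gr,\gr)$ (formula \eqref{e:A formula}), then compares the two expressions \eqref{e:D3f extremal bis} and \eqref{e:D3f extremal} for $\nabla^3f(X,Y,\xi)$ to obtain the identity \eqref{e:old key identity}, whose two sides have opposite symmetry and hence both vanish, and finally deduces $A(J\gr,\gr)=0$, so $|\gr|^4A=0$; concluding $A\equiv 0$ then requires the elliptic equation \eqref{e:lap n >1} and unique continuation to guarantee $\gr\neq 0$ almost everywhere. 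None of this is an integration argument, and your plan as written stalls here. (Also, $\triangle f=2nf$ is immediate from tracing \eqref{e:hessian}; no divergence argument is needed for it.) A secondary inaccuracy: the reason $n=1$ requires $A=0$ in this theorem is not CR-Paneitz positivity, which enters only in the eigenvalue estimate, but that for $n=1$ one has $Ric(X,Y)=Ric(JX,JY)$, so the two traced identities \eqref{eqc02} coincide and the ``missing'' equation \eqref{e:vhessian} for $\nabla^2f(Y,\xi)$ --- the very identity that both kills $A$ and supplies the mixed Hessian needed for the Riemannian reduction --- cannot be extracted.
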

This is the best known result   for a complete non-compact $M$ unlike the Riemannian and QC cases where the corresponding results are valid without any conditions on the torsion, see the paragraph after \eqref{e:Riem Hess eqn} and Theorem \ref{main2}. It should be noted that besides the Sasakian condition, when $n=1$, one can invoke assumptions such as the vanishing of the divergence of the torsion, the vanishing of the CR-Paneitz operator or the equality case in \eqref{condm-app}.  We insist on the strongest assumption, which avoids a lot of the technicalities which appear when a combination of these assumptions are made while still achieving a (probably) non-optimal result. Results of this nature can be found by combining identities proven in \cite{IVO}. In the compact case,  with the help of a clever integration argument \cite{LW,LW1} were able to complete the arguments of \cite{IVO,IV3} and remove the assumption of divergence free torsion $\bi^*A=0$ for $n\geq 2$, while the case $n=1$ was completed in \cite{IV3}. Taking into account  \eqref{e:CR equality hessian}, a consequence of these results is the Obata type theorem characterizing the case of equality in Theorem \ref{t:CR Lich}. We note that  Theorem \ref{main11} actually shows that in the compact Sasakian case \eqref{e:hessian} characterizes the unit Sasakian sphere. This fact in addition to other results of \cite{IVO} reappeared in \cite{LW1}.

\begin{thrm}[\cite{LW,LW1,IV3}]\label{t:CR Obata} Suppose  $(M, J,\eta)$, $\dim M=2n+1$, is a compact strictly pseudoconvex pseudohemitian manifold which satisfies the Lichnerowicz-type bound \eqref{condm-app}. If $n\geq 2$, then  $\lambda = \frac{n}{n+1}k_0$ is an eigenvalue  iff up-to a scaling  $(M, J,\eta)$ is the
standard pseudo-Hermitian CR structure on the unit sphere in $\mathbb{C}^{n+1}$.
If $n=1$
the same conclusion holds assuming in addition that  the CR-Paneitz operator is non-negative,  $C\geq 0$.
\end{thrm}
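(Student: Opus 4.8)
The plan is to derive, from the equality case, the Obata-type Hessian equation \eqref{e:hessian} together with the two extra pieces of information that make its compact analysis go through without a torsion hypothesis, and then to quote (a compact strengthening of) Theorem \ref{main11}. The reverse implication is immediate: normalizing $\eta$ so that $k_0=2(n+1)$, equivalently so that the Webster scalar curvature of the standard Sasakian sphere is $S=4n(n+1)$, the torsion vanishes, the Webster Ricci tensor equals $2(n+1)g$ so that \eqref{condm-app} holds with equality, and by Theorem \ref{t:first eigenspace Iwasawa} the value $\lambda=2n=\tfrac{n}{n+1}k_0$ is an eigenvalue whose eigenspace consists of the restrictions of the linear functions of $\mathbb{C}^{n+1}$.

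For the forward implication I would first take an eigenfunction $f$ with $\lap f=2nf$ and revisit the equality discussion of Section \ref{ss:CR Lich est}: in the integrated CR Bochner identity \eqref{bohh} each of the three non-negative contributions must vanish, which forces the trace-type Hessian identity \eqref{e:CR equality hessian} and the relation $\int_M P_f(\gr)\vol=0$. For $n\ge 2$ this last relation is automatic: by \eqref{l:GrLee} it equals a multiple of $\int_M |B_0[f]|^2\vol$, and $B_0[f]$ vanishes by \eqref{e:CR equality hessian}; for $n=1$ it is exactly here that the hypothesis $C\ge 0$ enters. Next I would convert \eqref{e:CR equality hessian} into the Obata form \eqref{e:hessian} on $H$: the skew-symmetric part of $\Cr^2 f$ is pinned down by the Ricci identity $\Cr^2 f(X,Y)-\Cr^2 f(Y,X)=-2\,df(\xi)\,\omega(X,Y)$ from \eqref{Riden}, so contracting \eqref{e:CR equality hessian} with $\omega$ identifies $\tfrac1{2n}g(\Cr^2 f,\omega)$ with $-df(\xi)$ (up to the orientation convention for $\omega$) and yields \eqref{e:hessian}.

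The main obstacle is the removal of the standing hypothesis $\Cr^*A=0$ (respectively $A=0$ for $n=1$) present in Theorem \ref{main11}. Here I would follow the integration scheme of \cite{LW,LW1} (and, for $n=1$, \cite{IV3}): differentiate \eqref{e:hessian}, commute third covariant derivatives using the Ricci identities \eqref{Riden}, the type decomposition \eqref{e:CR Ric type decomp} and the contracted Bianchi identities \eqref{bia}, and produce a pointwise identity expressing $\Cr^*A$ and the relevant traces of the curvature through $f$, $df(\xi)$ and $A$. Pairing this identity with a weight built out of $f$ and $|\gr|^2$ and integrating over the compact $M$ — in the spirit of the divergence computation behind Proposition \ref{p:Obata Einstein}, where $(n-4k)\int_M |Ric_0|^{2k}f^2\vol=\int_M |Ric_0|^{2k}|\gr|^2\vol$ forced $Ric_0=0$ — should yield a sign-definite integral, hence $\Cr^*A=0$, and when $n=1$, after also invoking $C\ge 0$, $A=0$. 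The delicate points will be the bookkeeping of the many curvature and torsion terms and, in dimension three, the coupling between the CR--Paneitz positivity and the torsion, which is why the extra hypothesis is retained there.

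Once $\Cr^*A=0$ (resp. $A=0$) is established, all hypotheses of Theorem \ref{main11} hold — completeness being automatic on a compact manifold — so up to a constant scaling $(M,J,\eta)$ is the standard Sasakian CR structure on the unit sphere in $\mathbb{C}^{n+1}$; moreover \eqref{e:CR equality hessian} shows that, up to an additive constant, $f$ is a first eigenfunction, i.e.\ the restriction of a linear function. In particular this recovers, in the compact case, the characterization \eqref{e:hessian} of the Sasakian sphere and settles the equality case of Theorem \ref{t:CR Lich}.
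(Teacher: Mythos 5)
Your architecture is the paper's: equality in the integrated CR Bochner identity forces \eqref{e:CR equality hessian} and $\int_M P_f(\gr)\vol=0$ (automatic for $n\ge 2$ via \eqref{l:GrLee}, the place where $C\ge 0$ enters for $n=1$), the Ricci identity converts this to \eqref{e:hessian}, the torsion must then be killed by an integration argument, and one concludes via Theorem \ref{main11} (equivalently, via the Sasakian-case reduction of \cite{CC09a,CC09b}). The reverse implication and the $n\ge2$ versus $n=1$ dichotomy are handled correctly.

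However, the step you rightly identify as the main obstacle is described by a mechanism that is not the one that works. There is no sign-definite integrand here in the spirit of Proposition \ref{p:Obata Einstein}; if you differentiate \eqref{e:hessian} and integrate against a weight in $f$ and $|\gr|^2$ you will not find a quantity with a fixed sign forcing $\nabla^*A=0$. The actual argument of \cite{LW1}, reproduced in Section \ref{ss:CR cpct Obata proof}, rests on three specific pointwise facts extracted from \eqref{e:hessian}: (i) the first identity of \eqref{e:CR key identity} gives $g(\gr,\nabla|A|^k)=0$, so $|A|^k$ can be carried through integrations by parts against $\triangle f$ for free; (ii) formula \eqref{e:A formula} shows $A$ is algebraically determined by the single function $A(J\gr,\gr)$, whence $|\gr|^2|A|=\sqrt2\,|A(\gr,J\gr)|$; (iii) testing the Lichnerowicz bound \eqref{condm-app} on $X=J\gr$ and using \eqref{eqc02} yields the pointwise sign $A(\gr,J\gr)\le 0$, so $|\gr|^2|A|=-\sqrt2\,A(\gr,J\gr)$ can be integrated by parts onto $\nabla^*A$. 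These feed a H\"older estimate of the form $c\,(2k+1)\,D_k\le C\sqrt{2k+1}\,D_k$ with $C$ independent of $k$, and it is letting the exponent $k\to\infty$ — not positivity of an integrand — that forces $A=0$ (note the conclusion is the full vanishing of $A$, stronger than the $\nabla^*A=0$ you aim for, though either suffices for $n\ge2$). A second omission: for $n=1$ the identities \eqref{eqc02}--\eqref{e:CR key identity} are not available from \eqref{e:hessian} alone, because $Ric(X,Y)=Ric(JX,JY)$ collapses the two traces in \eqref{eqc02}; the missing relation \eqref{e:vhessian} must be recovered by integrating the vertical Bochner formula \eqref{e:vertical Bochner}, which is the specific device of \cite[Lemma 5.1]{IV3} and is where compactness and the $n=1$ hypotheses are used before the weighted integration can even start.
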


Some earlier papers which contributed to the  proof of the above Theorem include  S.-C. Chang \& H.-L. Chiu who proved the above Theorem in the Sasakian case for $n\geq 2$ in \cite{CC09a} and for $n=1$ in \cite{CC09b}. The non-Sasakian case, was considered by Chang, S.-C., \& C.-T. Wu in \cite{ChW2}
assuming
    $ A_{\alpha\beta,\, \bar\beta}=0$ for $n\geq 2$ and $A_{\alpha\beta,\, \gamma\bar \gamma}=0$ and $A_{11,\, \bar 1}=0$ for $n=1$.
$P_1 f=0$.

Let us give an idea of the proof of Theorem \ref{main11} following \cite{IV3}. The first step is to show the vanishing of the Webster torsion $A$. We shall make clear where the cases $n=1$ and $n>1$ diverge. Using the Ricci identity
\begin{multline}\label{e:CR XYxi Ricci}
\nabla^3 f(X,Y,\xi)=\nabla^ 3 f (\xi,X,Y)+\nabla^2f (AX,Y)+\nabla^2f (X,AY)
 +(\nabla_X A)(Y,\nabla f)\\
 +(\nabla_Y A)(X,\nabla f)-(\nabla_{\nabla f}) A( X,Y).
\end{multline}
in which we substitute the term $\nabla^ 3 f (\xi,X,Y)$ by its expression obtained after differentiating \eqref{e:hessian} we come to the next equation \cite[(3.3)]{IVO},
\begin{multline}
\nabla ^{3}f(X,Y,\xi )=-df(\xi )g(X,Y)-(\xi ^{2}f)\omega (X,Y)-2fA(X,Y)
\label{e:D3f extremal bis}
+(\nabla_X A)(Y,\nabla f)+(\nabla_Y A)(X,\nabla f)\\
-(\nabla_{\nabla f}
A)(X,Y),
\end{multline}
With the help of the Ricci identities, \eqref{e:hessian}, \eqref{e:CR Ricci 2-from} and \eqref{e:CR Ric type decomp} we obtain  a formula for $R(X,Y,Z,\nabla f)$, \cite[(4.1)]{IVO}
 \begin{multline}\label{eqc1}
R(Z,X,Y,\nabla f)=\Big[df(Z)g(X,Y)-df(X)g(Z,Y)\Big]+\nabla df(\xi ,Z)\omega
(X,Y)
-\nabla df(\xi ,X)\omega (Z,Y)\\-2\nabla df(\xi ,Y)\omega (Z,X)+A(Z,\nabla
f)\omega (X,Y)
-A(X,\nabla f)\omega (Z,Y),
\end{multline}%
which after taking traces  gives identities for $Ric(X,\gr)$ and $Ric(JX,J\gr)$, \cite[(4.2)]{IVO},
      \begin{equation}\label{eqc02}
\begin{aligned}
& Ric(Z,\nabla f)=(2n-1)df(Z)-A(JZ,\nabla f)-3\nabla
df(\xi,JZ)\\
& Ric(JZ,J\nabla f)=df(Z)-(2n-1)A(JZ,\nabla f)-(2n+1)\nabla df(\xi,JZ).
\end{aligned}
\end{equation}
Note that when $n=1$, $Ric(X,Y)=Ric(JX,JY)$, hence the identities for $Ric(X,\gr)$ and $Ric(JX,J\gr)$ \emph{coincide}, which is the reason for the assumption $n>1$ when $A\not=0$.
 For $n>1$, taking  the $[-1]$ part of $R(.,.,X,Y)$ it follows
\begin{equation}\label{e:vhessian}
\nabla ^{2}f(Y,\xi
)=df(JY)+2A(Y,\nabla f).
\end{equation}
Using the formula for the curvature \eqref{eqc1} we come to
  $|\nabla f|^{2}A(Y,Z) =df(Y)A(\nabla f,Z)-df(JY)A(\nabla f,JZ)$
  found in the proof of \cite[Lemma 4.1]{IVO}. Hence,  the Webster  torsion is determined by $A(J\gr,\gr)$ as follows
  \begin{equation}\label{e:A formula}
  |\gr|^4A(X,Y)=-A(J\gr,\gr)\left [df(X)df(JY)+df(Y)df(JX) \right],
  \end{equation}
  which implies in particular,
  $A(\gr,\gr)=0$.
On the other hand, from \eqref{e:vhessian} we have  \cite[(4.9)]{IVO},
\begin{multline}  \label{e:D3f extremal}
\nabla ^{3}f(X,Y,\xi ) =-df(\xi )g(X,Y)+f\omega (X,Y)-2fA(X,Y)-2df(\xi
)A(JX,Y)
+2\nabla A(X,Y,\nabla f).
\end{multline}
For $n>1$, equations \eqref{e:D3f extremal} and \eqref{e:D3f extremal bis} imply the identity, see the formula in the proof of \cite[Lemma 4.3]{IVO},
\begin{multline}\label{e:old key identity}
2df(\xi )A(JX,Y)-(\nabla_{\nabla f} A)(X,Y)=(\xi ^{2}f)\omega (X,Y)+f\omega (X,Y)+(\nabla_X A)(Y,\nabla f) -(\nabla_Y A)(X,\nabla
f).
\end{multline}
 Notice that the left-hand side is symmetric why the right-hand is skew-symmetric, hence they both vanish,
 \begin{equation}\label{e:CR key identity}
 (\nabla_{\nabla f} A)(X,Y)=2df(\xi)A(JX,Y)\qquad \text{and }\qquad (\nabla_X A)(Y,\nabla f) =(\nabla_Y A)(X,\nabla f),
 \end{equation}
 taking into account $\nabla ^{2}f(\xi ,\xi )=-f+\frac 1n(\nabla^* A)(J\nabla f)=-f$, when $\bi^*A=0$, which follows by taking a trace in the (vanishing) right-hand side of \eqref{e:old key identity}, see \cite[Lemma 4.3]{IVO}.
 \begin{rmrk}\label{r:CR cpct key}
 Notice that the first identity implies  $g({\gr},\nabla|A|^2)=0$.
 \end{rmrk}
The first equation of \eqref{e:CR key identity} gives $(\nabla_{\nabla f} A)(J\gr,\gr)=-2df(\xi)A(\gr,\gr)=0$ as we already showed above, see \eqref{e:A formula}. Finally, differentiating the identity $A(\gr,\gr)=0$ and using \eqref{e:hessian} we obtain $(\nabla_XA)(\gr,\gr)=2fA(\gr,X)-2df(\xi)A(J\gr,X)$ which shows $(\nabla_\gr A)(\gr,\gr)=-2df(\xi)A(J\gr,\gr)$. Therefore, $A(J\gr,\gr)=0$ which implies $|\gr|^4 A=0$. In order to conclude that $A=0$ we need to know that $f$ cannot be a local constant. For this and other facts we turn to the next step of the proof, where we show that $f$ satisfies an elliptic equation for which we can use a unique continuation argument. We remark that the corresponding sub-elliptic result seems to be unavailable.

 Next, we observe that if $f$ satisfies \eqref{e:hessian}, then $f$ satisfies an  elliptic equation \cite[Corollary 4.5 \& Lemma 5.1]{IVO},
\begin{align}\label{e:lap n >1}
 \triangle^h f & =\triangle f- \nabla ^{2}f(\xi ,\xi )=(2n+1)f-\frac 1n(\nabla^* A)(J\nabla f), \qquad \text{ if }\  n>1, \\\label{e:lap n=1}
 \triangle^h f & = \left ( 2+ \frac{S-2}{6} \right ) f -\frac{{1}}{12}g(\gr,\nabla S) +\frac 13(\nabla^* A)(J\nabla f),  \qquad \text{ if }\  n=1,
\end{align}
where $\triangle^h $ is the Riemannian Laplacian  associated to  the Riemannian
metric $h=g+\eta^2$
on $M$. In particular, $f$ cannot be a local constant. The equations follows from the formula relating the Levi-Civita and the Tanaka-Webster connections, see \cite[Lemma~1.3]{DT} and \cite[(4.15)]{IVO}, which shows
\begin{equation}\label{obsa}
-\triangle^h f =-\triangle f+ (\xi^2 f).
\end{equation}
When $n>1$ equation \eqref{e:lap n >1} follows taking into account (the line after) equation \eqref{e:CR key identity}. The case $n=1$ requires some further calculations for which we refer to \cite[Lemma 5.1]{IVO}.

The final step of the prove of Theorem~\ref{main11} is a reduction to the corresponding Riemannian  Obata theorem on a complete Riemannian manifold. In fact, we will show that the Riemannian Hessian computed with respect to the Levi-Civita connection $D$ of the metric $h$  satisfies \eqref{e:Riem Hess eqn}
and then apply the Obata theorem  to conclude that $(M,h)$ is isometric to the unit sphere. We should mention the influence of \cite{CC09a,CC09b}  where  the compact Sasakian case is reduced  to Theorem \ref{t:Riem LichObata}.

For $n>1$ where we proved that  $A=0$ we showed the validity of the next two identities
\begin{equation}\label{e:vhessianc}
 \nabla ^{2}f(\xi ,Y)=\nabla ^{2}f(Y,\xi
)=df(JY), \qquad \xi^2f=-f.
\end{equation}
Next, we show that \eqref{e:vhessianc} also holds in dimension three when the pseudohermitian torsion vanishes. In the three dimensional case we have $Ric(X,Y)=\frac{S}2g(X,Y)$. After a substitution of this  equality in \eqref{eqc02}, taking into account $A=0$, we obtain
\begin{equation}\label{hes3}
\bi^2f(\xi,Z)=\bi^2f(Z,\xi)=\frac{(S-2)}6df(JZ).
\end{equation}
Differentiating \eqref{hes3} and using \eqref{e:hessian} we find
\begin{equation}\label{hes31}
\bi^3f(Y,Z,\xi)=\frac16\Big[dS(Y)df(JZ)+(S-2)f\omega(Y,Z)\Big]
-\frac 16(S-2)df(\xi)g(Y,Z).
\end{equation}
On the other hand, setting $A=0$ in \eqref{e:D3f extremal bis}, we have
\begin{equation}\label{hes32}
\bi^3f(Y,Z,\xi)=-df(\xi)g(Y,Z)-(\xi^2f)\omega(Y,Z).
\end{equation}
In particular, the function $\xi f$ also satisfies \eqref{e:hessian}. From \eqref{e:lap n=1} using again unique continuation $\xi f\not=0$ almost everywhere since otherwise $\nabla f=0$ taking into account \eqref{hes3}, hence $f\equiv 0$, which is not possible by assumption.
Now,  \eqref{hes31} and \eqref{hes32} give
\begin{equation}\label{hes33}
\frac{S-8}6df(\xi)g(Y,Z)-\Big(\xi^2f+\frac{S-2}6\Big)\omega(Y,Z)-\frac16dS(Y)df(JZ)=0,
\end{equation}
which implies
$
\frac{S-8}3df(\xi)|\gr|^2=0.
$ 
 Thus, the pseudohermitian scalar curvature is constant, $S=8$, invoking again the local non-constancy. Equation  \eqref{hes33} reduces then to
$\Big(\xi^2f+\frac{S-2}6\Big)\omega(Y,Z)=0$
since $dS=0$ which yields
$\xi^2f=-f.$
The latter together with \eqref{hes3} and $S=8$ imply the validity of \eqref{e:vhessianc}  in dimension three.

Finally, we use the relation between $D$ and $\bi$,  \cite[Lemma~1.3]{DT} which in the case $A=0$ simplifies to
\begin{equation}\label{wh}
D_BC=\bi_BC+\theta(B)JC+\theta(C)JB-\omega(B,C)\xi, \quad B,C\in \mathcal{T}(M),
\end{equation}
where $J$ is extended with $J\xi=0$.  Using \eqref{wh} together with \eqref{e:hessian} and \eqref{e:vhessianc} we calculate easily  that
\eqref{e:Riem Hess eqn} holds. The proof of Theorem~\ref{main11} is complete.

\subsection{Proof of the Obata CR eigenvalue theorem in the compact case}\label{ss:CR cpct Obata proof}
Turning to Theorem \ref{t:CR Obata} we mention that when
 $n=1$ we need to find an alternative way to  the 'missing" equation \eqref{e:vhessian}, see the remark after \eqref{eqc02}. In fact, in \cite[Lemma 5.1]{IV3}] it was shown that in this case (assuming  the Lichnerowicz' type condition), if $\triangle f=2f$ then we have $A(\gr,\gr)=0$ and \eqref{e:vhessian} holds true. This is proved with an integration (using the compactness!) of the "vertical Bochner" formula \cite[Remark 3.5]{IVO}
 \begin{equation}  \label{e:vertical Bochner}
-\triangle (\xi f)^2 = 2|\nabla(\xi f)|^2-2df(\xi)\cdot \xi (\triangle f) +
4df(\xi)\cdot g(A,\nabla^2 f)
-4df (\xi)(\nabla^*A)(\nabla f).
\end{equation}
At this point, identities \eqref{eqc02}-\eqref{e:CR key identity} are available for $n\geq 1$, which imply in particular the identity in Remark \ref{r:CR cpct key} holds true. We come to the idea of \cite[Lemma 4]{LW1} where  integration by parts  involving suitable powers of $f$ are used in order to conclude $A=0$. By Remark \ref{r:CR cpct key}  we have for any $k>0$
\begin{equation}\label{e:CR1}
g(\gr,\nabla|A|^k) =0,
\end{equation}
while the Lichnerowicz' condition implies \textit{point-wise}  the inequality $A(\gr,J\gr)\leq 0,$ hence
 \begin{equation}\label{e:CR2}
 |\gr|^2 |A|=-\sqrt{2} A(\gr, J\gr).
 \end{equation}
Next we shall use an integration by parts argument similar to \cite{LW1} for the case $n=1$, i.e.,  $\triangle f=2f$.
Using \eqref{e:CR1}, we have
$${I\overset{def}{=}\int_M |A|^3f^{2(k+1)}\vol}=-\frac {1}{2}\int_M |A|^3f^{2k+1}\triangle f \vol=\frac {2k+1}{2}{\int_M |A|^3f^{2k}|\gr|^2} \vol\equiv\frac {2k+1}{2}{D}.$$
From \eqref{e:CR2} it follows
\begin{multline*}
\sqrt{2}(2k+1)D
\overset{def}{=}-\int_M|A|^2f^{2k+1}(\nabla^* A)(J\gr)\vol
 \leq ||\nabla^* A||\int_M |A|^2f^{2k+1}|\gr|\vol\\
 \leq \frac {||\nabla^* A||}{a}\int_M f^{k+1}\,f^k|\gr|\, |A|^3\vol,
 \end{multline*}
 assuming  $|A|\geq a>0$ so $|A|^2\leq\frac {1}{a}|A|^3$.
Now, H\"older's inequality gives
\begin{multline*}
\sqrt{2}(2k+1)D
\leq \frac {||\nabla^* A||}{a}\Big ({\int_M |A|^3f^{2(k+1)}\vol}\Big)^{1/2}\, \Big( {\int_M |A|^3f^{2k}|\gr|^2\vol } \Big)^{1/2}\\
=\frac {||\nabla^* A||}{a} \, \Big( \frac {2k+1}{2}D\Big)^{1/2}\,D^{1/2}=\frac {||\nabla^* A||}{a} \Big( \frac{2k+1}{2}\Big)^{1/2}D{.}
\end{multline*}
By taking $k$ sufficiently large { we} conclude $A=0$. The assumption $|A|\geq a>0$ can be removed by employing a suitable cutt-off function, see \cite{LW1}. Once we know that $M$ is Sasakian one applies \cite{CC09a,CC09b} where a reduction to Theorem \ref{t:Riem LichObata} is made.

\section{The Quaternionic Contact Lichnerowicz and Obata theorems}\label{s:QC Lichnerowicz-Obata}

This section concentrates on the qc versions of the Lichnerowicz and Obata eigenvalue theorems. As in the Riemannian and CR cases we are dealing with a sub-elliptic operator, hence the discreteness of its spectrum on a compact qc manifold. The  Lichnerowicz' type result was found in
\cite{IPV1} in dimensions grater than seven and in \cite{IPV2} in the seven
dimensional case. Remarkably, compare with the CR case, the Obata type theorem characterizing the 3-Sasakian sphere through the horizontal Hessian equation holds under no extra assumptions on the Biquard'
torsion {   when the dimension of the qc manifold is at least eleven} as proven in \cite{IPV3}. The
general qc Obata result in dimension seven remains open.

We shall use freely  the curvature and torsion tensors associated to a given qc structure as defined in Section \ref{ss:qc geometry}. As in the previous sections where eigenvalues were concerned we shall use the non-negative sub-Laplacian, $\lap u=-tr^g(\nabla^2 u)$.
\subsection{The QC Lichnerowicz theorem}

\begin{thrm}[\cite{IPV1,IPV2}]
\label{mainpan} Let $(M,\eta)$ be a compact QC manifold of dimension $4n+3$. Suppose, for $\alpha_n=\frac {2(2n+3)}{2n+1}, \quad \beta_n=\frac {4(2n-1)(n+2)}{(2n+1)(n-1)}$ and 
for any $ X\in H$
$$\mathcal{L}(X,X)\overset{def}{=}2 Sg(X,X)+\alpha_n T^0(X,X) +\beta_n U(X,X)\geq 4g(X,X).$$

If $n=1$, assume in addition  the positivity of
the $P$-function of any eigenfunction.
Then, any eigenvalue $\lambda$ of the sub-Laplacian $\triangle$ satisfies the
inequality
$\lambda \ge 4n.$
\end{thrm}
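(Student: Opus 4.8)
The plan is to follow the Bochner technique that worked in the Riemannian (Theorem \ref{t:Riem LichObata}) and CR (Theorem \ref{t:CR Lich}) cases, but now built on the qc Bochner identity of \cite{IPV1}. First I would fix an eigenfunction $f$ of the sub-Laplacian, $\triangle f=\lambda f$, and write down the horizontal Bochner formula for $-\frac12\triangle|\nabla f|^2$; this expresses it in terms of $|\nabla^2 f|^2$, the term $g(\nabla(\triangle f),\nabla f)=\lambda|\nabla f|^2$, the $Sp(n)Sp(1)$-Ricci type curvature $Ric(\nabla f,\nabla f)$ applied to $\nabla f$, and a collection of "mixed" terms involving the vertical directions $\nabla^2 f(\xi_s,I_s\nabla f)$ and the torsion endomorphisms $T^0$, $U$. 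The key structural input is Theorem \eqref{sixtyfour}: the horizontal qc-Ricci tensor is $(2n+2)T^0+(4n+10)U+\frac{S}{4n}g$, so after substitution the curvature contribution gets replaced by the torsion/scalar expression. The coefficients $\alpha_n=\frac{2(2n+3)}{2n+1}$ and $\beta_n=\frac{4(2n-1)(n+2)}{(2n+1)(n-1)}$ should emerge precisely from combining the Ricci identities that relate the vertical second derivatives $\nabla^2 f(\xi_s,I_s\nabla f)$ back to horizontal quantities — exactly the role played by the CR-Paneitz term in the $n=1$ CR case.

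Next I would integrate over the compact $M$ using the divergence formula \eqref{div}, so that $\int_M \triangle|\nabla f|^2\,\vol=0$ and the term $\int_M g(\nabla\triangle f,\nabla f)=\lambda\int_M|\nabla f|^2$. The outcome should be an identity of the shape
\begin{equation*}
0=\int_M \Big(|\nabla^2 f|^2_{[0]}+\text{(extra non-negative corrections)}\Big)\vol+\int_M\Big(\mathcal{L}(\nabla f,\nabla f)-c_n\lambda|\nabla f|^2\Big)\vol,
\end{equation*}
where $\mathcal{L}(X,X)=2Sg(X,X)+\alpha_n T^0(X,X)+\beta_n U(X,X)$ is exactly the operator in the hypothesis and $c_n$ is a universal positive constant with $c_n\cdot 4n=4$, i.e. $c_n=1/n$. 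The orthogonal decomposition of the horizontal Hessian into $[3]$ and $[-1]$ parts, together with the identification of the trace-free part, produces the first (manifestly non-negative) integral, in analogy with $|(\nabla df)_0|^2$ in \eqref{e:RBochner} and with $|(\nabla^2f)_{[0]}|^2$ in the CR computation. In the $n=1$ case the term $\beta_n$ degenerates, so a genuinely different argument is required: one inserts the $P$-function of $f$ (the qc analogue of $P_f$) and the hypothesis that $\int_M P_f(\nabla f)\,\vol\le 0$ (positivity of the $P$-function) is used to absorb the otherwise uncontrolled seven-dimensional vertical terms, just as non-negativity of the CR-Paneitz operator was used for $n=1$ in Theorem \ref{t:CR Lich}. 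This is the reference to \cite{IPV2}.

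Finally, applying the hypothesis $\mathcal{L}(\nabla f,\nabla f)\ge 4|\nabla f|^2$ pointwise gives
\begin{equation*}
0\ge\int_M\big(4-\tfrac{\lambda}{n}\big)|\nabla f|^2\,\vol+\int_M(\text{non-negative terms})\,\vol,
\end{equation*}
and since $\nabla f\not\equiv 0$ (as $\lambda\ne 0$, $f$ non-constant), this forces $4-\frac{\lambda}{n}\le 0$, i.e. $\lambda\ge 4n$, which is the claim. I expect the main obstacle to be organizing the numerous vertical and torsion terms in the qc Bochner identity so that they reassemble exactly into the combination $2S+\alpha_n T^0+\beta_n U$ — this bookkeeping, which fixes the precise constants $\alpha_n,\beta_n$ and requires repeated use of the qc Ricci identities, the structure of $T^0$ and $U$ (with $U\in\Psi_{[3]}$, $T^0\in\Psi_{[-1]}$), and Theorem \eqref{sixtyfour}, is the technical heart of \cite{IPV1}; the $n=1$ special case, where one must additionally control the vanishing-denominator terms through the $P$-function integral, is the second delicate point and relies on \cite{IPV2}.
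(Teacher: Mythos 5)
Your plan is essentially the paper's proof: the qc Bochner identity of \cite{IPV1}, the handling of the vertical term $R_f=\sum_{s=1}^3\nabla^2f(\xi_s,I_s\nabla f)$ by computing its integral in two ways (once via the Ricci identities $g(\nabla^2f,\omega_s)=-4n\,df(\xi_s)$ and once via the $P$-function), a linear combination that reassembles the torsion terms into exactly $\mathcal{L}=2S g+\alpha_nT^0+\beta_nU$, and integration against the non-negative trace-free Hessian with the coefficient $\tfrac{1}{n}$ in front of $\lambda$. The one difference of emphasis is that the paper's unified argument inserts the $P$-function for all $n$ — its non-negativity being a theorem for $n>1$ and the extra hypothesis precisely when $n=1$ — rather than reserving it for dimension seven alone.
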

The  3-Sasakian sphere achieves equality in the  Theorem. The eigenspace of the first non-zero eigenvalue
of the sub-Laplacian on the unit 3-Sasakian sphere in Euclidean
space is given by the restrictions to the sphere of all linear
functions by Theorem \ref{t:first eigenspace Iwasawa}.

\subsubsection{The QC P-function}
We turn to the definition of the QC P-function defined  in \cite{IPV2}.
 For a fixed smooth function $f$ we define a one form $P\equiv P_f \equiv P[f]$ on $M$, which we call the $P-$form of $f$, by the following equation
\begin{equation*}
P_f(X) =\sum_{b=1}^{4n}\nabla ^{3}f(X,e_{b},e_{b})+\sum_{t=1}^{3}\sum_{b=1}^{4n}\nabla
^{3}f(I_{t}X,e_{b},I_{t}e_{b})
-4nSdf(X)+4nT^{0}(X,\nabla f)-\frac{8n(n-2)}{n-1%
}U(X,\nabla f).
\end{equation*}
The $P-$function of $f$ is the function $P_f(\nabla f)$. The $C-$operator is the fourth-th order differential operator on $M$, which is independent of $f$,
$$
f\mapsto Cf =-\nabla^* P_f=\sum_{a=1}^{4n}(\nabla_{e_a} P_f)\,(e_a).
$$
We say that the $P-$function of $f$ is non-negative if
$$\int_M f\cdot Cf \, Vol_{\eta}= -\int_M P_f(\nabla f)\, Vol_{\eta}\geq 0.
$$
If the above holds for any $f\in \mathcal{C}^\infty_o\,(M)$ we say that the $C-$operator is \emph{non-negative}, $C\geq 0$.

Several important properties of the C-operator were found in \cite{IPV2}.
 The first notable fact is that the $C-$operator is non-negative, $C\geq 0$, for $n>1$. Furthermore $Cf=0$ iff $(\nabla^2f)_{[3][0]}(X,Y)=0$, {where [3][0] denotes the trace-free part of the [3]-part of the Hessian.}
In this case the $P-$form of $f$ vanishes as well.
The key for the last result is the identity $\sum_{a=1}^{4n}(\nabla_{e_a}(\nabla^2f)_{[3][0]})(e_a,X)=\frac{n-1}{4n}P_f(X)$,
hence
$$\frac{n-1}{4n}\int_Mf\cdot Cf\, Vol_{\eta}=-\frac{n-1}{4n}\int_MP_f(\nabla f)\,
Vol_{\eta}=\int_M|(\nabla^2f)_{[3][0]}|^2\, Vol_{\eta},$$
after using the Ricci identities, the divergence formula and the orthogonality of the components of the
horizontal Hessian.

In dimension seven, the condition of non-negativity of the $C-$operator is also non-void. For example, \cite{IPV2} showed that
on a 7-dimensional compact qc-Einstein manifold with $Scal\geq 0$
the $P-$function of an \emph{eigenfunction} of the sub-Laplacian is non-negative.

The proof  relies on several results.  First, the qc-scalar curvature of a qc-Einstein  is constant \cite{IMV,IMV3}. In the higher dimensions this follows from the Bianchi identities. However, the result is very non-trivial in dimension seven where the qc-conformal curvature tensor $W^{qc}$, see Theorem \ref{T:flat}, is invoked in the proof. Secondly, on a qc-Einstein {{manifold we have}   $\nabla ^{3}f(\xi _{s},X,Y)=\nabla^{3}f(X,Y,\xi _{s})$, the vertical space is integrable and we have $\nabla ^{2}f(\xi _{k},\xi _{j})-\nabla ^{2}f(\xi _{j},\xi _{k})=-Sdf(\xi _{i})$. Finally, a calculation shows $\int_M|P_f|^2\vol=-(\lambda+4S)\int_M P_f(\nabla f) \vol$, which implies the claim.

At this point we can give the main steps in the proof of the Lichnerowicz' type theorem following the $P-$function approach of \cite{IPV3} which unified the seven and higher dimensional cases of \cite{IPV1,IPV2} as we did in the CR case in the proof of Theorem \ref{t:CR Lich}.
By the QC Bochner identity {established in \cite{IPV1}}, letting $R_f=\sum_{s=1}^3\nabla^2f(\xi_s,I_s\nabla f)$, we have
\begin{multline*}
-\frac12\triangle |\nabla f|^2=|\nabla^2f|^2-g\left (\nabla (\triangle f),
\nabla f \right )+2(n+2)S|\nabla f|^2+2(n+2)T^0(\nabla f,\nabla f)
+2(2n+2)U(\nabla f,\nabla f)\\
+ 4R_f.
\end{multline*}
The "difficult" term $R_f$ can be computed in two ways. First with the help of the $P$-function we have
$$\int_M R_f Vol_{\eta}=\int_M -%
\frac{1}{4n}P_n(\nabla f)-\frac{1}{4n}(\triangle f)^2-S|\nabla f|^2 Vol_{\eta}+\int_M\frac{%
n+1}{n-1}U(\nabla f,\nabla f)\, Vol_{\eta}.$$
On the other hand,
using Ricci's identities
$
g(\nabla^2f , \omega_s) \overset{def}{=}\sum_{a=1}^{4n}\nabla^2f(e_a,I_se_a)=-4ndf(\xi_s),
$
we have
$$\int_MR_f Vol_{\eta}=-\int_M%
\frac {1}{4n}\sum_{s=1}^3 g(\nabla^2 f, \omega_s)^2 +T^0(\nabla f,\nabla f)-3U(\nabla f,\nabla f) %
\, Vol_{\eta}.$$
A substitution of a linear combination of the last two identities in the QC Bochner identity shows
\begin{multline*}
0=\int_M|\nabla^2f|^2-\frac{1}{4n}\Big[(\triangle
f)^2+\sum_{s=1}^{3}[g(\nabla^2f,\omega_s)]^2\Big]-\frac{3}{4n}P_n(\nabla f)Vol_{\eta}
\\
+ \frac{2n+1}{2}\int_M \mathcal{L}(\nabla f,\nabla f)-%
\frac{\lambda}{n}|\nabla f|^2\,Vol_{\eta}.
\end{multline*}
With the Lichnerowicz type assumption, $\mathcal{L}(\nabla f,\nabla f)\geq 4|\nabla f|^2$,  it follows
\begin{equation*}  \label{intin}
0\geq\int_M |(\nabla^2f)_{0}|^2
-\frac{3}{4n}P_n(\nabla f)Vol_{\eta}
+%
\frac{2n+1}{2n}\int_M\Big(4n -\lambda\Big)|\nabla f|^2%
Vol_{\eta}.
\end{equation*}
For $n=1$, when $U=0$ trivially, { the formulas are still correct even after removing formally}  the
torsion tensor $U$ terms, which completes the proof of Theorem \ref{mainpan}.

\subsection{The QC Obata type theorem}
\begin{thrm}[\cite{IPV3}]
\label{main2} Let $(M,\eta)$ be a quaternionic contact manifold
of dimension $4n+3>7$ which is complete with respect to the associated Riemannian
metric
$
h=g+(\eta_1)^2+(\eta_2)^2+(\eta_3)^2.
$ 
There exists a smooth $f\not\equiv$const, such that,
$$
\nabla df(X,Y)=-fg(X,Y)-\sum_{s=1}^{3}df(\xi _{s})\omega _{s}(X,Y{)}
$$
if and only if the qc manifold $(M,\eta,g,\mathbb{Q})$ is qc homothetic to the unit  3-Sasakian sphere.
\end{thrm}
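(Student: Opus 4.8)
The plan is to mimic the structure of the CR argument for Theorem \ref{main11} as described in Section \ref{ss:CR Obata}, but now exploiting that in the quaternionic contact case the dimension hypothesis $4n+3>7$, i.e. $n\geq 2$, is strong enough to run the whole argument with \emph{no} assumption on the Biquard torsion. One direction is trivial: on the unit $3$-Sasakian sphere the restrictions of the ambient linear functions to the sphere solve the displayed Hessian equation, by Theorem \ref{t:first eigenspace Iwasawa} together with the standard model computation; qc homothety preserves this. For the substantial direction, suppose $f\not\equiv\mathrm{const}$ satisfies
\[
\nabla df(X,Y)=-fg(X,Y)-\sum_{s=1}^3 df(\xi_s)\omega_s(X,Y),\qquad X,Y\in H.
\]
First I would differentiate this identity and feed it into the Ricci identities for the Biquard connection (the qc analogues of \eqref{e:CR XYxi Ricci}), obtaining a formula for $\nabla^3 f(X,Y,\xi_s)$ and then, after taking the appropriate horizontal traces and the $[-1]$ and $[3]$ projections, explicit expressions for the horizontal curvature $R(Z,X,Y,\nabla f)$, for $Ric(X,\nabla f)$, and for the mixed Hessian $\nabla^2 f(\xi_s,X)$ in terms of $f$, $df(\xi_s)$ and the torsion components $T^0,U$. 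This is the qc counterpart of \eqref{e:D3f extremal bis}--\eqref{e:vhessian} in the CR proof, and it is precisely here that $n\geq 2$ is needed: when $n=1$ the $[3]$-component of the Hessian is only one-dimensional and the $U$-terms degenerate, which is why the seven-dimensional case is excluded.

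The heart of the proof is then to show that the torsion vanishes, $T^0=U=0$, so that $(M,\eta)$ is qc-Einstein, hence (by Theorem \ref{str_eq_mod_th}) locally $3$-Sasakian. Following the CR template, I would derive from the curvature formula an algebraic identity expressing $T^0$ and $U$ pointwise through their values contracted with $\nabla f$ and $J_s\nabla f$ (the analogue of \eqref{e:A formula}), forcing $T^0(\nabla f,\nabla f)=0$ and similar vanishings; combining the symmetric-versus-skew-symmetric splitting of a derived identity (cf. \eqref{e:old key identity}) gives the parallel transport equations $(\nabla_{\nabla f}T^0)=\cdots$ and $(\nabla_{\nabla f}U)=\cdots$ in terms of $df(\xi_s)$. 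Differentiating the pointwise quadratic vanishings along $\nabla f$ and $\xi_s$ and using the Hessian equation to rewrite third derivatives, one closes the system and concludes that all torsion invariants contracted with $\nabla f$ vanish, hence $|\nabla f|^{\,p}\,T^0=|\nabla f|^{\,p}\,U=0$ for a suitable power $p$. To upgrade this to $T^0=U\equiv0$ on all of $M$ one needs that $f$ cannot be locally constant, and for that I would show, exactly as in \eqref{obsa}--\eqref{e:lap n >1}, that $f$ satisfies an honest \emph{Riemannian} elliptic equation $\triangle^h f = c_n f + (\text{torsion terms})$ for the metric $h=g+\sum(\eta_s)^2$; once the torsion has been killed this reads $\triangle^h f=(2n+1)f$ (up to the normalization constant), and unique continuation for $\triangle^h$ then precludes $f$ vanishing on an open set, retroactively justifying the division by powers of $|\nabla f|$.

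Finally, with $T^0=U=0$ in hand I would reduce to the Riemannian Obata theorem (the complete-manifold form quoted after \eqref{e:Riem Hess eqn}): using the qc analogue of \eqref{wh}, the relation between the Levi-Civita connection $D$ of $h$ and the Biquard connection $\nabla$ in the vanishing-torsion case, together with the horizontal Hessian equation and the vertical identities $\nabla^2 f(\xi_s,Y)=df(I_sY)$, $\xi_s\xi_t f=-\delta_{st}f$ (derived along the way from the traced curvature formulas as in \eqref{e:vhessianc}), one computes that the full Riemannian Hessian satisfies $D^2 f=-f\,h$. The completeness hypothesis then yields that $(M,h)$ is isometric to the round sphere $S^{4n+3}(1)$, and tracking the metric through the identification shows the qc structure is qc-homothetic to the standard $3$-Sasakian one. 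I expect the main obstacle to be the torsion-vanishing step: the qc torsion has two independent pieces $T^0\in\Psi_{[-1]}$ and $U\in\Psi_{[3]}$ with a more intricate Ricci identity and Bianchi identity bookkeeping than the single Webster torsion $A$ in the CR case, so verifying that the derived pointwise and differential identities genuinely close the system (rather than leaving an uncontrolled component) is the delicate part, and it is there that one must use the full strength of the structure equations \eqref{sixtyfour} and $n\geq 2$.
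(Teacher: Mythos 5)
Your outline of the first half of the argument agrees with the paper's: one first proves $T^0=U=0$ by expressing the torsion pointwise through contractions with $\nabla f$ (the crucial identity being $|\nabla f|^4\,T^0(X,Y)=-\tfrac{2n}{n-1}U(\nabla f,\nabla f)\big[\,3df(X)df(Y)-\sum_{s}df(I_sX)df(I_sY)\,\big]$, whose factor $\tfrac1{n-1}$ is exactly where $n\ge 2$ enters), then kills the remaining scalar $U(\nabla f,\nabla f)$ by commuting third derivatives in a vertical direction, $0=\nabla^3f(\xi_i,I_i\nabla f,\nabla f)-\nabla^3f(I_i\nabla f,\nabla f,\xi_i)=\tfrac{2}{n+2}fU(\nabla f,\nabla f)$, and finally uses the elliptic equation \eqref{llex} for $\triangle^h f$ together with unique continuation to rule out local constancy of $f$. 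Your reduction to the Riemannian Obata theorem via $(\nabla^h)^2f(X,Y)=-fh(X,Y)$ is also what the paper does.

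The gap is in your last sentence. A Riemannian isometry $(M,h)\cong S^{4n+3}(1)$ identifies the metrics but says nothing, by itself, about where the horizontal distribution $H$, the quaternionic bundle $\mathbb{Q}$ and the three contact forms are sent; ``tracking the metric through the identification'' does not produce a qc homothety. Classifying the qc (equivalently, locally 3-Sasakian) structures whose associated Riemannian metric is the round one is a substantive step. The paper handles it by showing that in the qc-Einstein case the qc-conformal curvature $W^{qc}$ vanishes (using the explicit curvature of the round sphere and the relation between the Riemannian and Biquard curvatures), then running a monodromy argument to obtain a global qc-conformal map $M\to S^{4n+3}$, and finally invoking the qc Liouville theorem (Theorem \ref{t:qcLiouville}) together with the classification of qc-Einstein structures qc-conformal to the flat one (Theorem \ref{t:einstein preserving}) to upgrade qc-conformality to qc-homothety; the alternative route mentioned in the paper replaces the $W^{qc}$ argument by Escobales' classification of Riemannian submersions of the sphere with totally geodesic fibers. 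Without one of these inputs your argument stops at ``$(M,h)$ is the round sphere and $(M,\eta)$ is qc-Einstein,'' which is not yet the stated conclusion.
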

It should be noted that {in dimension seven the problem is still open}.
The above theorem suffices to charcaterize the cases of equality in Theorem \ref{mainpan} for $n>1$.
\begin{thrm}[\cite{IPV3}]
Let $(M,\eta)$ be a compact QC manifold of dimension $4n+3$ which satisfies a Lichnerowicz' type bound  $\mathcal{L}(X,X)\geq 4g(X,X)$. Then, there is a function $f$ with $\triangle f=4n f$
if and only if
\begin{itemize}
\item when $n>1$,  $M$ is qc-homothetic to the 3-Sasakian sphere;
\item when $n=1$, and $M$ is qc-Einstein, i.e., $T^0=0$, then $M$ is qc-homothetic to the 3-Sasakian sphere.
\end{itemize}
\end{thrm}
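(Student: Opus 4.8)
The plan is to settle the backward implication immediately and to obtain the forward one by running the equality case of the QC Lichnerowicz estimate, Theorem~\ref{mainpan}. If $(M,\eta)$ is qc-homothetic to the unit $3$-Sasakian sphere $S^{4n+3}\subset\Hn\times\mathbb{H}$, then by Theorem~\ref{t:first eigenspace Iwasawa} the restrictions to the sphere of the real-linear coordinate functions are eigenfunctions of the sub-Laplacian with eigenvalue $4n$, so a function $f$ with $\triangle f=4nf$ exists; no hypothesis beyond the homothety is used. Conversely, assume $4n$ is an eigenvalue, $\triangle f=4nf$ (necessarily $f\not\equiv\mathrm{const}$ since $4n\neq0$).

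The first step is to revisit the QC Bochner argument in the proof of Theorem~\ref{mainpan} with $\lambda=4n$: the sign-definite term $\frac{2n+1}{2n}\int_M(4n-\lambda)|\nabla f|^2\,Vol_\eta$ vanishes, leaving
\[
0\ \geq\ \int_M |(\nabla^2 f)_0|^2\,Vol_\eta\ -\ \frac{3}{4n}\int_M P_f(\nabla f)\,Vol_\eta .
\]
For $n>1$ the $C$-operator is non-negative, so $-\int_M P_f(\nabla f)\,Vol_\eta=\int_M f\cdot Cf\,Vol_\eta\geq0$ and both right-hand terms vanish. For $n=1$ the same holds: the bound $\mathcal{L}(X,X)\geq4g(X,X)$ with $T^0=0$ and $U\equiv0$ (dimension seven) forces $S\geq2>0$, so by \cite{IPV2} the $P$-function of the eigenfunction $f$ is non-negative, which is exactly the $n=1$ assumption built into Theorem~\ref{mainpan}. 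Hence $(\nabla^2 f)_0=0$ in all cases; since $g(\nabla^2 f,g)=-\triangle f=-4nf$, the Ricci identity gives $g(\nabla^2 f,\omega_s)=-4n\,df(\xi_s)$, and $|g|^2=|\omega_s|^2=4n$ with the $\omega_s$ mutually orthogonal and orthogonal to $g$, this reads
\[
\nabla df(X,Y)\ =\ -f\,g(X,Y)\ -\ \sum_{s=1}^3 df(\xi_s)\,\omega_s(X,Y),\qquad X,Y\in H,
\]
which is the horizontal Hessian equation of Theorem~\ref{main2}.

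The second step concludes from this equation. Being compact, $M$ is complete for $h=g+\sum_{s}(\eta_s)^2$, so for $n>1$ (that is, $4n+3>7$) Theorem~\ref{main2} applies directly and $(M,\eta,g,\mathbb{Q})$ is qc-homothetic to the unit $3$-Sasakian sphere. For $n=1$ the hypothesis $T^0=0$ lets me skip the step — hard and open in general in dimension seven — of deducing the vanishing of the Biquard torsion from the Hessian equation, after which the rest of the argument of Theorem~\ref{main2} goes through: by Theorem~\ref{str_eq_mod_th} (and $S>0$) the structure is locally $3$-Sasakian with vertical connection forms $\alpha_s=-\frac{S}{8n(n+2)}\eta_s$; differentiating the horizontal Hessian equation and using the Ricci identities and \eqref{sixtyfour} with $T^0=U=0$ pins down the vertical components $\nabla^2 f(\xi_s,X)$ and $\nabla^2 f(\xi_i,\xi_j)$; and substituting everything into the relation between the Biquard connection $\nabla$ and the Levi-Civita connection $D$ of $h$ yields $D^2 f=-f\,h$, i.e. \eqref{e:Riem Hess eqn}. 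By Obata's theorem for complete manifolds, recalled after \eqref{e:Riem Hess eqn}, $(M,h)$ is the round unit sphere $S^{7}(1)$, and since the $3$-Sasakian data is fixed up to the $SO(3)$-action by $g$ and $\eta$, this identifies $(M,\eta)$ with the standard $3$-Sasakian sphere up to homothety.

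The main obstacle is the $n=1$ case, precisely because Theorem~\ref{main2} does not cover dimension seven and one must instead carry out the reduction to the Riemannian Obata theorem by hand: the delicate points are the explicit computation of the vertical Hessian components, the comparison $\nabla\leftrightarrow D$ in the $3$-Sasakian setting, and keeping track of the normalization so that one really lands on the \emph{unit} sphere (which amounts to reading off the value of $S$ from the Hessian equation). A subsidiary point, handled above, is verifying that the $P$-function positivity assumption needed for $n=1$ in Theorem~\ref{mainpan} comes for free from the qc-Einstein condition together with positivity of the qc-scalar curvature.
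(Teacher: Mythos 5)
Your reduction is the intended one: at $\lambda=4n$ the Bochner/$P$-function identity leaves only two non-negative terms, both must vanish, $(\nabla^2 f)_0=0$ translates into the horizontal Hessian equation of Theorem~\ref{main2}, and for $n>1$ that theorem (compactness giving completeness of $h$) finishes the proof. Your observation that the $P$-function hypothesis needed for $n=1$ in Theorem~\ref{mainpan} is automatic here — qc-Einstein together with $\mathcal{L}(X,X)\geq 4g(X,X)$ forces $S\geq 2>0$, so the quoted result of \cite{IPV2} applies to the eigenfunction — is correct and is exactly the point that makes the $n=1$ statement self-contained.

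The gap is in the last step of your $n=1$ argument. Granting that the qc-Einstein portion of the proof of Theorem~\ref{main2} goes through in dimension seven up to $D^2f=-fh$, so that Obata gives an isometry of $(M,h)$ with the round $S^7(1)$, the assertion that ``the $3$-Sasakian data is fixed up to the $SO(3)$-action by $g$ and $\eta$'' is not a proof that the transported qc structure is the standard one. The isometry produced by Obata's theorem knows nothing about the distribution $H$ or the quaternionic bundle $\mathbb{Q}$, and one must still show that these, carried to the round sphere, coincide with the standard ones up to an isometry. This is precisely where Part 2 of the proof of Theorem~\ref{main2} does real work: it establishes qc-conformal flatness $W^{qc}=0$ from the qc-Einstein condition and the constant-curvature form of $R^h$, runs a monodromy argument to produce a global qc-conformal map to the sphere, invokes the qc-Liouville Theorem~\ref{t:qcLiouville}, and only then compares the metrics on $H$ to extract the homothety. (The alternative noted in the paper is to apply Escobales' classification \cite{Esc76} of Riemannian submersions of the sphere with totally geodesic fibers to the vertical foliation.) One of these arguments, or an equivalent substitute, is needed to close the $n=1$ case; without it the proof stops at a Riemannian, not a quaternionic contact, identification.
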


{Next, we give an outline of the key steps in the proof of Theorem \ref{main2}.}

\textbf{Part 1}.   The first step is to show  that $T^0=0$ and $U=0$, i.e., $M$ is qc-Einstein. This is achieved by the following argument. First we determine the remaining parts of the Hessian of $f$  with respect to  the Biquard connection in terms of the torsion tensors. A simple argument shows that $T^0(I_s\gr,\gr)=U(I_s\gr,\gr)=0$. Using the $[-1]$-component of the curvature tensor it follows $T^0(I_s\gr,I_t\gr)=0$, $s,\, t\in \{1,2,3\}$, $s\not=t$. Then we determine the torsion tensors $T^0$ and $U$ in terms of $\nabla f$ and the tensor $U(\gr,\gr)$. For example, $$
|\nabla f|^{4}T^{0}(X,Y)=-\frac{2n}{n-1}U(\nabla f,\nabla f)\Big[%
3df(X)df(Y)-\sum_{s=1}^{3}df(I_{s}X)df(I_{s}Y)\Big].$$
Next, we prove formulas of the same type for  $\nabla T^0$ and $\nabla U$. In particular we have $$
(\nabla _{\nabla f}U)(X,Y)=\frac{2(n-1)}{n+2}fU(X,Y).$$

\begin{rmrk}\label{r:qc obata cpct using riem}
We pause for a moment to remark that the last equation shows in particular $L_{\gr}|U|^2=\frac{4(n-1)}{n+2}f|U|^2$ \emph{as in the Riemannian case for $Ric_0$}. Hence, in the compact case we can use an integration as in Proposition \ref{p:Obata Einstein} to see the vanishing of $U$, hence {also} of $T^0$ by what we have proved.
\end{rmrk}
By what we already proved, the crux of the matter is the proof that $U(\gr,\gr)=0$ (or $T^0(\gr,\gr)=0$). This fact is achieved with the help of the Ricci identities, the contracted Bianchi second identity and many properties of the torsion of a qc-manifolds: $0=\nabla ^{3}f(\xi_i,I_i\nabla f,\nabla f)-\nabla ^{3}f(I_{i}\nabla f,\nabla f,\xi _{i})=\frac{2}{n+2}fU(\nabla f,\nabla f)$. We finish {taking into account} that $|\gr|\not=0$ a.e. using a unique continuation {argument} by showing that on  a qc manifold with $n>1$, the "horizontal Hessian equation" implies that $f$ satisfies an elliptic partial differential equation,
\begin{equation}  \label{llex}
\triangle^h f=(4n+3)f+\frac{n+1}{n(2n+1)}(\nabla_{e_a}T^0)(e_a,\nabla f)+%
\frac{3}{(2n+1)(n-1)}(\nabla_{e_a}U)(e_a,\nabla f).
\end{equation}

 \textbf{Part 2}: By Part 1 it suffices to consider the case of a qc-Einstein structure, in which case we proceed as follows. First, we show that $(M,h)$ ($h$- Riemannian metric!) is isometric to the unit round  sphere by showing that $(\nabla^h)^2f(X,Y)=-fh(X,Y)$ and using Obata's result, see Remark \ref{r:sphere charcat} and the paragraph preceding it.
 Next, we show qc-conformal flatness. For this we use the form of the curvature of the round sphere, $R^h(A,B,C,D)=h(B,C)h(A,D)-h(B,D)h(A,C)$ and the relation between the Riemannian and Biquard curvatures and then the formula for $W^{qc}(X,Y,Z,V)$ which simplifies considerably in the qc-Einstein case. Finally, we employ a standard monodromy argument showing that $(M,g,\eta,\mathbb Q)$  is qc-conformal to ${S^{4n+3}}$, i.e., we have  $\eta=\kappa \Psi F^*\tilde\eta$ for some diffeomorphism $F:M\rightarrow S^{4n+3}$,  $0<\kappa\in \mathcal{C}^\infty(M)$, and $\Psi\in \mathcal{C}^\infty(M:SO(3))$ 
 We conclude the proof of the qc-conformality with the 3-Sasakian sphere by invoking  the qc-Liouville theorem \ref{t:qcLiouville}.

Finally, a comparison of the metrics on $H$ show the desired homothety.

We should mention that an alternative to the use of the qc-conformal curvature tensor in Step 2 was found in \cite{BauKim14} where once the isometry with the round sphere is established the authors invoke the classification of Riemannian submersions with totally geodesic fibers of the sphere
due to Escobales \cite{Esc76}.

Since the above proof used the qc-Liouville theorem and because of its independent interest we devote a short section to it.

\subsection{The QC Liouville theorem}\label{ss:qc liuouville}
\begin{thrm}\label{t:qcLiouville}
          Every qc-conformal transformation between open subsets of the 3-Sasakian unit sphere is the restriction of a global qc-conformal transformation.
          \end{thrm}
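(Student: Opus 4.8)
The plan is to mimic the classical Riemannian Liouville-type argument, reducing the qc-conformal rigidity to the qc-conformal flatness already encoded in Theorem~\ref{T:flat} together with the qc-Einstein-preserving classification of Theorem~\ref{t:einstein preserving}. Let $F:\Omega_1\to\Omega_2$ be a qc-conformal transformation between open subsets of $S^{4n+3}$ equipped with the standard 3-Sasakian structure $\tilde\eta$, so that $F^*(\tilde\eta|_{\Omega_2})=\kappa\,\Psi\,(\tilde\eta|_{\Omega_1})$ for a positive function $\kappa$ and an $SO(3)$-valued function $\Psi$. First I would pass to the quaternionic Heisenberg group picture via the Cayley transform $\mathcal C$ of Section~\ref{ss:qc sphere}: since $\mathcal C$ is itself a global qc-conformal diffeomorphism between $S^{4n+3}\setminus\{\text{pt}\}$ and (the one-point compactification of) $\boldsymbol{G(\mathbb H)}$, it suffices to prove that a qc-conformal map $G$ between open subsets of $\boldsymbol{G(\mathbb H)}$, with its flat standard structure $\tilde\Theta$, extends to a global qc-conformal automorphism of the compactified group.

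The heart of the argument is then a local-to-global rigidity statement. Because $G$ is qc-conformal and $\tilde\Theta$ is qc-flat, the pulled-back structure $G^*\tilde\Theta$ is itself qc-conformally flat, hence $W^{qc}(G^*\tilde\Theta)=0$ by Theorem~\ref{T:flat}(a); by Theorem~\ref{T:flat}(b) this means $G^*\tilde\Theta=\phi\,\Psi\,\tilde\Theta$ locally for a positive $\phi$ and $\Psi\in SO(3)$. The key point is that the conformal factor $\phi$ is then forced to have the special polynomial form of Theorem~\ref{t:einstein preserving}: indeed $\tilde\Theta$ is qc-Einstein (being flat), and $G$ being qc-conformal means $\phi\Psi\tilde\Theta$ is obtained from a qc-Einstein structure by a qc-conformal map, so — after using that qc-Einstein is a qc-conformal invariant property in this setting, as recorded in the discussion around Theorem~\ref{t:einstein preserving} and in \cite{IMV}, together with the fact that the standard structure on the sphere/group is the model qc-Einstein structure — the deformation $\phi$ must be one of the functions in \eqref{e:Liouville conf factor}, i.e. $\phi(q,\omega)=c_0[(\sigma+|q+q_0|^2)^2+|\omega+\omega_o+2\,\mathrm{Im}\,q_o\bar q|^2]$ up to left translation. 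Each such $\phi$, however, is precisely the conformal factor of an explicit global qc-conformal automorphism of the compactified group: translations, dilations $\delta_\lambda$, the standard inversion (the qc Kelvin transform of \cite{CK,CDKR,GV2}), and rotations $\Psi$ together realize exactly this family of factors, and they act globally on $\hat\Sigma\cup\{\infty\}$. Matching $G$ on its domain with the global automorphism having the same conformal factor and the same value (and same $\Psi$) at one point, and using that a qc-conformal map is determined by finitely many jets of this sort (rigidity of multicontact maps, cf. \cite{Reimann01,Ot05,Banner}), one concludes $G$ is the restriction of that global map. Transporting back through $\mathcal C$ gives the theorem on the sphere.

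Concretely the steps are: (1) reduce to $\boldsymbol{G(\mathbb H)}$ via the Cayley transform; (2) observe that qc-conformality plus flatness of the target forces $W^{qc}=0$ for the pulled-back structure, so Theorem~\ref{T:flat} writes $G^*\tilde\Theta=\phi\Psi\tilde\Theta$ locally; (3) invoke the qc-Einstein-preserving property to see that $\phi$ solves the PDE system whose solutions, by Theorem~\ref{t:einstein preserving}, are exactly the polynomials \eqref{e:Liouville conf factor}; (4) identify, for each admissible $\phi$, an explicit global qc-conformal automorphism of $\hat\Sigma\cup\{\infty\}$ (built from translations, dilations, rotations and the qc inversion) realizing that conformal factor; (5) use rigidity/unique continuation for multicontact maps to conclude $G$ coincides with this global automorphism on its domain; (6) conjugate back by $\mathcal C$.

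The main obstacle I anticipate is step (3)–(4): one must be careful that the classification of Theorem~\ref{t:einstein preserving} applies — that is, that the relevant deformation really is a qc-Einstein-to-qc-Einstein qc-conformal change, which requires knowing that qc-conformal images of qc-Einstein structures remain qc-Einstein on the sphere/group (this is automatic here because the \emph{image} structure is the standard flat one, but the bookkeeping of which structure is the ``background'' must be done with care), and then matching the algebraic family of conformal factors \eqref{e:Liouville conf factor} bijectively with the explicit group of global automorphisms. The second delicate point is the unique-continuation/rigidity input in step (5): one needs that two qc-conformal maps agreeing to sufficiently high order at a point of a connected domain coincide, which follows from the rigidity results for multicontact maps in Carnot groups cited above but should be quoted precisely rather than re-proved. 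Everything else is a direct translation of the Riemannian Liouville argument into the qc setting, using the Cayley transform to linearize and the qc-conformal curvature tensor to detect flatness.
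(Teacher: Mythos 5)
Your proposal is correct and follows essentially the same route as the paper: transfer to $\QH$ by the Cayley transform, note that the pulled-back structure is qc-Einstein with vanishing qc-scalar curvature (by \emph{diffeomorphism} invariance of the Biquard curvature --- not because ``qc-Einstein is a qc-conformal invariant property,'' which is false in general and is precisely why Theorem \ref{t:einstein preserving} carries content), so that Theorem \ref{t:einstein preserving} forces $\sigma=0$ and identifies the conformal factor with that of an explicit composition of a translation, the inversion of Lemma \ref{l:inversion qc} and a dilation. Two further remarks: your step (2) is redundant, since $G^*\tilde\Theta=\phi\Psi\tilde\Theta$ is the \emph{definition} of $G$ being qc-conformal and Theorem \ref{T:flat}(b) would in any case only produce \emph{some} local equivalence rather than one realized by $G$ itself; on the other hand your step (5) --- pinning $G$ to the global automorphism having the same conformal factor --- makes explicit a point the paper's proof passes over in silence, and it can be closed more elementarily than by importing multicontact rigidity (which risks circularity with the very theorem being proved): the composition of $G$ with the inverse of that automorphism pulls $\tilde\Theta$ back to $\Psi\tilde\Theta$, hence preserves the flat Biquard connection and the horizontal metric, and is therefore the restriction of a left translation composed with an orthogonal automorphism.
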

This result is proved in the more general setting of parabolic {geometries} in \cite{CS09}.
Here we give a relatively self-contained proof of a version of Liouville's theorem in the case of the quaternionic Heisenberg group and the 3-Sasakian sphere equipped with their standard qc structures. The proof is related to the QC Yamabe problem on the 3-Sasakian sphere since a key step is provided by the proof of \cite[Theorem 1.1]{IMV} in which all qc-Einstein structures qc-conformal to the standard qc-structure on the quaternionic Heisenberg group (or sphere) were determined.  Thus, our proof of  Theorem \ref{t:qcLiouville} establishes  the local  Liouville type property in the setting of a sufficiently smooth qc-conformal maps relying only on the qc geometry. A very general version of the Liouville theorem was also {proved} by  Cowling, M., \& Ottazzi, A., see \cite{CO13}.

In the Euclidean  case Liouville  \cite{Liu1850}, \cite{Liu1850b} showed that every sufficiently smooth conformal map ($\emph{C}^4$ in fact) between two connected open sets of the Euclidean space $\mathbb{R}^3$  is necessarily a restriction of a M\"obius transformation. The latter is the group generated by translations, dilations  and inversions of the extended Euclidean space obtained by adding an ideal point at infinity.  Liouville's result generalizes easily to any dimension $n>3$. Subsequently, Hartman \cite{Har58} gave a proof requiring only $\mathcal{C}^1$ smoothness of the conformal map, see also  \cite{Ne60},  \cite{BoIw82}, \cite{Ja91},    \cite{IwMa98} and  \cite{Fr03} for other proofs.   A CR version of Liouville's result can be found in \cite{Ta62} and \cite{Al74}. Thus, a smooth CR diffeomorphism between two connected open subsets of the $2n+1$ dimensional  sphere is  the restriction of an element from the isometry group $SU(n+1,1)$ of the unit ball equipped with the complex hyperbolic metric. The proof of Alexander \cite{Al74} relies on the extension property of a smooth CR map to a biholomorphism. Tanaka, see also \cite{Poi07}, \cite{Car} and \cite{ChM}, in his study of pseudo-conformal equivalence between analytic real hypersurfaces of complex space showed a more general result
 \cite[Theorem 6]{Ta62} showing that any pseudo-conformal  homeomorphism between connected open sets of the quadric
 \[
 -\sum_{i=1}^r |z_i|^2+\sum_{i=r+1}^n |z_i|^2 =1, \quad (z_1,\dots,z_n)\in \mathbb{C}^n,
 \]
is the restriction of a projective transformation of $P^{n}(\mathbb{C})$.

Another theory began with the introduction of quasiconformal maps  \cite{Ge62} and \cite{Re67}, which imposed metric conditions on the maps,   and with the works of Mostow \cite{M} and Pansu \cite{P}.  In particular, in \cite{P} it was shown  that every global 1-quasiconformal map on the sphere at infinity  of each of the hyperbolic metrics is an isometry of the corresponding hyperbolic space.  The local version of the Liouville's property for  1-quasiconformal map of class $\emph{C}^4$ on the Heisenberg group  was settled in \cite{KoRe85} by a  reduction to the CR  result. The optimal regularity question for quasiconformal maps was settled later by Capogna \cite{Cap97} in much greater generality including the cases of all Iwasawa type groups, see also \cite{Ta96} and \cite{CC06}.

 A closely related  property is the so called rigidity property of quasiconformal or {multicontact} maps, also referred to as Liouville's property, but where the question is the finite dimensionality of the group of (locally defined) quasiconformal or {multicontact} maps, see \cite{Ya93},   \cite{Reimann01},  \cite{CMKR05}, \cite{Ot05}, \cite{Ot08},\cite{Mor09}, \cite{dMO10}, \cite{lDOt11}.

Besides the Cayley transform, we shall need the generalization of the Euclidean inversion transformation to the qc setting.  We recall that in \cite{Ko1} Kor\'anyi introduced such an inversion and an analogue of the Kelvin transform on the Heisenberg group, which were later generalized in \cite{CK} and \cite{CDKR} to all groups of Heisenberg type. The inversion and Kelvin transform enjoy useful properties in the case of the four groups of Iwasawa type of which $\QH$ is a particular case.   For our goals it is necessary to show that the inversion on $\QH$ is a qc-conformal map. In order to prove this fact we shall represent the inversion as the composition of two Cayley transforms, see \cite{IMV1,IV2} where  the seven dimensional case was used.   Let $P_1=(-1,0)$ and $P_2=(1,0)$ be correspondingly the 'south' and 'north'
poles of the unit sphere  ${S^{4n+3}}\ =\ \{\abs{q}^2+\abs{p}^2=1 \}\subset \Hn\times\mathbb{H}$.  Let $\mathcal{C}_1$ and $\mathcal{C}_2$ be the corresponding Cayley transforms defined, respectively, on $S^{4n+3}\setminus\{P_1\}$ and $S^{4n+3}\setminus\{P_2\}$. Note that $\mathcal{C}_1$ was defined in \eqref{e:Cayley transf ctct form}%
, while $\mathcal{C}_2$ is given by $(q', p')\ =\ \mathcal{C}_2\
\Big ((q, p)\Big)$,
$
q'\ =\ -(1-p)^{-1} \ q,\quad p'\ =\ (1-p)^{-1} \ (1+p), \qquad  (q,p)\in S^{4n+3}\setminus\{P_2\}
$
The inversion on the quaternionic Heisenberg group (with center the origin) with respect to the unit gauge sphere   is the map
\begin{equation}\label{d:inversion}
\sigma=\mathcal{C}_2\circ\mathcal{C}_1^{-1}:\QH\setminus \{ (0,0)\}
\rightarrow\QH\setminus \{ (0,0)\}.
\end{equation}
  In particular, $\sigma\ =\ \mathcal{C}_2\circ\mathcal{C}_1^{-1} $ is {an involution}
on the group.   A small calculation shows that $\sigma$ is given by the formula (in the Siegel model)
$
q^*\ =\ -p'^{-1}\, q',\qquad p^*\ =\ p'^{-1},
$
or, equivalently, in the direct product model $\boldsymbol{G\,(\mathbb{H})}$
\begin{equation*}
q^*\ =\ -(|q'|^2-\omega')^{-1}\, q', \qquad \omega^*\ =\ -\frac {\omega'%
}{|q'|^4+|\omega'|^2}.
\end{equation*}
It follows
\begin{equation}
\begin{aligned}
\sigma ^*\ \Theta\ =\ \frac {1}{|p'|^2}
\,\bar\mu\, \Theta\, \mu, \qquad \mu\ =\ \frac {p'}{|p'|},\qquad \text{ (in the Siegel model)}\\
\sigma ^*\ \Theta\ =\ \frac {1}{|q'|^4+|\omega'|^2}
\,\bar\mu\, \Theta\, \mu, \qquad \mu\ =\ \frac {|q'|^2+\omega'}{\left ( |q'|^4+|\omega'|^2\right)^{1/2}},\qquad \text{ (in the product model)},
\end{aligned}
\end{equation}
which, shows the following fundamental fact.
\begin{lemma}\label{l:inversion qc}
The inversion transformation \eqref{d:inversion} is a qc-conformal transformation on the quaternionic Heisenberg group.
\end{lemma}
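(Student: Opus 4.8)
The statement is an essentially immediate consequence of the explicit formula for $\sigma^*\Theta$ displayed just above its statement, so the plan is to organize that computation and, above all, to recognize its qc-conformal structure. Recall from Definition \ref{d:sub-Riemannian conf transf} that a diffeomorphism $\sigma$ of (an open subset of) $\QH$ is quaternionic contact conformal precisely when $\sigma^*\Theta = \phi\,\Psi\,\Theta$ for a positive smooth function $\phi$ and a smooth map $\Psi$ into $SO(3)$, where the $\mathbb R^3$-valued contact form $\Theta=(\Theta_1,\Theta_2,\Theta_3)$ is acted on by $\Psi$ as a rotation of its three components. Thus everything reduces to exhibiting such a $\phi$ and $\Psi$ for the inversion \eqref{d:inversion}.

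First I would re-derive the formula for $\sigma^*\Theta$. Writing $\sigma=\mathcal{C}_2\circ\mathcal{C}_1^{-1}$ and composing the two Cayley transforms produces the explicit expression for $\sigma$ recorded above (the map $q^*=-p'^{-1}q'$, $p^*=p'^{-1}$ in the Siegel model), from which a direct quaternionic computation using \eqref{e:Cayley transf ctct form group} yields
\begin{equation*}
\sigma^*\Theta = \frac{1}{|p'|^2}\,\bar\mu\,\Theta\,\mu, \qquad \mu=\frac{p'}{|p'|},
\end{equation*}
together with the analogous product-model expression. Here I view $\Theta=\Theta_1 i+\Theta_2 j+\Theta_3 k$ as an $\mathrm{Im}\,\mathbb{H}$-valued one-form, so that the right-hand side makes sense as a quaternionic product.

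The key structural observation is that conjugation by the unit quaternion $\mu$, namely $v\mapsto \bar\mu\, v\,\mu$ for $v\in\mathrm{Im}\,\mathbb{H}\cong\mathbb R^3$, is exactly the rotation $\Psi_\mu\in SO(3)$ associated to $\mu$ under the standard double cover $Sp(1)\to SO(3)$; its matrix entries are quadratic in the components of $\mu$, hence depend smoothly on the base point because $\mu$ is smooth and nonvanishing on $\QH\setminus\{(0,0)\}$. Likewise the factor $1/|p'|^2$ (respectively $1/(|q'|^4+|\omega'|^2)$) is positive and smooth there. Consequently $\sigma^*\Theta=\phi\,\Psi_\mu\,\Theta$ with $\phi>0$ smooth and $\Psi_\mu$ a smooth $SO(3)$-valued map, which is precisely the condition of Definition \ref{d:sub-Riemannian conf transf}, proving the lemma.

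I expect the only genuine work to be the quaternionic bookkeeping in composing $\mathcal{C}_2$ and $\mathcal{C}_1^{-1}$ and simplifying $\sigma^*\Theta$; the recognition of conjugation as an $SO(3)$-rotation is routine. This computation can in fact be bypassed entirely by a conceptual argument: each Cayley transform is a qc-conformal map by \eqref{e:Cayley transf ctct form}, the class of qc-conformal maps is closed under composition and inversion (under composition the positive conformal factors multiply and the $SO(3)$-valued matrices multiply to an $SO(3)$-valued matrix), and therefore $\sigma=\mathcal{C}_2\circ\mathcal{C}_1^{-1}$ is qc-conformal with no calculation at all. The explicit formula above then serves only to record the precise conformal factor and rotation.
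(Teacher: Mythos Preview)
Your proposal is correct and follows exactly the paper's approach: the paper presents the formula $\sigma^*\Theta=\frac{1}{|p'|^2}\bar\mu\,\Theta\,\mu$ (derived by composing the two Cayley transforms) and then simply says ``which shows the following fundamental fact,'' leaving implicit the observation you spell out, namely that conjugation by the unit quaternion $\mu$ is an $SO(3)$ rotation and $|p'|^{-2}$ is a positive conformal factor. Your alternative conceptual argument via closure of qc-conformal maps under composition is also valid and is implicit in the paper's setup, since both Cayley transforms are already shown to be qc-conformal in \eqref{e:Cayley transf ctct form group} and \eqref{e:Cayley transf ctct form}.
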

As usual, using the dilations and translations on the group, it is a simple matter to define an inversion   with respect to any gauge ball.

Turning to the proof of Theorem \ref{t:qcLiouville} let $\Sigma\not={S^{4n+3}}$, noting that  in the case $\Sigma={S^{4n+3}}$ there is nothing to prove.
We shall transfer the analysis to the quaternionic Heisenberg group using the Cayley transform, thereby reducing  to the case of a qc-conformal transformation $\tilde F:\tilde \Sigma \rightarrow \QH$ between two domains of the quaternionic Heisenberg group such that $\Theta={\tilde F}^*\tilde\Theta= \frac{1}{2\phi} \tilde\Theta$ for some positive smooth function $\phi$ defined on the open set $\tilde\Sigma$.  By its definition $\Theta$ is a qc-Einstein structure of vanishinq qc-scalar curvature, hence Theorem \ref{t:einstein preserving} shows  $\sigma=0$ and $F$ is a composition of a translation, cf. \eqref{e:H-type Iwasawa groups}, followed by an inversion and a homothety, cf. Lemma \ref{l:inversion qc}.

The above analysis implies that $F$ is the restriction of an element of $PSp(n+1,1)$. This completes the proof of Theorem \ref{t:qcLiouville}.
{
Similarly to the Riemannian and CR cases,  see \cite{Ku49}, \cite[Theorem  VI.1.6]{SchYa} and \cite{BS76},  Theorem \ref{t:qcLiouville} and a standard monodromy type argument show the validity of the next}
\begin{thrm}\label{t:conf sphere}
If $(M,\eta)$ is a simply connected qc-conformally flat manifold of dimension $4n+3$, then there is a qc-conformal immersion $\Phi:M\rightarrow {S^{4n+3}}$, where ${S^{4n+3}}$ is the 3-Sasakian unit sphere in the $(n+1)$-dimensional quaternion space.
\end{thrm}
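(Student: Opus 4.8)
The plan is to deduce Theorem \ref{t:conf sphere} from the local Liouville theorem \ref{t:qcLiouville} by a standard monodromy (analytic continuation / developing map) argument, exactly as in the Riemannian situation treated in \cite{Ku49}, \cite[Theorem VI.1.6]{SchYa} and the CR case \cite{BS76}. Since $(M,\eta)$ is qc-conformally flat, Theorem \ref{T:flat} b) gives an open cover $\{U_\alpha\}$ of $M$ together with qc-conformal diffeomorphisms $\Phi_\alpha: U_\alpha \to V_\alpha \subset \QH$ onto open subsets of the quaternionic Heisenberg group (equivalently, using the qc Cayley transform \eqref{e:Cayley transf ctct form} and Lemma \ref{l:inversion qc}, onto open subsets of the $3$-Sasakian sphere $S^{4n+3}$, which we now fix as the common target). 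On each nonempty overlap $U_\alpha \cap U_\beta$ the composition $\Phi_\beta \circ \Phi_\alpha^{-1}$ is a qc-conformal transformation between open subsets of $S^{4n+3}$, so by Theorem \ref{t:qcLiouville} it is the restriction of a globally defined qc-conformal automorphism $g_{\alpha\beta}\in PSp(n+1,1)$ of $S^{4n+3}$. This is precisely the input that turns a merely local immersion into a genuine developing map with values in a single homogeneous model.

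The next step is the monodromy/gluing argument. The cocycle $\{g_{\alpha\beta}\}$ is locally constant on overlaps, hence defines a flat $PSp(n+1,1)$-structure, i.e. a holonomy homomorphism $\rho:\pi_1(M)\to PSp(n+1,1)$ together with a $\rho$-equivariant developing map from the universal cover. Because $M$ is \emph{simply connected}, $\pi_1(M)$ is trivial, the holonomy is trivial, and the flat structure is globally trivializable: one can choose the charts $\Phi_\alpha$ so that $g_{\alpha\beta}=\mathrm{id}$ on all overlaps, i.e. $\Phi_\alpha = \Phi_\beta$ on $U_\alpha\cap U_\beta$. Concretely, fix a basepoint $p_0\in U_{\alpha_0}$; for any $p\in M$ choose a path from $p_0$ to $p$, cover it by finitely many charts, and successively adjust each $\Phi_\alpha$ by the unique element of $PSp(n+1,1)$ that makes it agree with the previous chart on the connecting overlap. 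Simple connectivity guarantees that the resulting value at $p$ is independent of the chosen path (any two paths are homotopic, and the adjustment along a homotopy is continuous with values in the discrete set fixed by the locally constant cocycle). The locally defined maps therefore patch to a single well-defined map $\Phi:M\to S^{4n+3}$, which is qc-conformal because it is locally so, and is an immersion because each $\Phi_\alpha$ is a local diffeomorphism. This gives the asserted qc-conformal immersion.

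One technical point worth flagging, and which I expect to be the main obstacle, is the regularity/smoothness hypothesis under which Theorem \ref{t:qcLiouville} is invoked: the local Liouville theorem as proved in the excerpt requires the qc-conformal transformation to be sufficiently smooth (the proof runs through Theorem \ref{t:einstein preserving}, which presupposes enough regularity on the conformal factor $h$ to justify the system of PDE's it satisfies). Since the transition maps $\Phi_\beta\circ\Phi_\alpha^{-1}$ here are compositions of smooth qc-conformal diffeomorphisms, this causes no difficulty in the present setting, but it should be stated explicitly; alternatively one can cite the more general parabolic-geometry version \cite{CS09} or \cite{CO13}. A second, minor point is that one must check that the adjusted charts still map \emph{onto} open subsets of $S^{4n+3}$ (immediate, since elements of $PSp(n+1,1)$ are global diffeomorphisms of $S^{4n+3}$) and that the patched map is qc-conformal for the \emph{standard} $3$-Sasakian structure on $S^{4n+3}$ rather than merely for some conformal representative — this follows because all the $\Phi_\alpha$ and all the $g_{\alpha\beta}$ respect the qc-conformal class by construction. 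With these remarks in place the proof is complete; the essential content is entirely carried by Theorem \ref{t:qcLiouville}, the rest being the classical developing-map machinery.
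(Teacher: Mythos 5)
Your argument is correct and is exactly the proof the paper intends: the paper disposes of this theorem in one sentence by invoking Theorem \ref{t:qcLiouville} together with ``a standard monodromy type argument'' as in the cited Riemannian and CR references, and your write-up is a faithful expansion of precisely that developing-map construction (local flat charts from Theorem \ref{T:flat}, transition maps extended to $PSp(n+1,1)$ by the Liouville theorem, path-continuation made unambiguous by simple connectivity). The only refinement worth keeping in mind is that the uniqueness of the global extension of a locally defined qc-conformal map --- which makes the continuation along paths rigid --- is what justifies the path-independence step, but this is implicit in your argument and in the Liouville theorem itself.
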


\section{Heterotic string theory relations }\label{s:strings}

The seven dimensional  quaternionic Heisenberg group $\QH$
has  applications in the construction of
non-trivial solutions to the so called \emph{Strominger system} in supersymmetric heterotic string theory.

The bosonic fields of the ten-dimensional supergravity which
arises as low energy effective theory of the heterotic string are
the spacetime metric $g$, the NS three-form field strength (flux)
$H$, the dilaton $\phi$ and the gauge connection $A$ with
curvature 2-form $F^A$. The bosonic geometry is of the form
$\mathbb{R}^{1,9-d}\times M^d$, where the bosonic fields are
non-trivial only on $M^d$, $d\leq 8$. One considers the two
connections $
\nabla^{\pm}=\nabla^g \pm \frac12 H, $ 
where $\nabla^g$ is the Levi-Civita connection of the Riemannian
metric~$g$. Both connections preserve the metric,
$\nabla^{\pm}g=0$ and have totally skew-symmetric torsion $\pm H$,
respectively. We denote by $R^g,R^{\pm}$ the corresponding
curvature.

The Green-Schwarz anomaly cancellation condition up to the first
order of the string constant
$\alpha^{\prime }$ reads
\begin{equation}  \label{acgen}
dH=\frac{\alpha^{\prime }}48\pi^2(p_1(\nabla^-)-p_1(E))=\frac{\alpha^{\prime }}4 %
\Big(Tr(R^-\wedge R^-)-Tr(F^A\wedge F^A)\Big),
\end{equation}
where $p_1(\nabla^-)$ and $p_1(E)$ are the first Pontrjagin forms
with respect to a connection $\nabla^-$ with curvature $R$ and a
vector bundle $E$ with connection $A$.

A heterotic geometry preserves supersymmetry iff in ten dimensions
there exists at least one Majorana-Weyl spinor $\epsilon$ such
that the following Killing-spinor equations hold \cite{Str,Berg}
\begin{equation} \label{sup1}
\nabla^+\epsilon=0, \quad
(d\phi-\frac12H)\cdot\epsilon=0, \quad
F^A\cdot\epsilon=0,
\end{equation}
where
$\cdot$ means Clifford action
of forms on spinors.

The system of Killing spinor equations \eqref{sup1} together with
the
anomaly cancellation condition \eqref{acgen} is known as the \emph{%
Strominger system} \cite{Str}. The last equation in \eqref{sup1}
is the instanton condition which means that the curvature $F^A$ is
contained in a Lie algebra of a Lie group which is a stabilizer of
a non-trivial spinor. In dimension 7 the largest such a  group is the exceptional
group $G_2$ {which is the {automorphism } group of the unit imaginary octonions. Denoting by $\Theta$ the non-degenerate  three-form definning the $G_2$ structure,} the
$G_2$-instanton condition has the form
\begin{equation}  \label{in2}
\sum_{k,l=1}^7(F^A)^i_j(e_k,e_l)\Theta(e_k,e_l,e_m)=0.
\end{equation}
Geometrically, the existence of a non-trivial real spinor parallel
with respect to the metric connection $\nabla^+$ with totally
skew-symmetric torsion $T=H$ leads to restriction of the holonomy
group $Hol(\nabla^+)$ of the torsion connection $\nabla^+$. In
dimension seven $Hol(\nabla^+)$ has to be contained in the
exceptional group $G_2$ \cite{FI1,GKMW,GMW,FI2}.

The general existence result \cite{GKMW,FI1,FI2} states that there
exists a non-trivial solution to both dilatino and gravitino
Killing spinor equations (the first two equations in \eqref{sup1})
in dimension d=7 if and only if there exists a globally conformal co-calibrated $G_2$-structure $%
(\Theta,g)$  of pure type and the Lee form $\theta^7=-\frac{1}{3}*(* d\Theta\wedge\Theta) = \frac{1}{3}%
*(* d*\Theta\wedge*\Theta)$ has to be exact, i.e. a $G_2$%
-structure $(\Theta,g)$ satisfying the equations
\begin{equation}  \label{sol7}
d*\Theta=\theta^7\wedge *\Theta, \quad d\Theta\wedge\Theta=0,
\quad \theta^7=-2d\phi.
\end{equation}
Therefore, the torsion 3-form (the flux $H$) is given by
\begin{equation}\label{torstr}
H=T= -* d\Theta - 2*(d\phi\wedge\Theta).
\end{equation}

A geometric model which fits the above structures  was proposed in \cite%
{FIUVdim7-8} as a certain ${\mathbb{T}}^3$-bundle over a
Calabi-Yau surface. For this, let $\Gamma_i$, $1\leq i \leq 3$, be
three closed anti-self-dual $2$-forms on a Calabi-Yau surface
$M^4$, which represent integral cohomology classes. Denote by
$\omega_1$ and by $\omega_2+\sqrt{-1}\omega_3$ the (closed)
K\"ahler form and the holomorphic volume form on $M^4$,
respectively. Then, there is a compact 7-dimensional manifold
$M^{1,1,1}$ which is the total space of a ${\mathbb{T}}^3$-bundle
over $M^4$ and has a $G_2$-structure $
\Theta=\omega_1\wedge\eta_1+\omega_2\wedge\eta_2-\omega_3\wedge\eta_3+\eta_1%
\wedge \eta_2\wedge\eta_3,
$
solving the first two Killing spinor equations in \eqref{sup1}
with constant dilaton in dimension $7$, where $\eta_i$, $1\leq i
\leq 3$, is a $1$-form on $M^{1,1,1}$ such that
$d\eta_i=\Gamma_i$, $1\leq i \leq 3$.

For any smooth function $f$ on $M^4$, the $G_2$-structure on
$M^{1,1,1}$ given by
\begin{equation*}
\Theta_f=e^{2f}\Big[\omega_1\wedge\eta_1+\omega_2\wedge\eta_2-
\omega_3\wedge\eta_3\Big]+\eta_1\wedge\eta_2\wedge\eta_3
\end{equation*}
solves the first two Killing spinor equations in \eqref{sup1} with
non-constant dilaton $\phi=-2f$.

To achieve a smooth solution to the Strominger system \emph{we still
have to determine} an auxiliary vector bundle with an
$G_2$-instanton
in order to satisfy the anomaly cancellation condition
\eqref{acgen}.
\subsection{The quaternionic Heisenberg group}

The Lie algebra $\mathfrak{g(\mathbb{H})}$ of the seven dimensional group $\QH$ has structure equations
\begin{equation}  \label{ecus-qHg}
\begin{aligned} & d\gamma^1=d\gamma^2=d\gamma^3=d\gamma^4=0, \quad
d\gamma^5=\gamma^{12}-\gamma^{34},\quad
d\gamma^6=\gamma^{13}+\gamma^{24},\quad
d\gamma^7=\gamma^{14}-\gamma^{23}.
\end{aligned}
\end{equation}
where $\gamma_1,\dots,\gamma_7$ is a basis of  left invariant
1-forms on $\QH$. In particular, the quaternionic Heisenebrg group
$\QH$ in dimension seven is an $R^3$-bundle over the flat
Calabi-Yau space $R^4$ and therefore fits  the geometric model
described above.

In order to obtain results in dimensions less than seven through
contractions of $\QH$ it will be convenient
to consider the orbit of $G(\mathbb{H})$ under the natural action
of $GL(3,\mathbb{R})$ on the $span\, \{\gamma^5, \gamma^6,
\gamma^7\}$. Accordingly let $K_A$ be a seven-dimensional real Lie
group with Lie bracket $[x,x^{\prime
}]_A=A[A^{-1}x,A^{-1}x^{\prime }]$ for $A\in GL(3,\mathbb{R})$
defined by a basis of left-invariant 1-forms $\{e^1,\ldots,e^7\}$
such that $e^i=\gamma^i$ for $1 \leq i \leq 4$ and $(e^5\ e^6\
e^7)=A\, (\gamma^5\ \gamma^6\ \gamma^7)^T$. Hence, the structure
equations of the Lie algebra $\mathfrak{K}_A$ of the group $K_A$
are
\begin{equation}  \label{ecus-general}
d e^1=d e^2=d e^3=d e^4=0, \qquad d e^{4+i}=
\sum_{j=1}^3a_{ij}\,\sigma_j, \quad i=1,2,3,
\end{equation}
where $\sigma_1=e^{12}-e^{34}$, $\sigma_2=e^{13}+e^{24}$, $%
\sigma_3=e^{14}-e^{23}$ are the three anti-self-dual 2-forms on
$\mathbb{R}^4$ and $A=\{a_{ij}\}$ is a 3 by 3 matrix.
We will denote the norm of $A$ by $|A|$, $|A|^2=\sum_{i,j=1}^3
a_{ij}^2$.

Since $\mathfrak{K}_A$ is isomorphic to
$\mathfrak{g(\mathbb{H})}$, if $K_A$ is connected and simply
connected it is isomorphic to $G(\mathbb{H})$.
Furthermore, any lattice $\Gamma_A$ gives rise to a (compact) nilmanifold $%
M_A=K_A/\Gamma_A$, which is a $\mathbb{T}^3$-bundle over a
$\mathbb{T}^4$ with connection 1-forms of anti-self-dual curvature
on the four torus.

The three closed hyperK\"ahler 2-forms on  $\mathbb R^4$ are given by $\omega_1=e^{12}+e^{34},\quad \omega_2=e^{13}-e^{24},\quad
\omega_3=e^{14}+e^{23}.$
Following \cite{FIUVdim7-8}, for a smooth function $f$ on $\mathbb R^4$,
we consider the $G_2$ structure on  $K_A$ defined by the 3-form
\begin{equation}  \label{g2-general}
\bar{\Theta}=e^{2f}\Big[\omega_1\wedge e^7+\omega_2\wedge
e^5-\omega_3\wedge e^6\Big] + e^{567}{.}
\end{equation}
The corresponding metric $\bar{g}$ on $K_{A}$ has an orthonormal
basis of 1-forms given by
\begin{equation}  \label{conf-general}
\bar{e}^{1}=e^{f}\,e^{1},\quad \bar{e}^{2}=e^{f}\,e^{2},\quad \bar{e}%
^{3}=e^{f}\,e^{3},\quad \bar{e}^{4}=e^{f}\,e^{4},\quad \bar{e}%
^{5}=e^{5},\quad \bar{e}^{6}=e^{6},\quad \bar{e}^{7}=e^{7}
\end{equation}
with self-dual 2-forms $\bar\omega_i=e^{2f}\omega_i$ and anti-self-dual 2-forms
$\bar\sigma_i=e^{2f}\sigma_i$, i=1,2,3.

It is easy to check using \eqref{ecus-general} and the property $%
\sigma_i\wedge\omega_j=0$ for $1\leq i,j \leq 3$ that the
\eqref{sol7} is satisfied, i.e.,
the $G_2$ structure $\bar\Theta$ solves the gravitino and dilatino
equations with non-constant dilaton $\phi=-2f$ \cite{FIUVas2}.
Furthermore, with $f_{ij}=\frac{%
\partial ^{2}f}{\partial x_{j}\partial x_{i}}$, $1\leq i,j\leq 4$, we obtain the next formula for the torsion $\bar T$ of $\bar\Theta$, see \cite{FIUVas2} for details,
\begin{equation}  \label{torsion-general}
d\bar{T} =-e^{-4f}\left[ \triangle e^{2f}+2|A|^2\right] \,\bar{e}^{1234} =-%
\left[ \triangle e^{2f}+2|A|^2 \right] e^{1234},
\end{equation}
where $\triangle
e^{2f}=(e^{2f})_{11}+(e^{2f})_{22}+(e^{2f})_{33}+(e^{2f})_{44}$
 is the Laplacian on $\mathbb{R}^4$.
\subsection{The first Pontrjagin form of the $(-)$-connection}

\label{pon7-general} The connection 1-forms of a connection
$\nabla$ are determined by
$\nabla_Xe_j=\sum_{s=1}^7\omega^s_j(X)e_s$.
From  Koszul's formula, we have that the Levi-Civita connection 1-forms $%
(\omega ^{\bar{g}})_{\bar{j}}^{\bar{\imath}}$ of the metric
$\bar{g}$ are given by
\begin{equation}\label{lc-general}
\begin{array}{ll}
(\omega ^{\bar{g}})_{\bar{j}}^{\bar{\imath}}(\bar{e}_{k}) \!&\! =-\frac{1}{2}\Big(%
\bar{g}(\bar{e}_{i},[\bar{e}_{j},\bar{e}_{k}])-\bar{g}(\bar{e}_{k},[\bar{e}%
_{i},\bar{e}_{j}])+\bar{g}(\bar{e}_{j},[\bar{e}_{k},\bar{e}_{i}])\Big) \\[8pt]
\!&\! =\frac{1}{2}\Big(d\bar{e}^{i}(\bar{e}_{j},\bar{e}_{k})-d\bar{e}^{k}(%
\bar{e}_{i},\bar{e}_{j})+d\bar{e}^{j}(\bar{e}_{k},\bar{e}_{i})\Big)
\end{array}
\end{equation}
taking into account $\bar{g}(\bar{e}_{i},[\bar{e}_{j},\bar{e}_{k}])=-d\bar{e}%
^{i}(\bar{e}_{j},\bar{e}_{k})$. With the help of
\eqref{lc-general} we
compute the expressions for the connection 1-forms $(\omega ^{-})_{\bar{j}}^{%
\bar{\imath}}$ of the connection $\nabla ^{-}$,
\begin{equation}
(\omega ^{-})_{\bar{j}}^{\bar{\imath}}=(\omega ^{\bar{g}})_{\bar{j}}^{\bar{%
\imath}}-\frac{1}{2}(\bar{T})_{\bar{j}}^{\bar{\imath}},\qquad \text{ where }%
\qquad (\bar{T})_{\bar{j}}^{\bar{\imath}}(\bar{e}_{k})=\bar{T}(\bar{e}_{i},%
\bar{e}_{j},\bar{e}_{k}).  \label{minus-general}
\end{equation}
A long straightforward calculation based on \eqref{lc-general},
\eqref{minus-general}  yields that
the first Pontrjagin form of $\nabla^{-}$ is a scalar multiple of
$e^{1234}$ given by \cite{FIUVas2}
\begin{equation}  \label{p1-general}
\pi ^{2}p_{1}(\nabla ^{-}) =\left[ \mathcal{F}_2[f]+\triangle_4 f -\frac{3}{8%
} |A|^{2} \triangle e^{-2f} \right] {e}^{1234},
\end{equation}
where $\mathcal{F}_2[f]$ is the 2-Hessian of $f$, i.e., the sum of
all principle $2\times 2$-minors of the Hessian, and $\triangle_4
f=div(|\nabla f|^2\nabla f)$ is the 4-Laplacian of $f$.
This formula shows, in particular, that even though the
curvature 2-forms of $\nabla^-$ are quadratic in the gradient of
the dilaton, the Pontrjagin form of $\nabla^-$ is also quadratic
in these terms. Furthermore, if $f$ depends on two of the
variables then $\mathcal{F}_2[f]=det (Hess f)$ while if $f$ is a
function of one variable $\mathcal{F}_2[f]$ vanishes.

What remains is to solve the anomaly cancellation condition.
We use the $G_2$-instanton $\mathrm{D}_{\Lambda }$ defined in \cite{FIUVas2}, which depends on a 3 by 3 matrix $\Lambda=(%
\lambda_{ij}) \in {\mathfrak{g}\mathfrak{l}}_3(\mathbb{R})$.

It is shown in \cite{FIUVas2} that the connection  $\mathrm{D}_{\Lambda}$ is a $G_2$-instanton with respect to
the $G_2$ structure defined by \eqref{g2-general} which preserves
the metric if and only if $\mathrm{rank}(\Lambda) \leq 1$. In this
case, the first
Pontrjagin form $p_{1}(\mathrm{D}_{\Lambda})$ of the $G_2$-instanton $%
\mathrm{D}_{\Lambda}$ is given by
\begin{equation}  \label{abinst-general}
8\pi ^{2}p_{1}(\mathrm{D}_{\Lambda})= -4\lambda^2\,e^{1234},
\end{equation}
where $\lambda=|\Lambda\, A|$ is the norm of the product matrix
$\Lambda\, A$.

After this preparation, we are left with solving the anomaly cancellation condition
$d\bar{T}=\frac{\alpha ^{\prime }}{4}8\pi ^{2}\Big(p_{1}(\nabla
^{-})-p_{1}(D_\Lambda)\Big)$, which in general is a highly overdetermined system for the dilaton function $f$. Remarkably, in our case taking
into account \eqref{torsion-general}, \eqref{p1-general} and %
\eqref{abinst-general} the anomaly becomes \emph{the single}  non-linear
equation
\begin{equation}  \label{e:anomaly negative alpha}
\triangle e^{2f}+2|A|^2 +\frac{\alpha ^{\prime }}{4}\left[ 8\mathcal{F}%
_2[f]+8\triangle_4 f -3 |A|^{2} \triangle e^{-2f}
+4\lambda^2\right]=0.
\end{equation}
We remind that this is an equation on $\mathbb{R}^4$ for the dilaton function $f$.

\begin{rmrk} An important question interesting for both string theory and
nonlinear analysis  is whether the non-linear PDE \eqref{e:anomaly
negative alpha} admits a periodic solution.
\end{rmrk}

In \cite{FIUVas2} was found a one dimensional (non-smooth) solution, which we describe briefly. If we assume that the function $f$ depends on one variable,
$f=f(x^{1})$, and for a \emph{negative} $\alpha ^{\prime }$ we
choose $2|A|^2+\alpha ^{\prime }\lambda ^{2}=0$, i.e., we let
$\alpha ^{\prime }=-\alpha^2$ so that $2|A|^2=\alpha ^{2}\lambda
^{2}$. This simplifies \eqref{e:anomaly negative alpha} to the
ordinary differential equation
\begin{equation}  \label{solv4}
\left( e^{2f}\right) ^{\prime }+\frac34\alpha ^{2}|A|^2\left(
e^{-2f}\right) ^{\prime }-2\alpha ^{2 }f^{\prime 3}=C_0=const.
\end{equation}%
A solution of the last equation for $C_0=0$ was found in \cite[Section 4.2]%
{FIUVas}. The substitution $u=\alpha^{-2} e^{2f}$ allows us to
write \eqref{solv4} in the form
\begin{equation*}
\left( e^{2f}\right) ^{\prime }+\frac34\alpha ^{2}|A|^2\left(
e^{-2f}\right)
^{\prime }-2\alpha ^{2 }f^{\prime 3}=\frac{\alpha^2 u^{\prime }}{4u^{3}}%
\left( 4u^{3}-3\frac {|A|^2}{\alpha ^{2 }}u-u^{\prime 2}\right) .
\end{equation*}%
For $C_0=0$ we solve the following ordinary differential equation
for
the function $u=u(x^1)>0$ 
\begin{equation}  \label{solv5}
u^{\prime 2}={4}u^{3}-3\frac {|A|^2}{\alpha ^{2 }}u=4u\left(
u-d\right) \left( u+d\right) ,\qquad d=\sqrt{3|A|^2}/ \alpha.
\end{equation}%
Replacing the real derivative with the complex derivative leads to
the Weierstrass' equation
$
\left (\frac {d\, \mathcal{P}}{dz}\right )^2=4\mathcal{P}\left( \mathcal{P}%
-d\right) \left( \mathcal{P}+d\right)
$
for the doubly periodic Weierstrass $\mathcal{P}$ function with a
pole at the origin.
Letting $\tau_{\pm}$ be the basic half-periods such
that $\tau _{+}$ is real and $\tau _{-}$ is purely imaginary we have that $%
\mathcal{P}$ is real valued on the lines
$\mathfrak{R}\mathfrak{e}\,z=m\tau _{+}$ or
$\mathfrak{I}\mathfrak{m}\,z=im\tau _{-}$, $m\in \mathbb{Z}$.
Thus, $u(x^1)=\mathcal{P}(x^1)$ defines a non-negative
$2\tau_{+}$-periodic function with singularities at the points $2n
\tau_{+}$, $n\in \mathbb{Z}$, which solves the real equation
\eqref{solv5}. By construction, $f= \frac 12 \ln (\alpha^2 u)$ is
a periodic function with
singularities on the real line which is a solution to equation %
\eqref{e:anomaly negative alpha}. {\ 
Therefore the $G_2$ structure defined by $\bar \Theta$ descends to the $7$%
-dimensional nilmanifold $M^{7}=\Gamma \backslash K_{A}$ with
singularity, determined by the singularity of $u$, where $K_{A}$
is the 2-step nilpotent
Lie group with Lie algebra $\mathfrak{K}_{A}$, defined by %
\eqref{ecus-general}, and $\Gamma $ is a lattice with the same period as $f$%
, i.e., $2 \tau_{+}$ in all variables.

In fact, $M^7$ is the total space of a $\mathbb{T}^3$ bundle over
the asymptotically hyperbolic manifold $M^4$  which is a
conformally compact 4-torus with conformal boundary at infinity a
flat 3-torus. Thus, we obtain the complete solution to the
Strominger system in dimension seven with non-constant dilaton,
non-trivial instanton and flux and with a negative $\alpha
^{\prime }$ parameter found in \cite{FIUVas2}.

\subsection{Solutions through contractions}
A contraction of the quaternionic  heisenberg algebra can
be obtained considering the matrix
\begin{equation*}
A_\varepsilon\overset{def}{=}\left(\!\!\!
\begin{array}{ccc}
0 & b & 0 \\
a & 0 & -b \\
0 & 0 & \varepsilon%
\end{array}
\!\right).
\end{equation*}
Letting $\varepsilon\rightarrow 0$ into $A_{\varepsilon}$ we get in the limit, using \eqref{ecus-general},  the structure equations of a six dimensional  two step nilpotent Lie algebra known as $\mathfrak{h}_5$. On the corresponding simply connected two-step nilponent Lie group $H_5$ non-trivial solutions to the Strominger system in
dimension 6 were prsented in\cite{FIUVas}. It is a remarkable fact \cite{FIUVas2}
that the geometric structures, the partial differential equations
and their solutions found in dimension seven starting with  the
quaternionic Heisenbeg group as above converge trough contraction to the heterotic solutions
on 6-dimensional  non-K\"ahler space   on $H_5$ found in \cite{FIUVas}. Moreover, using suiatable contractraction it is possible to obtain non-trivial solutions to the Strominger system in dimension 5 as well (see \cite{FIUVas2} for details).

\textbf{Acknowledgments}:S.Ivanov is visiting University of
Pennsylvania, Philadelphia. S.I. thanks UPenn for providing the
support and an excellent research environment during the final
stage of the paper. S.I. is partially supported by Contract DFNI
I02/4/12.12.2014 and  Contract 168/2014 with the Sofia University
"St.Kl.Ohridski". D.V. was partially supported by Simons
Foundation grant \#279381. D.V. would like to thank the organizers
of the Workshop on Geometric Analysis on sub-Riemannian manifolds
at IHP for the stimulating atmosphere and talks during the
workshop which influenced and inspired the writing of this paper.

\end{document}